\newcommand{\Z}{\mathbb Z}
\newcommand{\R}{\mathbb R}
\newcommand{\Ax}{\mathrm{Axis}}
\newcommand{\Aut}{\mathrm{Aut}}
\newcommand{\Stab}{\mathrm{Stab}}
\newcommand{\BS}{\mathrm{BS}}
\newcommand{\supp}{\mathrm{supp}}
\newcommand{\link}{\mathrm{link}}
\newcommand{\diam}{\mathrm{diam}}
\newcommand{\cont}{\text{cont}}
\newcommand{\im}{\mathrm{im}}
\newcommand{\id}{\mathrm{id}}
\newcommand{\lbr}{\{\!\!\{}
\newcommand{\rbr}{\}\!\!\}}
\newtheorem{thmx}{Theorem}
\newtheorem{corx}[thmx]{Corollary}
\newtheorem{quex}[thmx]{Question}
\newtheorem*{thmA}{Theorem \ref{mainT}}
\newtheorem*{thmB}{Theorem \ref{thmB}}
\newtheorem*{proof42}{\emph{Proof of Corollary \ref{mainC}}}
\newtheorem*{thmGP}{Theorem \ref{mainGraphProducts}}
\newtheorem*{cor52}{Corollary \ref{mainC}}
\newtheorem*{distance_formula}{Distance Formula for hierarchically hyperbolic spaces}
\newtheorem{theorem}{Theorem}[section]
\newtheorem{proposition}[theorem]{Proposition}
\newtheorem{lemma}[theorem]{Lemma}
\newtheorem{corollary}[theorem]{Corollary}
\theoremstyle{definition}
\newtheorem{definition}[theorem]{Definition}
\newtheorem{example}[theorem]{Example}
\newtheorem{remark}[theorem]{Remark}
\newtheorem*{convention}{Convention}
\title{A refined combination theorem for hierarchically hyperbolic groups}
\author{Federico Berlai}
\author{Bruno Robbio}
\address{Department of Mathematics, UPV/EHU, Sarriena s/n, 48940, Leioa - Bizkaia, Spain}
\email[Federico Berlai]{federico.berlai@ehu.eus}
\email[Bruno Robbio]{bruno.robbio@ehu.eus}
\keywords{hierarchically hyperbolic spaces, hierarchically hyperbolic groups, combination theorem, graph products}
\subjclass[2010]{20E06, 20F65, 20F67}
\date{\today}
\begin{document}

\begin{abstract}
In this work, we are concerned with hierarchically hyperbolic spaces and hierarchically hyperbolic groups. Our main result is a wide generalization of a combination theorem of Behrstock, Hagen, and Sisto. In particular, as a consequence,  we show that  any finite graph product of hierarchically hyperbolic groups is again a hierarchically hyperbolic group, thereby answering \cite[Question D]{BHS2} posed by Behrstock, Hagen, and Sisto.
In order to operate in such a general setting, we establish a number of structural results for hierarchically hyperbolic spaces and hieromorphisms (that is, morphisms between such spaces),
and we introduce two new notions for hierarchical hyperbolicity, that is \emph{concreteness} and the \emph{intersection property}, proving that they are satisfied in all known examples.

\end{abstract}
\maketitle

%%%%%%%%%%%%%%%%%%%%%%%%%%%%%%%%%%%
%%%%%%%%%%%%%%%%%%%%%%%%%%%%%%%%%%%
%%%%%%%%%%%%%%%%%%%%%%%%%%%%%%%%%%%
\section{Introduction}
Many seemingly different classes of groups, including hyperbolic $3$-manifold groups, surface groups, small cancellation groups, share some common behavior and properties that were identified by Gromov
(and others) and synthesized into the class of hyperbolic groups \cite{Gr}. 
Gromov's breakthrough consisted in considering groups purely as geometric objects, abstracting the properties shared by the above-mentioned classes. This geometric approach implies strong algebraic, asymptotic,
and growth properties: hyperbolic groups are finitely presented, they have exponential growth (except the virtually cyclic ones), they satisfy a strong form of Tits' alternative and a linear isoperimetric inequality.
As already stressed by Gromov, some natural groups of geometric origin do not fit into this picture: fundamental groups of $3$-manifolds with cusps and mapping class groups are in general not hyperbolic. What is more, the class of hyperbolic
groups is closed under taking free products, but not direct products. Therefore, as proved by Meier \cite{Me}, a graph product of hyperbolic groups is again a hyperbolic group (if and) only if some strong conditions are met.

To overcome these limitations, several generalizations of hyperbolic groups have been introduced over the years. The notion of relative hyperbolicity \cite{Bo,Os1} recovers fundamental groups of $3$-manifolds with cusps, whereas
mapping class groups are examples of acylindrically hyperbolic groups \cite{Os2}, and raags (that is right-angled Artin groups) are among the groups acting properly and cocompactly on CAT(0) cube complexes, that is cubulable groups~\cite{Sa1,Wi}. 
On the one hand, mapping class groups are not relatively hyperbolic (unless they are already hyperbolic \cite[Theorem 1.2]{BDM}). On the other one, the class of acylindrically hyperbolic groups is extensive, and acylindrical hyperbolicity does not imply finite presentability, or any kind of bound on the isoperimetric inequality. Therefore, 
one is brought to find a set of properties that would generalize hyperbolicity, include mapping class groups, be preserved by direct products, and still have strong algebraic consequences for groups satisfying~them. 

These conditions have been identified by Behrstock, Hagen, and Sisto, who isolated the notions of \emph{hierarchically hyperbolic spaces} and of \emph{hierarchically hyperbolic groups} \cite{BHS1,BHS2}. Again, the geometric approach 
that is undertaken reflects into strong algebraic and asymptotic properties: 
hierarchically hyperbolic groups are finitely presented \cite[Corollary 7.5]{BHS2}, they satisfy a quadratic isoperimetric inequality \cite[Corollary 7.5]{BHS2}, they are coarse median \cite[Theorem 7.3]{BHS2}, 
and they have finite asymptotic dimension \cite{asdim}.

The key insight was to axiomatize the Masur-Minsky machinery for mapping class groups, to be able to apply it to its full extent. Although not being in general hyperbolic, a mapping class group 
$\mathcal{M}\mathcal{C}\mathcal{G}(S)$ (of a surface $S$ of finite complexity) can be studied, using the tools developed by Masur and Minsky \cite{MM1, MM2}, through a family of hyperbolic spaces, the curve complexes $\mathcal{C}V$ associated to 
subsurfaces $V\subseteq S$. In a similar manner, hierarchically hyperbolic spaces and groups are the ones for which an analogous approach can be undertaken.

Hierarchically hyperbolic groups provide a common framework to work with hyperbolic groups, mapping class groups, and raags. Other examples comprise all known cubulable groups~\cite{HaSu}, 
toral relatively hyperbolic groups~\cite[Theorem 9.1]{BHS2}, fundamental groups of many $3$-manifolds~\cite[Theorem~10.1]{BHS2}, free and direct products~\cite{BHS2} of these. 
Hierarchically hyperbolic spaces include all hierarchically hyperbolic groups, the Teichm\"{u}ller space with either the Thurston or the Weil-Petersson metric, 
almost all separating curve graphs (and other graphs of multicurves) of surfaces \cite{KateVokes}, universal covers of compact special cube complexes, and any space quasi-isometric to a hierarchically hyperbolic space.

Hierarchical hyperbolicity has been used to prove several new results, and to uniformize results previously known only for certain subclasses of hierarchically hyperbolic spaces.
In \cite{quasiflats}, Behrstock, Hagen and Sisto show that, in a hierarchically hyperbolic space, any top-dimensional quasiflat is uniformly close to a union of standard orthants. This strengthened the known results \cite{BKS,huang} 
in the cubulable setting, and resolved conjectures of Farb for mapping class groups and Brock for Teichm\"{u}ller spaces.
In \cite{asdim}, as already mentioned, the same authors show that every hierarchically hyperbolic space has finite asymptotic dimension, and obtain the sharpest known bound on the asymptotic dimension of mapping class groups. 
%(passing from an exponential bound in terms of the complexity of the surface to a quadratic one). 

\smallskip
The definition of hierarchical hyperbolicity is rather technical and we postpone it until Section \ref{subsection_definition_hhs}.
For the time being, it is enough to know that a hierarchically hyperbolic space $(\mathcal{X},\mathfrak{S})$ is a metric space $(\mathcal{X},d_{\mathcal{X}})$ equipped with a collection of $\delta$-hyperbolic spaces 
$\{\mathcal{C}V\mid V\in\mathfrak{S}\}$, and projections $\pi_V$ from $\mathcal{X}$ onto the various hyperbolic spaces $\mathcal{C}V$, for all $V\in\mathfrak{S}$. 
This index set $\mathfrak{S}$ is equipped with a partial order called nesting, a symmetric and anti-reflexive relation called orthogonality, and if $V, U \in\mathfrak{S}$ are neither nested nor orthogonal, then they are transverse. 
These relations are mutually exclusive, and in the mapping class group scenario their role is respectively taken by nesting, disjointness and overlapping of subsurfaces.
The projections onto hyperbolic spaces and the nesting, orthogonality and transversality satisfy, in addition, several axioms (see Definition \ref{HHS_definition}), which again are evocative of mapping class groups, 
and assure that the coarse geometry of the space $\mathcal{X}$ can be reconstructed from the hierarchically hyperbolic structure.

This leads to one of the most salient features of hierarchical hyperbolicity: a distance formula that generalizes the celebrated distance formula for mapping class groups of Masur and Minsky \cite{MM2}.
In other words, distances in a hierarchically hyperbolic space $(\mathcal{X},\mathfrak{S})$ can be (uniformly) coarsely computed by projecting onto the various hyperbolic spaces associated to $\mathfrak{S}$, determining distances there,
and then sum.
This is made precise by the following theorem:
\begin{distance_formula}[{\cite[Theorem 4.5]{BHS2}}]
Let $(\mathcal{X},\mathfrak{S})$ be a hierarchically hyperbolic space. There exists $s_0$ such that for all $s\geq s_0$ there exist constants $k,c>0$ such that
\[d_{\mathcal{X}}(x,y)\asymp_{(k,c)} \sum_{V\in\mathfrak{S}}\lbr d_V(\pi_V(x),\pi_V(y))\rbr_s,\qquad\qquad \forall\, x,y\in\mathcal{X},\]
where the symbol $\lbr a\rbr_s$ means that $a$ is added to the sum only if $a\geq s$, and $a\asymp_{(k,c)}b$ stands for $\frac{b}{k}-c\leq a\leq kb+c$. 
\end{distance_formula} 

For what concerns hierarchically hyperbolic groups, at this time let us just mention that there exist groups that are hierarchically hyperbolic spaces, but fail to be hierarchically hyperbolic \emph{groups}, and therefore being a hierarchically
hyperbolic group is a stronger condition than having a Cayley graph which is a hierarchically hyperbolic space (see Definition \ref{def:hhg}).

\medskip
Given a class of groups $\mathcal{C}$, it is natural to investigate under which group constructions the class is preserved.
On the one hand, the fact that $\mathcal{C}$ is closed under certain operations gives information on the nature of the class, and, on the other, it provides methods to construct new groups in the class from known examples. 

A construction that generalizes free products 
(with amalgamation, and HNN extensions) is the fundamental group of a graph of groups, and results in this direction are usually referred to as \emph{combination theorems}. 
The Bestvina-Feighn combination theorem \cite{BestvinaFeighn} for hyperbolic groups is such an example: given a finite graph $\mathcal{G}$ of hyperbolic groups satisfying certain conditions, the resulting fundamental group is again hyperbolic.
Their strategy of proof was to consider a metric space (more precisely, a \emph{tree of metric spaces} obtained from the Bass-Serre tree of the graph and the vertex/edge groups of $\mathcal{G}$) and study the action of the fundamental group on such space.
This approach turned out to be very successful, and was later applied in several other related contexts. This is the case for the combination theorem of \cite{StrongRelGps} in the class of strongly relatively hyperbolic groups, 
or for the Hsu-Wise combination theorem in the context of groups acting on cube complexes \cite{WiHs}, or Alibegovi\'{c}'s combination theorem for relatively hyperbolic groups \cite{EminaAlibegovic}. 
On the other hand, a more dynamical approach is undertaken  by Dahmani \cite{Dahm} to obtain another combination theorem for relatively hyperbolic groups.

Also in the context of hierarchically hyperbolic groups and spaces, there have been efforts in establishing such combination theorems. In \cite[Section 8]{BHS2}, Behrstock, Hagen and Sisto impose strict conditions on a tree of 
hierarchically hyperbolic spaces (something completely analogous to the trees of hyperbolic groups considered by Bestvina and Feighn, and mentioned previously - see Definition~\ref{equiv_class_def}) that ensure that the resulting 
space is again hierarchically hyperbolic. From this, they deduce \cite[Corollary 8.24]{BHS2} the hierarchical hyperbolicity of fundamental groups of finite graph of groups satisfying related strict conditions.
In \cite[Theorem 4.17]{Sp2}, Spriano shows that certain amalgamated products of hierarchically hyperbolic groups are hierarchically hyperbolic, building on results from his previous work~\cite{Sp1}. 

\smallskip
In this work we provide a new combination theorem for hierarchically hyperbolic spaces (see Theorem \ref{mainT}) and groups (see Corollary \ref{mainC}). 
To do so, we introduce several new tools for the study of hierarchical hyperbolicity, which are of independent interest. 
The first one is the \emph{intersection property} (see Definition~\ref{intersectionproperty_definition}, and the discussion after the statement of Theorem \ref{mainGraphProducts}), which in
turn leads to the notion of \emph{concreteness}. We introduce the latter notion to exclude artificial examples of hierarchically hyperbolic spaces that carry some undesirable features. As we will see in this Introduction, the intersection property has a very natural definition, and we conjecture that all hyperbolic spaces
admit a hierarchically hyperbolic structure with the intersection property (see Question~\ref{conjectu}). On the other hand, concreteness is more technical, but 
nevertheless we prove in Proposition \ref{concreteness_making} that any hierarchically hyperbolic space with the intersection property can be supposed to be concrete.

These properties are of independent interest, and we expect them to be of further use. They allow us to assume much weaker hypotheses for our combination theorem than the ones used by
Behrstock, Hagen, and Sisto \cite[Theorem 8.6]{BHS2}. %Doing so, we reveal that some hypotheses of \cite[Theorem~8.6]{BHS2} are just artifacts of the proof, and are not necessary conditions.

\smallskip
The first result of this paper is the following combination theorem.
After having stated it, we will briefly comment on terminology and some concepts related to hierarchical hyperbolicity, relegating their full and precise introduction to Section \ref{SectionHH}. 

\begin{thmx}\label{mainT}
Let $\mathcal{T}$ be a tree of hierarchically hyperbolic spaces. Suppose that: 
\begin{enumerate}
\item each edge-hieromorphism is hierarchically quasiconvex, uniformly coarsely lipschitz and full;
\item\label{hyp3} comparison maps are uniform quasi isometries;
\item the hierarchically hyperbolic spaces of $\mathcal{T}$ have the intersection property and clean containers.
\end{enumerate} 
Then the metric space $\mathcal{X}(\mathcal{T})$ associated to $\mathcal{T}$ is a hierarchically hyperbolic space with clean containers and the intersection property.
\end{thmx}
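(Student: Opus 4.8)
The plan is to follow the architecture of the combination theorem of \cite[Section 8]{BHS2}: assemble an index set $\mathfrak{S}(\mathcal{T})$ for $\mathcal{X}(\mathcal{T})$ out of the index sets of the vertex and edge hierarchically hyperbolic spaces of $\mathcal{T}$, together with a single distinguished $\sqsubseteq$-maximal domain $S$, and then verify the axioms of Definition \ref{HHS_definition} along with clean containers and the intersection property. Concretely, I would first declare two domains, appearing in (possibly different) vertex or edge spaces, to be equivalent when they are identified through a chain of edge-hieromorphisms; fullness of the edge-hieromorphisms (hypothesis~(1)) ensures this identification is compatible with nesting, so each equivalence class carries a well-defined \emph{support}, a subtree of the underlying Bass--Serre tree $T$. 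Then $\mathfrak{S}(\mathcal{T})$ is the set of these classes together with $S$; the hyperbolic space attached to a class is the one attached to any of its representatives --- well defined up to uniform quasi-isometry precisely because comparison maps are uniform quasi-isometries (hypothesis~(2)) --- while $\mathcal{C}S$ is obtained from $T$, modified near the edge spaces so that the large-links axiom can hold.

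Next I would install the relations and the projections. Two classes whose supports lie far apart in $T$ are declared transverse, their $\rho$-maps recording the $T$-direction one travels from one support towards the other (and the gate onto the intervening edge space); two classes with sufficiently overlapping supports inherit nesting, orthogonality, or transversality from the common part of their supports; every class is nested in $S$. For $U\neq S$, the projection $\pi_U\colon\mathcal{X}(\mathcal{T})\to\mathcal{C}U$ is defined by first gating a point into a vertex space meeting $\supp(U)$ --- uniform coarse lipschitzness of the edge-hieromorphisms keeps this controlled --- and then applying the vertex-space projection to $\mathcal{C}U$; the map $\pi_S$ records the vertex space containing the point. The coordinate projections $\rho^U_W$ are built the same way from the downstairs data and the tree.

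The bulk of the proof is the verification of the axioms. The ``local'' ones (hyperbolicity of the $\mathcal{C}U$, coarse lipschitzness of projections, partial realization, uniqueness, and the container axiom among classes away from $S$) reduce fairly mechanically to the corresponding statements in the vertex spaces, where hierarchical quasiconvexity of the edge-hieromorphisms controls the gates, and where the intersection property and clean containers supply the container and orthogonal-complement domains that the weakened hypotheses of Theorem~\ref{mainT} no longer hand over for free. The genuinely global axioms --- consistency of the tuple $(\pi_U(x))_{U\in\mathfrak{S}(\mathcal{T})}$, bounded geodesic image (notably for pairs involving $S$), and large links --- which together power the realization theorem and the distance formula for $\mathcal{X}(\mathcal{T})$, are where the tree must be exploited: one checks that passing a point between adjacent vertex spaces alters only boundedly many coordinates beyond a threshold, and that the modification of $\mathcal{C}S$ near edge spaces is exactly what large links requires. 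I expect this --- consistency/realization and large links for the new domain $S$ under the weakened edge-hieromorphism hypotheses --- to be the main obstacle.

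Finally, to see that $\mathcal{X}(\mathcal{T})$ again enjoys clean containers and the intersection property, I would argue by cases on supports: for a family of pairwise orthogonal or transverse classes whose supports all overlap in $T$, the relevant container or intersection domain is inherited from a single vertex space, which has these properties by hypothesis~(3); for configurations spread across $T$, or involving $S$, the required domain is forced by the combinatorics of $T$ and the edge spaces, and one verifies its properties directly. This last step is largely bookkeeping, but it is why the whole construction is carried out with the intersection property and clean containers in hand, rather than appended afterwards.
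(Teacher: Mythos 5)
Your index set $\mathfrak{S}(\mathcal{T})=\mathfrak{S}_1\cup\{S\}$ is missing the key structural ingredient of the paper's construction. The paper's index set is $\mathfrak{S}=\mathfrak{S}_1\sqcup\mathfrak{S}_2\sqcup\{\widehat{T}\}$, where $\mathfrak{S}_2=\{T_{[V]}\mid[V]\in\mathfrak{S}_1\}$ consists of the \emph{support trees} themselves, with $\mathcal{C}\widehat{T}$ equal to $T$ coned off along every support tree, and $\mathcal{C}T_{[W]}$ equal to $T_{[W]}$ coned off along its proper sub-support-trees. These new domains are precisely the orthogonal containers: $\cont_\perp[V]=T_{[V]}$ and $\cont_\perp T_{[V]}=[V]$. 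Without them the container axiom fails once supports are unbounded, which your hypothesis~(2) permits: a class $[W]$ is orthogonal to $[V]$ iff $T_{[V]}\subseteq T_{[W]}$, so the orthogonal complement of $[V]$ is spread across all of $T_{[V]}$, and no single $[U]\in\mathfrak{S}_1$ nests all such $[W]$. In \cite{BHS2} this is handled by inductively adjoining artificial containers, a process which terminates only under their bounded-supports hypothesis; the present paper replaces that process with $\mathfrak{S}_2$. Your vague ``$\mathcal{C}S$ obtained from $T$, modified near the edge spaces'' also misses that the coning is global (along all support trees), which is what makes $\widehat{T}$ hyperbolic with unbounded supports and what makes the $\mathfrak{S}_2$-domains necessary: they record exactly the distance killed by the cones.

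Two smaller but real omissions. First, you skip the ``decoration'' step (Subsection~\ref{trees_with_decorations}): before the axiom check, $\mathcal{T}$ is replaced by a decorated tree $\widetilde{\mathcal{T}}$, quasi-isometric on associated spaces, in which distinct classes have distinct supports (Lemma~\ref{lemma_decorations}); this makes $\mathfrak{S}_2$ parametrize supports faithfully and yields the converse of Lemma~\ref{support_inclusion} (Corollary~\ref{reverse_nesting_blue}), on which the relations and wedge constructions for $\mathfrak{S}_2$ lean heavily. Second, you invoke ``weakened edge-hieromorphism hypotheses'' but do not say how BHS's dropped assumption --- that $\phi_v(\mathcal{X}_e)$ and ${\bf F}_{\phi_v^\lozenge(S_e)}\times\{\star\}$ are uniformly Hausdorff-close --- is recovered. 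This is the job of the Section~\ref{Section4} machinery (Theorem~\ref{thmB}, Theorem~\ref{winning_theorem}, Lemma~\ref{conclusion_fourth_hypothesis}), applied after Proposition~\ref{concreteness_making} and Remark~\ref{remark_crucial_concrete} have arranged that edge spaces are \emph{concrete}. It is exactly that machinery which makes the $\rho$-sets for transverse classes with disjoint supports, defined by Equation~\eqref{new_rhos} as $\mathfrak{c}\circ\pi_{V_{e^+}}(\phi_{e^+}(\mathcal{X}_e))$, uniformly bounded, so the consistency and bounded-geodesic-image checks go through.
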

As already mentioned, $\mathcal{X}(\mathcal{T})$ is a metric space associated to $\mathcal{T}$, and it is built from a tree, replacing vertices and edges with hierarchically hyperbolic spaces, with embeddings of edge spaces into vertex spaces 
(see Definition~\ref{equiv_class_def}). These embeddings are given by hieromorphisms, which are morphisms between hierarchically hyperbolic spaces that agree with the hierarchical structure (see Definition~\ref{HHS_hieromorphism}, and 
Definition~\ref{fullness_definition} for the notion of full hieromorphism). 
Theorem~\ref{mainT} then has three hypotheses: the first two are metric conditions, one of them imposing constraints on how the edge groups in $\mathcal{T}$ are embedded into vertex groups, and the other one requiring certain natural 
maps (compare Definition \ref{comparison_maps}) at the level of the hyperbolic spaces $\mathcal{C}V$ to be isometries. 
These are the two fundamental hypotheses of the theorem, and, as we will see, neither of the two can be dropped or relaxed.

The third hypothesis invokes two conditions that, in view of the motivating examples, are very natural. These two properties are known to persist under all known operations that preserve the classes of hierarchically hyperbolic spaces and groups, and currently they are
satisfied in all examples of hierarchically hyperbolic spaces.

\smallskip
As a consequence of Theorem \ref{mainT}, we obtain a combination theorem for hierarchically hyperbolic groups:
\begin{corx}\label{mainC}
Let $\mathcal{G}$ be a finite graph of hierarchically
hyperbolic groups. Suppose that:
\begin{enumerate}
\item each edge-hieromorphism is hierarchically quasiconvex, uniformly coarsely lipschitz and full;
\item comparison maps are isometries;
\item the hierarchically hyperbolic spaces of $\mathcal{G}$ have the intersection property and clean containers.
\end{enumerate}
Then the group associated to $\mathcal{G}$ is itself a hierarchically hyperbolic group.
\end{corx}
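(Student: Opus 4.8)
The plan is to deduce the corollary from Theorem \ref{mainT} by the standard mechanism that turns a statement about trees of spaces into a statement about graphs of groups via Bass--Serre theory. Given a finite graph of hierarchically hyperbolic groups $\mathcal{G}$, let $G$ be its fundamental group and let $T$ be the associated Bass--Serre tree, on which $G$ acts cocompactly with vertex stabilizers the (conjugates of the) vertex groups and edge stabilizers the (conjugates of the) edge groups. First I would build a tree of hierarchically hyperbolic spaces $\mathcal{T}$ whose underlying tree is $T$: to each vertex $v$ of $T$ attach a copy of a Cayley graph (with respect to a finite generating set) of the corresponding vertex group, equipped with its hierarchically hyperbolic group structure, and similarly for edges, using the edge-group Cayley graphs; the edge-to-vertex hieromorphisms of $\mathcal{T}$ are the ones induced by the given edge-hieromorphisms of $\mathcal{G}$. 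By hypothesis these are hierarchically quasiconvex, uniformly coarsely lipschitz and full, and the comparison maps are isometries, so in particular they are uniform quasi-isometries; and the vertex/edge hierarchically hyperbolic spaces have the intersection property and clean containers. Thus all three hypotheses of Theorem \ref{mainT} are met, and we conclude that $\mathcal{X}(\mathcal{T})$ is a hierarchically hyperbolic space (with clean containers and the intersection property).

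The second step is to promote this to a hierarchically hyperbolic \emph{group} structure on $G$. The action of $G$ on $T$ naturally extends to an action on $\mathcal{X}(\mathcal{T})$: because the vertex and edge groups act on their own Cayley graphs by isometries, and because the graph-of-groups data are equivariant, $G$ acts on $\mathcal{X}(\mathcal{T})$ properly and cocompactly by isometries. Hence $\mathcal{X}(\mathcal{T})$ is $G$-equivariantly quasi-isometric to a Cayley graph of $G$ with respect to a finite generating set. What remains is to check that the hierarchically hyperbolic structure on $\mathcal{X}(\mathcal{T})$ produced by Theorem \ref{mainT} can be taken to be $G$-equivariant in the sense required by Definition \ref{def:hhg}: namely that $G$ acts on the index set $\mathfrak{S}(\mathcal{T})$ with finitely many orbits, compatibly with nesting, orthogonality and transversality, that for each $g \in G$ and $V \in \mathfrak{S}(\mathcal{T})$ there is an isometry $\mathcal{C}V \to \mathcal{C}(gV)$ intertwining the projections and the comparison/relative-projection maps, and that these isometries satisfy the usual cocycle condition.

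The main obstacle is precisely this equivariance bookkeeping. The index set $\mathfrak{S}(\mathcal{T})$ built in the proof of Theorem \ref{mainT} is assembled from the index sets of the vertex and edge spaces together with some extra domains coming from the tree; one must verify that $G$ permutes all of these pieces, that the $G$-action has finitely many orbits (here finiteness of the graph $\mathcal{G}$ and of each vertex/edge group's index set is used), and that the associated hyperbolic spaces and all the structural maps (projections $\pi_V$, comparison maps, relative projections $\rho^U_V$) are carried to one another by the group elements. Since the construction in Theorem \ref{mainT} is natural — it is built functorially from the edge-hieromorphisms and the combinatorics of the tree — this equivariance should follow by tracking the $G$-action through each stage of that construction; concretely, one checks it orbit by orbit, reducing to the already-$G$-equivariant structures on the vertex and edge hierarchically hyperbolic groups. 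Once equivariance is in place, Definition \ref{def:hhg} is satisfied and $G$ is a hierarchically hyperbolic group, completing the proof. \qed
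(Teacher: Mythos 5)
Your overall strategy matches the paper's: apply Theorem~\ref{mainT} to the tree of hierarchically hyperbolic spaces $\mathcal{T}$ arising from the Bass--Serre tree, and then check $G$-equivariance. However, you elide two genuine difficulties that the paper must — and does — confront, and your proposal as written does not close them.

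First, Theorem~\ref{mainT} is proved only after replacing $\mathcal{T}$ by the \emph{decorated} tree $\widetilde{\mathcal{T}}$ of Subsection~\ref{trees_with_decorations}, which is essential for discriminating equivalence classes by their supports. But the decoration attaches standard product regions $\textbf{F}_U\times\{f\}$ as new vertex spaces, and the set of such parallel copies need not be $G$-invariant; so $\mathcal{X}(\widetilde{\mathcal{T}})$ may not carry a $G$-action at all. The paper addresses this by establishing (Lemma~\ref{useful_lemma_1}) that $\mathcal{X}(\mathcal{T})$ is hierarchically quasiconvex in $\mathcal{X}(\widetilde{\mathcal{T}})$ and therefore inherits, via \cite[Proposition~5.5]{BHS2}, an HHS structure with index set $\mathfrak{S}$, on which $G$ does act. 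You apply Theorem~\ref{mainT} directly to $\mathcal{X}(\mathcal{T})$ and say nothing about this; that is a gap, since the proof you invoke is not available for the undecorated object.

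Second, and more importantly, you never use the hypothesis that comparison maps are \emph{isometries} (as opposed to uniform quasi-isometries, which is all Theorem~\ref{mainT} requires). Yet this is precisely where the hypothesis is consumed. To give a hierarchically hyperbolic \emph{group} structure, $G$ must act by $\Aut(\mathfrak{S})$, i.e., each $g$ must induce \emph{isometries} $\mathcal{C}[V]\to\mathcal{C}g[V]$. Each equivalence class $[V]$ has a favorite vertex $v$ and favorite representative $V_v$ with $\mathcal{C}[V]=\mathcal{C}V_v$, but $g$ need not send the favorite vertex of $[V]$ to the favorite vertex of $g[V]$ — when supports $T_{[V]}$ are unbounded, $\Stab_G([V])$ can act by translations and fix no vertex, so there is no $G$-equivariant system of favorite vertices. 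The paper handles this by fixing a left transversal of each $\Stab_G([V])$, declaring favorite vertices accordingly, and defining $g_{[V]}\vcentcolon=\mathfrak{c}\circ g_{V_v}$, where $\mathfrak{c}$ is the comparison map to the newly declared favorite representative. For $g_{[V]}$ to be an isometry, $\mathfrak{c}$ must be an isometry; if comparison maps were only quasi-isometries, $G$ would act by hieromorphisms but not by automorphisms, and Definition~\ref{def:hhg} would not be satisfied. Your appeal to the construction being ``natural'' and to ``already-$G$-equivariant structures on the vertex and edge hierarchically hyperbolic groups'' misidentifies the source of the difficulty: it is the favorite-representative bookkeeping across unbounded supports that fails to be automatically equivariant, not the structures on the individual vertex/edge groups. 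See Remark~\ref{remark_qi} and Corollary~\ref{corollary_qi} for the paper's own discussion of when the weaker hypothesis suffices.
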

Notice that, in contrast with Theorem \ref{mainT}, in Corollary \ref{mainC} comparison maps are required to be \emph{isometries}, and not just uniform quasi isometries. This is needed in the proof that the hierarchical structure given to the fundamental group of $\mathcal{G}$ provided by Theorem \ref{mainT} is a hierarchically hyperbolic \emph{group} structure, and not just a hierarchically hyperbolic space structure. We direct the interested reader to Remark \ref{remark_qi} and Corollary \ref{corollary_qi} for more regarding this.

\smallskip
A more involved application of Theorem \ref{mainT} and Corollary \ref{mainC} is the following Theorem \ref{mainGraphProducts}, which is concerned with persistence of hierarchical hyperbolicity under taking graph products.
Theorem \ref{mainGraphProducts} answers in the positive a question posed by Behrstock, Hagen, and Sisto \cite[Question D]{BHS2}. As a byproduct of Theorem \ref{mainGraphProducts}, we extend
the results of \cite{ABD} to show that clean containers are not only preserved by taking free and direct products, but also by graph products.

\begin{thmx}\label{mainGraphProducts}
Let $\Gamma$ be a finite simplicial graph, $\mathcal{G}=\{G_v\}_{v\in V}$ be a family of hierarchically hyperbolic groups with the intersection property and clean containers. Then 
the graph product $\Gamma\mathcal{G}$ is a hierarchically hyperbolic group with the intersection property and clean containers.
\end{thmx}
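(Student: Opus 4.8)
The plan is to argue by induction on the number of vertices of $\Gamma$, using the standard decomposition of a graph product as an amalgamated product over a sub-graph-product, and feeding this into Corollary \ref{mainC}. Write $V$ for the vertex set of $\Gamma$, and for $W\subseteq V$ let $\Gamma_W\mathcal{G}$ denote the graph product over the full subgraph spanned by $W$; recall that $\Gamma_W\mathcal{G}$ is canonically a retract of $\Gamma\mathcal{G}$, and that for every vertex $v\in V$ there is the splitting
\[
\Gamma\mathcal{G}\;=\;\Gamma_{V\setminus v}\mathcal{G}\;*_{\Gamma_{\link(v)}\mathcal{G}}\;\Gamma_{\sta(v)}\mathcal{G},\qquad \Gamma_{\sta(v)}\mathcal{G}=G_v\times\Gamma_{\link(v)}\mathcal{G}.
\]
If $|V|=1$ the statement is the hypothesis on $\mathcal{G}$. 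If $\link(v)=V\setminus v$, the splitting degenerates to the direct product $G_v\times\Gamma_{V\setminus v}\mathcal{G}$; since $\Gamma_{V\setminus v}\mathcal{G}$ has fewer vertices, I would finish this case by combining the inductive hypothesis with the fact that a direct product of hierarchically hyperbolic groups with the intersection property and clean containers is again one of these (hierarchical hyperbolicity and the intersection property being straightforward, clean containers being \cite{ABD}). So the remaining case is $\link(v)\subsetneq V\setminus v$, in which $V\setminus v$, $\link(v)$ and $\sta(v)$ all span strictly fewer vertices than $\Gamma$.

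In that case, by induction $\Gamma_{V\setminus v}\mathcal{G}$ and $\Gamma_{\link(v)}\mathcal{G}$ are hierarchically hyperbolic groups with the intersection property and clean containers, and hence so is $\Gamma_{\sta(v)}\mathcal{G}=G_v\times\Gamma_{\link(v)}\mathcal{G}$ by the direct-product case. I would then view the splitting above as a one-edge finite graph of hierarchically hyperbolic groups and apply Corollary \ref{mainC}. Hypothesis (3) of that corollary holds by induction, so everything rests on verifying hypotheses (1) and (2) for the two edge-inclusions $\Gamma_{\link(v)}\mathcal{G}\hookrightarrow\Gamma_{\sta(v)}\mathcal{G}$ and $\Gamma_{\link(v)}\mathcal{G}\hookrightarrow\Gamma_{V\setminus v}\mathcal{G}$: that they are full, hierarchically quasiconvex, uniformly coarsely lipschitz hieromorphisms, and that the comparison maps between the associated hyperbolic spaces are isometries. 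For the inclusion as a direct factor of $\Gamma_{\sta(v)}\mathcal{G}$ this is immediate from the explicit product hierarchically hyperbolic structure. The inclusion $\Gamma_{\link(v)}\mathcal{G}\hookrightarrow\Gamma_{V\setminus v}\mathcal{G}$ of a sub-graph-product is a retract, hence a quasi-isometric embedding, hence uniformly coarsely lipschitz; the real content is that it is full, that its image is hierarchically quasiconvex, and that the comparison maps it produces are isometries rather than mere quasi-isometries.

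To push these properties through the induction, a bare existence statement will not suffice, so I would strengthen the inductive hypothesis to also record that, for the hierarchically hyperbolic group structure on $\Gamma\mathcal{G}$ produced at each stage, \emph{every} canonical inclusion $\Gamma_W\mathcal{G}\hookrightarrow\Gamma\mathcal{G}$ with $W\subseteq V$ is a full, hierarchically quasiconvex, uniformly coarsely lipschitz hieromorphism with isometric comparison maps. Using the explicit description of the index set of $\mathcal{X}(\mathcal{T})$ coming from the proof of Theorem \ref{mainT} (it is assembled from the index sets of the vertex and edge spaces together with a tree part), one reads off that this strengthened statement for the vertex and edge groups implies hypotheses (1)--(2) of Corollary \ref{mainC} for the two edge-inclusions above, and that it is again satisfied by the amalgam $\Gamma\mathcal{G}$ for all $W\subseteq V$: fullness and hierarchical quasiconvexity are preserved under the relevant compositions of hieromorphisms by the structural results of Section \ref{SectionHH}, and isometry of comparison maps is a coordinatewise statement about the hyperbolic spaces $\mathcal{C}V$, which are not altered by the amalgamation procedure. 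Finally, the intersection property of $\Gamma\mathcal{G}$ is part of the conclusion of Theorem \ref{mainT}, while the preservation of clean containers requires the additional argument extending \cite{ABD} from direct and free products to graph products, obtained by tracking the container domains through the construction of $\mathcal{X}(\mathcal{T})$.

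The main obstacle I anticipate is not a single hard geometric estimate but the bookkeeping around the strengthened inductive statement: it must be phrased so that at the amalgamation step the edge-inclusions $\Gamma_{\link(v)}\mathcal{G}\hookrightarrow\Gamma_{\sta(v)}\mathcal{G},\Gamma_{V\setminus v}\mathcal{G}$ transparently satisfy the hypotheses of Corollary \ref{mainC}, and so that the structure output by Theorem \ref{mainT} again satisfies the same strengthened statement for \emph{all} sub-graph-products of $\Gamma$, including those not visible in the chosen splitting. Getting fullness right is the most delicate point, since fullness of a composite such as $\Gamma_W\mathcal{G}\hookrightarrow\Gamma_{\link(v)}\mathcal{G}\hookrightarrow\Gamma\mathcal{G}$ must be deduced from fullness of each factor together with the precise way the index set of the amalgam is built.
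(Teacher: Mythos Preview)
Your proposal is correct and follows essentially the same approach as the paper: induction on $|V|$ via the splitting over $G_{\link(v)}$, with the inductive hypothesis strengthened to say that every sub-graph-product inclusion is a full, hierarchically quasiconvex hieromorphism inducing isometries on hyperbolic spaces, and then an application of Corollary~\ref{mainC}. Two small corrections: clean containers for the amalgam is already part of the conclusion of Theorem~\ref{mainT} (the container-tracking you allude to is done once and for all there, and \cite{ABD} is only invoked at the direct-product step), and in the paper the bulk of the technical verification of the strengthened hypothesis goes into checking hierarchical quasiconvexity of $G_{\Delta}$ inside the amalgam's new structure rather than into fullness.
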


\emph{Clean containers} (see Remark \ref{container_remark}), a notion introduced originally by Abbott, Behrstock, and Durham \cite{ABD},
is a technical condition that in the mapping class group setting translates into the following: if $V\subseteq S$ is a subsurface of the surface $S$, then $V$ and $S\setminus V$ are disjoint, and \emph{any} subsurface disjoint from $V$ is contained into~$S\setminus V$.
On the other hand, the intersection property is a condition that we introduce, and in the mapping class group setting means that, given two subsurfaces
$V,U\subseteq S$, the subsurface $V\cap U$ is the biggest subsurface of 
$S$ that is contained in both $V$ and $U$. The intersection property gives to the index set $\mathfrak{S}$ the structure of a lattice.
At this point, it is instructive to notice that both $V\cap U$ and $S\setminus V$ could be \emph{non-connected} subsurfaces of $S$, and indeed the hierarchically hyperbolic 
structure with clean containers and the intersection property of a mapping class group $\mathcal{M}\mathcal{C}\mathcal{G}(S)$ is obtained considering all, possibly non-connected, subsurfaces of~$S$.

Both properties are satisfied in all known examples of hierarchically hyperbolic spaces, in the sense that given a hierarchically hyperbolic space $\mathcal{X}$ (respectively: hierarchically hyperbolic 
group $G$), there exists a hierarchical structure $\mathfrak{S}$ such that $(\mathcal{X},\mathfrak{S})$ is a hierarchically hyperbolic space (respectively: $(G,\mathfrak{S})$ is a hierarchically hyperbolic group) with the intersection property and clean containers.

\smallskip
We are inclined to believe that any hierarchically hyperbolic space admits a hierarchically hyperbolic structure with the intersection property and clean containers:
\begin{quex}\label{conjectu}
Let $(\mathcal{X},d_{\mathcal{X}})$ be a hierarchically hyperbolic space. Does there exist a hierarchically hyperbolic structure~$\mathfrak{S}$ such that
$(\mathcal{X},\mathfrak{S})$ is a hierarchically hyperbolic space with the intersection property and clean containers?
\end{quex}

All the stated theorems rely on the following fundamental result, Theorem \ref{thmB}, which is of independent interest. It provides equivalent conditions for a (full) hieromorphism $\phi\colon(\mathcal{X},\mathfrak{S})\to(\mathcal{X}',\mathfrak{S}')$
with hierarchically quasiconvex image, to be a coarsely lipschitz map. 

An interesting feature of Theorem \ref{thmB} is the following. On the one hand, its first two conditions are purely metric conditions on the hieromorphism, 
whereas the third is a metric condition on
certain natural maps (that is \emph{gate maps}, see Remark \ref{def_gatemaps_nontree}) between hierarchically quasiconvex subspaces of the hierarchically hyperbolic structure of 
$(\mathcal{X}',\mathfrak{S}')$. On the other hand, after the image of the hieromorphism $\phi$ is understood, the fourth and the fifth conditions can be detected in $(\mathcal{X}',\mathfrak{S}')$.

Therefore, Theorem \ref{thmB} reveals that a seemingly mild condition on $\phi$ 
(being coarsely lipschitz) already guarantees that the hieromorphism is a quasi-isometric embedding, and has implications on the hierarchically hyperbolic structure of $(\mathcal{X}',\mathfrak{S}')$.

\begin{thmx}\label{thmB}
Let $\phi\colon(\mathcal{X},\mathfrak{S})\to(\mathcal{X}',\mathfrak{S}')$ be a full hieromorphism with hierarchically quasiconvex image, and let 
$S$ be the $\sqsubseteq$-maximal element of $\mathfrak{S}$. 
The following are equivalent:
\begin{enumerate}
 \item $\phi$ is coarsely lipschitz;
 \item $\phi$ is a quasi-isometric embedding;
  \item the maps $\mathfrak{g}_{\phi(\mathcal{X})}\colon{\textbf F}_{\phi(S)}\to\phi(\mathcal{X})$ and $\mathfrak{g}_{{\textbf F}_{\phi(S)}}\colon\phi(\mathcal{X})\to\textbf{F}_{\phi(S)}$ are quasi-inverses of each other, and in particular quasi isometries;
 \item the subspace $\phi(\mathcal{X})\subseteq \mathcal{X}'$, endowed with the subspace metric, admits a hierarchically hyperbolic structure obtained from the 
 one of $\mathcal{X}$ by composition with the map $\phi$ (and its induced maps at the level of hyperbolic spaces);
 \item $\pi_{W}(\phi(\mathcal{X}))$ is uniformly bounded for every $W\in\mathfrak{S}'\setminus\phi(\mathfrak{S})$.
\end{enumerate}
\end{thmx}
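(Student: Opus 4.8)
The plan is to run the cycle $(2)\Rightarrow(1)\Rightarrow(5)\Rightarrow(4)\Rightarrow(3)\Rightarrow(2)$, with the distance formula as the main workhorse. Write $\phi^\diamond$ for the induced injection of index sets and $\phi^*_V\colon\mathcal{C}V\to\mathcal{C}(\phi^\diamond V)$ for the induced maps on hyperbolic spaces. The implication $(2)\Rightarrow(1)$ is immediate. The starting observation for everything else is that a full hieromorphism \emph{automatically} satisfies the lower quasi-isometry bound: since each $\phi^*_V$ is a uniform quasi-isometric embedding and $\pi_{\phi^\diamond V}\circ\phi$ coarsely agrees with $\phi^*_V\circ\pi_V$, applying the distance formula in $\mathcal{X}'$ restricted to the injective family $\{\phi^\diamond V\}_{V\in\mathfrak{S}}$ and then in $\mathcal{X}$ gives $d_{\mathcal{X}'}(\phi x,\phi y)\gtrsim d_{\mathcal{X}}(x,y)$ with uniform constants, after the usual threshold adjustment; combined with $(1)$ this is $(2)$. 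Also, by fullness applied to $S$ one identifies $\phi(\mathfrak{S})$ with $\{W\in\mathfrak{S}'\colon W\sqsubseteq\phi^\diamond(S)\}$, so $\textbf{F}_{\phi(S)}$ carries a genuine hierarchically hyperbolic structure with index set $\phi(\mathfrak{S})$ and hyperbolic spaces $\mathcal{C}W$, $W\sqsubseteq\phi^\diamond(S)$; this is the object mediating between $\mathcal{X}$ and $\phi(\mathcal{X})$.

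I expect $(1)\Rightarrow(5)$ to be the main obstacle, and would prove it contrapositively. If $\pi_W(\phi(\mathcal{X}))$ is unbounded for some $W\in\mathfrak{S}'\setminus\phi(\mathfrak{S})$, pick points with arbitrarily large $d_W(\phi x,\phi y)$ and, using hierarchical quasiconvexity of $\phi(\mathcal{X})$, run a hierarchy path between $\phi x$ and $\phi y$ inside a bounded neighbourhood of $\phi(\mathcal{X})$. Along it the $\mathcal{C}W$-coordinate traverses an unparametrised quasigeodesic of unbounded length, and the sub-path responsible for that traversal lies uniformly close to the standard product region $P_W$; its endpoints $a,b$ then lie in $N_D(\phi(\mathcal{X}))\cap N_D(P_W)$ with $d_W(a,b)$ unbounded. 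Since $W\not\sqsubseteq\phi^\diamond(S)$, the element $W$ is transverse to, orthogonal to, or properly contains $\phi^\diamond(S)$; in the first and third cases the defining equations of $P_W$ plus consistency pin down the $\phi^\diamond(S)$-coordinate of $a$ and $b$, and then bounded geodesic image and large links pin down \emph{all} the $\phi^\diamond(V)$-coordinates, $V\in\mathfrak{S}$, up to a uniform constant. Pulling $a,b$ back through $\phi$ produces $x',y'\in\mathcal{X}$ at uniformly bounded distance whose images lie at unbounded distance, contradicting $(1)$. The delicate point is the orthogonal case, where the relevant constraint comes from $P_W$ containing the whole $\textbf{F}$-factor orthogonal to $W$ rather than from a transversality equation; this is where fullness and hierarchical quasiconvexity of the image are both genuinely used, and I would handle it by comparing the $\textbf{F}$-factors of $P_W$ and $P_{\phi^\diamond(S)}$ directly.

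With $(5)$ in hand, $(5)\Rightarrow(4)$ amounts to transporting the structure: give $\phi(\mathcal{X})$ (with the subspace metric) index set $\phi(\mathfrak{S})$, hyperbolic spaces $\mathcal{C}(\phi^\diamond V)$, the relations and $\rho$-maps pushed forward from $\mathfrak{S}$ (a full hieromorphism respects nesting, orthogonality and transversality), and projections $\pi_{\phi^\diamond V}|_{\phi(\mathcal{X})}$; all HHS axioms are inherited from $\mathcal{X}'$, the only subtle one being uniqueness, which is exactly where $(5)$ enters — two points of $\phi(\mathcal{X})$ with all $\phi^\diamond(V)$-coordinates close have \emph{all} $\mathcal{X}'$-coordinates close, the remaining ones ranging in the uniformly bounded sets $\pi_W(\phi(\mathcal{X}))$, so hierarchical quasiconvexity forces them close in $\mathcal{X}'$. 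Then $(4)\Rightarrow(3)$: the new structure on $\phi(\mathcal{X})$ and the structure on $\textbf{F}_{\phi(S)}$ have canonically identified index sets and identical hyperbolic spaces, so $\mathfrak{g}_{\phi(\mathcal{X})}$ and $\mathfrak{g}_{\textbf{F}_{\phi(S)}}$ are given coordinatewise by gating in the $\mathcal{C}W$ (onto all of $\mathcal{C}W$ on the $\textbf{F}$-side, onto the quasiconvex $\pi_W(\phi(\mathcal{X}))$ on the other), whence both composites are coordinatewise coarsely the identity and the two maps are quasi-inverse quasi-isometries. Finally $(3)\Rightarrow(2)$: by $(3)$ we have $\textbf{F}_{\phi(S)}\simeq\phi(\mathcal{X})$, and $\phi$ composed with this identification is, in each coordinate, one of the uniform quasi-isometric embeddings $\phi^*_V$, so the distance formulas for $\mathcal{X}$ and for $\textbf{F}_{\phi(S)}$ match term by term and $\phi$ is a quasi-isometric embedding.
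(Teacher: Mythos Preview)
Your cycle and most of the individual steps are reasonable, but the implication you single out as the main obstacle, $(1)\Rightarrow(5)$, has a genuine gap that I do not see how to close along the lines you sketch.

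The problem is your claim that, once $a,b$ lie close to $P_W$, ``consistency pins down the $\phi^\diamond(S)$-coordinate of $a$ and $b$, and then bounded geodesic image and large links pin down all the $\phi^\diamond(V)$-coordinates.'' Membership in a neighbourhood of $P_W$ constrains $\pi_{U}(a)$ only for those $U$ with $U\pitchfork W$ or $W\sqsubsetneq U$; it says nothing about coordinates $U$ with $U\perp W$ or $U\sqsubseteq W$. For $V\sqsubsetneq S$ there is no reason the element $\phi^\diamond V$ cannot be orthogonal to $W$, or nested in $W$ (this is perfectly compatible with $W\not\sqsubseteq\phi^\diamond S$), and in those cases $\pi_{\phi^\diamond V}(a)$ and $\pi_{\phi^\diamond V}(b)$ are completely unconstrained by proximity to $P_W$. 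Bounded geodesic image and large links relate a domain to those nested in it, not the other way around; knowing $\pi_{\phi^\diamond S}(a)$ tells you nothing about $\pi_{\phi^\diamond V}(a)$ for $V\sqsubsetneq S$. So the pullbacks $x',y'\in\mathcal{X}$ need not be close, and no contradiction with $(1)$ follows. The orthogonal case $W\perp\phi^\diamond S$ that you flag as delicate is not a separate difficulty but an instance of the same one: every $\phi^\diamond V$ is then orthogonal to $W$, and the $\textbf{F}_W$-direction can be traversed while moving freely in the $\phi^\diamond V$-coordinates as well.

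The paper sidesteps this entirely. It never attempts $(1)\Rightarrow(5)$ directly; instead it proves $(1)\Leftrightarrow(2)$ (your automatic lower bound), then $(2)\Rightarrow(3)$ and $(3)\Rightarrow(5)$. The key intermediary is $\textbf{F}_{S'}$, for which one knows \emph{a priori} that $\pi_W(\textbf{F}_{S'})$ is bounded for $W\notin\mathfrak{S}'_{S'}$. A short realization argument shows that $\pi_{V'}(\textbf{F}_{S'})$ and $\pi_{V'}(\phi(\mathcal{X}))$ coarsely coincide for every $V'\sqsubseteq S'$; combined with $(2)$ this makes the two gate maps quasi-inverses, giving $(3)$. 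Then $(3)\Rightarrow(5)$ is immediate: $\phi(\mathcal{X})$ lies in a bounded neighbourhood of $\mathfrak{g}_{\phi(\mathcal{X})}(\textbf{F}_{S'})$, and the latter projects boundedly to every $W\notin\mathfrak{S}'_{S'}$ because $\textbf{F}_{S'}$ does. So the uniform bound in $(5)$ is inherited from $\textbf{F}_{S'}$ rather than extracted from a product-region argument. Your remaining implications $(5)\Rightarrow(4)\Rightarrow(3)\Rightarrow(2)$ are fine in outline, though note that the paper gets $(4)$ from $(2)$ in one line via quasi-isometry invariance of the HHS property, avoiding the axiom-by-axiom check you propose for $(5)\Rightarrow(4)$.
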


\subsection*{Organisation of the paper}
The paper is organized as follows. In Section~\ref{SectionHH} we fix the notation, and recall all the necessary definitions and facts concerning 
hierarchically hyperbolic spaces and groups. In Section~\ref{section3} we introduce the notions of \emph{intersection property}, of $\varepsilon$-\emph{support}, and of \emph{concreteness} of a hierarchically hyperbolic space 
(see Definition \ref{intersectionproperty_definition}, Definition \ref{supports}, and Definition \ref{concreteness}).
As already mentioned, we conjecture that all hierarchically hyperbolic spaces satisfy the intersection property. On the other hand, concreteness is a technical condition
that will play a pivotal role in the proofs of Theorem~\ref{winning_theorem} and of Theorem~\ref{mainT}.
In Proposition~\ref{concreteness_making} we prove that, without loss of generality, any hierarchically hyperbolic space with the intersection property can be assumed to be concrete (and this is why 
concreteness does not appear as an hypothesis in Theorem~\ref{mainT}). 

In Section \ref{Section4} we prove Theorem~\ref{thmB} of the Introduction, which is then used in the proofs of Theorem~\ref{winning_theorem} and 
Lemma~\ref{consequence_winning_theorem}. These results will be applied repeatedly in Section~\ref{Section5}, which is devoted to the proof of Theorem~\ref{mainT}. In Subsection \ref{trees_with_decorations}
we introduce a trick, which we call the \emph{decoration} of a tree of hierarchically hyperbolic spaces~$\mathcal{T}$, which is fundamental for our approach to prove Theorem~\ref{mainT}.

In Subsection \ref{nes_ort_tra}, Subsection \ref{unif_hyp}, and Subsection \ref{proj_hyp} we built the index set needed for Theorem~\ref{mainT}, and describe projections onto the hyperbolic 
spaces associated to this index set. Finally, in Subsection \ref{proof_mainT} we prove Theorem \ref{mainT}.
We conclude with Section \ref{last_section}, where we discuss the connections of our result to the combination theorem of Bestvina and Feighn (see Subsection \ref{BFsubsection}), and where 
the applications of Theorem~\ref{mainT} can be found, that is, where we prove Corollary \ref{mainC} and Theorem \ref{mainGraphProducts}.

\subsubsection*{Acknowledgements}
Bruno Robbio is, and Federico Berlai was, supported by the ERC grant PCG-336983, Basque Government Grant IT974-16, and Ministry of Economy, Industry and Competitiveness of the Spanish Government Grant MTM2017-86802-P. Federico Berlai is now supported by the Austrian Science Foundation FWF, grant no.~J4194. Bruno Robbio also acknowledges the support of the University of the Basque Country through the grant PIF17/241.
We are very grateful to Montserrat Casals Ruiz, Mark Hagen, and Ilya Kazachkov, for fruitful, inspiring, and continuous conversations over the topic of this work. The first-named author is
grateful to Alessandro Sisto for stimulating discussions over a previous version of the paper during the workshop \textquotedblleft Non-positively curved groups and spaces\textquotedblright\ that took place in Regensburg, September 2017. We are grateful to the anonymous reviewer for their many suggestions that helped improve the exposition of the paper.

%%%%%%%%%%%%%%%%%%%%%%%%%%%%%%%%%%%
%%%%%%%%%%%%%%%%%%%%%%%%%%%%%%%%%%%
%%%%%%%%%%%%%%%%%%%%%%%%%%%%%%%%%%%
\section{Preliminaries}\label{SectionHH}
In this section, after setting the notation, we will recall the notions of hierarchically hyperbolic spaces and groups, and of morphisms - that is 
\emph{hieromorphisms} - between these spaces, following \cite{BHS2}.

%%%%%%%%%%%%%%%%%%%%%%%%%%%%%%%%%%
%%%%%%%%%%%%%%%%%%%%%%%%%%%%%%%%%%
%%%%%%%%%%%%%%%%%%%%%%%%%%%%%%%%%%
\subsection*{Notation} 
For real-valued functions $A$ and $B$, we write $A\asymp_{(K,C)}B$ if there exist constants $C$ and $K$ such that 
\[K^{-1}B(x)-C\leq A(x)\leq KB(x)+C\]
for all $x$ in the domain of the functions.
With $A\asymp B$ we intend that there exist real numbers $C$ and $K$ such that $A\asymp_{(K,C)}B$.

Moreover, for real numbers $a,b$ we define
\begin{equation*}
\lbr a \rbr_{b} :=
\begin{cases} a,\quad\text{ if } a\geqslant b;\\
0,\quad\text{ if } a<b.
\end{cases}
\end{equation*}

\begin{definition}
A map $\phi\colon (\mathcal X,d_{\mathcal X})\to(\mathcal Y,d_{\mathcal Y})$ between metric spaces is \emph{coarsely lipschitz} if there exists constants $K\geq 1$ and $C\geq 0$ such that
\[d_{\mathcal Y}\bigl(f(x),f(y)\bigr)\leqslant Kd_{\mathcal X}(x,y)+C, \qquad \forall x,y\in \mathcal X.\]
In this case we call $\phi$ a $(K,C)$-coarsely lipschitz map.

The map $\phi$
is a \emph{quasi-isometric embedding} if there exist constants $K\geq 1$ and $C\geq 0$ so that
\[K^{-1}d_{\mathcal X}(x,y)-C\leq d_{\mathcal Y}(\phi(x),\phi(y))\leq Kd_{\mathcal X}(x,y) + C,\qquad \forall x,y\in\mathcal X.\]
In this case we call $\phi$ a $(K,C)$-quasi-isometric embedding. If, in addition, there exists a constant $M$ such that $\mathcal Y=\mathcal{N}_M(\phi(\mathcal X))$ then $\phi$ is a \emph{quasi-isometry}. If $X$ is a connected subset of $\mathbb{R}$ then we call $\phi$ a $(K,C)$-\emph{quasigeodesic}. 
\end{definition}

\begin{definition}[\bf{Quasigeodesic metric space}]
A metric space $(\mathcal{X},d_{\mathcal{X}})$ is $(K,C)$-\emph{quasigeodesic} if for every $x,y\in \mathcal{X}$ there exist a $(K,C)$-quasigeodesic $\gamma:[0,1]\to X$ such that $\gamma(0)=x$ and $\gamma(1)=y$. We call the metric space $K$-quasigeodesic if it is $(K,K)$-quasigeodesic, and we call the metric space \emph{quasigeodesic} if it is $(K,C)$-quasigeodesic for some $K\geq 1$ and $C\geq 0$. 
\end{definition}

Any geodesic metric space is a $(K,C)$-quasigeodesic metric space.

%%%%%%%%%%%%%%%%%%%%%%%%%%%%%%%%%%%%
%%%%%%%%%%%%%%%%%%%%%%%%%%%%%%%%%%%%
%%%%%%%%SPACES AND GROUPS%%%%%%%%%%%
%%%%%%%%%%%%%%%%%%%%%%%%%%%%%%%%%%%%
\subsection{Hierarchically hyperbolic spaces and groups}\label{subsection_definition_hhs}
We start this subsection with the definition of hierarchically hyperbolic spaces.

\begin{definition}[\bf{Hierarchically hyperbolic space}]\label{HHS_definition}
A $q$-quasigeodesic metric space $(\mathcal{X},d_\mathcal{X})$ is \emph{hierarchically hyperbolic} if there exist $\delta\geqslant 0$, an index set $\mathfrak{S}$, 
and a set $\{\mathcal{C}W\mid W\in\mathfrak{S}\}$ of $\delta$-hyperbolic spaces $(\mathcal{C}U,d_U)$, such that the following conditions are satisfied:
\begin{enumerate}
\item {\bf (Projections)}\label{axiom1} There is a set $\{\pi_W\colon\mathcal{X}\to 2^{\mathcal{C}W}\mid W\in\mathfrak{S}\}$ of projections that send points in $\mathcal{X}$ 
to sets of diameter bounded by some $\xi\geqslant 0$ in the hyperbolic spaces $\mathcal{C}W\in\mathfrak{S}$. Moreover, there exists $K$ so that all $W\in\mathfrak{S}$, the coarse map $\pi_{W}$ is $(K,K)$-coarsely lipschitz and $\pi_W(\mathcal{X})$\footnote{If $A\subseteq \mathcal{X}$, by $\pi_U(A)$ we mean $\bigcup_{a\in A}\pi_U(a)$.} is $K$-quasiconvex in $\mathcal{C}W$.
\item {\bf (Nesting)} The index set $\mathfrak{S}$ is equipped with a partial order $\sqsubseteq$ called \emph{nesting}, and either $\mathfrak{S}$ is empty or it contains a unique $\sqsubseteq$-maximal element. When $V\sqsubseteq W$, $V$ is nested into $W$. For each $W\in\mathfrak{S}$, $W\sqsubseteq W$, and with $\mathfrak{S}_W$
we denote the set of all $V\in\mathfrak{S}$ that are nested in $W$.
For all $V,W\in\mathfrak{S}$ such that $V\sqsubsetneq W$ there is a subset $\rho_W^V\subseteq \mathcal{C}W$ with diameter at most $\xi$, and a map 
$\rho_V^W\colon \mathcal{C}W\to 2^{\mathcal{C}V}$.
\item\label{A3} {\bf (Orthogonality)} The set $\mathfrak{S}$ has a symmetric and antireflexive relation $\perp$ called \emph{orthogonality}. Whenever $V\sqsubseteq W$ and 
$W\perp U$, then $V\perp U$ as well. For each $Z\in \mathfrak{S}$ and each $U\in\mathfrak{S}_Z$ for which $\{V\in\mathfrak{S}_Z\mid V\perp U\}\neq\emptyset$, 
there exists $\cont_\perp^ZU\in \mathfrak{S}_Z\setminus\{Z\}$ such that whenever $V\perp U$ and $V\sqsubseteq Z$, then $V\sqsubseteq \cont_\perp^ZU$.
\item {\bf (Transversality and Consistency)}\label{HHS_definition_4}
If $V,W\in\mathfrak{S}$ are not orthogonal and neither is nested into the other, then they are transverse: 
$V\pitchfork W$. There exists $\kappa_0\geqslant 0$ such that if $V\pitchfork W$, then there are sets $\rho_W^V\subseteq \mathcal{C}W$ and 
$\rho_V^W\subseteq \mathcal{C}V$, each of diameter at most $\xi$, satisfying
\begin{equation*}\label{consistent_transversal}
\min\bigl\{d_W(\pi_W(x),\rho_W^V),d_V(\pi_V(x),\rho_V^W)\bigr\}\leqslant \kappa_0,\qquad \forall\ x\in\mathcal{X}.
\end{equation*}
Moreover, for $V\sqsubseteq W$ and for all $x\in\mathcal{X}$ we have that
\begin{equation*}\label{consistent_nesting}
\min\bigl\{d_W(\pi_W(x),\rho_W^V),\diam_{\mathcal{C}V}(\pi_V(x)\cup\rho_V^W(\pi_W(x)))\bigr\}\leqslant \kappa_0.
\end{equation*}
In the case of $V\sqsubseteq W$, we have that $d_U(\rho^V_U,\rho^W_U)\leqslant \kappa_0$ whenever $U\in\mathfrak{S}$ is such that either 
$W\sqsubsetneq U$, or $W\pitchfork U$ and $U\not\perp V$.
\item {\bf (Finite complexity)} There is a natural number $n\geqslant0$, the complexity of $\mathcal{X}$ with respect to $\mathfrak{S}$, such that any set of 
pairwise $\sqsubseteq$-comparable elements of $\mathfrak{S}$ has cardinality at most $n$.
\item {\bf (Large links)} There exist $\lambda\geqslant 1$ and $E\geqslant \max\{\xi,\kappa_0\}$ such that, given any $W\in\mathfrak{S}$ and $x,x'\in\mathcal{X}$, 
there exists $\{T_i\}_{i=1,\dots,\lfloor N\rfloor}\subset \mathfrak{S}_W\setminus\{W\}$ such that for all $T\in\mathfrak{S}_W\setminus\{W\}$ either $T\in\mathfrak{S}_{T_i}$ for some $i$, or $d_T(\pi_T(x),\pi_T(x'))< E$, where $N=\lambda d_W(\pi_W(x),\pi_W(x'))+\lambda$. 
Moreover, $d_W(\pi_W(x),\rho_W^{T_i})\leqslant N$ for all $i$.
\item {\bf (Bounded geodesic image)} For all $W\in\mathfrak{S}$, all $V\in\mathfrak{S}_W \setminus \{W\}$ and all geodesics $\gamma$ of $\mathcal{C}W$, either $\diam_{\mathcal{C}V}(\rho_V^W(\gamma))\leqslant E$ or $\gamma\cap \mathcal{N}_E(\rho_W^V)\neq \emptyset$.
\item {\bf (Partial realization)} There is a constant $\alpha$ satisfying: let $\{V_j\}$ be a family of pairwise orthogonal elements of $\mathfrak{S}$, ad let
$p_j\in\pi_{V_j}(\mathcal{X})\subseteq \mathcal{C}V_j$. Then there exists $x\in\mathcal{X}$ such that
\begin{itemize}
\item $d_{V_j}\bigl(\pi_{V_j}(x),p_j\bigr)\leqslant \alpha$ for all $j$;
\item for each $j$ and each $V\in\mathfrak{S}$ with $V_j\sqsubseteq V$, we have $d_V\bigl(\pi_V(x),\rho_V^{V_j}\bigr)\leqslant \alpha$;
\item if $W\pitchfork V_j$ for some $j$, then $d_W\bigl(\pi_W(x),\rho_W^{V_j}\bigr)\leqslant \alpha$.
\end{itemize} 
\item {\bf (Uniqueness)} For each $\kappa\geqslant 0$ there exists $\theta_u=\theta_u(\kappa)$ such that if $x,y\in\mathcal{X}$ and $d(x,y)\geqslant \theta_u$, then there exists $V\in\mathfrak{S}$ such that $d_V(x,y)\geqslant\kappa$.
\end{enumerate}
The inequalities of the fourth axiom are called \emph{consistency inequalities}.
\end{definition}
Although most of the natural examples of hierarchically hyperbolic spaces are geodesic metric spaces, it is benefitial to work in the more general context of quasigeodesic metric spaces. This is because a quasiconvex subspace of a geodesic metric space might fail to be geodesic, but (hierarchically) quasiconvex subspaces of hierarchically hyperbolic spaces inherit naturally a hierarchically hyperbolic structure (compare Definition \ref{hierarchical_quasiconvexity} and \cite[Proposition 5.6]{BHS2}).

\begin{remark}\label{container_remark}
The element $\cont_\perp^ZU$ appearing in Axiom \eqref{A3} of Definition \ref{HHS_definition} is called the \emph{orthogonal container} 
(or the container of the orthogonal complement) of $U$ in $Z$.
If $Z$ is the $\sqsubseteq$-maximal element of $\mathfrak{S}$, then we might suppress it from the notation, write $\cont_\perp U$ and call it 
\emph{higher} container. If $Z$ is not the $\sqsubseteq$-maximal, then we will talk about \emph{lower} containers.

A hierarchically hyperbolic space has \emph{clean containers} if $U\perp \cont_\perp^Z U$ for all $U,Z\in\mathfrak{S}$, as originally defined in \cite[Definition 3.4]{ABD}.
\end{remark}

For a hierarchically hyperbolic space $(\mathcal{X},\mathfrak{S})$ and a subset $\mathfrak{U}\subseteq \mathfrak{S}$, we define 
\begin{equation}\label{equation_orthogonal_set}
\mathfrak{U}^{\perp}:=\{V\in\mathfrak{S}\mid V\perp U\text{ for every }U\in\mathfrak{U}\}.
\end{equation}

\begin{remark}\label{remark_coarsely_surjective}
By \cite[Remark 1.3]{BHS2}, the projections $\pi_U$ of a hierarchically hyperbolic space $(\mathcal{X},\mathfrak{S})$ can always be assumed to be uniformly coarsely surjective. Without loss of generality, we will always assume this.
\end{remark}

\begin{definition}[\bf Hierarchical quasiconvexity]\label{hierarchical_quasiconvexity}
Let $(\mathcal{X},\mathfrak{S})$ be a hierarchically hyperbolic space. A subspace $\mathcal{Y}\subseteq \mathcal{X}$ is $k$-\emph{hierarchically quasiconvex}, for 
some function $k\colon [0,+\infty)\to[0,+\infty)$, if:
\begin{enumerate}
\item for all $U\in\mathfrak{S}$ the image $\pi_U(\mathcal{Y})$ is a $k(0)$-quasiconvex subspace of the hyperbolic space $\mathcal{C}U$;
\item for all $\kappa\geqslant 0$, if $x\in\mathcal{X}$ is such that $d_U(\pi_U(x),\pi_U(\mathcal{Y}))\leqslant \kappa$ for all $U\in\mathfrak{S}$, then 
$d_{\mathcal{X}}(x,\mathcal{Y})\leqslant k(\kappa)$.
\end{enumerate}
\end{definition}
We now fix a notation for hyperbolic spaces. If $\mathcal{X}$ is a hyperbolic space and $\mathcal{Y}\subseteq\mathcal{X}$ a quasiconvex subspace, then we denote the closest-point projection by $p_{\mathcal{Y}}\colon \mathcal{X}\to \mathcal{Y}$. Whenever it is clear from the context to which quasiconvex subspace we are projecting, we might suppress it from the notation, and denote the closest-point projection by $p\colon \mathcal{X}\to \mathcal{Y}$.

\begin{remark}\label{remark:hierarchical_quasiconvexity}
It is extremely important to stress that, in \cite{BHS2}, a hieromorphism $\phi\colon (\mathcal{X},\mathfrak{S})\to(\mathcal{X}',\mathfrak{S}')$ is called \mbox{$k$-hierarchically} quasiconvex if $\phi(\mathcal{X})$ is a $k$-hierarchically quasiconvex subspace of $\mathcal{X}'$ - in the sense of Definition~\ref{hierarchical_quasiconvexity} - \emph{and} $\phi$ is already a quasi-isometric embedding (compare \cite[Definition 8.1]{BHS2}).

In this work, by $k$-hierarchically quasiconvex hieromorphism we just mean a hieromorphism whose image is a $k$-hierarchically quasiconvex
subspace. 

In practice, this will not produce diverging notions of hierarchical quasiconvexity: in this paper, whenever we consider a hierarchically quasiconvex hieromorphism $\phi$, this map $\phi$ is always also assumed to be coarsely lipschitz, and full. By what we will prove in Theorem \ref{thmB}, these hypotheses imply that $\phi$ is a quasi-isometric embedding. Therefore, a $k$-hierarchically quasiconvex hieromorphism in the sense of \cite{BHS2} is equivalent to a $k$-hierarchically quasiconvex full, coarsely lipschitz hieromorphism in the sense of this paper.

We elected to do this because, in previous ArXiv-versions of \cite{BHS2}, the assumption for $\phi$ to be a quasi-isometric embedding was not included in the notion of hierarchically quasiconvex hieromorphism, and because, doing so, we remember the reader that the hieromorphisms we consider are always assumed to be coarsely lipschitz (and equivalently quasi-isometric embeddings), something which is now hidden in \cite{BHS2}.
\end{remark}

\begin{remark}\label{def_gatemaps_nontree}
As for quasiconvexity in the hyperbolic setting, there exist coarse projections onto hierarchically quasiconvex subspaces. If $\mathcal{Y}\subseteq 
\mathcal{X}$ is a hierarchically quasiconvex subspace, then there exists a coarsely lipschitz map $\mathfrak{g}_{\mathcal{Y}}\colon \mathcal{X}\to 
\mathcal{Y}$, called \emph{gate map} \cite[Section 5]{BHS2}, with the following property: $\mathfrak{g}_{\mathcal{Y}}(x)\in\mathcal{Y}$ is such that for all 
$V\in\mathfrak{S}$ the set $\pi_V\bigl(\mathfrak{g}_{\mathcal{Y}}(x)\bigr)$ coarsely coincides (with uniform constants) with the projection of the element 
$\pi_V(x)\in\mathcal{C}V$ to the quasiconvex subspace $\pi_V(\mathcal{Y})$ of the hyperbolic space $\mathcal{C}V$.
\end{remark}

Important examples of hierarchically quasiconvex subspaces are \emph{standard product regions} \cite[Section 5]{BHS2}.
To define them, we need the notion of \emph{consistent tuple} \cite[Definition~1.16]{BHS2}.
\begin{definition}[{\bf $\kappa$-consistent tuple}]\label{product_region}
Fix $\kappa\geqslant 0$, and consider a tuple $\vec{b}=(b_U)_{U\in\mathfrak{S}}\in\prod_{U\in\mathfrak{S}}2^{\mathcal{C}U}$ such that for each coordinate 
$U\in\mathfrak{S}$ the coordinate $b_U$ is a subset of $\mathcal{C}U$ with diameter bounded by $\kappa$. The tuple $\vec{b}$ is $\kappa$-\emph{consistent} if 
whenever $V\pitchfork W$
\[\min\bigl\{d_W(b_W,\rho^V_W),d_V(b_V,\rho^W_V)\bigr\}\leqslant\kappa,\]
and whenever $V\sqsubseteq W$
\[\min\bigl\{d_W(b_W,\rho_W^V),\diam_{\mathcal{C}W}(b_V\cup \rho^W_V(b_W))\bigr\}\leqslant\kappa.\]
These inequalities generalize the consistency inequalities of the definition of hierarchically hyperbolic space.
\end{definition}
Let $(\mathcal{X},\mathfrak{S})$ be a hierarchically hyperbolic space. For a given $U\in\mathfrak{S}$, let
\[\mathfrak{S}_U\vcentcolon=\{V\in\mathfrak{S}\mid V\sqsubseteq U\}.\]
Given $\kappa\geqslant\kappa_0$, define ${\bf F}_U$ to be the set of $\kappa$-consistent tuples in $\prod_{V\in\mathfrak{S}_U}2^{\mathcal{C}V}$, and 
${\bf E}_U$ to be the set of $\kappa$-consistent tuples in $\prod_{V\in\mathfrak{S}_U^\perp\setminus\{A\}}2^{\mathcal{C}U}$, where
\[\mathfrak{S}_U^\perp=\{V\in\mathfrak{S}\mid V\sqsubseteq U\}\cup\{A\}\]
and $A$ is a $\sqsubseteq$-minimal element such that $V\sqsubseteq A$ for all $V\perp A$.

These sets ${\bf F}_U$ and ${\bf E}_U$ can be canonically identified as subspaces of $\mathcal{X}$. Indeed, by \cite[Construction 5.10]{BHS2} there are coarsely 
well-defined maps $\phi^\sqsubseteq\colon {\bf F}_U\to\mathcal{X}$ and $\phi^\perp\colon {\bf E}\to\mathcal{X}$ with hierarchically quasiconvex image, 
and by an abuse of notation we set that ${\bf F}_U=\im \phi^\sqsubseteq$ and ${\bf E}_U=\im\phi^\perp$.

Then, if ${\bf F}_U$ and ${\bf E}_U$ are endowed with the subspace metric, the spaces $({\bf F}_U,\mathfrak{S}_U)$ and $({\bf E}_U,\mathfrak{S}_U^\perp)$ are
hierarchically hyperbolic.
The maps $\phi^\sqsubseteq$ and $\phi^\perp$ extend to $\phi_U\colon {\bf F}_U\times {\bf E}_U\to \mathcal{X}$. Call ${\bf P}_U=\im\phi_U$ the \emph{standard product
region} in $\mathcal{X}$ associated to $U$ (compare \cite[Definition 5.14]{BHS2}). This space is coarsely equal to ${\bf F}_U\times {\bf E}_U$.
We direct the interested reader to \cite[Section 5]{BHS2} for more information on this.

%%%%%%%%%%%%%%%%%%%%%%%%%%%%%%%%%%%%%%%%%%%%%%
%%%%%%%%%%%%%%%%%%%%%%%%%%%%%%%%%%%%%%%%%%%%%%
%%%%%%%%%%%%%  MORPHISMS  %%%%%%%%%%%%%%%%%%%%
%%%%%%%%%%%%%%%%%%%%%%%%%%%%%%%%%%%%%%%%%%%%%%
\subsection{Morphisms between hierarchically hyperbolic spaces, and groups}
A hieromorphism is a morphism between hierarchically hyperbolic spaces that preserves the underlying structure. This statement is made precise by the following 
definition.

\begin{definition}[\bf Hieromorphism]\label{HHS_hieromorphism}
Let $(\mathcal{X},\mathfrak{S})$ and $(\mathcal{X}',\mathfrak{S}')$ be hierarchically hyperbolic spaces. A \emph{hieromorphism} is a triple 
$\phi=\bigl(\phi,\phi^\lozenge,\{\phi^*_U\}_{U\in\mathfrak{S}}\bigr)$, where $\phi\colon \mathcal{X}\to\mathcal{X}'$ is a map, 
$\phi^\lozenge\colon \mathfrak{S}\to\mathfrak{S}'$ is an injective map that preserves nesting, transversality and orthogonality,
and, for every $U\in\mathfrak{S}$, the maps $\phi^*_U\colon \mathcal{C}U\to\mathcal{C}\phi^\lozenge(U)$ are quasi-isometric embeddings with uniform constants. 

Moreover, the following two diagrams coarsely commute (again with uniform constants), for all non-orthogonal $U,V\in\mathfrak{S}$:
\begin{equation}\label{coarsely.commuting.diagrams}
\xymatrix{
\mathcal{X}\ar[r]^{\phi}\ar[d]_{\pi_U} & \mathcal{X}'\ar[d]^{\pi_{\phi^\lozenge(U)}}\\
\mathcal{C}U\ar[r]_{\phi^*_U}&\mathcal{C}\phi^\lozenge(U)
}\qquad\qquad\qquad 
\xymatrix{
\mathcal{C}U\ar[rr]^{\phi^*_U}\ar[d]_{\rho^U_V} && \mathcal{C}\phi^\lozenge(U)\ar[d]^{\rho^{\phi^\lozenge(U)}_{\phi^\lozenge(V)}}\\
\mathcal{C}V\ar[rr]_{\phi^*_V}&&\mathcal{C}\phi^\lozenge(V)
}
\end{equation}
\end{definition}
As hieromorphisms $\phi\colon (\mathcal{X},\mathfrak{S})\to(\mathcal{X}',\mathfrak{S}')$ between hierarchically hyperbolic spaces induce injective maps 
$\phi^\lozenge\colon \mathfrak{S}\to \mathfrak{S}'$ at the level of indexing sets, with a slight abuse of notation one can think of $\mathfrak{S}$ as a subset 
of $\mathfrak{S}'$.

\medskip
We will need the following strengthening of the notion of hieromorphism.
\begin{definition}\label{fullness_definition}
A hieromorphism $\phi\colon(\mathcal{X},\mathfrak{S})\to(\mathcal{X}',\mathfrak{S}')$ is \emph{full} if: 
\begin{enumerate} 
\item there exists $\xi$ such that the maps $\phi^*_U\colon\mathcal{C}U\to\mathcal{C}\phi^\lozenge(U)$ are $(\xi,\xi)$-quasi-isometries, for all $U\in\mathfrak{S}$;
\item if $S$ denotes the $\sqsubseteq$-maximal element of $\mathfrak{S}$, then for all $U'\in\mathfrak{S}'$ nested into $\phi^\lozenge(S)$ there exists 
$U\in\mathfrak{S}$ such that $U'=\phi^\lozenge(U)$.
\end{enumerate}
\end{definition}
Such hieromorphism is called full because its image coincides with (and it is not only contained in) the sublattice of $\mathfrak{S}'$ consisting of all $U'$ nested 
into $\phi^\lozenge(S)$.

Finally, we say that a hieromorphism $\phi\colon(\mathcal{X},\mathfrak{S})\to(\mathcal{X}',\mathfrak{S}')$ is $k$-\emph{hierarchically quasiconvex} 
if $\phi(\mathcal{X})$ is a $k$-hierarchically quasiconvex subspace of $\mathcal{X}'$, for some function $k\colon [0,+\infty)\to[0,+\infty)$.

\smallskip 
An \emph{automorphism} of a hierarchically hyperbolic space $(\mathcal{X},\mathfrak{S})$ is a hieromorphism $\phi\colon (\mathcal{X},\mathfrak{S})
\to(\mathcal{X},\mathfrak{S})$ such that $\phi^\lozenge$ is bijective and each $\phi^*_U$ is an isometry. 
The group of automorphisms of 
$(\mathcal{X},\mathfrak{S})$ is denoted by $\Aut(\mathfrak{S})$.

\begin{definition}[\bf{Hierarchically hyperbolic group}]\label{def:hhg}
A finitely generated group $G$ is \emph{hierarchically hyperbolic group} if there exists an action 
$G\to \Aut(\mathfrak{S})$ on a hierarchically hyperbolic space $(\mathcal{X},\mathfrak{S})$ such that the action of $G$ on $\mathcal{X}$ is
metrically proper, cobounded, and such that the induced action on $\mathfrak{S}$ is cofinite. 
This, in particular, means that the metric space $(G,d)$, where $d$ is the
word metric associated to any finite generating set of $G$, is hierarchically hyperbolic with respect to $\mathfrak{S}$.

For each $g\in G$, we denote its image in $\Aut(\mathfrak{S})$ by $\bigl(f_g,f_g^\lozenge,\{f^*_{g,U}\}_{U\in\mathfrak{S}}\bigr)$.
\end{definition}

Let $(G,\mathfrak{S})$ and $(G',\mathfrak{S}')$ be hierarchically hyperbolic groups. A hieromorphism $\phi\colon (G,\mathfrak{S})\to 
(G',\mathfrak{S}')$ is a \emph{homomorphism of hierarchically hyperbolic groups} if it is also a group-homomorphism $\phi\colon G\to G'$ that is $\phi$-equivariant, that is, for all $g\in G$ and $U\in\mathfrak{S}$ we have that 
\[\phi^\lozenge\bigl(f^\lozenge_g(U)\bigr)=f^\lozenge_{\phi(g)}(\phi^\lozenge(U))\]
and the following diagram uniformly coarsely commutes:
\[\xymatrix{
\mathcal{C}U\ar[rr]^{\phi^*_U}\ar[d]_{f_{g,U}^*} && \mathcal{C}\phi^\lozenge(U)\ar[d]^{f_{\phi(g),\phi^\lozenge(U)}^*}\\
\mathcal{C}f_g^\lozenge(U)\ar[rrd]_{\phi^*_{f_g^\lozenge(U)}}&&\mathcal{C}f_{\phi(g)}^\lozenge\bigl(\phi^\lozenge(U)\bigr)\ar@{}[d]|*=0[@]{=}\\
&&\mathcal{C}\phi^\lozenge\bigl(f^\lozenge_g(U)\bigr)
}\]

As a particular example, we now describe the hierarchically hyperbolic structure of a direct product of two hierarchically hyperbolic groups.
The hierarchical structure of the product of two hierarchically hyperbolic \emph{spaces} would be completely similar.

\begin{example}[{\bf Direct product of hierarchically hyperbolic groups}]\label{example_directproduct}
Let $(G_u,\mathfrak{S}_u)$ and $(G_w,\mathfrak{S}_w)$ be hierarchically hyperbolic groups. The direct product $G=G_u\times G_w$ is a hierarchically hyperbolic group \cite[Proposition~8.25]{BHS2}, and its hierarchical structure is described as follows.

The index set $\mathfrak{S}$ for $G$ is defined to be the disjoint union of $\mathfrak{S}_u$ with $\mathfrak{S}_w$, inheriting the associated hyperbolic spaces, along with the following elements whose associated hyperbolic spaces are defined to be points. For each $U\in\mathfrak{S}_u$ add an element $V_U$, into which every element of $\mathfrak{S}_u$ orthogonal to $U$, and every element of $\mathfrak{S}_w$, is nested. Analogously, for every $W\in\mathfrak{S}_w$ include an element $V_W$ into which every element of $\mathfrak{S}_w$ orthogonal to $W$, and every element of $\mathfrak{S}_u$, is nested. 
Finally, include a $\sqsubseteq$-maximal element $S$ into which each of the previous elements is nested. 

Nesting, orthogonality, and transversality agree with the ones of $(G_u,\mathfrak{S}_u)$ and $(G_w,\mathfrak{S}_w)$ on the subsets $\mathfrak{S}_u$ and $\mathfrak{S}_w$ of $\mathfrak{S}$, and any element of $\mathfrak{S}_u$ is orthogonal to any element of $\mathfrak{S}_w$. 
For any $A,B\in \mathfrak{S}_u\sqcup \mathfrak{S}_w$ we impose that
\begin{equation*}
\begin{cases}
A\sqsubsetneq V_B,&\quad\text{whenever }A\perp B;\\
A\perp V_B,&\quad\text{whenever }A\sqsubseteq B;\\
A\ \pitchfork\ V_B,&\quad\text{otherwise};
\end{cases}
\qquad\qquad\qquad
\begin{cases}
V_B\sqsubsetneq V_A,&\quad\text{whenever }A\sqsubsetneq B;\\
V_A\ \pitchfork\ V_B,&\quad\text{otherwise}.
\end{cases}
\end{equation*}
In particular, $A\perp V_A$ for any element $A\in\mathfrak{S}_u\sqcup \mathfrak{S}_w$.

Projections to the hyperbolic spaces are either defined to be trivial, for elements with trivial hyperbolic space, or defined as the compositions $\pi_U\circ p_u$  (respectively $\pi_W\circ p_w$) for every $U\in\mathfrak{S}_u$ (respectively for every $W\in\mathfrak{S}_w$), where $p_u\colon G\to G_u$ is the canonical projection on the first direct factor, and $\pi_U\colon G_u\to 2^{\mathcal{C}U}$ is the projection given in $(G_u,\mathfrak{S}_u)$. 

It follows that for every $U\in\mathfrak{S}_u$ the set $\pi_U(G_w)$ is uniformly bounded, and analogously for every $W\in\mathfrak{S}_w$ the set  $\pi_W(G_u)$ is uniformly bounded.
Moreover, the inclusions of the subgroups $G_u$ and $G_w$ into $G$ are full, hierarchically quasiconvex hieromorphisms that induce isometries at the level of hyperbolic spaces.

\end{example}

%%%%%%%%%%%%%%%%%%%%%%%%%%%%%%%%%%%%%%%%%%%%%
%%%%%%%%%%%%%%%%%%%%%%%%%%%%%%%%%%%%%%%%%%%%%
%%%%%%%%%%  TREES  %%%%%%%%%%%%%%%%%%%%%%%%%%
%%%%%%%%%%%%%%%%%%%%%%%%%%%%%%%%%%%%%%%%%%%%%
\subsection{Trees of hierarchically hyperbolic spaces}\label{section_trees}
\begin{definition}\label{equiv_class_def}
Let $T=(V,E)$ be a tree.
A \emph{tree of hierarchically hyperbolic spaces} is a quadruple $\mathcal{T}=\bigl(T,\{\mathcal{X}_v\}_{v\in V},\{\mathcal{X}_e\}_{e\in E},\{\phi_{e_\pm}\colon
\mathcal{X}_e\to\mathcal{X}_{e_\pm}\}_{ e\in E}\bigr)$ such that
\begin{enumerate}
\item $\{\mathcal{X}_v\}$ and $\{\mathcal{X}_e\}$ are families of uniformly hierarchically hyperbolic spaces with index sets $\{\mathfrak{S}_v\}$ and $\{\mathfrak{S}_e\}$ respectively;
\item all $\phi_{e_+}\colon (\mathcal{X}_e,\mathfrak{S}_e)\to (\mathcal{X}_{e_+},\mathfrak{S}_{e_+})$ and $\phi_{e_-}\colon (\mathcal{X}_e,\mathfrak{S}_e)\to (\mathcal{X}_{e_-},\mathfrak{S}_{e_-})$ are hieromorphisms with all constants bounded 
uniformly by some $\xi\geqslant 0$. 
\end{enumerate}
\end{definition}
To a tree of hierarchically hyperbolic spaces $\mathcal{T}$ we can associate the metric space
$\mathcal{X}(\mathcal{T})\vcentcolon=\bigsqcup_{v\in V}(\mathcal{X}_v,d)$ in the following way. If $x\in\mathcal{X}_e$, then add an edge between $\phi_{e_-}(x)$ and 
$\phi_{e_+}(x)$. Given $x,x'\in \mathcal{X}$ in the same vertex space $\mathcal{X}_v$, then define $d'(x,x')$ to be $d_{\mathcal{X}_v}(x,x')$. Given 
$x,x'\in \mathcal{X}$ joined by an edge, define $d'(x,x')=1$. If $x_0,x_1,\dots,x_m\in\mathcal{X}$ is a sequence with consecutive points either joined by 
an edge or in a common vertex space, then define 
\[d'(x_0,x_m)=\sum_{i=1}^{m}d'(x_{i-1},x_{i}).\]
Finally, given $x,x'\in\mathcal{X}$, define
\[d(x,x')=\inf\{d'(x,x')\mid \ x=x_0,\dots,x_m=x'\quad \text{a sequence}\}.\]
Following \cite[Section 8]{BHS2}, for each edge $e$ and each $W_{e_-}\in\mathfrak{S}_{e_-}$ and
$W_{e_+}\in\mathfrak{S}_{e_+}$, we write $W_{e_-}\sim_d W_{e_+}$ if there exists $W_e\in\mathfrak{S}_e$ such that $\phi_{e_-}^\lozenge(W_e)=W_{e_-}$ and 
$\phi_{e_+}^\lozenge(W_e)=W_{e_+}$. Then, the transitive closure of $\sim_d$ defines a equivalence relation in $\bigsqcup_{v}\mathfrak{S}_v$, denoted by $\sim$.

The \emph{support} of an $\sim$-equivalence class $[V]$ is
\begin{equation*}
T_{[V]}:=\bigl\{ v\in T\mid \text{there exists } V_v\in\mathfrak{S}_v \text{ such that }[V]=[V_v]\bigr\}.
\end{equation*}
By definition of the equivalence $\sim$, supports are trees.

%%%%%%%%%%%%%%%%%%%%%%%%%%%%%%%%%%
%%%%%%%%%%%%%%%%%%%%%%%%%%%%%%%%%%

\begin{definition}[{\bf Gate maps in trees of hierarchically hyperbolic spaces}]
Let $\mathcal{T}$ be a tree of hierarchically hyperbolic spaces and assume that the image of the hieromorphism $\phi_v:(\mathcal{X}_e,\mathfrak{S}_e)\to(\mathcal{X}_{v},\mathfrak{S}_v)$ is hierarchically quasiconvex (recall Definition \ref{hierarchical_quasiconvexity}) for every $e\in E$ and $v\in V$ connected to $e$.
The \emph{gate maps} $\mathfrak{g}_v\colon\mathcal{X}\to\mathcal{X}_v$ is defined as follows. 
Let $x\in\mathcal{X}$ be an arbitrary element. If $x\in\mathcal{X}_v$, then define $\mathfrak{g}_v(x):=x$. If $x\notin 
\mathcal{X}_v$, then we define $\mathfrak{g}_v(x)$ inductively. Let $w$ be the vertex such that $x\in \mathcal{X}_w$, suppose that 
$d_T(v,w)=n\geq1$, and that $\mathfrak{g}_v(-)$ is defined on all vertex spaces that are at distance strictly less than $n$ from $v$. 
Let $\gamma$ be the geodesic in $T$ connecting $w$ to $v$, let $e$ be its first edge, with $e^-=v$. It follows that $d_T(e^+,v)=n-1$.
Then
\[\mathfrak{g}_v(x)\vcentcolon=   \mathfrak{g}_v\Bigl( \phi_{e^+}\circ\bar{\phi}_{e^-}\bigl(\mathfrak{g}_{\phi_{e^-}(\mathcal{X}_e)}(x)\bigr)\Bigr),\]
where $\bar{\phi}_{e^-}\colon \mathcal{X}_{e^-}\to\mathcal{X}_e$ is a quasi-inverse of $\phi_{e^-}\colon \mathcal{X}_e\to\mathcal{X}_{e^-}$.
\end{definition}

\begin{definition}[{\bf Comparison maps}]\label{comparison_maps}
Let $\mathcal{T}$ be a tree of hierarchically hyperbolic groups, $[V]$ be an equivalence class, and let $u\neq v$ be two vertices in the 
support of $[V]$.
The \emph{comparison map} $\mathfrak{c}\colon \mathcal{C}V_u\to \mathcal{C}V_v$ between the hyperbolic spaces associated to the representatives
$V_u$ and $V_v$ of the class $[V]$ is defined as follows.

Assume first that $u$ and $v$ are vertices connected by a single edge $e$ such that $u=e^-$ and $v=e^+$. Then, the comparison map is defined as
\[\mathfrak{c}\vcentcolon=\phi_{e^+}^{\ast}\circ\overline{\phi^{\ast}_{e^-}}:\mathcal{C}V_u\to\mathcal{C}V_v.\] 
Where the maps $\phi^{\ast}_{e^+}:\mathcal{C}V_e\to\mathcal{C}V_{e^+}$ and $\phi^{\ast}_{e^-}:\mathcal{C}V_e\to\mathcal{C}V_{e^-}$ are the quasi-isometries induced by the hieromorphisms $\phi_{e^+}:\mathcal{X}_e\to\mathcal{X}_{e^+}$ and $\phi_{e^-}:\mathcal{X}_e\to\mathcal{X}_{e^-}$ respectively and $\overline{\phi^{\ast}_{e^-}}$ denotes a quasi inverse of $\phi^{\ast}_{e^-}$.

For the general case, let $\gamma$ be the geodesic in $T$ connecting $u$ to $v$, let $u_i$ be the $i$-th vertex of this geodesic (so that $u=u_0$ and $v=u_n$ for some natural number $n>0$), and let $e^i$ be the edge connecting $u_{i-1}$ to $u_{i}$.
For all $i=1,\dots,n$ consider the hieromorphisms $\phi_{e_i^-}\colon\mathcal{X}_{e_i}\to\mathcal{X}_{u_{i-1}}$ and 
$\phi_{e_i^+}\colon\mathcal{X}_{e_i}\to\mathcal{X}_{u_{i}}$, and the induced quasi-isometries $\phi_{e_i^-}^*\colon 
\mathcal{C}V_{e_i}\to\mathcal{C}V_{u_{i-1}}$ and $\phi_{e_i^+}^*\colon \mathcal{C}V_{e_i}\to\mathcal{C}V_{u_{i}}$ from the hyperbolic space
associated to the representative of $[V]$ in $\mathfrak{S}_{e_i}$ to the hyperbolic spaces associated to $V_{u_{i-1} }$ and $V_{u_i} $ 
respectively.
Finally, let $\overline{\phi_{e_i^-}^*}\colon \mathcal{C}V_{u_{i-1}}\to \mathcal{C}V_{e_i}$ be a quasi-inverse of the map $\phi_{e_i^-}^*$, for
all $i$.

Then, the comparison map $\mathfrak{c}$ is defined to be the composition of the previous quasi isometries:
\begin{equation}\label{definition_comparison}
\mathfrak{c}\vcentcolon=\phi_{e_n^-}^*\circ\overline{\phi_{e_n^-}^*}\dots \circ\phi_{e_1^-}^*\circ\overline{\phi_{e_1^-}^*}\colon \mathcal{C}V_{u_0}
\to\mathcal{C}V_{u_n}.
\end{equation}
\end{definition}

\begin{remark}\label{remark_comparison_maps}It is a fact \cite[Lemma 8.18]{BHS2} that if the cardinality of supports is uniformly bounded, then comparison maps are 
$(\xi,\xi)$-quasi-isometries, for some uniform (not depending on the two vertices $u$ and $v$) constant $\xi\geqslant1$.
\end{remark}

\begin{remark}\label{we_use_this_graphproduct}
If the edge hieromorphisms $\{\phi_{e^{\pm}}\}_{e\in E}$ of the tree of hierarchically hyperbolic spaces $\mathcal{T}$ induce isometries at the level of hyperbolic spaces, then we can choose inverse isometries for the maps $\phi_{e^{\pm}}^*$. Therefore, from Equation~\eqref{definition_comparison} it follows that comparison maps in this particular case are isometries.
\end{remark}

\smallskip 
We record now the following lemma, which is implicitly used in \cite{BHS2}. Its proof follows by applying repeatedly the (coarsely commutative) 
second diagram of Equation \eqref{coarsely.commuting.diagrams}.
\begin{lemma}\label{lemma.good.definition}
Let $\mathcal{T}$ be a tree of hierarchically hyperbolic spaces, and let $[U], [V]$ be two equivalence classes such that either 
$[U]\pitchfork [V]$ or $[U]\sqsubseteq [V]$.
If comparison maps are uniform quasi isometries, then for all vertices $u,v\in T_{[U]}\cap T_{[V]}$ the set $\mathfrak{c}(\rho^{U_u}_{V_u})$ is coarsely
equal to $\rho^{U_v}_{V_v}$, where $\mathfrak{c}  \colon\mathcal{C}V_u\to\mathcal{C}V_v$ is the comparison map.
\end{lemma}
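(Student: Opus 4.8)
The plan is to reduce the general statement to the case of a single edge, and then handle that case by unwinding the definitions of the comparison map and of the $\rho$-maps, using the coarse commutativity of the second diagram in Equation~\eqref{coarsely.commuting.diagrams}. First I would observe that it suffices to prove the claim when $u$ and $v$ are joined by a single edge $e$ of $T$, with $u = e^-$ and $v = e^+$: indeed, if $\gamma$ is the geodesic in $T$ from $u$ to $v$ with consecutive vertices $u = u_0, u_1, \dots, u_n = v$, then by definition the comparison map $\mathfrak{c}\colon \mathcal{C}V_u \to \mathcal{C}V_v$ factors (up to uniformly bounded error, using Remark~\ref{remark_comparison_maps} to control the quasi-inverses) as a composition of the single-edge comparison maps $\mathfrak{c}_i \colon \mathcal{C}V_{u_{i-1}} \to \mathcal{C}V_{u_i}$. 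Since each $u_i$ lies on the geodesic between $u$ and $v$, and supports are trees, each $u_i$ lies in $T_{[U]} \cap T_{[V]}$ as well; so an induction on $n$, composing the single-edge statements and absorbing the uniformly bounded errors into the coarse-equality constant, yields the general case. The finite-complexity axiom (or rather the uniform bound on the cardinality of supports assumed in Lemma~\ref{lemma.good.definition} via the hypothesis that comparison maps are uniform quasi-isometries, cf.\ Remark~\ref{remark_comparison_maps}) guarantees that $n$ is uniformly bounded, so the accumulated error stays uniform.

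For the single-edge case, write $U_e, V_e \in \mathfrak{S}_e$ for the representatives of $[U], [V]$ in the edge space, so that $\phi_{e^-}^\lozenge(U_e) = U_u$, $\phi_{e^+}^\lozenge(U_e) = U_v$, and similarly for $V$. The hypothesis that $[U] \pitchfork [V]$ or $[U] \sqsubseteq [V]$ means the representatives in $\mathfrak{S}_e$ satisfy the corresponding relation (since $\phi_{e^\pm}^\lozenge$ preserves nesting, transversality and orthogonality and is injective), so the set $\rho^{U_e}_{V_e} \subseteq \mathcal{C}V_e$ is defined. The second diagram of Equation~\eqref{coarsely.commuting.diagrams} applied to the hieromorphisms $\phi_{e^-}$ and $\phi_{e^+}$ gives that $\phi_{e^-}^*(\rho^{U_e}_{V_e})$ coarsely coincides with $\rho^{U_u}_{V_u}$ and $\phi_{e^+}^*(\rho^{U_e}_{V_e})$ coarsely coincides with $\rho^{U_v}_{V_v}$, with uniform constants. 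Now apply the quasi-inverse $\overline{\phi_{e^-}^*}$ to the first coarse equality: since $\phi_{e^-}^*$ is a uniform quasi-isometry and $\overline{\phi_{e^-}^*} \circ \phi_{e^-}^*$ is coarsely the identity, we get that $\overline{\phi_{e^-}^*}(\rho^{U_u}_{V_u})$ coarsely coincides with $\rho^{U_e}_{V_e}$ (here I use that applying a coarsely lipschitz map to a coarse equality preserves it, and that $\rho^{U_e}_{V_e}$ has diameter at most $\xi$). Composing with $\phi_{e^+}^*$ and using the second coarse equality,
\[
\mathfrak{c}(\rho^{U_u}_{V_u}) = \phi_{e^+}^* \circ \overline{\phi_{e^-}^*}(\rho^{U_u}_{V_u}) \overset{\text{c.e.}}{=} \phi_{e^+}^*(\rho^{U_e}_{V_e}) \overset{\text{c.e.}}{=} \rho^{U_v}_{V_v},
\]
where $\overset{\text{c.e.}}{=}$ denotes coarse equality with uniform constants, as required.

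The main obstacle is bookkeeping rather than conceptual: one must be careful that all the coarse-equality and coarse-commutativity constants are genuinely uniform (independent of the vertices $u, v$ and the class), which relies on the standing hypothesis that the tree $\mathcal{T}$ has all hieromorphism constants bounded by a uniform $\xi$ (Definition~\ref{equiv_class_def}) and on the assumption that comparison maps are uniform quasi-isometries, so that in the multi-edge induction the number of compositions is uniformly bounded (Remark~\ref{remark_comparison_maps}). A secondary point requiring a little care is the well-definedness issue flagged by the name of Lemma~\ref{lemma.good.definition}: a priori the representative $V_e$ realizing $W_{e^-} \sim_d W_{e^+}$ need not be unique, but any two choices have $\phi_{e^\pm}^\lozenge$-images coarsely realizing the same $\rho$-sets, so the argument above is insensitive to the choice; alternatively one simply fixes representatives along $\gamma$ once and for all at the start.
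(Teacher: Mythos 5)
Your single-edge argument is exactly right, and the reduction to a single edge is indeed the decomposition the paper has in mind: the paper's entire justification of this lemma is the one-line remark that "its proof follows by applying repeatedly the (coarsely commutative) second diagram of Equation~\eqref{coarsely.commuting.diagrams}", which is precisely what you implement. Two small side-remarks: the representative $V_e$ in $\mathfrak{S}_e$ is in fact unique (since $\phi_{e^\pm}^\lozenge$ is injective), so the well-definedness caveat at the end is moot; and $\rho^{U_u}_{V_u}$ should be read as a constant map so that the second diagram applies to it in both the transverse and properly-nested cases.

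The genuine problem is your justification of uniformity in the multi-edge step. You write that ``the uniform bound on the cardinality of supports [is] assumed \dots via the hypothesis that comparison maps are uniform quasi-isometries, cf.\ Remark~\ref{remark_comparison_maps}.'' That reverses the implication: Remark~\ref{remark_comparison_maps} (citing \cite[Lemma 8.18]{BHS2}) asserts only that uniformly bounded supports \emph{imply} uniform-QI comparison maps, not the converse. The lemma's hypothesis is solely that comparison maps are $(\xi,\xi)$-quasi-isometries for a uniform $\xi$; support trees may be arbitrarily large — indeed allowing unbounded supports is one of the main points on which this paper departs from \cite{BHS2}. Without a uniform bound on $n = d_T(u,v)$, the straightforward induction you set up accumulates error: if $D_i$ denotes the Hausdorff distance between $\mathfrak{c}^{[0,i]}(\rho^{U_u}_{V_u})$ and $\rho^{U_{u_i}}_{V_{u_i}}$, the single-edge step gives $D_i \le \xi_0 D_{i-1} + \xi_0 + \epsilon_0$, where $\xi_0$ bounds the single-edge comparison maps and $\epsilon_0$ bounds the single-edge coarse commutativity. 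This is exponential in $n$ if $\xi_0>1$ and still linear in $n$ even when the single-edge maps are isometries. The assumption that each $\mathfrak{c}^{[0,i]}$ is itself a $(\xi,\xi)$-QI controls the \emph{metric distortion} of the composed map but does not, by itself, control how far the particular subsets $\rho^{U_w}_{V_w}$ drift under it. So the reduction-plus-induction, as written, does not deliver the uniform constant the lemma asserts. (The paper's one-sentence sketch suffers from the same issue, so you have faithfully reproduced the intended argument; but a complete proof needs something extra — for instance, realizing $\rho^{U_w}_{V_w}$ uniformly as $\pi_{V_w}$ of a partial realization point for $U_w$ and then comparing these points across the tree using the uniformly coarsely lipschitz gate maps directly, rather than edge-by-edge, or else imposing uniformly bounded supports as an explicit hypothesis.)
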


\begin{definition}[\bf{Graph of hierarchically hyperbolic groups}]
Let $\Gamma=(V,E)$ be a finite simplicial graph.
A \emph{graph of hierarchically hyperbolic groups} is given by the quadruple $\bigl(\Gamma,\{G_v\}_{v\in V},\{G_e\}_{e\in E},
\{\phi_{e^\pm}\colon G_e\to G_{e^\pm}\}_{e\in E}\bigr)$, where vertex and edge groups are hierarchically hyperbolic groups, and the
$\phi_{e^\pm}$ are homomorphisms of hierarchically hyperbolic groups.

Let $F_E$ be the free group freely generated by the set $E$. 
The group $G$ associated to the graph of hierarchically hyperbolic groups is the quotient of $(\ast_{v\in V}G_v)\ast F_E$ obtained by
adding the relations
\begin{itemize}
\item $e=_G1$, if $e\in E$ belongs to a fixed spanning tree of $\Gamma$;
\item $\phi_{e^+}(g)=e\phi_{e^-}(g)e^{-1}$, for all $e\in E$ and $g\in G_e$.
\end{itemize}
\end{definition}
As described in \cite[Section 8.2]{BHS2}, a graph of hierarchically hyperbolic groups acts on a tree of hierarchically hyperbolic spaces whose
associated tree is the Bass-Serre tree $\tilde\Gamma$ of the graph $\Gamma$.

%%%%%%%%%%%%%%%%%%%%%%%%%%%%%%%%%%%
%%%%%%%%%%%%%%%%%%%%%%%%%%%%%%%%%%%
%%%%%%%%%%%%%%%%%%%%%%%%%%%%%%%%%%%

\section{Intersection property and concrete hierarchically hyperbolic spaces}\label{section3}

We now introduce a notion that will play a pivotal role in the proof of Theorem \ref{mainT}.

\begin{definition}[\bf Intersection property]\label{intersectionproperty_definition}
A hierarchically hyperbolic space $(\mathcal{X},\mathfrak{S})$ has the \emph{intersection property} if 
the index set admits an operation $\wedge\colon (\mathfrak{S}\cup\{\emptyset\})\times(\mathfrak{S}\cup\{\emptyset\})\to\mathfrak{S}\cup\{\emptyset\}$ satisfying
the following properties for all $U,V,W\in\mathfrak{S}$:
\begin{itemize}
\item[$(\wedge_1)$] $V\wedge \emptyset=\emptyset \wedge V=\emptyset$;
\item[$(\wedge_2)$] $U\wedge V=V\wedge U$;
\item[$(\wedge_3)$] $(U\wedge V)\wedge W=U\wedge (V\wedge W)$;
\item[$(\wedge_4)$] $U\wedge V\sqsubseteq U$ and $U\wedge V\sqsubseteq V$ whenever $U\wedge V\in\mathfrak{S}$;
\item[$(\wedge_5)$] if $W\sqsubseteq U$ and $W\sqsubseteq V$, then $W\sqsubseteq U\wedge V$.
\end{itemize}
\end{definition}
We call $U\wedge V$ the \emph{wedge} between $U$ and $V$.
Notice that $U\wedge V\in \mathfrak{S}_U\cap\mathfrak{S}_V$ as soon as $U\wedge V\neq \emptyset$, by property $(\wedge_4)$. Therefore, whenever $U\perp V$ it follows that $U\wedge V=\emptyset$, as 
the intersection $\mathfrak{S}_U\cap\mathfrak{S}_V$ is empty.
Moreover, it follows that $U\wedge V=V$ if and only if $V\sqsubseteq U$, and that
for all $U,V\in \mathfrak{S}$ the set $\mathfrak{S}_U\cap\mathfrak{S}_V$ either is empty or has a unique maximal element $U\wedge V$.

\smallskip
Hyperbolic groups satisfy the intersection property, since the index set consists of one element. 
Mapping class groups, raags, and the cubulable groups known to be hierarchically hyperbolic also satisfy the intersection property. In these cases, the operation $\wedge$ corresponds respectively to considering (the curve complex associated to) the intersection of two subsurfaces, the intersection of two parabolic subgroups, and the coarse projection (using gate maps \cite{HaSu}) of one hyperplane onto another.

Let $(\mathcal{X},\mathfrak{S})$ be a hierarchically hyperbolic space with the intersection property, let $U,V\in\mathfrak{S}$, and define
\begin{equation}\label{equation_vee}
U\vee V\vcentcolon=\bigwedge \bigl\{ W\in\mathfrak{S}\mid U\sqsubseteq W,\ V\sqsubseteq W\bigr\}.
\end{equation}
We call $U\vee V$ the \emph{join} between $U$ and $V$. The operations $\wedge$ and $\vee$ give to the set $\mathfrak{S}$ a lattice structure.

Notice that the set $\mathcal{W}=\{W\in\mathfrak{S}\mid U\sqsubseteq W,\ V\sqsubseteq W \}$ appearing in Equation \eqref{equation_vee} is never 
empty, because at least the $\sqsubseteq$-maximal element of $\mathfrak{S}$ belongs to it. Even if $\mathcal{W}$ is infinite,
finite complexity of the hierarchically hyperbolic space implies that there exists a natural number $n$, not greater than the complexity of the hierarchically
hyperbolic space, such that 
$U\vee V=W_1\wedge\dots\wedge W_n$, where $W_i\in\mathcal{W}$ for all $i$. Indeed, if this were not the case, one could find elements $W_i\in\mathcal{W}$ for $i=1,\dots,r$, where $r$ is strictly bigger than the finite-complexity constant, such that
\begin{equation*}
W_1\sqsupset W_1\wedge W_2\sqsupset \dots\sqsupset W_1\wedge\dots\wedge W_r\neq \emptyset,
\end{equation*}
contradicting the fifth axiom of the definition of hierarchically hyperbolic space.
By definition, $U\vee V$ is the $\sqsubseteq$-minimal element of $\mathfrak{S}$ in which both $U$ and $V$ are nested. 

In raags, the join of two parabolic subgroups is the subgroup they generate, and in mapping class groups the join of two subsurfaces is their union (which might be disconnected).

\smallskip
In the following lemma we prove that direct product of hierarchically hyperbolic spaces/groups with the intersection property continues to satisfy the intersection property. As a consequence of Theorem \ref{mainGraphProducts}, the intersection property is preserved also by graph products, and in particular by free products, when in presence of \emph{clean containers}.

The intersection property for free products of hierarchically hyperbolic groups is preserved also \emph{without} assuming clean containers, by deducing it from \cite[Theorem 8.6]{BHS2}, but we elected not to write down the details, as clean containers is such a natural hypothesis to make.

\begin{lemma}\label{directproduct_intersection}
The intersection property is preserved by direct products. If a group is hyperbolic relative to a finite collection of hierarchically hyperbolic spaces (respectively: groups) with the intersection property, then it is a hierarchically hyperbolic space (respectively: group) with the intersection property.
\begin{proof}
Given two hierarchically hyperbolic spaces $(\mathcal{X}_1,\mathfrak{S}_1)$ and $(\mathcal{X}_2,\mathfrak{S}_2)$ with the intersection property, we endow the space $\mathcal{X}_1\times \mathcal{X}_2$ with the hierarchically hyperbolic structure $\mathfrak{S}$ described in Example \ref{example_directproduct} (for hierarchically hyperbolic groups). 

Let $\wedge_1$ and $\wedge_2$ be the wedge maps on $(\mathcal{X}_1,\mathfrak{S}_1)$ and $(\mathcal{X}_2,\mathfrak{S}_2)$, respectively, and let us define $\wedge\colon (\mathfrak{S}\cup\{\emptyset\})\times (\mathfrak{S}\cup\{\emptyset\})\to \mathfrak{S}\cup \{\emptyset\}$.
If $U\in\mathfrak{S}_1, W\in\mathfrak{S}_2$ then $U\perp W$ and therefore $U\wedge W=\emptyset$. On the other hand, $\wedge$ coincides with $\wedge_1$ or $\wedge_2$ if both arguments belong to $\mathfrak{S}_1$ or $\mathfrak{S}_2$  respectively. 
If $W\in \mathfrak{S}_1\cup\mathfrak{S}_2$ and $V_U$, for $U\in \mathfrak{S}_1\cup\mathfrak{S}_2$, is an element with trivial associated hyperbolic space, as described in Example \ref{example_directproduct}, then we have the following exhaustive disjoint cases: either $W\perp U$, or $W$ and $U$ are $\sqsubseteq$-related, or $W\pitchfork U$. In the first case $W\sqsubseteq V_U$, and therefore $W\wedge V_U=W$. In the other two cases, it must be that $U$ and $W$ belong to the same index factor, say $\mathfrak{S}_1$. Therefore, $W\wedge V_{U}=W\wedge_1 \cont_\perp U$, where $\cont_\perp U$ is the orthogonal container of $U$ in $\mathfrak{S}_1$.
Finally, if $S$ is the $\sqsubseteq$-maximal element then $S\wedge U=U$ for every $U\in\mathfrak{S}_1\cup\mathfrak{S}_2$.

\medskip
To conclude, we now prove the statement for groups hyperbolic relative to hierarchically hyperbolic groups. The same argument works if the parabolic subgroups $\{H_1,\dots,H_n\}$ are assumed to be hierarchically hyperbolic \emph{spaces}, with the difference that the resulting group would be a hierarchically hyperbolic \emph{space}.

Let $G$ be a group hyperbolic relative to a finite collection of subgroups $\{H_1,\ldots,H_n\}$ that are hierarchically hyperbolic groups with the intersection property. Let $\mathfrak{S}_{H_i}$ be the hierarchically hyperbolic structure on $H_i$, and let $\wedge_{H_i}$ be the wedge operation of $\mathfrak{S}_{H_i}$. Any coset $gH_i$ admits a hierarchically hyperbolic structure $\mathfrak{S}_{gH_i}$ with wedge operation $\wedge_{gH_i}$ (compare \cite[Theorem 9.1]{BHS2}).

By \cite[Theorem 9.1]{BHS2} the group $G$ is a hierarchically hyperbolic group with index set $\mathfrak{S}=\{\widehat{G}\}\cup\bigsqcup_{gH_i\in GH_i}\mathfrak{S}_{gH_i}$, where $\widehat{G}$ is obtained from $G$ by coning off all left cosets of all the subgroups $H_i$. By \cite[Theorem 9.1]{BHS2} the element $\widehat{G}$ is the $\sqsubseteq$-maximal element, for all $U\in \mathfrak{S}_{gH_i}$ and $V\in \mathfrak{S}_{g'H_j}$ with $gH_i\neq g'H_j$ we have that $U\pitchfork V$, and finally if $U,V\in \mathfrak{S}_{gH_i}\subseteq \mathfrak{S}$ then the elements $U$ and $V$ are transversal (respectively orthogonal, $\sqsubseteq$-related) if and only if they are transversal (respectively orthogonal, $\sqsubseteq$-related) in $\mathfrak{S}_{gH_i}$. 

If $U,V\in\mathfrak{S}_{gH_i}\subseteq\mathfrak{S}$, then define $U\wedge V$ to be $U\wedge_{gH_i}V$. If $U,V$ belong to different cosets and in particular they are orthogonal, define $U\wedge V=\emptyset$. Finally, for every $U\in\mathfrak{S}$ define $U\wedge\widehat{G}=U$.

Thus, $G$ admits a hierarchically hyperbolic group structure with the intersection property.

\end{proof}
\end{lemma}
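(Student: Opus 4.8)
The plan is to prove the two assertions separately; for each, one starts from an explicitly understood hierarchically hyperbolic structure, writes down a candidate wedge operation on its index set by a case analysis, and then checks the five axioms $(\wedge_1)$--$(\wedge_5)$ of Definition~\ref{intersectionproperty_definition}.

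For the direct product, I would endow $\mathcal{X}_1\times\mathcal{X}_2$ with the structure $\mathfrak{S}=\mathfrak{S}_1\sqcup\mathfrak{S}_2\sqcup\{V_U\mid U\in\mathfrak{S}_1\cup\mathfrak{S}_2\}\sqcup\{S\}$ of Example~\ref{example_directproduct} and define $\wedge$ according to which of these four families the two arguments lie in: on each $\mathfrak{S}_i$ it is the given $\wedge_i$; if the arguments lie in different factors $\mathfrak{S}_1,\mathfrak{S}_2$ they are orthogonal and we set the wedge to $\emptyset$; $S\wedge A=A$ for all $A$; and for $A=W\in\mathfrak{S}_1\cup\mathfrak{S}_2$ and $B=V_U$ one splits into the three mutually exclusive cases $W\perp U$, ``$W$ and $U$ nested'', and $W\pitchfork U$ --- in the first $W\sqsubseteq V_U$, so the wedge is $W$, while in the other two $W$ and $U$ lie in the same factor $\mathfrak{S}_i$ and one sets $W\wedge V_U:=W\wedge_i\cont_\perp^{S_i}U$ (with the convention that this is $\emptyset$ if $U$ has empty orthogonal complement in $\mathfrak{S}_i$); the case $V_U\wedge V_{U'}$ is handled by the same dichotomy, using the nesting relations among the auxiliary elements recorded in Example~\ref{example_directproduct}. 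Properties $(\wedge_1)$, $(\wedge_2)$, $(\wedge_4)$ are then immediate, and $(\wedge_3)$ (associativity) and $(\wedge_5)$ follow by expanding both sides in each combination of arguments and invoking the corresponding properties of $\wedge_1,\wedge_2$ together with the explicit description of $\sqsubseteq$, $\perp$ and $\pitchfork$ on the $V_U$ and on $S$.

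For the second assertion, let $G$ be hyperbolic relative to hierarchically hyperbolic groups $H_1,\dots,H_n$ with the intersection property. By \cite[Theorem~9.1]{BHS2}, $G$ is hierarchically hyperbolic with index set $\mathfrak{S}=\{\widehat G\}\sqcup\bigsqcup_{gH_i}\mathfrak{S}_{gH_i}$, where $\widehat G$ (the cone-off of $G$ over the peripheral cosets) is $\sqsubseteq$-maximal, elements coming from distinct cosets are transverse, and the restriction of the three relations to a single $\mathfrak{S}_{gH_i}$ is its intrinsic structure; transporting $\wedge_{H_i}$ along the translation action endows each $\mathfrak{S}_{gH_i}$ with a wedge $\wedge_{gH_i}$. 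I would then define $U\wedge V$ to be $U\wedge_{gH_i}V$ when $U,V\in\mathfrak{S}_{gH_i}$, to be $\emptyset$ when $U,V$ come from different cosets, and to be $U$ when $V=\widehat G$. The axioms again reduce to a case check; the key point here is that $\mathfrak{S}_{gH_i}\cap\mathfrak{S}_{g'H_j}=\emptyset$ when $gH_i\neq g'H_j$, so no nonempty subset is nested into two incomparable elements coming from different cosets, which makes every instance of $(\wedge_3)$ and $(\wedge_5)$ collapse either to an identity already valid inside one $\mathfrak{S}_{gH_i}$ or to a triviality involving $\widehat G$. Reading ``group'' as ``space'' and invoking the space version of \cite[Theorem~9.1]{BHS2} gives the parenthetical statement, the resulting object being a hierarchically hyperbolic space.

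The delicate part is the direct-product case: one must make sure that in the structure of Example~\ref{example_directproduct} the sets $\mathfrak{S}_W\cap\mathfrak{S}_{V_U}$ and $\mathfrak{S}_{V_U}\cap\mathfrak{S}_{V_{U'}}$ really do possess a unique $\sqsubseteq$-maximal element whenever they are nonempty, so that the formulas above are forced --- this is precisely where the hypothesis that the factors satisfy the intersection property, and in particular carry the orthogonal containers supplied by Axiom~\eqref{A3}, is used --- and after that the remaining work is the routine but lengthy bookkeeping in the verification of associativity, to be carried out over all combinations of the four families of index elements together with the maximal element $S$.
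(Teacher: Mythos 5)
Your proposal follows essentially the same route as the paper: for the direct product you use the index set from Example~\ref{example_directproduct} and define $\wedge$ by exactly the same case split (agreeing with $\wedge_i$ on each factor, $\emptyset$ across factors, $W\wedge V_U=W\wedge_i\cont_\perp U$ when $W,U$ lie in the same factor and are not orthogonal, $W$ when $W\perp U$, and $S\wedge A=A$); for the relatively hyperbolic case you invoke \cite[Theorem~9.1]{BHS2} and define $\wedge$ piecewise on the cosets exactly as the paper does. The only small difference is that you explicitly record the case $V_U\wedge V_{U'}$, which the paper's published proof elides; otherwise the two arguments coincide.
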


\begin{lemma}\label{lemma:intersection_wedge_commute}
Let $\phi\colon \bigl(\mathcal{X},\mathfrak{S}\bigr)\to \bigl(\mathcal{X}',\mathfrak{S}'\bigr)$ be a full hieromorphism between hierarchically hyperbolic
spaces with the intersection property, and let $U,V\in \mathfrak{S}$. Then
\[\phi^\lozenge\bigl(U\wedge V\bigr)=\phi^\lozenge(U)\wedge \phi^\lozenge(V),\qquad\qquad\phi^\lozenge\bigl(U\vee V\bigr)=\phi^\lozenge(U)\vee\phi^\lozenge(V).\]
\begin{proof}
We prove the lemma for the wedge $U\wedge V$. The proof for $U\vee V$ follows the same strategy.
Let $U\wedge V=A$, and $\phi^\lozenge(A)=A'\in\mathfrak{S}'$. We need to show that $\phi^\lozenge(U)\wedge \phi^\lozenge(V)=A'$. As $\phi^\lozenge$ preserves
nesting, we have that $A'\sqsubseteq \phi^\lozenge (U)\wedge \phi^\lozenge (V)$. As $\phi$ is full and $\phi^\lozenge(U)\wedge\phi^\lozenge(V)$
is nested into both $\phi^\lozenge (U)$ and $\phi^\lozenge (U)$, 
there exists $B\in \mathfrak{S}$ such that $\phi^\lozenge(B)=\phi^\lozenge(U)\wedge\phi^\lozenge(V)$ and $B$ is nested into both $U$ and $V$.

By maximality of $U\wedge V$, we conclude that $B=U\wedge V$, and it follows that
\[\phi^\lozenge\bigl(U\wedge V\bigr)=\phi^\lozenge(B)=\phi^\lozenge(U)\wedge \phi^\lozenge(V).\]
\end{proof}
\end{lemma}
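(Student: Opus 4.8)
The plan is to prove the statement for the wedge operation, and then observe that the argument for the join operation is entirely parallel (using $(\wedge_5)$ in place of $(\wedge_4)$ and the characterization of $U \vee V$ as the $\sqsubseteq$-minimal element containing both $U$ and $V$). Throughout I use that $\phi^\lozenge$ is injective and preserves nesting (hence also preserves strict nesting, since injectivity rules out the degenerate case), and that $\phi$ is full in the sense of Definition~\ref{fullness_definition}, so that its image is exactly the sublattice $\{U' \in \mathfrak{S}' \mid U' \sqsubseteq \phi^\lozenge(S)\}$.

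First I would fix notation: set $A = U \wedge V \in \mathfrak{S}$ (note $U \wedge V \neq \emptyset$ since $A \sqsubseteq U$, so in particular $U \not\perp V$; if instead $U \wedge V = \emptyset$ the same reasoning shows $\phi^\lozenge(U) \wedge \phi^\lozenge(V) = \emptyset$ too, because a common nested element in $\mathfrak{S}'$ would have to lie in the image of $\phi^\lozenge$ by fullness and pull back to a common nested element in $\mathfrak{S}$, contradicting $U \wedge V = \emptyset$) and put $A' = \phi^\lozenge(A)$. The easy inclusion is that $A' \sqsubseteq \phi^\lozenge(U) \wedge \phi^\lozenge(V)$: since $A \sqsubseteq U$ and $A \sqsubseteq V$, preservation of nesting gives $A' \sqsubseteq \phi^\lozenge(U)$ and $A' \sqsubseteq \phi^\lozenge(V)$, and then property $(\wedge_5)$ in $\mathfrak{S}'$ yields $A' \sqsubseteq \phi^\lozenge(U) \wedge \phi^\lozenge(V)$.

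For the reverse inclusion, write $C' = \phi^\lozenge(U) \wedge \phi^\lozenge(V)$. By $(\wedge_4)$ in $\mathfrak{S}'$ we have $C' \sqsubseteq \phi^\lozenge(U) \sqsubseteq \phi^\lozenge(S)$, so by fullness of $\phi$ there is $C \in \mathfrak{S}$ with $\phi^\lozenge(C) = C'$. Now I claim $C \sqsubseteq U$ and $C \sqsubseteq V$: this is where I invoke that $\phi^\lozenge$ reflects nesting between elements of its image. Indeed, $\phi^\lozenge(C) = C' \sqsubseteq \phi^\lozenge(U)$; if $C \not\sqsubseteq U$, then $C$ and $U$ are either transverse, or orthogonal, or satisfy $U \sqsubsetneq C$, and in each case the hieromorphism axioms force $\phi^\lozenge(C)$ and $\phi^\lozenge(U)$ into the corresponding relation (transversality, orthogonality, or $\phi^\lozenge(U) \sqsubsetneq \phi^\lozenge(C)$ respectively, using injectivity for the strict part), all of which contradict $C' \sqsubseteq \phi^\lozenge(U)$. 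Hence $C \sqsubseteq U$, and symmetrically $C \sqsubseteq V$. Then by $(\wedge_5)$ in $\mathfrak{S}$ we get $C \sqsubseteq U \wedge V = A$, so $C' = \phi^\lozenge(C) \sqsubseteq \phi^\lozenge(A) = A'$. Combining the two inclusions gives $A' = C'$, i.e. $\phi^\lozenge(U \wedge V) = \phi^\lozenge(U) \wedge \phi^\lozenge(V)$.

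I expect the only genuinely delicate point to be the reflection-of-nesting step, i.e.\ verifying that $\phi^\lozenge(C) \sqsubseteq \phi^\lozenge(U)$ forces $C \sqsubseteq U$. The paper's own argument seems to shortcut this by directly asserting (via fullness) the existence of $B \in \mathfrak{S}$ with $\phi^\lozenge(B) = \phi^\lozenge(U) \wedge \phi^\lozenge(V)$ that is already nested into both $U$ and $V$; to make that legitimate one really does need that relations among elements of $\im \phi^\lozenge$ determine the relations among their preimages, which follows from the fact that $\phi^\lozenge$ preserves all three mutually exclusive relations $\sqsubseteq$, $\perp$, $\pitchfork$ together with injectivity (a standard consequence of the hieromorphism definition). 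Once that bookkeeping is in place, the rest is a formal manipulation of $(\wedge_4)$ and $(\wedge_5)$, and the maximality clause "$U \wedge V$ is the unique maximal element of $\mathfrak{S}_U \cap \mathfrak{S}_V$" recorded after Definition~\ref{intersectionproperty_definition} closes the argument.
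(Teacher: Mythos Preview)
Your proof is correct and follows essentially the same route as the paper's: establish $\phi^\lozenge(A)\sqsubseteq\phi^\lozenge(U)\wedge\phi^\lozenge(V)$ by preservation of nesting, then use fullness to pull $\phi^\lozenge(U)\wedge\phi^\lozenge(V)$ back to an element nested in both $U$ and $V$, and conclude by maximality of $U\wedge V$. Your version is in fact more careful than the paper's in two respects: you treat the case $U\wedge V=\emptyset$ explicitly, and you spell out the reflection-of-nesting step (that $\phi^\lozenge(C)\sqsubseteq\phi^\lozenge(U)$ forces $C\sqsubseteq U$, via preservation of the three mutually exclusive relations plus injectivity), which the paper leaves implicit in its invocation of fullness.
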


The next lemma is an example of why clean containers is a very natural property, and should be assumed without any hesitation. In the mapping class group
setting the lemma just proves that if two subsurfaces $U$ and $V$ are disjoint from $W$, then $W$ is also disjoint from the subsurface $U\cup V$.
\begin{lemma}\label{clean_containers_use}
Let $\bigl(\mathcal{X},\mathfrak{S}\bigr)$ be a hierarchically hyperbolic space with the intersection property and clean containers. 
If $U\perp W$ and $V\perp W$, then $(U\vee V)\perp W$.
\begin{proof}
Both the elements $U$ and $V$ are nested into the orthogonal container $\cont_\perp W$, and by definition of join, it follows that
$U\vee V\sqsubseteq \cont_\perp W$ as well. By clean containers we have that $W\perp \cont_\perp W$, and therefore $(U\vee V)\perp W$.

Notice that we need the clean containers hypothesis for the case $U\vee V=\cont_\perp W$.
\end{proof}
\end{lemma}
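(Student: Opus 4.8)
The plan is to route the argument through the orthogonal container $\cont_\perp W$. Since we are given $U\perp W$, the set $\{Y\in\mathfrak{S}\mid Y\perp W\}$ is nonempty, so Axiom \eqref{A3} of Definition \ref{HHS_definition}, applied with $Z$ the $\sqsubseteq$-maximal element of $\mathfrak{S}$, provides the higher container $\cont_\perp W\in\mathfrak{S}$, with the defining property that every element orthogonal to $W$ is nested into it. In particular $U\sqsubseteq\cont_\perp W$ and $V\sqsubseteq\cont_\perp W$.

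Next I would bring in the intersection property. By the previous paragraph, $\cont_\perp W$ belongs to the family $\{Y\in\mathfrak{S}\mid U\sqsubseteq Y,\ V\sqsubseteq Y\}$ whose wedge is, by Equation \eqref{equation_vee}, the join $U\vee V$. Since $U\vee V$ is the $\sqsubseteq$-minimal element of $\mathfrak{S}$ into which both $U$ and $V$ are nested, it follows that $U\vee V\sqsubseteq\cont_\perp W$.

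To conclude, I would invoke clean containers, which by hypothesis gives $W\perp\cont_\perp W$. Combining this with $U\vee V\sqsubseteq\cont_\perp W$ and the fact that orthogonality descends along nesting (again Axiom \eqref{A3}: if $A\sqsubseteq B$ and $B\perp C$, then $A\perp C$), taking $A=U\vee V$, $B=\cont_\perp W$, $C=W$, yields $(U\vee V)\perp W$. The argument is immediate once the container is in play, so there is no genuine obstacle; the only point worth flagging is that clean containers is truly needed — in the extremal case $U\vee V=\cont_\perp W$ one has no other way to obtain orthogonality with $W$, since a general orthogonal container need not be orthogonal to the element it is the container of.
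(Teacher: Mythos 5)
Your proof is correct and follows exactly the same route as the paper's: both arguments route through the higher container $\cont_\perp W$, use minimality of the join to get $U\vee V\sqsubseteq\cont_\perp W$, and then invoke clean containers together with the descent of orthogonality along nesting. You simply make a couple of the implicit steps explicit (citing Axiom~\eqref{A3} and Equation~\eqref{equation_vee}), which is fine.
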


\begin{lemma}\label{wedge_container_lemma_blue}
Let $(\mathcal{X},\mathfrak{S})$ be a hierarchically hyperbolic space with the intersection property and clean containers. For all $U,V\in\mathfrak{S}$ we have that $\cont_{\perp}^{U}V=U\wedge \cont_{\perp}V$. 
\begin{proof}
If $\cont_\perp V=\emptyset$, then also $\cont_\perp^U V$ is empty, and the equality is trivially satisfied.

If $\cont_\perp V$ is not empty, but $\cont_\perp^U V=\emptyset$, then there does not exist an element nested into both $U$ and into $\cont_\perp V$. Indeed, assume that there esists $W\in\mathfrak{S}$ such that $W\sqsubseteq U$ and $W\sqsubseteq\cont_\perp V$. Then, 
$W\sqsubseteq\cont_\perp^U V$ by definition of orthogonal containers, contradicting the assumption that $\cont_\perp^U V$ is empty. Therefore, also in this case the equality is trivially satisfied.

Suppose now that both $\cont_\perp V$ and $\cont_\perp^U V$ are non-empty.
By definition, we have that $\cont_{\perp}^{U}V\sqsubseteq U$. By clean containers $V\perp \cont_{\perp}^{U}V$, and thus $\cont_{\perp}^UV\sqsubseteq \cont_{\perp}V$. Therefore, $\cont_{\perp}^UV\sqsubseteq U\wedge\cont_{\perp}V$. On the other hand, as $V\perp \cont_{\perp}V$ and $U\wedge \cont_{\perp}V\sqsubseteq U$, we conclude that $U\wedge\cont_{\perp}V\sqsubseteq \cont_{\perp}^{U}V$.

\end{proof}
\end{lemma}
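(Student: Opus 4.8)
The plan is to establish the identity by proving mutual $\sqsubseteq$-nesting in the generic case, after first disposing of the two degenerate situations in which a container is empty. If $\cont_\perp V=\emptyset$, then no element of $\mathfrak{S}$ is orthogonal to $V$; in particular no element of $\mathfrak{S}_U$ is, so $\cont_\perp^U V=\emptyset$ as well, and the asserted equality reduces to $\emptyset=U\wedge\emptyset$, which holds by $(\wedge_1)$.

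Next I would treat the case $\cont_\perp V\neq\emptyset$ but $\cont_\perp^U V=\emptyset$. Here the claim is that $U\wedge\cont_\perp V=\emptyset$ too, i.e. that $\mathfrak{S}_U\cap\mathfrak{S}_{\cont_\perp V}=\emptyset$. If some $W$ lay in this intersection, then $W\sqsubseteq\cont_\perp V$ together with $V\perp\cont_\perp V$ (clean containers) and the downward propagation of orthogonality along nesting from Definition~\ref{HHS_definition} would give $W\perp V$; since also $W\sqsubseteq U$, this would force $W$ to be nested into the (empty) orthogonal container $\cont_\perp^U V$, a contradiction. So the equality is trivial again.

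In the main case, both $\cont_\perp V$ and $\cont_\perp^U V$ are nonempty. For the inclusion $\cont_\perp^U V\sqsubseteq U\wedge\cont_\perp V$: by definition $\cont_\perp^U V\sqsubseteq U$, and clean containers gives $V\perp\cont_\perp^U V$, so $\cont_\perp^U V$ is orthogonal to $V$ and hence $\cont_\perp^U V\sqsubseteq\cont_\perp V$; being nested into both $U$ and $\cont_\perp V$, it is nested into $U\wedge\cont_\perp V$ by $(\wedge_5)$. For the reverse inclusion: $(\wedge_4)$ gives $U\wedge\cont_\perp V\sqsubseteq U$ and $U\wedge\cont_\perp V\sqsubseteq\cont_\perp V$, and combining the latter with $V\perp\cont_\perp V$ (clean containers) and downward propagation of orthogonality yields $V\perp(U\wedge\cont_\perp V)$; thus $U\wedge\cont_\perp V$ is an element of $\mathfrak{S}_U$ orthogonal to $V$, hence nested into $\cont_\perp^U V$. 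The two inclusions give the equality.

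The computations here are short, so the real point of care --- and the step I expect to be the main obstacle --- is the bookkeeping around orthogonality: the relevant axiom propagates orthogonality only \emph{downward} along $\sqsubseteq$, so at each step one must set things up with $V\perp\cont_\perp(\cdot)$ (exactly what clean containers provides) as the ``large'' orthogonal pair and the wedge or the smaller container as the element nested below it. Clean containers is used precisely twice, once for each inclusion, and it is essential: already $U\wedge\cont_\perp V\sqsubseteq\cont_\perp^U V$ relies on $U\wedge\cont_\perp V$ being orthogonal to $V$, which is deduced from $\cont_\perp V\perp V$.
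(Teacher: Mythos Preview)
Your proof is correct and follows essentially the same approach as the paper's: the same three-case split, and in the main case the same two-inclusion argument via $(\wedge_4)$, $(\wedge_5)$, clean containers, and downward propagation of orthogonality. If anything you are slightly more explicit in the second degenerate case, where the paper writes ``by definition of orthogonal containers'' while you spell out that clean containers is needed to pass from $W\sqsubseteq\cont_\perp V$ to $W\perp V$.
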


\begin{definition}[{\bf $\varepsilon$-support}]\label{supports}
For $A\subseteq \mathcal{X}$ and a constant $\varepsilon>0$, define the $\varepsilon$-\emph{support} to be
\[\supp_\varepsilon(A)\vcentcolon=\bigl\{ W\in \mathfrak{S}\mid\diam_{\mathcal{C}W} (\pi_W (A)) >\varepsilon\bigr\}.\]
Notice that if $\supp_\varepsilon(A)=\emptyset$, then $A\subseteq\mathcal{X}$ has uniformly bounded diameter: indeed, by the Uniqueness Axiom of 
Definition \ref{HHS_definition} it follows that $\diam_{\mathcal{X}}(A)\leqslant \theta_u(\varepsilon)$.
\end{definition}

In the following lemma, we make use of a relevant feature of a given standard product region $\textbf{P}_U$ associated to a given $U\in\mathfrak{S}$ as defined in Definition \ref{product_region}. For each $e\in\textbf{E}_U$ we denote $\textbf{F}_U\times\{e\}$ a \emph{parallel copy} of $\textbf{F}_U$ in $\mathcal{X}$. By construction of $\textbf{P}_U$ there exists a constant $\alpha$ which depends only on $\mathcal{X}$ and $\mathfrak{S}$, such that for every $x\in\textbf{P}_U$ we have that $d_V(\pi_V(x),\rho_V^U)\leq\alpha$ for all $U\in\mathfrak{S}$ satisfying either $U\pitchfork V$ or $U\sqsubseteq V$. Moreover, we can choose $\alpha$ so that, if $V\perp U$, then $\diam_{\mathcal{C}V}(\pi_V(\textbf{F}_U\times\{e\}))\leq\alpha$ (see \cite[Definition 1.15]{asdim} and \cite[Section 5]{BHS2} for more information). 

We recall that $\xi$ is the constant that uniformly bounds the sets $\rho_V^U$ for $U,V\in\mathfrak{S}$ such that $U\pitchfork V$ or $U\sqsubseteq V$.

\begin{lemma}\label{lemma_inclusion1}
Let $\varepsilon> 3\max\{\xi,\alpha\}$.
If $W\in \supp_\varepsilon({\bf F}_U\times\lbrace e\rbrace)$ then $W\sqsubseteq U$, and therefore $\supp_\varepsilon({\bf F}_U\times\lbrace e\rbrace)\subseteq \mathfrak{S}_U$.
\begin{proof}
If $U$ is either transverse to $V\in \mathfrak{S}$ or properly nested into $V$, then $d_V(\pi_V( x),\rho_V^U)\leq\alpha$ for every $x\in{\bf F}_U\times\lbrace e\rbrace$. As the diameter of the set $\rho_V^U$ is at most $\xi$, we obtain that
\begin{equation*}
\begin{aligned}
d_V(\pi_V(x),\pi_V(y))\leq &d_V(\pi_V(x),p(\pi_V(x)))+d_V(p(\pi_V(x)),p(\pi_V(y)))+\\
&+d_V(p(\pi_V(y)),\pi_V(y)) \leq 2\alpha +\xi<\varepsilon,
\end{aligned}
\end{equation*}
for every $x,y\in {\bf{F}}_U\times\{e\}$, where $p\colon\mathcal{C}V\to\rho_V^U$ denotes the closest point projection.
Therefore, we conclude that $V\notin\supp_\varepsilon ({\bf F}_U\times\lbrace e\rbrace)$.
On the other hand, whenever $U\perp V$ we have that $\pi_V( {\bf F}_U\times\lbrace e\rbrace)$ is a set of diameter bounded by $\alpha$, and again 
$V\notin 
\supp_\varepsilon ({\bf F}_U\times\lbrace e\rbrace)$.

Therefore, by the choice of $\varepsilon$, we have that $\supp_\varepsilon( {\bf F}_U\times\lbrace e\rbrace)\subseteq \mathfrak{S}_U$.
\end{proof}
\end{lemma}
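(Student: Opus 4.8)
The plan is to prove the contrapositive at the level of a single element: I will show that if $W\in\mathfrak{S}$ is \emph{not} nested into $U$, then $\diam_{\mathcal{C}W}\bigl(\pi_W({\bf F}_U\times\{e\})\bigr)$ is bounded above by a constant strictly smaller than $\varepsilon$, hence $W\notin\supp_\varepsilon({\bf F}_U\times\{e\})$. Granting this, every $W\in\supp_\varepsilon({\bf F}_U\times\{e\})$ satisfies $W\sqsubseteq U$, i.e.\ $W\in\mathfrak{S}_U$, which is the asserted inclusion. Since $W$ and $U$ satisfy exactly one of $W\sqsubseteq U$, $U\sqsubsetneq W$, $W\perp U$, $W\pitchfork U$, and the first alternative is precisely the desired conclusion, it remains to rule out membership in the $\varepsilon$-support in the three cases $U\sqsubsetneq W$, $W\pitchfork U$, and $W\perp U$.

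In the two cases $U\sqsubsetneq W$ and $W\pitchfork U$, I would use the defining property of the standard product region ${\bf P}_U$ recalled just before the lemma: since ${\bf F}_U\times\{e\}\subseteq {\bf P}_U$, for every $x\in {\bf F}_U\times\{e\}$ one has $d_W(\pi_W(x),\rho_W^U)\leq\alpha$. Combining this with $\diam_{\mathcal{C}W}(\rho_W^U)\leq\xi$ via the triangle inequality through the closest-point projection of $\mathcal{C}W$ onto $\rho_W^U$, I obtain $d_W(\pi_W(x),\pi_W(y))\leq 2\alpha+\xi$ for all $x,y\in {\bf F}_U\times\{e\}$, whence $\diam_{\mathcal{C}W}\bigl(\pi_W({\bf F}_U\times\{e\})\bigr)\leq 2\alpha+\xi\leq 3\max\{\xi,\alpha\}<\varepsilon$.

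In the remaining case $W\perp U$, I would invoke the second estimate recalled before the lemma, the one governing parallel copies of ${\bf F}_U$: when $W\perp U$ the set $\pi_W({\bf F}_U\times\{e\})$ has diameter at most $\alpha$ in $\mathcal{C}W$, and $\alpha\leq 3\max\{\xi,\alpha\}<\varepsilon$, so again $W\notin\supp_\varepsilon({\bf F}_U\times\{e\})$. Collecting the three cases, every $W\in\supp_\varepsilon({\bf F}_U\times\{e\})$ must be nested into $U$, which finishes the proof.

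There is no genuine obstacle here: the content is entirely the trichotomy for the pair $(U,W)$ together with the two standard-product-region estimates stated just above the lemma. The only point that requires a little care is the bookkeeping of the constants, namely checking that both bounds $2\alpha+\xi$ and $\alpha$ stay strictly below the chosen threshold $\varepsilon>3\max\{\xi,\alpha\}$, which is exactly why the hypothesis on $\varepsilon$ is stated in that form.
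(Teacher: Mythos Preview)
Your proposal is correct and follows essentially the same approach as the paper's proof: both argue by the trichotomy on the pair $(U,W)$, use the product-region estimate $d_W(\pi_W(x),\rho_W^U)\leq\alpha$ together with $\diam(\rho_W^U)\leq\xi$ and a triangle inequality through the closest-point projection to handle the transverse and properly-nesting cases, and invoke the $\alpha$-bound on $\pi_W({\bf F}_U\times\{e\})$ in the orthogonal case. The arguments are the same up to notation (the paper writes $V$ for your $W$).
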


\begin{convention}
From now on, even if not explicitly stated, we assume that $\varepsilon>3\max\{\xi,\alpha\}$.
\end{convention}

\begin{remark}
For an element $U\in\mathfrak{S}$, the set $\supp_\varepsilon({\bf F}_U\times\lbrace e\rbrace)$ defined in Definition \ref{supports} is independent of the parallel copy of ${\bf F}_U\times\lbrace e\rbrace$ that we consider, that is
\[\supp_\varepsilon({\bf F}_U\times \{e\})=\supp_\varepsilon({\bf F}_U\times \{e'\})\]
for any two elements $e,e'\in {\bf E}_U$.
Indeed, $\pi_W\bigl({\bf F}_U\times\{e\}\bigr)$ uniformly coarsely coincides with $\rho_W^U$ when either $W\sqsupseteq U$ or $W\pitchfork U$, or its diameter is bounded by $\alpha$ if $W\perp U$. Therefore, for $\varepsilon> 3\max\{\xi,\alpha\}$, it follows that $W\in \supp_\varepsilon({\bf F}_U\times \{e\})$ if and only if 
$W\in \supp_\varepsilon({\bf F}_U\times \{e'\})$.
\end{remark}

\noindent
\textbf{Notation.} For every $\varepsilon>3\max\{\xi,\alpha\}$ we denote by $\supp_\varepsilon({\bf F}_U)$ the set $\supp_{\varepsilon}({\bf F}_U\times\{e\})$ for any $e\in\textbf{E}_U$.

\begin{lemma}\label{hiero_supports}
Let $\phi:(\mathcal{X},\mathfrak{S})\to(\mathcal{X}',\mathfrak{S}')$ be a full hieromorphism and let $\varepsilon>0$. There exists $\varepsilon_0>0$ such that for every $\varepsilon'\geq \varepsilon_0$ 
\[\phi^{\lozenge}\bigl(\supp_{\varepsilon'}(\mathcal{X})\bigr)\subseteq\supp_\varepsilon\bigl(\phi(\mathcal{X})\bigr).\]
\end{lemma}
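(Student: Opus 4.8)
The plan is to unpack both supports and use the coarse commutativity of the projection diagram for a hieromorphism. First I would recall that for a full hieromorphism $\phi$ the maps $\phi^*_U\colon\mathcal{C}U\to\mathcal{C}\phi^\lozenge(U)$ are $(\xi_0,\xi_0)$-quasi-isometries for some uniform $\xi_0$, and that the left-hand square of \eqref{coarsely.commuting.diagrams} coarsely commutes with some uniform constant $D$; that is, $d_{\phi^\lozenge(U)}\bigl(\pi_{\phi^\lozenge(U)}(\phi(x)),\phi^*_U(\pi_U(x))\bigr)\leqslant D$ for all $x\in\mathcal{X}$ and $U\in\mathfrak{S}$. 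These constants depend only on $\phi$, so we are free to let them determine $\varepsilon_0$.

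The core computation is the following. Fix $U\in\mathfrak{S}$ and suppose $U\in\supp_{\varepsilon'}(\mathcal{X})$, i.e.\ $\diam_{\mathcal{C}U}(\pi_U(\mathcal{X}))>\varepsilon'$. Pick $x,y\in\mathcal{X}$ with $d_U(\pi_U(x),\pi_U(y))$ close to this diameter, say $>\varepsilon'-1$. Since $\phi^*_U$ is a $(\xi_0,\xi_0)$-quasi-isometry, $d_{\phi^\lozenge(U)}\bigl(\phi^*_U(\pi_U(x)),\phi^*_U(\pi_U(y))\bigr)\geqslant \xi_0^{-1}(\varepsilon'-1)-\xi_0$. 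Using coarse commutativity twice, $d_{\phi^\lozenge(U)}\bigl(\pi_{\phi^\lozenge(U)}(\phi(x)),\pi_{\phi^\lozenge(U)}(\phi(y))\bigr)\geqslant \xi_0^{-1}(\varepsilon'-1)-\xi_0-2D$. Hence $\diam_{\mathcal{C}\phi^\lozenge(U)}\bigl(\pi_{\phi^\lozenge(U)}(\phi(\mathcal{X}))\bigr)\geqslant \xi_0^{-1}(\varepsilon'-1)-\xi_0-2D$. Now choose $\varepsilon_0$ so that $\varepsilon'\geqslant\varepsilon_0$ forces $\xi_0^{-1}(\varepsilon'-1)-\xi_0-2D>\varepsilon$; explicitly $\varepsilon_0:=\xi_0(\varepsilon+\xi_0+2D)+1$ works. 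Then $\phi^\lozenge(U)\in\supp_\varepsilon(\phi(\mathcal{X}))$, which is exactly the claimed inclusion.

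I would then note that fullness is what makes the statement clean: it guarantees the $\phi^*_U$ are genuine quasi-isometries (not merely quasi-isometric embeddings), so the inequality runs in the direction we need, and it also guarantees $\phi^\lozenge(U)$ makes sense as an element of $\mathfrak{S}'$ whose associated hyperbolic space receives $\pi_{\phi^\lozenge(U)}$. Strictly speaking the argument only uses that each $\phi^*_U$ is a uniform quasi-isometric embedding together with the coarse commutativity of the first diagram, so one could even get away with less, but since the ambient hypothesis is fullness there is no need to optimize. The one genuinely delicate point — and the only place I expect to have to be careful — is bookkeeping the uniformity of the constant $D$ for the coarsely commuting square over all $U\in\mathfrak{S}$ simultaneously; this is built into Definition \ref{HHS_hieromorphism} (``again with uniform constants''), so it is available, but it must be invoked explicitly so that $\varepsilon_0$ can be chosen independently of $U$.
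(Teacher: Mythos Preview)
Your argument is correct and essentially identical to the paper's: both push the lower bound on $d_U(\pi_U(x),\pi_U(y))$ through the coarsely commuting square to get a lower bound on $d_{\phi^\lozenge(U)}(\pi_{\phi^\lozenge(U)}(\phi(x)),\pi_{\phi^\lozenge(U)}(\phi(y)))$, then choose $\varepsilon_0$ accordingly. The only cosmetic difference is that the paper packages the quasi-isometry and coarse-commutativity constants into a single $K$ (writing $K^{-1}d_U-K\leq d_{U'}$ and taking $\varepsilon_0=K\varepsilon+K^2$), while you track $\xi_0$ and $D$ separately; your side remark that only the quasi-isometric-embedding direction of $\phi^*_U$ is used is correct and slightly sharper than what the paper states.
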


\begin{proof}
The hieromorphism $\phi$ is full, and the maps $\phi^*_U\circ\pi_U$ uniformly coarsely coincides with $\pi_{U'}\circ\phi$ for all $U\in \mathfrak{S}$ (here $U'$ denotes $\phi^{\lozenge}(U)$). Therefore, there exists $K>0$ such that
for all $x,y\in\mathcal{X}$, for all $U\in\mathfrak{S}$
\begin{equation}\label{quasiisom}
K^{-1}d_U(\pi_U(x),\pi_U(y))-K\leq d_{U'}\bigl(\pi_{U'}(\phi(x)),\pi_{U'}(\phi(y))\bigr).
\end{equation}
Let $\varepsilon_0:=K\varepsilon +K^2$. For $\varepsilon'\geqslant \varepsilon_0$, consider $W\in\supp_{\varepsilon'}(\mathcal{X})$: we prove that $\phi^\lozenge(W)\in \supp_{\varepsilon}\bigl(\phi(\mathcal{X})\bigr)$.
Indeed, let $x,y\in\mathcal{X}$ be such that $d_W\bigl(\pi_W(x),\pi_W(y)\bigr)>\varepsilon'$. By Equation \eqref{quasiisom} and
the definition of $\varepsilon_0$ we have that
\[d_{W'}\bigl(\pi_{W'}(\phi(x)),\pi_{W'}(\phi(y))\bigr)>\varepsilon,\]
that is $W'=\phi^\lozenge(W)\in\supp_\varepsilon\bigl(\phi(\mathcal{X})\bigr)$.
\end{proof}

\begin{definition}[{\bf Concreteness}] 
\label{concreteness}
Let $(\mathcal{X},\mathfrak{S})$ be a hierarchically hyperbolic space with the intersection property. We say that the hierarchically hyperbolic structure is $\varepsilon$-\emph{concrete} if either the space $\mathcal{X}$ is bounded, or the $\sqsubseteq$-maximal element $S$ of $\mathfrak{S}$ is equal to 
\begin{equation*}
\bigvee\lbrace V\in\mathfrak{S}\mid V\in\supp_{\varepsilon}(\mathcal{X})\rbrace.
\end{equation*}
We say that the hierarchically hyperbolic space is \emph{concrete} if
it is $\varepsilon$-concrete for some $\varepsilon >3\max\{\xi,\alpha\}$.
\end{definition}

\begin{remark}
Given a hierarchically hyperbolic group $(\mathcal{X},\mathfrak{S})$ with $\sqsubseteq$-maximal element $S$, we have that $\supp_\varepsilon({\bf F}_S)\subseteq \supp_\varepsilon(\mathcal{X})$, because
${\bf F}_S \subseteq\mathcal{X}$.

Notice that the other inclusion is not guaranteed, in general. Nevertheless, if the hierarchical structure on $\mathcal{X}$ is \emph{normalized} \cite[Definition 1.15]{DHS}, that is if the projections $\pi_U$ are uniformly coarsely surjective for all $U\in\mathfrak{S}$, then it follows that ${\bf F}_S=\mathcal{X}$, and in particular that 
$\supp_\varepsilon({\bf F}_S)= \supp_\varepsilon(\mathcal{X})$. As specified in Remark \ref{remark_coarsely_surjective}, we are assuming this.

By \cite[Proposition 1.16]{DHS}, any hierarchically hyperbolic space $(\mathcal{X},\mathfrak{S})$ admits a normalized hierarchically hyperbolic structure $(\mathcal{X},\mathfrak{S}')$ and a hieromorphism
$\phi\colon (\mathcal{X},\mathfrak{S})\to (\mathcal{X},\mathfrak{S}')$ where
$\phi\colon \mathcal{X}\to\mathcal{X}$ is the identity and $\phi^\lozenge\colon\mathfrak{S}\to \mathfrak{S}'$ is a bijection.
Therefore, up to considering normalized hierarchically hyperbolic spaces, an unbounded hierarchically hyperbolic space $(\mathcal{X},\mathfrak{S})$ is $\varepsilon$-concrete and its $\sqsubseteq$-maximal element $S$ is equal to
$\bigvee\lbrace V\in\mathfrak{S}\mid V\in\supp_{\varepsilon}({\bf F}_S)\rbrace$.
\end{remark}

In Definition \ref{concreteness} we are not asking that the maximal element $S$ already belongs to $\supp_\varepsilon(\mathcal{X})$: for instance, this is not the case for direct products of hierarchically hyperbolic spaces and groups, where the hyperbolic space associated to this $\sqsubseteq$-maximal element is bounded.

We are interested in concrete hierarchically hyperbolic spaces for the following proposition:

\begin{proposition}\label{concreteness_making}
Let $(\mathcal{X},\mathfrak{S})$ be an unbounded hierarchically hyperbolic space with the intersection property and let $\varepsilon>3\max\lbrace\xi,\alpha\rbrace$. There exists $\mathfrak{S}_{\varepsilon}\subseteq \mathfrak{S}$ such that
$(\mathcal{X},\mathfrak{S}_{\varepsilon})$ is an unbounded, $\varepsilon$-concrete hierarchically hyperbolic space with the intersection property.
\begin{proof}
Let $S$ be the $\sqsubseteq$-maximal element of $\mathfrak{S}$. If 
\begin{equation}\label{equality_bigvee}
S=\bigvee\{V\in\mathfrak{S}\mid V\in\supp_\varepsilon(\mathcal{X})\},
\end{equation}
then $\mathfrak{S}_{\varepsilon}=\mathfrak{S}$ and there is nothing to prove.

\smallskip
If the equality of Equation \eqref{equality_bigvee} is not satisfied, then 
$\bigvee\{V\in\mathfrak{S}\mid V\in\supp_\varepsilon(\mathcal{X})\}$ is properly nested into the $\sqsubseteq$-maximal element $S$.
Let $S_{\varepsilon}:=\bigvee\{V\in\mathfrak{S}\mid V\in\supp_\varepsilon(\mathcal{X})\}$ and
$\mathfrak{S}_{\varepsilon}\vcentcolon=\mathfrak{S}_{S_{\varepsilon}}$. 

We now claim that there exists $C=C(\varepsilon)$ such that $\mathcal{X}=\mathcal{N}_C(\textbf{F}_{S_{\varepsilon}})$. Let $x\in\mathcal{X}$ and consider the tuple $\vec{c}$ defined as follows:
\begin{equation*}
c_V=\begin{cases}
\pi_V(x),&\quad \forall\  V\in\mathfrak{S}_{S_{\varepsilon}};\\
\pi_V(e),& \quad\forall\ V\in\mathfrak{S}^{\perp}_{S_{\varepsilon}};\\
\rho_V^{S_{\varepsilon}}&\quad\forall\ V\pitchfork S_{\varepsilon} \text{ or } V\sqsupseteq S_{\varepsilon};
\end{cases}
\end{equation*}
where $e\in\textbf{E}_{S_{\varepsilon}}$ is a fixed, arbitrarily chosen element.

The tuple $\vec{c}$ is a $\kappa$-consistent tuple, where $\kappa$ depends only on $\varepsilon$ and the constants of the hierarchically hyperbolic space $(\mathcal{X},\mathfrak{S})$. By \cite[Theorem 3.1]{BHS2}, there exists $z\in\mathcal{X}$ such that $\pi_U(z)\asymp\pi_U(\vec{c\ })$ for every $U\in\mathfrak{S}$, and by Definition \ref{product_region} the element $z$ belongs to $\textbf{F}_{S_{\varepsilon}}\times\{e\}$. Let $s_0$ be the constant associated to the Distance Formula Theorem for the space $(\mathcal{X},\mathfrak{S})$, and consider $s>\max\{\varepsilon,s_0\}$. There exist $K,C>0$ such that
\begin{equation}\label{concreteness_make_of}
\begin{aligned}
d(x,z)&\leq K\sum_{U\in\mathfrak{S}}\lbr d_U(\pi_U(x),\pi_U(z))\rbr_s+C\\
&=K\left(\sum_{U\in\mathfrak{S}_{S_{\varepsilon}}}\lbr d_U(\pi_U(x),\pi_U(z))\rbr_s+\sum_{U\in\mathfrak{S}\setminus\mathfrak{S}_{S_{\varepsilon}}}\lbr d_U(\pi_U(x),\pi_U(z))\rbr_s\right)+C\\
&= K\sum_{U\in\mathfrak{S}\setminus\mathfrak{S}_{S_{\varepsilon}}}\lbr d_U(\pi_U(x),\pi_U(z))\rbr_s +C.
\end{aligned}
\end{equation}
Note that $d_U(\pi_U(x),\pi_U(z))\leq\varepsilon$ for every $U\in\mathfrak{S}\setminus\mathfrak{S}_{\varepsilon}$. Since $s>\varepsilon$, from Equation \eqref{concreteness_make_of} we conclude that $d(x,z)\leq C$.

To complete the proof, notice that $\textbf{F}_{S_{\varepsilon}}\times\{e\}$ can be endowed with the hierarchical hyperbolic structure $\mathfrak{S}_{S_{\varepsilon}}$. Since $\mathcal{X}=\mathcal{N}_C(\textbf{F}_{S_{\varepsilon}}\times\{e\})$, the space $(\mathcal{X},\mathfrak{S}_{S_{\varepsilon}})$ is  hierarchically hyperbolic, being quasi isometric to $\bigl(\textbf{F}_{S_{\varepsilon}}\times\{e\},\mathfrak{S}_\varepsilon\bigr)$, and it is concrete by construction.

The intersection property in $(\mathcal{X},\mathfrak{S}_{S_{\varepsilon}})$ follows from the intersection property in $(\mathcal{X},\mathfrak{S})$.
\end{proof}
\end{proposition}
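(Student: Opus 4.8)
The plan is to obtain $\mathfrak{S}_{\varepsilon}$ by truncating the index set to the part of the structure that $\mathcal{X}$ genuinely sees at scale $\varepsilon$, and then to check that this truncation does not change the metric space up to bounded Hausdorff error. Concretely, I would set $S_{\varepsilon}:=\bigvee\{V\in\mathfrak{S}\mid V\in\supp_{\varepsilon}(\mathcal{X})\}$. Since $\mathcal{X}$ is unbounded, as noted in Definition~\ref{supports} we have $\supp_{\varepsilon}(\mathcal{X})\neq\emptyset$, and then finite complexity together with the intersection property (as in the discussion after Definition~\ref{intersectionproperty_definition}) guarantees that this join is a genuine element of $\mathfrak{S}$. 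If $S_{\varepsilon}$ is the $\sqsubseteq$-maximal element $S$, take $\mathfrak{S}_{\varepsilon}:=\mathfrak{S}$ and there is nothing to prove. Otherwise $S_{\varepsilon}\sqsubsetneq S$ and I put $\mathfrak{S}_{\varepsilon}:=\mathfrak{S}_{S_{\varepsilon}}$; recall that by \cite[Proposition 5.6]{BHS2} and the product-region machinery of \cite[Section 5]{BHS2} the factor $\textbf{F}_{S_{\varepsilon}}$, with the subspace metric, is hierarchically hyperbolic with index set $\mathfrak{S}_{S_{\varepsilon}}$, and, being hierarchically quasiconvex, is quasi-isometrically embedded in $\mathcal{X}$.

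The core step is the claim that $\mathcal{X}=\mathcal{N}_{C}(\textbf{F}_{S_{\varepsilon}})$ for some $C=C(\varepsilon)$. Fixing a parallel copy $\textbf{F}_{S_{\varepsilon}}\times\{e\}$ and an arbitrary $x\in\mathcal{X}$, I would form the tuple $\vec{c}$ with $c_{V}=\pi_{V}(x)$ for $V\sqsubseteq S_{\varepsilon}$, $c_{V}=\pi_{V}(e)$ for $V\perp S_{\varepsilon}$, and $c_{V}=\rho^{S_{\varepsilon}}_{V}$ for $V\pitchfork S_{\varepsilon}$ or $S_{\varepsilon}\sqsubsetneq V$. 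Using the consistency inequalities of $(\mathcal{X},\mathfrak{S})$ together with the properties of the sets $\rho^{S_{\varepsilon}}_{V}$ and of parallel copies recalled before Lemma~\ref{lemma_inclusion1}, one checks that $\vec{c}$ is $\kappa$-consistent for a $\kappa$ depending only on $\varepsilon$ and the ambient constants. The realization theorem \cite[Theorem 3.1]{BHS2} then produces $z\in\mathcal{X}$ with $\pi_{U}(z)\asymp c_{U}$ uniformly for all $U$, and by Definition~\ref{product_region} such a $z$ lies, up to bounded error, in $\textbf{F}_{S_{\varepsilon}}\times\{e\}$. Now apply the Distance Formula to $x$ and $z$ with a threshold $s$ chosen larger than $s_{0}$, than $\varepsilon$, and than the uniform realization constant: if $U\in\mathfrak{S}_{S_{\varepsilon}}$ then $\pi_{U}(z)\asymp c_{U}=\pi_{U}(x)$, so $d_{U}(\pi_{U}(x),\pi_{U}(z))<s$; and if $U\notin\mathfrak{S}_{S_{\varepsilon}}$ then $U\notin\supp_{\varepsilon}(\mathcal{X})$ (otherwise $U\sqsubseteq S_{\varepsilon}$ by definition of the join), so $d_{U}(\pi_{U}(x),\pi_{U}(z))\leqslant\diam_{\mathcal{C}U}\pi_{U}(\mathcal{X})\leqslant\varepsilon<s$. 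Hence every surviving term of the truncated sum is $0$ and $d(x,z)\leqslant C$, proving the claim.

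With the claim in hand, the inclusion $(\textbf{F}_{S_{\varepsilon}},\mathfrak{S}_{\varepsilon})\hookrightarrow\mathcal{X}$ is a quasi-isometry, so $(\mathcal{X},\mathfrak{S}_{\varepsilon})$ is hierarchically hyperbolic (being quasi-isometric to a hierarchically hyperbolic space) and unbounded. It is $\varepsilon$-concrete because $S_{\varepsilon}$ is its $\sqsubseteq$-maximal element and, since $\supp_{\varepsilon}(\mathcal{X})\subseteq\mathfrak{S}_{S_{\varepsilon}}$, the $\varepsilon$-support is unchanged in the new structure and has join $S_{\varepsilon}$; and it satisfies the intersection property because, by $(\wedge_{4})$ and $(\wedge_{5})$, the wedge of two elements of $\mathfrak{S}_{S_{\varepsilon}}$ is again nested into $S_{\varepsilon}$, so $\wedge$ restricts to $\mathfrak{S}_{\varepsilon}$ and the axioms are inherited from $(\mathcal{X},\mathfrak{S})$. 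The main obstacle I anticipate is uniformity rather than the construction itself: one must guarantee that $\kappa$, the realization constant, the threshold $s$, and hence $C$, are all independent of the chosen point $x$, so that the truncated distance-formula sum is genuinely empty rather than merely small; the secondary bookkeeping point is checking that truncating $\mathfrak{S}$ to $\mathfrak{S}_{\varepsilon}$ leaves $\supp_{\varepsilon}(\mathcal{X})$ and its join intact, which is exactly what makes $(\mathcal{X},\mathfrak{S}_{\varepsilon})$ concrete.
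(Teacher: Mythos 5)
Your proof follows essentially the same route as the paper: define $S_{\varepsilon}$ as the join of the $\varepsilon$-support, truncate the index set to $\mathfrak{S}_{S_{\varepsilon}}$, show $\mathcal{X}=\mathcal{N}_{C}(\textbf{F}_{S_{\varepsilon}}\times\{e\})$ via the same realization tuple and the same truncated distance-formula estimate, and then transport the hierarchically hyperbolic structure back to $\mathcal{X}$ by quasi-isometry. The small additional observations you include (e.g.\ that $U\notin\mathfrak{S}_{S_{\varepsilon}}$ forces $U\notin\supp_{\varepsilon}(\mathcal{X})$, and the explicit concern about uniformity of the constants) are correct and are implicit in the paper's argument; there is no material deviation.
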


Concreteness will play an important role in Lemma \ref{lemma_concreteness} and Theorem \ref{winning_theorem}, after the proof of Theorem \ref{thmB}.

\section{General structure theorems for hierarchically hyperbolic spaces and groups}\label{Section4}
In this section we prove some general results for hierarchically hyperbolic spaces and for hieromorphisms, in particular we prove Theorem 
\ref{thmB}. All this machinery will be used in Section \ref{Section4} to prove Theorem \ref{mainT} and its corollaries.

The following lemma spells out a fact implicitly used in the proof of \cite[Theorem 8.6]{BHS2}. 

\begin{lemma}\label{support_inclusion}
Let $\mathcal{T}$ be a tree of hierarchically hyperbolic spaces with full edge hieromorphisms. If $[U]\sqsubseteq [V]$ then 
$T_{[V]}\subseteq T_{[U]}$.
\end{lemma}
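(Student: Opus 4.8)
The plan is to prove the inclusion by carrying a representative of $[U]$ from a common vertex of $T_{[U]}$ and $T_{[V]}$ out to an arbitrary vertex of $T_{[V]}$, step by step along the tree, using fullness of the edge-hieromorphisms at each step.

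First I would fix the data coming from $[U]\sqsubseteq[V]$: there is a vertex $v_0$ with representatives $U_0\in\mathfrak{S}_{v_0}$ of $[U]$ and $V_0\in\mathfrak{S}_{v_0}$ of $[V]$ satisfying $U_0\sqsubseteq V_0$, so in particular $v_0\in T_{[U]}\cap T_{[V]}$. Given an arbitrary $v\in T_{[V]}$, let $\gamma=(v_0=x_0,x_1,\dots,x_n=v)$ be the geodesic in $T$ from $v_0$ to $v$, with $e_i$ the edge joining $x_{i-1}$ to $x_i$. Since supports are subtrees of $T$, $\gamma$ lies in $T_{[V]}$; chasing the chain of $\sim_d$-relations that witnesses $V_0\sim V_v$ and cancelling backtracking steps (two consecutive steps across one edge collapse, because the maps $\phi_{e^\pm}^\lozenge$ are injective) produces a coherent family of representatives $V_{x_0}=V_0,\,V_{x_1},\dots,V_{x_n}=V_v$ of $[V]$ together with elements $V_{e_i}\in\mathfrak{S}_{e_i}$ such that $\phi_{e_i^-}^\lozenge(V_{e_i})$ and $\phi_{e_i^+}^\lozenge(V_{e_i})$ are the chosen representatives of $[V]$ at $e_i^-$ and $e_i^+$. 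I would record this as the starting point of the argument.

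Then I would prove by induction on $i$ that there is a representative $U_{x_i}\in\mathfrak{S}_{x_i}$ of $[U]$ with $U_{x_i}\sqsubseteq V_{x_i}$, the case $i=0$ being $U_0\sqsubseteq V_0$. For the inductive step, write $e=e_i$ and, up to relabelling, assume $x_{i-1}=e^-$ and $x_i=e^+$ (the other orientation is identical, as both $\phi_{e^+}$ and $\phi_{e^-}$ are full). By the inductive hypothesis $U_{x_{i-1}}\sqsubseteq V_{x_{i-1}}=\phi_{e^-}^\lozenge(V_e)$, and since $V_e$ is nested into the $\sqsubseteq$-maximal element $S_e$ of $\mathfrak{S}_e$ and $\phi_{e^-}^\lozenge$ preserves nesting, $U_{x_{i-1}}$ is nested into $\phi_{e^-}^\lozenge(S_e)$. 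Fullness of $\phi_{e^-}$ (part~(2) of Definition~\ref{fullness_definition}) then yields $U_e\in\mathfrak{S}_e$ with $\phi_{e^-}^\lozenge(U_e)=U_{x_{i-1}}$. I would next observe that $U_e\sqsubseteq V_e$: the pair $\{U_e,V_e\}$ is either equal or in exactly one of the relations $\sqsubsetneq$, $\sqsupsetneq$, $\perp$, $\pitchfork$, and since $\phi_{e^-}^\lozenge$ is injective and preserves nesting, orthogonality and transversality while $\phi_{e^-}^\lozenge(U_e)=U_{x_{i-1}}\sqsubseteq\phi_{e^-}^\lozenge(V_e)$, every possibility other than $U_e\sqsubseteq V_e$ leads to a contradiction. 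Finally put $U_{x_i}:=\phi_{e^+}^\lozenge(U_e)$; then $U_{x_{i-1}}=\phi_{e^-}^\lozenge(U_e)\sim_d\phi_{e^+}^\lozenge(U_e)=U_{x_i}$, so $U_{x_i}$ represents $[U]$, and $U_{x_i}=\phi_{e^+}^\lozenge(U_e)\sqsubseteq\phi_{e^+}^\lozenge(V_e)=V_{x_i}$, completing the induction.

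Taking $i=n$ shows $v=x_n\in T_{[U]}$; since $v\in T_{[V]}$ was arbitrary, $T_{[V]}\subseteq T_{[U]}$. The main obstacle I anticipate is the bookkeeping in the first step — extracting from the fact that supports are trees a coherent family of representatives of $[V]$ along the prescribed geodesic $\gamma$ and, crucially, normalising it to start at the given $V_0$ so that $U_0\sqsubseteq V_0$ can be fed into the induction — which is where the tree structure and backtrack-cancellation are genuinely used; the remainder is a direct application of fullness together with the elementary fact that an injective, structure-preserving map of index sets reflects nesting.
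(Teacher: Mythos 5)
Your proof is correct and follows essentially the same route as the paper's: fix a vertex $u$ witnessing $U_u\sqsubseteq V_u$, take the geodesic from $u$ to an arbitrary $v\in T_{[V]}$, and propagate a nested representative of $[U]$ along it edge by edge using fullness. You are slightly more explicit than the paper in two spots, namely (i) deriving a coherent family of edge-representatives $V_{e_i}$ along the geodesic by cancelling backtracks in a witnessing $\sim_d$-chain (the paper simply asserts their existence from connectivity of supports), and (ii) justifying $U_e\sqsubseteq V_e$ via the mutual exclusivity of the relations and injectivity of $\phi^\lozenge$ (the paper folds this into the appeal to fullness); both are sound and fill in details rather than changing the argument.
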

\begin{proof}
As $[U]\sqsubseteq [V]$, there exist a vertex $u\in T$ and representatives $U_u,V_u\in\mathfrak{S}_u$ of $[U]$ and $[V]$ respectively
such that $U_u\sqsubseteq V_u$.
Let $v\in T_{[V]}$: we will prove that $v\in T_{[U]}$. 

Let $\sigma$ be the geodesic connecting $u$ to $v$ in the tree $T$, with consecutive edges $e_1,\dots,e_k$, so that $e_1^-=u$ and $e_k^+=v$.
Since $u,v\in T_{[V]}$ and supports are connected, we conclude that $e_i^{\pm}\in T_{[V]}$ for all $i=1,\dots,k$. Therefore, there exist representatives 
$V_{e_i^{-}}$ and $V_{e_i^+}=V_{e_{i+1}^-}$ of $[V]$ in each 
index set $\mathfrak{S}_{e_i^{\pm}}$, and there exist representatives $V_{e_i}\in\mathfrak{S}_{e_i}$ in each edge space on $\sigma$ such that 
$\phi_{e_i^{\pm}}^{\lozenge}(V_{e_i})=V_{e_i^{\pm}}$.
	
Since $U_u\sqsubseteq V_u=V_{e_1^-}=\phi_{e_1^-}^{\lozenge}(V_{e_1})$, by fullness of $\phi_{e_1^-}$ (compare Definition \ref{fullness_definition}) we know 
that there exists some $U_{e_1}\in\mathfrak{S}_{e_1}$ 
such that $\phi_{e_1^{-}}^{\lozenge}(U_{e_1})=U_u$ and $U_{e_1}\sqsubseteq V_{e_1}$. Thus there exists a representative 
$U_{e_1^+}=\phi_{e_1^{+}}^{\lozenge}(U_{e_1})$ of $[U]$ in $\mathfrak{S}_{e_1^+}$.
	
As hieromorphisms respect nesting, we know that $U_{e_1^+}\sqsubseteq V_{e_1^+}$. Applying the same argument to the other edges $e_i$ of $\sigma$, we conclude 
that there exists a representative $U_v$ of $[U]$ in $\mathfrak{S}_v$ such that $U_v\sqsubseteq V_v$.

Therefore $T_{[V]}\subseteq T_{[U]}$.
\end{proof}

In general, the converse implication of Lemma \ref{support_inclusion} fails to be true. Nevertheless, in Subsection \ref{trees_with_decorations} we show that the tree $\mathcal{T}$ of hierarchically hyperbolic spaces can always be enlarged in a way so that the converse implication holds in the bigger tree $\widetilde{\mathcal{T}}$. 

We now state a lemma that will be useful later.

\begin{lemma}\label{DFlemma}
Given a full hieromorphism $\phi\colon\left(\mathcal{X},\mathfrak{S}\right)\to\left(\mathcal{X}',\mathfrak{S}'\right)$, there exist constants $K,C\geqslant 0$ and 
$s,s'>0$ such that
\begin{equation*}
\sum_{U\in\mathfrak{S}}\lbr d_U(\pi_U(x),\pi_U(y))\rbr_s\leqslant K\sum_{U'\in\phi^{\lozenge}(\mathfrak{S})}\lbr d_{U'}(\pi_{U'}(\phi(x),\pi_{U'}(\phi(y)))\rbr_{s'}+C\qquad\forall x,y\in\mathcal{X}.
\end{equation*}
\begin{proof}
For $U\in\mathfrak{S}$, we denote $\phi^\lozenge(U)$ by $U'$.
As the hieromorphism is full, there exists a uniform constant $\xi$ such that
\begin{equation}\label{equation_s_prime}
d_U\bigl(\pi_U(x),\pi_U(y)\bigr)\leqslant \xi d_{U'}\bigl(\pi_{U'}(\phi(x)),\pi_{U'}(\phi(y))\bigr)+\xi,\qquad\forall\ U\in\mathfrak{S},\ \forall x,y\in\mathcal{X}.
\end{equation}
Choose $s$ and $s'$ such that 
\begin{equation*}
s'\vcentcolon=\frac{s-\xi}{\xi}>1.
\end{equation*}
Suppose that $s\leqslant d_U\bigl(\pi_U(x),\pi_U(y)\bigr)$ for a given $U\in\mathfrak{S}$. 
Then, using Equation \eqref{equation_s_prime}, we obtain that 
\begin{equation}\label{equation_1s_prime_d}
1<s'\leqslant d_{U'}\bigl(\pi_{U'}(\phi(x)),\pi_{U'}(\phi(y))\bigr)=\lbr d_{U'}\bigl(\pi_{U'}(\phi(x)),\pi_{U'}(\phi(y))\bigr)\rbr_{s'}.
\end{equation}
As $s\leqslant d_U\bigl(\pi_U(x),\pi_U(y)\bigr)$
we have that $\lbr d_U\bigl(\pi_U(x),\pi_U(y)\bigr)\rbr_s=d_U\bigl(\pi_U(x),\pi_U(y)\bigr)$. It then follows that 
\begin{equation}\label{equation_clip_inequality}
\begin{split}
\lbr d_U\bigl(\pi_U(x),\pi_U(y)\bigr)\rbr_s&=d_U\bigl(\pi_U(x),\pi_U(y)\bigr)\leqslant  \xi d_{U'}\bigl(\pi_{U'}(\phi(x)),\pi_{U'}(\phi(y))\bigr)+\xi\\
&\leqslant \xi\lbr d_{U'}\bigl(\pi_{U'}(\phi(x)),\pi_{U'}(\phi(y))\bigr)\rbr_{s'}+\xi.
\end{split}
\end{equation}
Therefore, using Equation \eqref{equation_1s_prime_d} and Equation \eqref{equation_clip_inequality}, we obtain 
\begin{equation}\label{eq1_DFL}
\begin{split}
\lbr d_U\bigl(\pi_U(x),\pi_U(y)\bigr)\rbr_s&\leqslant \xi\lbr d_{U'}\bigl(\pi_{U'}(\phi(x)),\pi_{U'}(\phi(y))\bigr)\rbr_{s'}+\xi\\
&\leqslant \xi\lbr d_{U'}\bigl(\pi_{U'}(\phi(x)),\pi_{U'}(\phi(y))\bigr)\rbr_{s'}+\xi\lbr d_{U'}\bigl(\pi_{U'}(\phi(x)),\pi_{U'}(\phi(y))\bigr)\rbr_{s'}\\
&= 2\xi\lbr d_{U'}\bigl(\pi_{U'}(\phi(x)),\pi_{U'}(\phi(y))\bigr)\rbr_{s'}.
\end{split}
\end{equation}
On the other hand, if $s> d_U\bigl(\pi_U(x),\pi_U(y)\bigr)$ then 
\begin{equation}\label{eq2_DFL}
\lbr d_U\bigl(\pi_U(x),\pi_U(y)\bigr)\rbr_s=0\leqslant 2\xi \lbr d_{U'}\bigl(\pi_{U'}(\phi(x)),\pi_{U'}(\phi(y))\bigr)\rbr_{s'},
\end{equation}
so the inequality of Equation \eqref{eq1_DFL} is satisfied also in this case.

Concluding, we use Equation \eqref{eq1_DFL} and Equation \eqref{eq2_DFL} to obtain that
\begin{equation*}
\begin{split}
\sum_{U\in\mathfrak{S}}\lbr d_U(\pi_U(x),\pi_U(y))\rbr_s&\leqslant \sum_{U\in\mathfrak{S}}2\xi \lbr d_{U'}\bigl(\pi_{U'}(\phi(x)),\pi_{U'}(\phi(y))\bigr)\rbr_{s'}\\
&=2\xi \sum_{U\in\mathfrak{S}}\lbr d_{U'}\bigl(\pi_{U'}(\phi(x)),\pi_{U'}(\phi(y))\bigr)\rbr_{s'}\\
&=2\xi \sum_{U'\in\phi^\lozenge(\mathfrak{S})}\lbr d_{U'}\bigl(\pi_{U'}(\phi(x)),\pi_{U'}(\phi(y))\bigr)\rbr_{s'},
\end{split}
\end{equation*}
and therefore the lemma is satisfied with $K=2\xi$ and $C=0$.
\end{proof}
\end{lemma}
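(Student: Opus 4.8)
\textbf{Proof proposal for Lemma \ref{DFlemma}.}

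The plan is to reduce everything to the term-by-term comparison between $\lbr d_U(\pi_U(x),\pi_U(y))\rbr_s$ and $\lbr d_{U'}(\pi_{U'}(\phi(x)),\pi_{U'}(\phi(y)))\rbr_{s'}$, where $U'$ denotes $\phi^\lozenge(U)$, and then to sum over $U\in\mathfrak{S}$. The starting point is that a full hieromorphism has uniformly quasi-isometric maps $\phi_U^*$ and the first diagram of Equation \eqref{coarsely.commuting.diagrams} coarsely commutes; combining these two facts yields a uniform constant $\xi$ with $d_U(\pi_U(x),\pi_U(y))\leqslant \xi\, d_{U'}(\pi_{U'}(\phi(x)),\pi_{U'}(\phi(y)))+\xi$ for every $U\in\mathfrak{S}$ and all $x,y\in\mathcal{X}$. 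Note that I only need this one-sided inequality, which is cleaner than the full quasi-isometry statement.

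The main technical point is to choose the thresholds $s,s'$ compatibly so that the clipping functions $\lbr\cdot\rbr_s$ and $\lbr\cdot\rbr_{s'}$ respect the above inequality. I would set $s'=(s-\xi)/\xi$, which can be arranged to be $>1$ by taking $s$ large enough, and then split into two cases for a fixed $U$. If $d_U(\pi_U(x),\pi_U(y))\geqslant s$, then the unclipped distance upstairs equals its clipped version, and the affine inequality forces $d_{U'}(\pi_{U'}(\phi(x)),\pi_{U'}(\phi(y)))\geqslant s'>1$, so the downstairs term is also unclipped; pushing the additive $\xi$ into a second copy of that term (legitimate since the term is $\geqslant 1$) gives $\lbr d_U(\pi_U(x),\pi_U(y))\rbr_s\leqslant 2\xi\,\lbr d_{U'}(\pi_{U'}(\phi(x)),\pi_{U'}(\phi(y)))\rbr_{s'}$. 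If instead $d_U(\pi_U(x),\pi_U(y))<s$, the left side is $0$ and the inequality is trivial. Thus the bound $\lbr d_U(\pi_U(x),\pi_U(y))\rbr_s\leqslant 2\xi\,\lbr d_{U'}(\pi_{U'}(\phi(x)),\pi_{U'}(\phi(y)))\rbr_{s'}$ holds for every single $U$.

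Finally I sum this term-by-term inequality over all $U\in\mathfrak{S}$. Since $\phi^\lozenge$ is injective, the map $U\mapsto U'=\phi^\lozenge(U)$ is a bijection from $\mathfrak{S}$ onto $\phi^\lozenge(\mathfrak{S})$, so $\sum_{U\in\mathfrak{S}}(\cdots)_{s'}=\sum_{U'\in\phi^\lozenge(\mathfrak{S})}(\cdots)_{s'}$, and the lemma follows with $K=2\xi$ and $C=0$. The only genuinely delicate step is the arithmetic juggling of the two thresholds in the first case — making sure that the affine bound together with the choice $s'=(s-\xi)/\xi$ really does guarantee that the downstairs term is not clipped to zero — but this is a short computation rather than a conceptual obstacle. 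I expect no issue with convergence of the sums, since this is exactly the kind of expression controlled by the Distance Formula, and in any event the inequality is purely formal term-by-term.
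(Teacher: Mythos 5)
Your proposal is correct and follows essentially the same route as the paper's proof: establish the uniform affine bound from fullness, choose $s'=(s-\xi)/\xi>1$, argue term-by-term via the clipped/unclipped dichotomy (absorbing the additive $\xi$ into a second copy of the downstairs term since it is $\geq 1$), and then sum using injectivity of $\phi^\lozenge$, arriving at $K=2\xi$, $C=0$. No discrepancies to flag.
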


\begin{remark}\label{DFremark}
The argument of Lemma \ref{DFlemma} can be used to show that there exist constants $\bar K,\bar C\geqslant 0$ and $\bar s,\bar s'>0$ such that
\begin{equation*}
\sum_{U'\in\phi^{\lozenge}(\mathfrak{S})}\lbr d_{U'}\bigl(\pi_{U'}(\phi(x)),\pi_{U'}(\phi(y))\bigr)\rbr_{\bar{s}}\leqslant 
\bar K\sum_{U\in\mathfrak{S}}\lbr d_U(\pi_U(x),\pi_U(y))\rbr_{\bar s'}+\bar C\qquad \forall\ x,y\in\mathcal{X}.
\end{equation*}
\end{remark}

\begin{lemma}\label{useful_lemma}
Let $\phi\colon (\mathcal{X},\mathfrak{S})\to(\mathcal{X}',\mathfrak{S}')$ be a full hieromorphism and $S$ be the $\sqsubseteq$-maximal element in $\mathfrak{S}$. If $S'=\phi^{\lozenge}(S)$ and $\textbf{F}_{S'}\times\{e\}$ is a parallel copy of $\textbf{F}_{S'}$, then $\pi_{V'}(\textbf{F}_{S'}\times\{e\})$ is coarsely equal to $\pi_{V'}(\phi(\mathcal{X}))$ for all $V'\in\mathfrak{S}'_{S'}$. 
\end{lemma}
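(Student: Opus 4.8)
The plan is to unwind the definition of $\textbf{F}_{S'}$ as a space of consistent tuples and to use that $\phi$ is a full hieromorphism, so that $\phi^\lozenge$ surjects onto $\mathfrak{S}'_{S'}=\mathfrak{S}'_{\phi^\lozenge(S)}$. First I would recall that $\textbf{F}_{S'}\times\{e\}$ is, up to uniformly bounded error, the image of the realization map $\phi^\sqsubseteq\colon \textbf{F}_{S'}\to \mathcal{X}'$ from \cite[Construction 5.10]{BHS2}, so a point of $\textbf{F}_{S'}\times\{e\}$ realizes a $\kappa$-consistent tuple $(b_{V'})_{V'\in\mathfrak{S}'_{S'}}$ together with the fixed coordinates of $e$ on $\mathfrak{S}'^\perp_{S'}$ and the coordinates $\rho^{S'}_{V'}$ on everything transverse to or properly containing $S'$. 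Conversely, by the Realization Theorem \cite[Theorem 3.1]{BHS2}, every such consistent tuple is coarsely realized by a point of $\textbf{F}_{S'}\times\{e\}$. Hence for $V'\in\mathfrak{S}'_{S'}$, the set $\pi_{V'}(\textbf{F}_{S'}\times\{e\})$ is coarsely the set of all $\kappa$-consistent values that the $V'$-coordinate can take when the other coordinates on $\mathfrak{S}'_{S'}$ are allowed to vary consistently; since the container/product-region constants are uniform, this is coarsely all of $\pi_{V'}(\mathcal{X}')$ restricted appropriately — more precisely, coarsely all of $\mathcal{C}V'$ by coarse surjectivity of the projections (Remark \ref{remark_coarsely_surjective}).

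Next I would show $\pi_{V'}(\phi(\mathcal{X}))$ is also coarsely all of $\mathcal{C}V'$ for $V'\in\mathfrak{S}'_{S'}$. Here I use fullness in the second sense (Definition \ref{fullness_definition}(2)): there is $U\in\mathfrak{S}$ with $\phi^\lozenge(U)=V'$, and fullness in the first sense says $\phi^*_U\colon \mathcal{C}U\to\mathcal{C}V'$ is a $(\xi,\xi)$-quasi-isometry, hence coarsely surjective. Combined with the coarse commutativity of the first diagram in Equation \eqref{coarsely.commuting.diagrams}, namely $\phi^*_U\circ\pi_U\asymp \pi_{V'}\circ\phi$, and the fact that $\pi_U(\mathcal{X})$ is coarsely all of $\mathcal{C}U$, we get that $\pi_{V'}(\phi(\mathcal{X}))=\pi_{V'}(\phi(\mathcal{X})) \asymp \phi^*_U(\pi_U(\mathcal{X}))$ is coarsely all of $\mathcal{C}V'$. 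Therefore both $\pi_{V'}(\textbf{F}_{S'}\times\{e\})$ and $\pi_{V'}(\phi(\mathcal{X}))$ are coarsely dense in $\mathcal{C}V'$ with uniform constants, and in particular coarsely equal to each other.

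Alternatively, and perhaps more cleanly, I would argue directly that $\phi(\mathcal{X})\subseteq \textbf{F}_{S'}$ up to bounded error, so the nontrivial content is the reverse coarse containment of projections. The inclusion $\phi(\mathcal{X})\subseteq \textbf{F}_{S'}$ (coarsely) holds because, for $x\in\mathcal{X}$, the tuple $(\pi_{V'}(\phi(x)))_{V'}$ is $\kappa$-consistent (consistency is inherited along the hieromorphism by the coarse commutativity of the $\rho$-diagram in Equation \eqref{coarsely.commuting.diagrams}), and on the coordinates outside $\mathfrak{S}'_{S'}$ its values are forced: by Definition \ref{fullness_definition}(2) the image $\phi^\lozenge(\mathfrak{S})$ is exactly $\mathfrak{S}'_{S'}$, so any $W'$ transverse to or strictly containing $S'$ is not in the image, and the consistency inequalities together with the container axiom pin $\pi_{W'}(\phi(x))$ to $\rho^{S'}_{W'}$; similarly coordinates in $\mathfrak{S}'^\perp_{S'}$ are coarsely constant on $\phi(\mathcal{X})$. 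Since $\textbf{F}_{S'}\times\{e\}$ is precisely (coarsely) the set of points whose tuple looks like this, and since $\pi_{V'}$ is coarsely Lipschitz, this gives $\pi_{V'}(\phi(\mathcal{X}))\subseteq \mathcal{N}_{C}(\pi_{V'}(\textbf{F}_{S'}\times\{e\}))$ for $V'\in\mathfrak{S}'_{S'}$. For the reverse, I use the Realization Theorem as above: any point $z\in\textbf{F}_{S'}\times\{e\}$ realizes a consistent tuple whose $\mathfrak{S}'_{S'}$-coordinates could equally be obtained by composing $\phi$ with a point of $\mathcal{X}$ (using coarse surjectivity of $\pi_U$ for $U\in\mathfrak{S}$ together with fullness as in the previous paragraph), so $\pi_{V'}(z)$ lies coarsely in $\pi_{V'}(\phi(\mathcal{X}))$.

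The main obstacle I anticipate is the reverse coarse containment $\pi_{V'}(\textbf{F}_{S'}\times\{e\})\subseteq \mathcal{N}_C(\pi_{V'}(\phi(\mathcal{X})))$: one must be careful that a point of $\textbf{F}_{S'}$ realizes an \emph{arbitrary} consistent tuple on $\mathfrak{S}'_{S'}$, but $\phi(\mathcal{X})$ only realizes those consistent tuples in the image of $\phi$ — so the key is that fullness makes $\phi^*_U$ coarsely surjective on each $\mathcal{C}V'$ and $\phi^\lozenge$ a bijection $\mathfrak{S}\to\mathfrak{S}'_{S'}$, and then the coarse commutativity of Equation \eqref{coarsely.commuting.diagrams} lets one transport the whole consistent tuple back to one realized by a point of $\mathcal{X}$, whose image under $\phi$ has the desired $V'$-projection up to uniform error. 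Keeping all the constants uniform (independent of $V'$) throughout is the bookkeeping point to watch, but it follows since there are only finitely many relevant constants and $\phi$ is a hieromorphism with uniform constants.
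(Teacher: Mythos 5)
Your proposal is correct and actually contains two valid arguments. Your first (coarse-density) argument is a genuine shortcut that the paper's proof of this lemma does not take: you observe that for $V'\in\mathfrak{S}'_{S'}$ both $\pi_{V'}(\textbf{F}_{S'}\times\{e\})$ and $\pi_{V'}(\phi(\mathcal{X}))$ are coarsely dense in $\mathcal{C}V'$ with uniform constants (the former because every $\kappa$-consistent tuple on $\mathfrak{S}'_{S'}$ is coarsely the restriction of a point's tuple and $\pi_{V'}$ is coarsely surjective by Remark~\ref{remark_coarsely_surjective}; the latter from $\pi_{V'}\circ\phi\asymp\phi^*_U\circ\pi_U$ with $\phi^*_U$ a uniform quasi-isometry onto $\mathcal{C}V'$ and $\pi_U$ uniformly coarsely surjective), and two uniformly coarsely dense subsets of the same space are coarsely equal. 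This matches a fact the paper itself records in passing inside the proof of Theorem~\ref{thmB} ($5\Rightarrow 3$), namely $\pi_{U'}(\textbf{F}_{S'})\asymp\pi_{U'}(\mathcal{X}')$ for $U'\sqsubseteq S'$. It buys economy, but at the cost of giving less information: it does not produce, for a given $z\in\textbf{F}_{S'}\times\{e\}$, an explicit $\phi(x)$ whose entire $\mathfrak{S}'_{S'}$-tuple is uniformly close to that of $z$ simultaneously, which is what the paper's proof establishes and what is reused elsewhere (e.g.\ in Proposition~\ref{XqiFS} and in $5\Rightarrow 3$ of Theorem~\ref{thmB}). Your ``alternative'' argument is in fact the paper's: pull back the $\mathfrak{S}'_{S'}$-tuple of $z$ through quasi-inverses $\bar\phi^*_V$ using fullness, realize it in $\mathcal{X}$ by \cite[Theorem 3.1]{BHS2}, and push forward by $\phi$, using the coarse commutativity of Equation~\eqref{coarsely.commuting.diagrams}; and conversely extend the tuple of $\phi(x)$ by $\rho^{S'}_{W'}$ and $\pi_{W'}(e)$ off $\mathfrak{S}'_{S'}$ and realize it in $\textbf{F}_{S'}\times\{e\}$.
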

\begin{proof}
Let $z\in\textbf{F}_{S'}$ and consider the tuple $\vec{b}=\bigl(\pi_{V'}(z)\bigr)_{V'\in\mathfrak{S}'_{S'}}$. 
As $z\in  {\bf F}_{S'}$, the tuple $\vec{b}$ is $\kappa$-consistent.
The hieromorphism $\phi$ is full, therefore $\mathfrak{S}'_{S'}=\phi^\lozenge(\mathfrak{S})$ and
\[\bigl(\pi_{V'}(z)\bigr)_{V'\in\mathfrak{S}'_{S'}}=\bigl(\pi_{V'}(z)\bigr)_{V'\in\phi^\lozenge(\mathfrak{S})}.\]
As the full hieromorphism $\phi$ 
induces uniform quasi isometries $\bar{\phi}_V^*\colon \mathcal{C}V'\to\mathcal{C}V$ at the level of hyperbolic spaces, we obtain a tuple 
$\vec{a}= (a_V)_{V\in\mathfrak{S}}$, where $a_V\vcentcolon=\bar{\phi}_V^*\bigl(\pi_{V'}(z)\bigr)\subseteq \mathcal{C}V$. 

The tuple $\vec{a}$ is $\kappa'$-consistent, and therefore there exists $x\in\mathcal{X}$ that realizes it, by \cite[Theorem 3.1]{BHS2}. Exploiting the fact that the maps $\phi_V^*\circ\pi_V$ uniformly
coarsely coincide with the $\pi_{V'}\circ\phi$ (compare Definition \ref{HHS_hieromorphism} and in particular Equation \eqref{coarsely.commuting.diagrams}), 
we conclude that the element $\phi(x)$ realizes the tuple $\vec{b}$:
\begin{equation}\label{realization_points}
\bigl(\pi_{V'}(z)\bigr)_{V'\in\phi^\lozenge(\mathfrak{S})}\asymp \bigl(\pi_{V'}(\phi(x))\bigr)_{V'\in\phi^\lozenge(\mathfrak{S})}.
\end{equation}
That is, there exists a constant $T_1$ depending only on the realization Theorem \cite[Theorem 3.1]{BHS2} and the hieromorphism $\phi$ such that $d_{V'}(\pi_{V'}(z),\pi_{V'}(\phi(x)))\leq T_1$ for every $V'\in\mathfrak{S}'_{S'}$.

Conversely, let $\phi(x)\in\phi(\mathcal{X})$ and consider the tuple $\vec{c}$:
\begin{equation*}
c_{V'}=\begin{cases}
\pi_{V'}(\phi(x)),&\quad \forall\  V'\in\mathfrak{S}'_{S'};\\
\pi_{V'}(e),& \quad\forall\ V'\in\mathfrak{S}'^{\perp}_{S'};\\
\rho_{V'}^{S'}&\quad\forall\ V'\pitchfork S' \text{ or } V'\sqsupseteq S'.
\end{cases}
\end{equation*}
Since $\vec{c}\ $ is a $\kappa$-consistent tuple, there exists $z\in\mathcal{X}$ such that $\pi_V(z)\asymp\pi_V(\vec{c}\ )$, and $z$ belongs to $\textbf{F}_{S'}\times\{e\}$ by Definition \ref{product_region}. Therefore there exists $T_2$ such that $d_{V'}(\pi_{V'}(z),\pi_{V'}(\phi(x)))\leq T_2$ for every $V'\in\mathfrak{S}'_{S'}$.
\end{proof}

\begin{proposition}\label{XqiFS}
If $\phi\colon (\mathcal{X},\mathfrak{S})\to(\mathcal{X}',\mathfrak{S}')$ is a full hieromorphism between hierarchically hyperbolic spaces,
then the spaces $\mathcal{X}$ and ${\bf F}_{S'}$ are quasi isometric, where $S'$ is the image in $\mathfrak{S}'$ of the $\sqsubseteq$-maximal element 
of $\mathfrak{S}$. 
\begin{proof}

We define a map $\psi\colon {\bf F}_{S'}\to\mathcal{X}$ and we prove that it is a quasi isometry. 
Let $z\in  {\bf F}_{S'}$, and consider the tuple $\vec{b}=\bigl(\pi_{V'}(z)\bigr)_{V'\in\mathfrak{S}'_{S'}}$. 
As $z\in  {\bf F}_{S'}$, the tuple $\vec{b}$ is $\kappa$--consistent.
The hieromorphism $\phi$ is full, so that $\mathfrak{S}'_{S'}=\phi^\lozenge(\mathfrak{S})$ and
\[\bigl(\pi_{V'}(z)\bigr)_{V'\in\mathfrak{S}'_{S'}}=\bigl(\pi_{V'}(z)\bigr)_{V'\in\phi^\lozenge(\mathfrak{S})}.\]
As the full hieromorphism $\phi$ 
induces uniform quasi isometries $\bar{\phi}_V^*\colon \mathcal{C}V'\to\mathcal{C}V$ at the level of hyperbolic spaces, we obtain a tuple 
$\vec{a}= (a_V)_{V\in\mathfrak{S}}$, where $a_V\vcentcolon=\bar{\phi}_V^*\bigl(\pi_{V'}(z)\bigr)\subseteq \mathcal{C}V$.

The tuple $\vec{a}$ is $\kappa'$-consistent, and therefore there exists $x\in\mathcal{X}$ that realizes it by \cite[Theorem 3.1]{BHS2}. Exploiting the fact that the maps $\phi_V^*$ uniformly
coarsely commute with the projections $\pi_{V}$ (compare Definition \ref{HHS_hieromorphism} and in particular Equation \eqref{coarsely.commuting.diagrams}), 
we conclude that the element $\phi(x)$ realizes the tuple $\vec{b}$:
\begin{equation}\label{realization_points_1}
\bigl(\pi_{V'}(z)\bigr)_{V'\in\phi^\lozenge(\mathfrak{S})}\asymp \bigl(\pi_{V'}(\phi(x))\bigr)_{V'\in\phi^\lozenge(\mathfrak{S})}.
\end{equation}
Define $\psi(z)\vcentcolon=x$.
The element $x$ is not uniquely determined by the tuple $\vec{b}$, but it is up to uniformly bounded error.

Let us prove that $\psi$ is a quasi isometry. Indeed, let $z_1,z_2\in{\bf F}_{S'}$. Using, in this order, the Distance Formula in $\mathcal{X}'$, Remark \ref{DFremark}, and
the fact that $\phi$ is a full hieromorphism combined with the Distance Formula in ${\bf F}_{S'}$, we have that
\begin{equation}\label{quasi_isometry_1}
\begin{aligned}
d_{\mathcal{X}}(\psi(z_1),\psi(z_2))&\leqslant K\sum_{U\in\mathfrak{S}}\lbr d_U(\pi_U(\psi(z_1)),\pi_U(\psi(z_2))) \rbr_s+C\\
&\leqslant K\Bigl(K_1\sum_{U'\in\phi^\lozenge(\mathfrak{S})}\lbr d_{U'}(\pi_{U'}(z_1),\pi_{U'}(z_2) )\rbr_{\bar s}+C_1\Bigr)+C\\
&\leqslant K\bigl(K_1(K_2 d_{\mathcal{X}'}(z_1,z_2)+C_2)+C_1\bigr)+C.
\end{aligned}
\end{equation}
On the other hand, we have that
\begin{equation}\label{quasi_isometry_2}
\begin{aligned}
d_{\mathcal{X}'}(z_1,z_2)&\leqslant K_3\sum_{U'\in\phi^\lozenge(\mathfrak{S})}\lbr d_{U'}\bigl(\pi_{U'}(z_1),\pi_{U'}(z_2)\bigr)\rbr_{s'} + C_3\\
&\leqslant K_3\Bigl(K_4\sum_{U\in\mathfrak{S}}\lbr d_U\bigl(\pi_{U}(\psi(z_1)),\pi_{U}(\psi(z_2))\bigr)\rbr_{\bar{s}'}\Bigr)+ C_3\\
&\leqslant K_3\bigl(K_4(K_5d_{\mathcal{X}}(\psi(z_1),\psi(z_2))+C_5)+C_4\bigr)+C_3.
\end{aligned}
\end{equation}
Equation \eqref{quasi_isometry_1} and Equation \eqref{quasi_isometry_2} prove that $\psi$ is a quasi-isometric embedding.

Moreover, the map $\psi$ is coarsely surjective. Indeed, given an element $x\in \mathcal{X}$, the tuple $\left(\pi_{V'}(\phi(x)\right)_{V'\in
\phi^{\lozenge}(\mathfrak{S})}$ is consistent, and therefore there exists a point $z\in {\bf F}_{S'}$ coarsely realizing it, that is uniformly close to $x$. 

\end{proof}
\end{proposition}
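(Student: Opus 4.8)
The plan is to construct an explicit quasi-isometry $\psi\colon\textbf{F}_{S'}\to\mathcal{X}$ by the realization theorem \cite[Theorem 3.1]{BHS2}, and then verify the two metric estimates via the distance formula. Given $z\in\textbf{F}_{S'}$, fullness of $\phi$ gives $\mathfrak{S}'_{S'}=\phi^\lozenge(\mathfrak{S})$, so the coordinate tuple of $z$ is indexed by $\phi^\lozenge(\mathfrak{S})$; applying the uniform quasi-inverses $\overline{\phi^*_V}\colon\mathcal{C}\phi^\lozenge(V)\to\mathcal{C}V$ coordinatewise produces a tuple $\vec a=(a_V)_{V\in\mathfrak{S}}$ with $a_V=\overline{\phi^*_V}\bigl(\pi_{\phi^\lozenge(V)}(z)\bigr)$. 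First I would check that $\vec a$ is $\kappa'$-consistent: since $\phi^\lozenge$ preserves nesting, transversality and orthogonality and the second coarsely commuting square of \eqref{coarsely.commuting.diagrams} transports the $\rho$-sets, each consistency inequality for $\vec a$ follows from the corresponding inequality for the consistent tuple $(\pi_{V'}(z))_{V'}$ up to the uniform hieromorphism constants. The realization theorem then yields $x\in\mathcal{X}$, unique up to uniformly bounded error, with $\pi_V(x)\asymp a_V$ for all $V$; set $\psi(z):=x$. Since the maps $\phi^*_V$ coarsely intertwine $\pi_V$ with $\pi_{\phi^\lozenge(V)}$, it follows that $\phi(\psi(z))$ realizes $(\pi_{V'}(z))_{V'\in\phi^\lozenge(\mathfrak{S})}$ --- equivalently, this is the content of Lemma \ref{useful_lemma}.

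To see that $\psi$ is a quasi-isometric embedding, fix $z_1,z_2\in\textbf{F}_{S'}$. For the upper bound I would chain: the distance formula in $\mathcal{X}$ applied to $d_{\mathcal{X}}(\psi(z_1),\psi(z_2))$; then Remark \ref{DFremark} together with the fact that $\psi(z_i)$ realizes $\vec a$ and each $\overline{\phi^*_V}$ is a quasi-isometry, which lets one replace $\sum_{V\in\mathfrak{S}}\lbr d_V(\psi(z_1),\psi(z_2))\rbr_s$ by a comparable sum over $\phi^\lozenge(\mathfrak{S})$ of the clipped quantities $\lbr d_{V'}(z_1,z_2)\rbr_{s'}$; and finally the distance formula in $\textbf{F}_{S'}$ (whose index set is precisely $\mathfrak{S}'_{S'}=\phi^\lozenge(\mathfrak{S})$), which bounds that sum above by a linear function of $d_{\mathcal{X}'}(z_1,z_2)$. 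For the lower bound I would run the symmetric chain: the distance formula in $\textbf{F}_{S'}$, then Lemma \ref{DFlemma} (again with the quasi-isometries $\overline{\phi^*_V}$ identifying the relevant projections) to dominate the sum over $\phi^\lozenge(\mathfrak{S})$ by the distance-formula sum over $\mathfrak{S}$ evaluated at the $\psi(z_i)$, and then the distance formula in $\mathcal{X}$ again. Together these give that $\psi$ is a $(K,C)$-quasi-isometric embedding with uniform constants.

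It remains to observe that $\psi$ is coarsely surjective. Given $x\in\mathcal{X}$, the tuple $(\pi_{V'}(\phi(x)))_{V'\in\phi^\lozenge(\mathfrak{S})}$ is the restriction to $\mathfrak{S}'_{S'}$ of the genuine coordinate tuple of the point $\phi(x)\in\mathcal{X}'$, hence $\kappa$-consistent, so under the identification of $\textbf{F}_{S'}$ with consistent tuples over $\mathfrak{S}'_{S'}$ it is realized by some $z\in\textbf{F}_{S'}$; by the construction of $\psi$ the point $\psi(z)$ has coarsely the same projections as $x$ in every $\mathcal{C}V$, so the uniqueness axiom forces $d_{\mathcal{X}}(\psi(z),x)$ to be uniformly bounded. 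Hence $\psi$ is a quasi-isometry and $\mathcal{X}$ and $\textbf{F}_{S'}$ are quasi isometric.

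The step I expect to be the main obstacle is the consistency verification for $\vec a$ in the first paragraph: one must check that the sets $\rho^V_W$ (for nested or transverse $V,W\in\mathfrak{S}$) pull back under $\overline{\phi^*_W}$ to coarsely agree with $\rho^{\phi^\lozenge(V)}_{\phi^\lozenge(W)}$ pulled back --- which is exactly what the commuting diagram \eqref{coarsely.commuting.diagrams} encodes for non-orthogonal pairs, but it must be invoked uniformly across all the nesting and transversality inequalities --- and then the careful bookkeeping of the distance-formula thresholds when moving between the index sets $\mathfrak{S}$, $\phi^\lozenge(\mathfrak{S})$ and $\mathfrak{S}'_{S'}$, which is precisely what Lemma \ref{DFlemma} and Remark \ref{DFremark} are designed to handle.
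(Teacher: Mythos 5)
Your proposal follows the paper's proof essentially verbatim: the same construction of $\psi$ via the realization theorem applied to the pulled-back tuple $\vec a$, the same pair of distance-formula chains via Lemma \ref{DFlemma} and Remark \ref{DFremark} for the quasi-isometric embedding, and the same coarse surjectivity argument. The only difference is that you spell out more explicitly why $\vec a$ is consistent and why $\psi(z)$ lands near $x$ in the surjectivity step (invoking the uniqueness axiom), which the paper leaves implicit.
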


\begin{example}[Hagen] 
\label{example_Hagen}
It very well may happen that a full hieromorphism 
between hierarchically hyperbolic spaces fails to be coarsely lipschitz.

We describe such a hieromorphism $\phi\colon \bigl(\R,\{\R\}\bigr)\to \bigl(X,\mathfrak{S}\bigr)$ here, where $X$ is the Cayley graph of the free group $F_2=F(a,b)$ with respect to the free generating set $\{a,b\}$.
The structure $\mathfrak{S}$ on $X$ is given by the family
$\mathfrak{S}$ of all axes of conjugates of $a$ and of $b$, and
a $\sqsubseteq$-maximal element $M$:
\[\mathfrak{S}:=\bigl\{\bigcup_{g\in F_2}\Ax(a^g)\bigr\}\cup\bigl\{\bigcup_{g\in F_2}\Ax(b^g)\bigr\}\cup\{M\},\]
where the axis $\Ax(x)$ of an element $x$ is defined to be the set of vertices of $X$ with minimal displacement with respect to $x$, that is
$\Ax(x):=\{y\in F_2\mid d_X(y,xy)\text{ is minimal}\}$.

In $\mathfrak{S}$ any two different axes are transverse, and everything is nested into $M$. The hyperbolic spaces associated to the axes are their corresponding lines in $X$, and $\mathcal{C}M$ is obtained from $X$ by coning off all these axes. 

The projections $\pi_{\Ax(x^g)}\colon F_2\to 2^{\Ax(x^g)}$ are given by closest-point projections, for all $x=a,b$ and $g\in F_2$, as well as the $\rho$ maps between two axes. The sets $\rho_M^{\Ax(x^g)}$ are the inclusion of the axis into the coned-off Cayley graph.

\smallskip
The map $\phi$ is defined as follows. At the level of metric spaces, $\phi$ maps $\R$ homeomorphically into $X$ in the following way. For $n\in\Z$, the segment $[n,n+1]\subseteq \R$ is mapped to the geodesic path that connects $a^nb^n$ to $a^{n+1}b^{n+1}$ in $X$. For this reason the map $\phi$ is not coarsely lipschitz, because the segment $[n,n+1]\subseteq \R$, which has length one, is mapped to a geodesic path of length $2n+2$ in $X$.

The map $\phi^\lozenge\colon\{\R\}\to\mathfrak{S}$ is defined as $\phi^\lozenge(\R)=\Ax(a)$, whilst
the map $\phi^*_\R\colon \R\to\Ax(a)$ is the isometry such that $\phi^*_\R(0)=e$ and $\phi^*_\R(1)=a$.

It can be checked that $\phi$ is a hieromorphism, and that it is full. Moreover,
$\phi(\R)$ is hierarchically quasiconvex inside $\bigl(X,\mathfrak{S}\bigr)$.
\end{example}

We now prove Theorem \ref{thmB} from the Introduction.
\begin{thmB}
Let $\phi\colon(\mathcal{X},\mathfrak{S})\to(\mathcal{X}',\mathfrak{S}')$ be a full hieromorphism with hierarchically quasiconvex image, and let 
$S$ be the $\sqsubseteq$-maximal element of $\mathfrak{S}$. 
The following are equivalent:
\begin{enumerate}
 \item $\phi$ is coarsely lipschitz;
 \item $\phi$ is a quasi-isometric embedding;
  \item the maps $\mathfrak{g}_{\phi(\mathcal{X})}\colon{\textbf F}_{\phi^\lozenge(S)}\to\phi(\mathcal{X})$ and $\mathfrak{g}_{{\textbf F}_{\phi^\lozenge(S)}}\colon\phi(\mathcal{X})\to\textbf{F}_{\phi^\lozenge(S)}$ are quasi-inverses of each other, and in particular quasi isometries;
 \item the subspace $\phi(\mathcal{X})\subseteq \mathcal{X}'$, endowed with the subspace metric, admits a hierarchically hyperbolic structure obtained by from 
 one of $\mathcal{X}$ by composition with the map $\phi$ (and its induced maps at the level of hyperbolic spaces);
 \item $\pi_{W}(\phi(\mathcal{X}))$ is uniformly bounded for every $W\in\mathfrak{S}'\setminus\phi^\lozenge(\mathfrak{S})$.
\end{enumerate}
\end{thmB}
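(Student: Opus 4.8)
We prove the five statements equivalent by running the cycle
$(1)\Rightarrow(5)\Rightarrow(3)\Rightarrow(2)\Rightarrow(4)\Rightarrow(1)$ (so in particular the headline implication $(1)\Rightarrow(2)$; note also that $(2)\Rightarrow(1)$ is immediate). Write $S'=\phi^\lozenge(S)$; by fullness $\phi^\lozenge(\mathfrak S)=\mathfrak S'_{S'}$. We will repeatedly use three facts. First, both $\phi(\mathcal X)$ and $\textbf F_{S'}$ are hierarchically quasiconvex in $\mathcal X'$, hence each carries, with the subspace metric, a hierarchically hyperbolic structure with index set $\mathfrak S'$, is quasi-isometrically embedded in $\mathcal X'$, and satisfies the Distance Formula (\cite[Proposition~5.6]{BHS2}). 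Second, by Lemma~\ref{useful_lemma}, $\pi_{U'}(\phi(\mathcal X))$ coarsely coincides with $\pi_{U'}(\textbf F_{S'})$ for every $U'\in\mathfrak S'_{S'}$, while $\pi_W(\textbf F_{S'}\times\{e\})$ has diameter at most $\max\{\xi,\alpha\}$ for every $W\in\mathfrak S'\setminus\mathfrak S'_{S'}$. Third, if all $\phi^\lozenge(\mathfrak S)$-coordinates of two points $\phi(x),\phi(x')$ coarsely agree then, by the coarse commutation of the hieromorphism diagrams and the fact that the $\phi^*_U$ are quasi-isometries, all $\mathfrak S$-coordinates of $x,x'$ coarsely agree, so $d_{\mathcal X}(x,x')$ is uniformly bounded by the Uniqueness axiom in $\mathcal X$. (If $\mathcal X$ is bounded all five statements hold trivially, so we assume $\mathcal X$ unbounded.)

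For $(1)\Rightarrow(5)$, fix $W\in\mathfrak S'\setminus\mathfrak S'_{S'}$ and $p\in\phi(\mathcal X)$, and set $p'=\mathfrak g_{\phi(\mathcal X)}\bigl(\mathfrak g_{\textbf F_{S'}}(p)\bigr)$. Using the gate-projection property together with the first two facts above, $\pi_{U'}(p')$ coarsely equals $\pi_{U'}(p)$ for every $U'\in\phi^\lozenge(\mathfrak S)$; by the third fact, $d_{\mathcal X}(x,x')$ is uniformly bounded, where $p=\phi(x)$ and $p'=\phi(x')$. This is the only point where hypothesis $(1)$ enters: since $\phi$ is coarsely lipschitz, $d_{\mathcal X'}(p,p')$, and hence $d_W(\pi_W(p),\pi_W(p'))$, is uniformly bounded. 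On the other hand $\pi_W(p')$ is, up to uniform error, the closest-point projection onto the quasiconvex set $\pi_W(\phi(\mathcal X))$ of $\pi_W(\mathfrak g_{\textbf F_{S'}}(p))\in\pi_W(\textbf F_{S'})$, a set of diameter at most $\max\{\xi,\alpha\}$; so $\pi_W(p')$ lies in a region of diameter bounded independently of $p$. Combining, $\pi_W(\phi(\mathcal X))$ is uniformly bounded, which is $(5)$. (When $W\perp S'$ there is no relation $\rho^{S'}_W$; one simply uses the diameter bound $\alpha$ on $\pi_W$ of a fixed parallel copy of $\textbf F_{S'}$.)

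For $(5)\Rightarrow(3)$, one checks that $\mathfrak g_{\phi(\mathcal X)}\colon\textbf F_{S'}\to\phi(\mathcal X)$ and $\mathfrak g_{\textbf F_{S'}}\colon\phi(\mathcal X)\to\textbf F_{S'}$ are quasi-inverse: both composites preserve every coordinate up to uniform error — on $\mathfrak S'_{S'}$ by the gate property and Lemma~\ref{useful_lemma}, and on $\mathfrak S'\setminus\mathfrak S'_{S'}$ because there the projections of $\textbf F_{S'}$ are uniformly bounded and, by $(5)$, so are those of $\phi(\mathcal X)$ — hence the Uniqueness axiom, applied inside $\textbf F_{S'}$ and inside $\phi(\mathcal X)$ respectively, shows each composite is uniformly close to the identity; as gate maps are coarsely lipschitz and coarsely surject onto their targets, they are quasi-isometries. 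For $(3)\Rightarrow(2)$, chain the quasi-isometry $\phi(\mathcal X)\simeq\textbf F_{S'}$ of $(3)$ with Proposition~\ref{XqiFS} (which gives $\mathcal X\simeq\textbf F_{S'}$) and with the quasi-isometric embedding $\phi(\mathcal X)\hookrightarrow\mathcal X'$; expanding everything through the Distance Formula in $\textbf F_{S'}$, the gate property, Lemma~\ref{DFlemma}, Remark~\ref{DFremark}, fullness and the Distance Formula in $\mathcal X$ yields $d_{\mathcal X'}(\phi(x),\phi(y))\asymp d_{\mathcal X}(x,y)$. For $(2)\Rightarrow(4)$: if $\phi$ is a quasi-isometric embedding then $\phi\colon\mathcal X\to\phi(\mathcal X)$ is a quasi-isometry onto the subspace, so $\phi(\mathcal X)$ with the subspace metric inherits the hierarchically hyperbolic structure of $\mathcal X$ transported along $\phi$, which by the hieromorphism diagrams is precisely the structure described in $(4)$. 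For $(4)\Rightarrow(1)$: with that structure, $\phi\colon\mathcal X\to\phi(\mathcal X)$ identifies the two index sets and induces the quasi-isometries $\phi^*_U$, so the two Distance Formulas (again via Lemma~\ref{DFlemma} and Remark~\ref{DFremark}) force $d_{\phi(\mathcal X)}(\phi(x),\phi(y))\asymp d_{\mathcal X}(x,y)$; since the subspace metric on $\phi(\mathcal X)$ is the restriction of $d_{\mathcal X'}$, the map $\phi$ is in particular coarsely lipschitz.

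The step carrying the real content is $(1)\Rightarrow(5)$: every other implication is a more or less formal manipulation of the Distance Formula and of gate maps, whereas $(1)\Rightarrow(5)$ is where coarse lipschitzness is genuinely used, through the gate-composition device, and the delicate point is that all the constants involved — the gate constants, the Uniqueness function of $\mathcal X$, the quasi-isometry constants of the $\phi^*_U$, the quasiconvexity constants, and the bound $\max\{\xi,\alpha\}$ on the off-$\mathfrak S'_{S'}$ projections of $\textbf F_{S'}$ — must be shown uniform, so that the resulting bound on $\pi_W(\phi(\mathcal X))$ depends on neither $p$ nor $W$.
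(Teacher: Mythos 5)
Your proof is correct, and the cycle $(1)\Rightarrow(5)\Rightarrow(3)\Rightarrow(2)\Rightarrow(4)\Rightarrow(1)$ is a valid organization. It uses the same basic toolkit as the paper (Lemma~\ref{DFlemma}, Remark~\ref{DFremark}, Lemma~\ref{useful_lemma}, the Distance Formula, gate maps), and your $(5)\Rightarrow(3)$, $(3)\Rightarrow(2)$, $(2)\Rightarrow(4)$, $(4)\Rightarrow(1)$ closely parallel the paper's arguments for $5\Leftrightarrow 3$, the $1\Leftrightarrow 2$ computation, and $2\Rightarrow 4\Rightarrow 1$.

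What is genuinely different is how you use hypothesis $(1)$. The paper never proves $(1)\Rightarrow(5)$ directly: its only implication starting from $(1)$ is $(1)\Rightarrow(2)$, and it is a Distance-Formula manipulation (bounding $d_{\mathcal X}(x,y)$ above via the Distance Formulas in $\mathcal X$ and $\mathcal X'$ together with Lemma~\ref{DFlemma}, then pairing with the coarse-lipschitz hypothesis); the paper then reaches $(5)$ via $(2)\Rightarrow(3)\Rightarrow(5)$. You instead prove $(1)\Rightarrow(5)$ with a gate-composition argument: for $p\in\phi(\mathcal X)$ you form $p'=\mathfrak g_{\phi(\mathcal X)}\circ\mathfrak g_{\textbf F_{S'}}(p)$, show its $\phi^\lozenge(\mathfrak S)$-coordinates coarsely agree with those of $p$, pull back via the quasi-isometries $\phi^*_U$ and apply Uniqueness in $\mathcal X$ to bound $d_{\mathcal X}(x,x')$, and only then invoke coarse-lipschitzness of $\phi$ to bound $d_{\mathcal X'}(p,p')$ and hence $d_W(\pi_W(p),\pi_W(p'))$; since $\pi_W(p')$ lands in the closest-point projection onto $\pi_W(\phi(\mathcal X))$ of the uniformly bounded set $\pi_W(\textbf F_{S'})$, this confines $\pi_W(\phi(\mathcal X))$ to a bounded region. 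The composite $\mathfrak g_{\phi(\mathcal X)}\circ\mathfrak g_{\textbf F_{S'}}$ is exactly the device the paper deploys in its $5\Leftrightarrow 3$ step, so you have re-purposed the paper's own mechanism to furnish the implication $(1)\Rightarrow(5)$ that the paper obtains only indirectly. The paper's Distance-Formula route is perhaps easier to bookkeep constant-wise; yours isolates more transparently the single place where coarse-lipschitzness is actually needed, which is useful for understanding why the equivalence fails without it (compare Example~\ref{example_Hagen}). Two small stylistic points: in $(3)\Rightarrow(2)$ the phrase \textquotedblleft chain the quasi-isometry\textquotedblright\ could read as if composing $\psi\circ\mathfrak g_{\textbf F_{S'}}$ directly gave $\phi$, which is not what you mean; the actual Distance-Formula computation you sketch afterwards is the correct argument, and Proposition~\ref{XqiFS} is not needed there. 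And in $(1)\Rightarrow(5)$ you should make explicit that $\textbf F_{S'}$ denotes a fixed parallel copy (as the paper's abuse of notation $\textbf F_U=\im\phi^\sqsubseteq$ guarantees), since otherwise the bound on $\pi_W(\textbf F_{S'})$ would fail.
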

\begin{proof}
The implications $3\Leftrightarrow 5\Rightarrow 1 \Leftrightarrow 2\Rightarrow 4\Rightarrow 1$ and $2\Rightarrow 3$ are enough to prove the theorem.
\item[\fbox{$5\Rightarrow 1$}] By the Distance Formula applied in $(\mathcal{X}',\mathfrak{S}')$, there exists $s_0$ such that for every $s>s_0$ there exists $K',C'\geqslant 0$ for which 
\begin{equation}\label{DFinequality}
d_{\mathcal{X}'}(\phi(x),\phi(y))\leq K'\sum_{V\in\mathfrak{S}'}\lbr d_V\left(\pi_{V}(\phi(x)),\pi_{V}(\phi(y))\right)\rbr_s+C'\qquad \forall x,y\in\mathcal{X}.
\end{equation}
Also, the Distance Formula applied in $(\mathcal{X},\mathfrak{S})$ implies that there exists $s_1$ such that for every $s>s_1$ there exist $K,C\geqslant 0$ 
for which 
\begin{equation}\label{DFinequality2}
d_{\mathcal{X}}(x,y)\geq K^{-1}\sum_{U\in\mathfrak{S}}\lbr d_U(\pi_U(x),\pi_U(y))\rbr_s-C\qquad\forall x,y\in\mathcal{X}.
\end{equation}
Now let $x,y\in\mathcal{X}$. By hypothesis $\pi_W\left(\phi(\mathcal{X})\right)$ is uniformly bounded for every 
$W\in\mathfrak{S}'\setminus\phi^\lozenge(\mathfrak{S})$. Let $M$ be this uniform bound, and choose $s$ such that $s>\max\{M,s_0\}$. Therefore
\begin{equation*}
\sum_{V\in\mathfrak{S}'}\lbr d_V\left(\pi_V(\phi(x)),\pi_V(\phi(y))\right)\rbr_s=\sum_{U'\in\phi^{\lozenge}(\mathfrak{S})}\lbr d_{U'}
\left(\pi_{U'}(\phi(x)),\pi_{U'}(\phi(y))\right)\rbr_s
\end{equation*}
and Equation \eqref{DFinequality} implies that
\begin{equation*}
d_{\mathcal{X}'}(\phi(x),\phi(y))\leq K'\sum_{U'\in\phi^{\lozenge}(\mathfrak{S})}\lbr d_{U'}\left(\pi_{U'}(\phi(x)),\pi_{U'}(\phi(y))\right)\rbr_s+C'.
\end{equation*}
Using Remark \ref{DFremark}, we can choose $\bar{s}, \bar{s}'>s_1$ and $\bar{K},\bar{C}\geqslant 0$ for which 
\begin{equation*}
\sum_{U'\in\phi^{\lozenge}(\mathfrak{S})}\lbr d_{U'}\left(\pi_{U'}(\phi(x)),\pi_{U'}(\phi(y))\right)\rbr_{\bar{s}}\leq \bar{K}\sum_{U\in\mathfrak{S}}\lbr d_U\left( \pi_U(x),\pi_U(y)\right)\rbr_{\bar s'}+\bar{C}.
\end{equation*}
By taking $\tilde{s}=\max\lbrace s_0,\bar{s}\rbrace$ we get 
\begin{equation*}\begin{aligned}\sum_{U'\in\phi^{\lozenge}(\mathfrak{S})}\lbr d_{U'}\left(\pi_{U'}(\phi(x)),\pi_{U'}(\phi(y))\right)\rbr_{\tilde{s}}&\leq \sum_{U'\in\phi^{\lozenge}(\mathfrak{S})}\lbr d_{U'}\left(\pi_{U'}(\phi(x)),\pi_{U'}(\phi(y))\right)\rbr_{\bar{s}}\\
&\leq \bar{K}\sum_{U\in\mathfrak{S}}\lbr d_{U}\left(\pi_{U}(x),\pi_{U}(y)\right)\rbr_{\bar s'}+\bar{C}.\end{aligned}\end{equation*}
As $\bar s'>s_1$, by the Distance Formula, Equation \eqref{DFinequality} and Equation \eqref{DFinequality2} we obtain
\begin{equation*}
\begin{aligned}
d_{\mathcal{X}'}(\phi(x),\phi(y))&\leq K'\sum_{U'\in\phi^{\lozenge}(\mathfrak{S})}\lbr d_{U'}\left(\pi_{U'}(\phi(x)),\pi_{U'}(\phi(y))\right)\rbr_{\tilde{s}}+C'\\
&\leq K'\bar{K}\sum_{U\in\mathfrak{S}}\lbr d_U\left(\pi_U(x),\pi_U(y)\right)\rbr_{\bar s'}+K'\bar{C}+C'\\
&\leq K'\bar{K}\left(Kd_{\mathcal{X}}(x,y)+KC\right)+K'\bar{C}+C'=Rd_{\mathcal{X}}(x,y)+R'
\end{aligned}
\end{equation*}
for appropriate constants $R$ and $R'$. Therefore, $\phi$ is a coarsely lipschitz map.

\item[\fbox{$1\Leftrightarrow 2$}]
If $\phi$ is a quasi-isometric embedding, then it is a coarsely lipschitz map.

Suppose now that $\phi$ is a coarsely lipschitz map. To conclude that it is a quasi-isometric embedding, we need to prove that there exist constants 
$K,C\geq 0$ such that $d_{\mathcal{X}}(x,y)\leq Kd_{\mathcal{X}'}(\phi(x),\phi(y))+C$ for every $x,y\in\mathcal{X}$.

By the Distance Formula applied in $\left(\mathcal{X},\mathfrak{S}\right)$, there exists $s_0$ so that for every $s\geqslant s_0$ there exist $K_1,C_1\geq 0$ so that
\begin{equation*}
d_{\mathcal{X}}(x,y)\leq K_1\sum_{U\in\mathfrak{S}}\lbr d_U(\pi_U(x),\pi_U(y))\rbr_s+C_1,\qquad \forall x,y\in\mathcal{X}.
\end{equation*}
Also by the Distance Formula applied to $(\mathcal{X}',\mathfrak{S}')$, there exists $s_1$ so that for every $s\geq s_1$ there exist $K_2,C_2\geq 0$ so that 
\begin{equation*}
d_{\mathcal{X}'}(\phi(x),\phi(y))\geq K_2^{-1}\sum_{W\in\mathfrak{S}'}\lbr d_W(\pi_W(\phi(x)),\pi_W(\phi(y)))\rbr_s-C_2,\qquad \forall x,y\in\mathcal{X}.
\end{equation*}
By Lemma \ref{DFlemma}, we can choose $\bar{s},\bar{s}'>s_1$ and $\bar{K},\bar{C}\geqslant 0$ such that 
\begin{equation*}
\begin{aligned}
\sum_{U\in\mathfrak{S}}\lbr d_U\left(\pi_U(x),\pi_U(y)\right)\rbr_{\bar{s}}&\leq\bar{K}\sum_{U'\in\phi^{\lozenge}(\mathfrak{S})}\lbr d_{U'}(\pi_{U'}(\phi(x)),\pi_{U'}(\phi(y)))\rbr_{\bar{s}'}+\bar{C}\\
&\leq \bar{K}\sum_{W\in\mathfrak{S}'}\lbr d_W(\pi_W(\phi(x)),\pi_W(\phi(y)))\rbr_{\bar{s}'}+\bar{C},\qquad\forall x,y\in\mathcal{X}.
\end{aligned}
\end{equation*}
Let $s=\max\lbrace s_0,\bar{s}\rbrace$. Since $s\geq s_0$ and $s\geq \bar{s}$, for any $x,y\in\mathcal{X}$ we obtain that
\begin{equation*}
\begin{aligned}
d_{\mathcal{X}}(x,y)&\leq K_1\sum_{U\in\mathfrak{S}}\lbr d_U(\pi_U(x),\pi_U(y))\rbr_s+C_1\leq K_1\sum_{U\in\mathfrak{S}}\lbr d_U(\pi_U(x),\pi_U(y))\rbr_{\bar{s}}+C_1\\
&\leq K_1\left(\bar{K}\sum_{W\in\mathfrak{S}'}\lbr d_W(\pi_W(\phi(x)),\pi_W(\phi(y)))\rbr_{\bar{s}'}+\bar{C}\right)+C_1\\
&\leq K_1\bar{K}\left(K_2d_{\mathcal{X}'}(\phi(x),\phi(y))+\bar{K} C_2\right)+K_1\bar{C}+C_1\\
&=Sd_{\mathcal{X}'}(\phi(x),\phi(y))+S'
\end{aligned}
\end{equation*}
for appropriate constants $S$ and $S'$.
Therefore, $\phi$ is a quasi-isometric embedding.
\item[\fbox{$2\Rightarrow 4$}]
If the map $\phi$ is a quasi-isometric embedding then $(4)$ is automatically satisfied, because hierarchical hyperbolicity is preserved under quasi 
isometries (compare with the remark before \cite[Theorem G]{BHS1}).

\item[\fbox{$4\Rightarrow 1$}]
As the hieromorphism is full, every induced map $\phi^*_U\colon\mathcal{C}U\to\mathcal{C}(\phi^{\lozenge}(U))$ is a $(\xi,\xi)$-quasi isometry, where $\xi$ is independent of $U\in\mathfrak{S}$, that is
\begin{equation*}\label{equation_boh}
\xi^{-1}d_U(\pi_U(x),\pi_U(y)) -\xi\leqslant d_{\phi^{\lozenge}(U)}(\phi^*_U(\pi_U(x)),\phi^*_U(\pi_U(y)))\leqslant \xi d_U(\pi_U(x),\pi_U(y))+\xi
\end{equation*}
for all $U\in\mathfrak{S}$ and for all $x,y\in\mathcal{X}$.

By the Distance Formula applied in $\left(\mathcal{X},\mathfrak{S}\right)$, there exists $s_0$ such that for every $s\geqslant s_0$ there exist $K_1,C_1\geq 0$ satisfying
\begin{equation}\label{4->2_first}
d_{\mathcal{X}}(x,y)\geqslant K_1^{-1}\sum_{U\in\mathfrak{S}}\lbr d_U(\pi_U(x),\pi_U(y))\rbr_s-C_1,\qquad \forall x,y\in\mathcal{X}.
\end{equation}
We apply now the Distance Formula to the hierarchically hyperbolic space $(\phi(\mathcal{X}),\phi^{\lozenge}(\mathfrak{S}))$. 
Therefore, there exists $s_1$ such that for every $s\geqslant s_1$ there exist $K_2,C_2\geq 0$ satisfying 
\begin{equation}\label{4->2_second}
d_{\mathcal{X}'}(\phi(x),\phi(y))\leqslant K_2\sum_{U'\in\phi^{\lozenge}(\mathfrak{S})}\lbr d_{U'}(\pi_{U'}(\phi(x)),\pi_{U'}(\phi(y)))\rbr_s+C_2,\qquad 
\forall x,y\in\mathcal{X}.
\end{equation}
By Remark \ref{DFremark}, we can choose $\bar{s}, \bar{s}'>s_0$ and $\bar{K},\bar{C}\geqslant 0$ for which 
\begin{equation}\label{4->2_third}
\sum_{U'\in\phi^{\lozenge}(\mathfrak{S})}\lbr d_{U'}\left(\pi_{U'}(\phi(x)),\pi_{U'}(\phi(y))\right)\rbr_{\bar{s}}\leq \bar{K}\sum_{U\in\mathfrak{S}}
\lbr d_U\left( \pi_U(x),\pi_U(y)\right)\rbr_{\bar s'}+\bar{C},\qquad \forall x,y\in\mathcal{X}.
\end{equation}
For $s=\max\lbrace s_1,\bar{s}\rbrace$, combining Equation \eqref{4->2_first}, Equation \eqref{4->2_second}, and Equation \eqref{4->2_third}, we obtain that
\begin{equation*}
\begin{aligned}
d_{\mathcal{X}'}(\phi(x),\phi(y))&\leq K_2\sum_{U'\in\phi^{\lozenge}(\mathfrak{S})}\lbr d_{U'}(\pi_{U'}(\phi(x)),\pi_{U'}(\phi(y)))\rbr_s+C_2\\
&\leq K_2\left(\bar{K}\sum_{U\in\mathfrak{S}}\lbr d_U(\pi_U(x),\pi_U(y))\rbr_{\bar{s}'}+\bar{C}\right) +C_2\\
&\leq K_2\bar{K}\left(K_1 d_{\mathcal{X}}(x,y)+K_1C_1\right)+K_2\bar{C}+C_2 = Td_{\mathcal{X}}(x,y)+T'
\end{aligned}
\end{equation*}
for appropriate constants $T$ and $T'$. Therefore, $\phi$ is a coarsely lipschitz map.

\item[\fbox{$3\Rightarrow 5$}]
By hypothesis, $\mathfrak{g}_{\textbf{F}_{S'}}\colon \phi(\mathcal{X})\to\textbf{F}_{S'}$ and $\mathfrak{g}_{\phi(\mathcal{X})}:\textbf{F}_{S'}\to\phi(\mathcal{X})$ are quasi inverses of each other, and by construction of gate maps they are also coarsely lipschitz. 
Therefore $\textbf{F}_{S'}$ and $\phi(\mathcal{X})$ are quasi-isometric, where the quasi-isometry is given by $\mathfrak{g}_{\textbf{F}_{S'}}$, and in particular there exists $C>0$ such that 

\[\phi(\mathcal{X})\subseteq\mathcal{N}_C(\mathfrak{g}_{\phi(\mathcal{X})}(\textbf{F}_{S'})).\]
Let $W\in\mathfrak{S}'\setminus\phi^{\lozenge}(\mathfrak{S})$. By the previous inclusion, there exists $C'>0$, depending on $C$ and on $\pi_{W}$, such that 
\begin{equation}\label{coarse_surjection}
\pi_W(\phi(\mathcal{X}))\subseteq\mathcal{N}_{C'}\bigl(\pi_W(\mathfrak{g}_{\phi(\mathcal{X})}(\textbf{F}_{S'}))\bigr).
\end{equation}
Since the hieromorphism $\phi$ is full, $\phi^{\lozenge}(\mathfrak{S})=\mathfrak{S}'_{S'}$. Moreover, by construction of gate maps, the set $\pi_W(\mathfrak{g}_{\phi(\mathcal{X})}(\textbf{F}_{S'}))$ is uniformly coarsely equal to $p_{\pi_W(\phi(\mathcal{X}))}(\pi_W(\textbf{F}_{S'}))$, where $p_{\pi_W(\phi(\mathcal{X}))}$ is the closest-point projection in $\mathcal{C}W$ to the quasiconvex subspace $\pi_W(\phi(\mathcal{X}))$. Since $W\in\mathfrak{S}'\setminus\mathfrak{S}'_{S'}$, we have that $\diam(\pi_W(\textbf{F}_{S'}))\leq\alpha$ by \cite[Construction 5.10]{BHS2} and, as a consequence, that there exists $\alpha'$ such that $\diam(\pi_W(\mathfrak{g}_{\phi(\mathcal{X})}(\textbf{F}_{S'})))\leq\alpha'$. The first condition of the theorem follows from this, and Equation \eqref{coarse_surjection}.

\item[\fbox{$5\Rightarrow 3$}] We claim that there exists $M>0$ such that 
\[d_{\mathcal{X}'}(\mathfrak{g}_{\textbf{F}_{S'}}\circ\mathfrak{g}_{\phi(\mathcal{X})}(z),z)\leq M,\qquad d_{\mathcal{X}'}(\mathfrak{g}_{\phi(\mathcal{X})}\circ\mathfrak{g}_{\textbf{F}_{S'}}(y),y)\leq M, \qquad \forall z\in\textbf{F}_{S'},\ \forall y\in\phi(\mathcal{X}).\]
By applying the Distance Formula to the space $(\mathcal{X}',\mathfrak{S}')$, there exists $s_0$ such that for every $s\geq s_0$ there exist $\overline{K}_1,\overline{C}_1>0$ such that
\[
d_{\mathcal{X}'}(\mathfrak{g}_{\textbf{F}_{S'}}\circ\mathfrak{g}_{\phi(\mathcal{X})}(z),z)\leq\overline{K}_1\sum_{U'\in\mathfrak{S}'}\lbr d_{U'}(\pi_{U'}(\mathfrak{g}_{\textbf{F}_{S'}}\circ\mathfrak{g}_{\phi(\mathcal{X})}(z)),\pi_{U'}(z))\rbr_s+\overline{C}_1,
\qquad \forall z\in\textbf{F}_{S'}.\]
By Lemma \ref{lemma_inclusion1}, $\diam(\pi_W(\textbf{F}_{S'}))\leq \varepsilon$ for every $W\in\mathfrak{S'}\setminus\mathfrak{S'}_{S'}$ for an appropriate $\varepsilon>0$. For $s\geqslant \max\{s_0,\varepsilon\}$ and the previous equation, it follows that
\begin{equation}\label{implication_5_1}
d_{\mathcal{X}'}(\mathfrak{g}_{\textbf{F}_{S'}}\circ\mathfrak{g}_{\phi(\mathcal{X})}(z),z)\leq\overline{K}_1\sum_{U'\in\mathfrak{S}_{S'}'}\lbr d_{U'}(\pi_{U'}(\mathfrak{g}_{\textbf{F}_{S'}}\circ\mathfrak{g}_{\phi(\mathcal{X})}(z)),\pi_{U'}(z))\rbr_s+\overline{C}_1,\qquad \forall z\in\textbf{F}_{S'}.
\end{equation}
For $z\in\textbf{F}_{S'}$, using the fact that $\mathfrak{g}_{\textbf{F}_{S'}}(z)=z$, we obtain
\begin{equation}
\begin{aligned}d_{U'}\bigl(\pi_{U'}(\mathfrak{g}_{\textbf{F}_{S'}}\circ\mathfrak{g}_{\phi(\mathcal{X})}(z)),\pi_{U'}(z)\bigr)&=d_{U'}(\pi_{U'}(\mathfrak{g}_{\textbf{F}_{S'}}\circ\mathfrak{g}_{\phi(\mathcal{X})}(z)),\pi_{U'}(\mathfrak{g}_{\textbf{F}_{S'}}(z)))\leq\\
&\leq d_{U'}(p(\pi_{U'}\circ\mathfrak{g}_{\phi(\mathcal{X})}(z)),p(\pi_{U'}(z)))+2k\\
&\leq k'd_{U'}(\pi_{U'}(\mathfrak{g}_{\phi(\mathcal{X})}(z),\pi_{U'}(z))+c'+2k,
\end{aligned}
\end{equation}
where $p\colon \mathcal{C}U'\to\pi_{U'}(\textbf{F}_{S'})$ is the closest-point projection to the quasiconvex subspace $\pi_{U'}(\textbf{F}_{S'})\subseteq \mathcal{C}U'$, and $k',c'$ denote the multiplicative and additive constants associated to the coarsely lipschitz map $p$, and $k$ denotes the Hausdorff distance between the (uniformly) coarsely equal sets $\pi_{W}(\mathfrak{g}_{\textbf{F}_{S'}}(x))$ and $p(\pi_W(x))$, for every $x\in~\mathcal{X'}$.

By Lemma \ref{useful_lemma} there exists a constant $T>0$ such that for every $z\in\textbf{F}_{S'}$ there exists $\phi(x)\in\phi(\mathcal{X})$ for which $d_{U'}(\pi_{U'}(\phi(x)),\pi_{U'}(z))\leq T$ for every $U'\in\mathfrak{S}'_{S'}$. Since $\pi_{U'}(\mathfrak{g}_{\phi(\mathcal{X})}(z))$ coarsely equals $p_{\pi_{U'}(\phi(\mathcal{X}))}(\pi_{U'}(z))$, we obtain that
\begin{equation*}
\begin{aligned}
d_{U'}(\pi_{U'}(\mathfrak{g}_{\phi(\mathcal{X})}(z)),\pi_{U'}(z))\leq T'\qquad \forall\ U'\in\mathfrak{S}'_{S'}.
\end{aligned}
\end{equation*}
By choosing an adequate $s$ in Equation \eqref{implication_5_1}, we conclude that 
\[d_{\mathcal{X}'}(\mathfrak{g}_{\textbf{F}_{S'}}\circ\mathfrak{g}_{\phi(\mathcal{X})}(z),z)\leq\overline{C}_1.\]
In order to show that $d_{\mathcal{X}'}(\mathfrak{g}_{\phi(\mathcal{X})}\circ\mathfrak{g}_{\textbf{F}_{S'}}(y),y)$ is uniformly bounded for every $y\in\phi(\mathcal{X})$ let $\mu>0$ denote the constant such that $\diam(\pi_W(\phi(\mathcal{X})))<\mu$ for every $W\in\mathfrak{S}'\setminus\phi^{\lozenge}(\mathfrak{S})=\mathfrak{S}'\setminus\mathfrak{S}^{'}_{S'}$. By the Distance Formula there exists $s_0>0$ such that for all $s\geq s_0$ there exists $\overline{K}_2,\overline{C}_2$ such that
\begin{equation}\label{equation_1_5}
d_{\mathcal{X}'}(\mathfrak{g}_{\phi(\mathcal{X})}\circ\mathfrak{g}_{\textbf{F}_{S'}}(y),y)\leq\overline{K}_2\sum_{U'\in\mathfrak{S}'}\lbr d_{U'}(\pi_{U'}(\mathfrak{g}_{\phi(\mathcal{X})}\circ\mathfrak{g}_{\textbf{F}_{S'}}(y)),\pi_{U'}(y))\rbr_s+\overline{C}_2,\qquad \forall\ y\in\phi(\mathcal{X}).
\end{equation}
Since $\pi_{U'}\circ\mathfrak{g}_{\phi(\mathcal{X})}\asymp p_{\pi_{U'}(\phi(\mathcal{X}))}\circ\pi_{U'}$, it follows that $\pi_{U'}(\mathfrak{g}_{\phi(\mathcal{X})}\circ\mathfrak{g}_{\textbf{F}_{S'}})\asymp p_{\pi_{U'}(\phi(\mathcal{X}))}(\pi_{U'}\circ\mathfrak{g}_{\textbf{F}_{S'}})$. Moreover, if $U'\sqsubseteq S'$, it follows that $\pi_{U'}\circ\mathfrak{g}_{\textbf{F}_{S'}}\asymp \pi_{U'}$, because $\pi_{U'}(\textbf{F}_{S'})\asymp \pi_{U'}(\mathcal{X}')$ for every $U'\sqsubseteq S'$. Therefore, we conclude that $\pi_{U'}(\mathfrak{g}_{\phi(\mathcal{X})}\circ\mathfrak{g}_{\textbf{F}_{S'}})\asymp p_{\pi_{U'}(\phi(\mathcal{X}))}\circ\pi_{U'}$. For any $y\in\phi(\mathcal{X})$ we have that $p_{\pi_{U'}(\phi(\mathcal{X}))}\circ\pi_{U'}(y)=\pi_{U'}(y)$ and, therefore, $\pi_{U'}(\mathfrak{g}_{\phi(\mathcal{X})}\circ\mathfrak{g}_{\textbf{F}_{S'}}(y))\asymp\pi_{U'}(y)$ for every $U'\in\mathfrak{S}'_{S'}$, that is for all $U'\in\mathfrak{S}'$ and for all $y\in\phi(\mathcal{X})$, we have that $d_{U'}(\pi_{U'}(\mathfrak{g}_{\phi(\mathcal{X})}\circ\mathfrak{g}_{\textbf{F}_{S'}}(y)),\pi_{U'}(y))\leq \bar\mu$ for some constant $\bar \mu$.

For $s\geq\max\{s_0,\mu,\bar\mu\}$, Equation \eqref{equation_1_5} yields that $d(\mathfrak{g}_{\phi(\mathcal{X})}\circ\mathfrak{g}_{\textbf{F}_{S'}}(y),y)\leq \overline{C}_2$, that is the distance is uniformly bounded.

\item[\fbox{$2\Rightarrow 3$}] 
We claim that $(\phi(\mathcal{X}),\mathfrak{S'}_{S'})$ is a hierarchically hyperbolic space. Since $(\mathcal{X},\mathfrak{S})$ is a hierarchically hyperbolic space and $\phi(\mathcal{X})$ is quasi isometric to $\mathcal{X}$, we can endow $\phi(\mathcal{X})$ with the hierarchically hyperbolic structure given by the index set $\mathfrak{S}$. 
For $V\in\mathfrak{S}$, the projections $\overline{\pi}_V\colon\phi(\mathcal{X})\to\mathcal{C}V$ in this latter hierarchically hyperbolic space are defined to be $\pi_V\circ\phi^{-1}$, where $\phi^{-1}$ is a fixed quasi inverse of $\phi\colon \mathcal{X}\to\phi(\mathcal{X})$, and $\pi_V$ are the projections in the space $(\mathcal{X},\mathfrak{S})$.

Moreover, we can define the hierarchically hyperbolic space $(\phi(\mathcal{X}),\phi^{\lozenge}(\mathfrak{S}))$. For $V'\in \phi^\lozenge(\mathfrak{S})$, that is for $V'=\phi^\lozenge(V)$ with $V\in\mathfrak{S}$, the projections $\overline{\overline{\pi}}_{V'}\colon \phi(\mathcal{X})\to\mathcal{C}V'$ are defined to be $\phi^{\ast}_V\circ\pi_V\circ\phi^{-1}$, where $\phi^{-1}$ and $\pi_V$ are as before, and $\phi_V^*\colon \mathcal{C}V\to\mathcal{C}V'$ are the (uniform) quasi isometries provided by the hierarchically hyperbolic space $(\mathcal{X},\mathfrak{S})$. 

By Definition \ref{HHS_hieromorphism} we have that $\phi^{\ast}_V\circ\pi_V\asymp\pi_{V'}\circ\phi$, where $\pi_{V'}$ is the projection in the space $(\mathcal{X}',\mathfrak{S}')$. Therefore $\overline{\overline{\pi}}_{V'}\asymp\pi_{V'}\circ\phi\circ\phi^{-1}$, which uniformly coarsely coincides with $\pi_{V'}$, being $\phi$ and $\phi^{-1}$ quasi inverses of each other. Thus $(\phi(\mathcal{X}),\phi^{\lozenge}(\mathfrak{S}))$ is a hierarchically hyperbolic space, where we can take the projections to be $\pi_{V'}$ for all $V'\in\phi^\lozenge(\mathfrak{S})$, instead of $\overline{\overline{\pi}}_{V'}$.

From this point, the argument to prove that there exists $M>0$ such that 
\[d_{\mathcal{X}'}(\mathfrak{g}_{\textbf{F}_{S'}}\circ\mathfrak{g}_{\phi(\mathcal{X})}(z),z)\leq M,\qquad d_{\mathcal{X}'}(\mathfrak{g}_{\phi(\mathcal{X})}\circ\mathfrak{g}_{\textbf{F}_{S'}}(y),y)\leq M\qquad \forall\ z\in\textbf{F}_{S'},y\in\phi(\mathcal{X})\]
is exactly the same as the one used in the previous implication $5\Rightarrow 3$, and it is omitted.
\end{proof}

Theorem \ref{thmB} has several consequences. We start with the following, in the form of a remark:
\begin{remark} \label{remark_comparealso}
The combination theorem of Behrstock, Hagen, and Sisto \cite[Theorem 8.6]{BHS2} holds without the first part of their fourth hypothesis, that is
\begin{quote}
if $e$ is an edge of $T$ and $S_e$ is the $\sqsubseteq$-maximal element of $\mathfrak{S}_e$, then for all $V\in \mathfrak{S}_{e^\pm}$, the elements $V$ and $\phi_{e^\pm}^\lozenge(S_e)$ are not orthogonal in $\mathfrak{S}_{e^\pm}$.
\end{quote}
Indeed, this hypothesis is used (compare \cite[Definition 8.23]{BHS2}) to define
the uniformly bounded sets $\rho^{[W]}_{[V]}$ when $[W]$ and $[V]$ are transverse equivalence classes whose supports do not intersect. By Theorem~\ref{thmB}, instead of
defining 
\[\rho^{[W]}_{[V]}=\mathfrak{c}_V\circ\rho_{V_v}^{\phi^{\lozenge}_{e^+}(S)}\]
as done in \cite[Definition 8.23]{BHS2}, we can impose that
\[\rho^{[W]}_{[V]}=\mathfrak{c}_V\bigl(\pi_{V_{e^+}}(\phi_{e^+}(\mathcal{X}_e))\bigr),\]
where $e$ is the last edge in the geodesic connecting $T_{[W]}$ to $T_{[V]}$, with $e^+\in T_{[V]}$, and $\mathfrak{c}_V$ is the comparison map from $\mathcal{C}V_{e^+}$ to the favorite representative of $[V]$. We will exploit this fact in the proof of Theorem \ref{mainT} (compare Subsection \ref{proj_hyp} and Equation \eqref{new_rhos}).
The proof of \cite[Theorem 8.6]{BHS2}, after this modification, is not altered.
\end{remark}

\begin{lemma}\label{lemma_concreteness}
Let $\phi\colon (\mathcal{X},\mathfrak{S})\to(\mathcal{X}',\mathfrak{S}')$ be a full, coarsely lipschitz hieromorphism between hierarchically hyperbolic spaces such that $\phi(\mathcal{X})$ is hierarchically quasiconvex in $\mathcal{X}'$, and let $S$ be the $\sqsubseteq$-maximal element of~$\mathfrak{S}$. 

There exist $\varepsilon$ and $\varepsilon_0$ such that for all $\varepsilon'\geqslant\varepsilon_0$, if $(\mathcal{X},\mathfrak{S})$ is $\varepsilon'$-concrete, with the intersection property and clean containers, then for any element $W\in\mathfrak{S}'$ we have that $W\perp \supp_{\varepsilon}\bigl(\phi(\mathcal{X})\bigr)$ if and only if $W\perp \phi^\lozenge(S)$.

\begin{proof}
Let $\varepsilon>\max\{3\alpha,3\xi,\mu\}$, where $\mu$ is the uniform bound given by Theorem \ref{thmB} on the diameters of $\pi_U\bigl(\phi(\mathcal{X})\bigr)$ for all $U\in\mathfrak{S}'\setminus\phi^\lozenge(\mathfrak{S})$, and $\varepsilon_0$ and $\varepsilon'$ be as in Lemma \ref{hiero_supports}.
Suppose that $W\perp \phi^\lozenge (S)$, so that $W\perp \mathfrak{S}'_{\phi^\lozenge(S)}$.
By the choice of $\varepsilon$ and by Theorem \ref{thmB}, we have that $\supp_\varepsilon(\phi(\mathcal{X}))\subseteq \mathfrak{S}'_{\phi^\lozenge(S)}$, because the hieromorphism if full, coarsely lipschitz, and with hierarchically quasiconvex image. Thus $W\perp \supp_{\varepsilon} \bigl(\phi(\mathcal{X})\bigr)$.

\smallskip
Assume now that $W\perp \supp_\varepsilon\bigl(\phi(\mathcal{X})\bigr)$.
As the hierarchically hyperbolic space $(\mathcal{X},\mathfrak{S})$ is $\varepsilon'$-concrete, we have that $S=\bigvee\supp_{\varepsilon'}(\mathcal{X})$, and therefore
\begin{equation}\label{eq1:lemma_concreteness}
\phi^\lozenge(S)=\phi^\lozenge\Bigl(\bigvee\supp_{\varepsilon'}(\mathcal{X})\Bigr).
\end{equation}
The hieromorphism $\phi$ is full and $\bigl(\mathcal{X},\mathfrak{S}\bigr)$ satisfies the intersection property, therefore by Lemma \ref{lemma:intersection_wedge_commute} and Equation~\eqref{eq1:lemma_concreteness} we obtain that
\begin{equation}\label{eq2:lemma_concreteness}
\phi^\lozenge(S)=\bigvee\phi^\lozenge\bigl(\supp_{\varepsilon'}(\mathcal{X})\bigr),
\end{equation}
and by Lemma \ref{hiero_supports} we have that
\begin{equation}\label{eq3:lemma_concreteness}
\phi^{\lozenge}(\supp_{\varepsilon'}(\mathcal{X}))\subseteq\supp_{\varepsilon}(\phi(\mathcal{X})).
\end{equation}
Combining Equation \eqref{eq2:lemma_concreteness} and Equation \eqref{eq3:lemma_concreteness}, we conclude that $\phi^\lozenge(S)\sqsubseteq 
\bigvee\supp_{\varepsilon}(\phi(\mathcal{X}))$.
As $W\perp \supp_{\varepsilon'}(\phi(\mathcal{X}))$, by clean containers and Lemma \ref{clean_containers_use} it follows that
$W\perp \bigvee \supp_{\varepsilon}(\phi(\mathcal{X}))$. Therefore
$W\perp \phi^\lozenge(S)$.
\end{proof}
\end{lemma}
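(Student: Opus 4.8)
The plan is to establish the two implications of the biconditional separately: the forward direction $W\perp\phi^\lozenge(S)\Rightarrow W\perp\supp_\varepsilon(\phi(\mathcal{X}))$ will use only Theorem~\ref{thmB} and the orthogonality axiom, while the reverse direction is where concreteness, the intersection property and clean containers all enter.

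First I would pin down the constants. Since $\phi$ is full, coarsely lipschitz and has hierarchically quasiconvex image, Theorem~\ref{thmB} (condition~(5), which is equivalent to being coarsely lipschitz) furnishes a uniform bound $\mu$ with $\diam_{\mathcal{C}U}(\pi_U(\phi(\mathcal{X})))\leqslant\mu$ for all $U\in\mathfrak{S}'\setminus\phi^\lozenge(\mathfrak{S})$. I would then fix $\varepsilon>\max\{3\alpha,3\xi,\mu\}$ and, feeding this $\varepsilon$ into Lemma~\ref{hiero_supports}, obtain the constant $\varepsilon_0$ with the property that $\phi^\lozenge(\supp_{\varepsilon'}(\mathcal{X}))\subseteq\supp_\varepsilon(\phi(\mathcal{X}))$ for every $\varepsilon'\geqslant\varepsilon_0$. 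Two consequences of this choice get used repeatedly: from $\varepsilon>\mu$ we get $\supp_\varepsilon(\phi(\mathcal{X}))\subseteq\phi^\lozenge(\mathfrak{S})$, and from fullness we get $\phi^\lozenge(\mathfrak{S})=\mathfrak{S}'_{\phi^\lozenge(S)}$. We may also assume $\mathcal{X}$ is unbounded, so that, $\phi$ being a quasi-isometric embedding by Theorem~\ref{thmB}, the Uniqueness axiom forces $\supp_\varepsilon(\phi(\mathcal{X}))\neq\emptyset$; this is needed only to keep the orthogonal containers below nonempty.

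For the forward direction, suppose $W\perp\phi^\lozenge(S)$. By the previous paragraph every $V\in\supp_\varepsilon(\phi(\mathcal{X}))$ satisfies $V\sqsubseteq\phi^\lozenge(S)$, and the orthogonality axiom of Definition~\ref{HHS_definition} then gives $V\perp W$; hence $W\perp\supp_\varepsilon(\phi(\mathcal{X}))$. For the reverse direction I would start from the $\varepsilon'$-concreteness hypothesis, which (as $\mathcal{X}$ is unbounded) reads $S=\bigvee\supp_{\varepsilon'}(\mathcal{X})$. Applying $\phi^\lozenge$ and using that, by finite complexity, this join is realized by a wedge of finitely many elements of $\supp_{\varepsilon'}(\mathcal{X})$, repeated use of Lemma~\ref{lemma:intersection_wedge_commute} yields $\phi^\lozenge(S)=\bigvee\phi^\lozenge(\supp_{\varepsilon'}(\mathcal{X}))$. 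Combining with Lemma~\ref{hiero_supports} gives $\phi^\lozenge(S)\sqsubseteq\bigvee\supp_\varepsilon(\phi(\mathcal{X}))$, so by the orthogonality axiom it suffices to show $W\perp\bigvee\supp_\varepsilon(\phi(\mathcal{X}))$. For this I would observe that, $W$ being orthogonal to each element of $\supp_\varepsilon(\phi(\mathcal{X}))$, every such element is nested into the orthogonal container $\cont_\perp W$ of $W$ in the $\sqsubseteq$-maximal element of $\mathfrak{S}'$; hence $\bigvee\supp_\varepsilon(\phi(\mathcal{X}))\sqsubseteq\cont_\perp W$ by definition of the join, and clean containers ($W\perp\cont_\perp W$) together with the orthogonality axiom give $W\perp\bigvee\supp_\varepsilon(\phi(\mathcal{X}))$. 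Equivalently, one could iterate Lemma~\ref{clean_containers_use} over the finitely many elements whose wedge realizes the join.

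The main obstacle I anticipate is purely bookkeeping around joins of infinite sets: both Lemma~\ref{lemma:intersection_wedge_commute} and Lemma~\ref{clean_containers_use} are stated for pairs of elements, whereas $\supp_{\varepsilon'}(\mathcal{X})$ and $\supp_\varepsilon(\phi(\mathcal{X}))$ may well be infinite. This is resolved exactly as in the discussion following Equation~\eqref{equation_vee}: finite complexity guarantees that any such join is already attained by a wedge of at most $n$ of the relevant elements, so the pairwise statements suffice after an evident induction. A secondary point to be careful about is that a \emph{single} constant $\varepsilon$ must simultaneously exceed $3\max\{\xi,\alpha\}$ (so that $\supp_\varepsilon$ enjoys the containment of Lemma~\ref{lemma_inclusion1}), exceed $\mu$ (to land inside $\phi^\lozenge(\mathfrak{S})$), and be admissible for Lemma~\ref{hiero_supports}; choosing $\varepsilon$ first and only afterwards extracting $\varepsilon_0$ makes this automatic.
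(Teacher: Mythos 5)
Your proposal is correct and follows essentially the same route as the paper: same constants $\varepsilon>\max\{3\alpha,3\xi,\mu\}$ with $\varepsilon_0$ from Lemma~\ref{hiero_supports}, same use of Theorem~\ref{thmB} for the forward implication, and the same chain via Lemma~\ref{lemma:intersection_wedge_commute}, Lemma~\ref{hiero_supports} and clean containers for the reverse one. Your explicit remark about reducing infinite joins to finite wedges via finite complexity, and your unpacking of Lemma~\ref{clean_containers_use} into the orthogonal-container argument, are just making the paper's implicit steps visible rather than genuine departures.
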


\begin{theorem}\label{winning_theorem}
Let $\phi\colon(\mathcal{X},\mathfrak{S})\to(\mathcal{X}',\mathfrak{S}')$ be a full, coarsely lipschitz hieromorphism with hierarchically quasiconvex image, and assume that $\mathcal{X}$ is unbounded and concrete. 
There exists a constant $\eta\geqslant 0$, depending only on the hierarchical structures and the hieromorphism $\phi$, such that $d_{\mathcal{X}'}({\bf F}_{S'},\phi(\mathcal{X}))\leqslant\eta$, where $S'=\phi^\lozenge(S)$ and $S$ is the $\sqsubseteq$-maximal element of $\mathfrak{S}$.
\end{theorem}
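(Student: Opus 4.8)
The plan is to show that $\phi(\mathcal{X})$ and $\textbf{F}_{S'}$ have (uniformly) coarsely equal projections to every hyperbolic space $\mathcal{C}W$, $W\in\mathfrak{S}'$, and then to conclude via the elementary observation that two uniformly hierarchically quasiconvex subspaces with coarsely equal projections lie at uniformly bounded Hausdorff distance: if $\mathcal{Y},\mathcal{Z}$ are $k$-hierarchically quasiconvex and $\pi_W(\mathcal{Y})\subseteq\mathcal{N}_D(\pi_W(\mathcal{Z}))$ for every $W$, then for $y\in\mathcal{Y}$ one has $d_W(\pi_W(y),\pi_W(\mathcal{Z}))\leq D$ for all $W$, hence $d_{\mathcal{X}'}(y,\mathcal{Z})\leq k(D)$ by condition (2) of Definition \ref{hierarchical_quasiconvexity}, and symmetrically. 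Both $\phi(\mathcal{X})$ (by hypothesis) and $\textbf{F}_{S'}$ (being a factor of a standard product region) are uniformly hierarchically quasiconvex, so this reduction is legitimate; here $\textbf{F}_{S'}$ is taken to be the parallel copy $\textbf{F}_{S'}\times\{e\}$ whose $\textbf{E}_{S'}$-coordinate agrees with that of $\phi(\mathcal{X})$, which is coarsely well defined by the boundedness statement recorded next.

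First I would record what Theorem \ref{thmB} provides: since $\phi$ is full, coarsely lipschitz, and has hierarchically quasiconvex image, $\phi$ is a quasi-isometric embedding, and by part (5) there is a uniform $\mu$ with $\diam_{\mathcal{C}W}\pi_W(\phi(\mathcal{X}))\leq\mu$ for all $W\in\mathfrak{S}'\setminus\phi^\lozenge(\mathfrak{S})=\mathfrak{S}'\setminus\mathfrak{S}'_{S'}$ (fullness identifies $\phi^\lozenge(\mathfrak{S})$ with $\mathfrak{S}'_{S'}$). For $W\in\mathfrak{S}'_{S'}$, Lemma \ref{useful_lemma} gives directly that $\pi_W(\phi(\mathcal{X}))$ coarsely equals $\pi_W(\textbf{F}_{S'})$. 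So the whole theorem reduces to the elements of $\mathfrak{S}'$ not nested in $S'$, and this is where concreteness enters: as $\mathcal{X}$ is $\varepsilon$-concrete, $S=\bigvee\{V\mid V\in\supp_\varepsilon(\mathcal{X})\}$, and applying $\phi^\lozenge$, using fullness with the intersection property (Lemma \ref{lemma:intersection_wedge_commute}) together with Lemma \ref{hiero_supports} and Theorem \ref{thmB}(5), one gets $S'=V_1\vee\dots\vee V_n$ for finitely many $V_i\in\supp_{\varepsilon_1}(\phi(\mathcal{X}))\subseteq\mathfrak{S}'_{S'}$, at a suitable scale $\varepsilon_1$; in particular $\diam_{\mathcal{C}V_i}\pi_{V_i}(\phi(\mathcal{X}))>\varepsilon_1$ for each $i$.

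Now I would treat the remaining $W\in\mathfrak{S}'\setminus\mathfrak{S}'_{S'}$. If $W\sqsupsetneq S'$ or $W\pitchfork S'$, then $\pi_W(\textbf{F}_{S'})$ is coarsely $\rho_W^{S'}$, and I claim so is $\pi_W(\phi(\mathcal{X}))$: choose an index $V_i$ with $W\not\perp V_i$ (for $W\sqsupsetneq S'$ this is automatic since $V_i\sqsubsetneq W$; for $W\pitchfork S'$ one must see that $W$ orthogonal to every $V_i$ would force $W\perp S'$, contrary to assumption), pick $y\in\phi(\mathcal{X})$ with $d_{V_i}(\pi_{V_i}(y),\rho_{V_i}^W)>\kappa_0$ (possible since $\diam_{\mathcal{C}V_i}\pi_{V_i}(\phi(\mathcal{X}))>\varepsilon_1$ exceeds $\kappa_0$ plus the diameter of $\rho_{V_i}^W$), apply the consistency inequality between $V_i$ and $W$ to obtain $d_W(\pi_W(y),\rho_W^{V_i})\leq\kappa_0$, note that $\rho_W^{V_i}$ is coarsely $\rho_W^{S'}$ by the last clause of the Consistency axiom (here $V_i\sqsubseteq S'$, $W\not\perp V_i$, and $S'\sqsubsetneq W$ or $S'\pitchfork W$), and finally invoke $\diam_{\mathcal{C}W}\pi_W(\phi(\mathcal{X}))\leq\mu$ to propagate this to all of $\phi(\mathcal{X})$, getting $\pi_W(\phi(\mathcal{X}))\subseteq\mathcal{N}_C(\rho_W^{S'})$ for a uniform $C$. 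If instead $W\perp S'$, then $\pi_W(\phi(\mathcal{X}))$ and $\pi_W(\textbf{F}_{S'})$ both have uniformly bounded diameter and agree coarsely by the choice of the parallel copy of $\textbf{F}_{S'}$ fixed above. Assembling the three cases yields coarse equality of projections on all of $\mathfrak{S}'$, and the Hausdorff bound follows from the reduction.

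The main obstacle I expect is precisely the elements $W$ not nested in $S'$: a priori $\phi(\mathcal{X})$ could project arbitrarily far from $\textbf{F}_{S'}$ there, and it is exactly concreteness of $\mathcal{X}$ --- transported along $\phi$ into the statement that $\supp_{\varepsilon_1}(\phi(\mathcal{X}))$ $\vee$-generates $S'$ --- that makes the consistency inequalities bite and forces $\pi_W(\phi(\mathcal{X}))$ onto $\rho_W^{S'}$. Two secondary points need care: the scale bookkeeping (choosing $\varepsilon$ and $\varepsilon_1$ so that Lemma \ref{hiero_supports} applies at the concreteness scale) and, in the transverse case, the verification that $W$ orthogonal to every generator $V_i$ would force $W\perp S'$ --- an argument one should extract from $\bigvee V_i=S'$, the orthogonal-container axiom, and the intersection property.
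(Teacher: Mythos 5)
Your overall strategy is sound and in fact more ambitious than the paper's: you aim to prove the stronger Hausdorff-distance bound directly (essentially Lemma \ref{conclusion_fourth_hypothesis}) by showing that $\phi(\mathcal{X})$ and a suitable parallel copy of $\textbf{F}_{S'}$ have uniformly coarsely equal projections to every $\mathcal{C}W$ and then invoking hierarchical quasiconvexity of both; whereas the paper only produces a single realization point $z\in\textbf{F}_{S'}$ near $\phi(\mathcal{X})$ and proves the Hausdorff bound separately later. The reduction step, the use of Theorem \ref{thmB}(5) and Lemma \ref{useful_lemma}, and the role of concreteness as the mechanism that forces $\pi_W(\phi(\mathcal{X}))$ onto $\rho_W^{S'}$ are all correctly identified.

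However, there is a genuine gap in the step where you "pick $y\in\phi(\mathcal{X})$ with $d_{V_i}(\pi_{V_i}(y),\rho_{V_i}^W)>\kappa_0$ \dots apply the consistency inequality between $V_i$ and $W$." This treats $\rho_{V_i}^W$ as a bounded subset of $\mathcal{C}V_i$, which is only the case when $V_i\pitchfork W$. When $V_i\sqsubsetneq W$ --- which is \emph{forced} for every $V_i$ as soon as $W\sqsupsetneq S'$, and can also occur when $W\pitchfork S'$ --- the object $\rho_{V_i}^W$ is a map $\mathcal{C}W\to 2^{\mathcal{C}V_i}$, the relevant consistency inequality involves $\diam_{\mathcal{C}V_i}\bigl(\pi_{V_i}(y)\cup\rho_{V_i}^W(\pi_W(y))\bigr)$ rather than a distance to a fixed set, and one cannot simply choose $y$ to make that term large since it depends on $\rho_{V_i}^W(\pi_W(y))$. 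The paper handles exactly this case by a different mechanism: take two points $x,y\in\mathcal{X}$ with $d_{V_i}(\pi_{V_i}(\phi(x)),\pi_{V_i}(\phi(y)))$ large, apply the \emph{Bounded Geodesic Image} axiom to a geodesic in $\mathcal{C}W$ between $\pi_W(\phi(x))$ and $\pi_W(\phi(y))$, and combine the two alternatives it provides with the uniform bound $\diam\pi_W(\phi(\mathcal{X}))\leq\mu$. You never invoke Bounded Geodesic Image, so the nested case (which is unavoidable) is not covered. A secondary point: the orthogonality-forcing step --- "$W\perp V_i$ for all $i$ implies $W\perp S'$" --- is really Lemma \ref{lemma_concreteness} and uses \emph{clean containers} through Lemma \ref{clean_containers_use}; the intersection property and orthogonal containers alone do not give that joins of $W$-orthogonal elements stay $W$-orthogonal.
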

\begin{proof}

Let $\kappa_0$ and $E$ be the constants coming from the hierarchically hyperbolic space $\mathcal{X}'$,
and let $\mu$ be the uniform constant on the diameters of the sets $\pi_W(\phi(\mathcal{X}))$, for all $W\in\mathfrak{S}'\setminus\phi^{\lozenge}
(\mathfrak{S})$, provided by Theorem \ref{thmB}. 

Let $V'\in\supp(\phi(\mathcal{X}))$, take $\kappa$ such that 
\begin{equation*}
\kappa >\max\{2\kappa_0,2E,E+\mu\}
\end{equation*}
and consider $x,y\in\mathcal{X}$ for which
\begin{equation}\label{thm3.6_eq2}
d_{V'}(\pi_{V'}(\phi(x)),\pi_{V'}(\phi(y)))>2\kappa.
\end{equation}
Let $W\in\mathfrak{S}' \setminus\phi^{\lozenge}(\mathfrak{S})$ be such that either $V'\pitchfork W$ or $V'\sqsubseteq W$. We claim that either
\begin{equation}\label{thm3.6_eq1}
d_W\left(\pi_W(\phi(x)),\rho_W^{V'}\right)\leq 2\kappa\quad\text{ or }\quad d_W\left(\pi_W(\phi(y)),\rho_W^{V'}\right)\leq 2\kappa.
\end{equation}
Indeed, assume that Equation \eqref{thm3.6_eq1} is not satisfied and that $W\pitchfork V'$. By consistency, as $2\kappa>\kappa_0$, we have that
\[d_{V'}\left(\pi_{V'}(\phi(x)),\rho_{V'}^{W}\right)\leq\kappa_0\quad\text{ and }\quad d_{V'}\left(\pi_{V'}(\phi(y)),\rho_{V'}^{W}\right)\leq\kappa_0.\]
This leads to a contradiction with Equation \eqref{thm3.6_eq2}.
	
Assume now that $V'\sqsubseteq W$. Again by consistency, we have that
\[\diam_{\mathcal{C}V'}\left(\pi_{V'}(\phi(x))\cup\rho_{V'}^{W}(\pi_W(\phi(x)))\right)\leq\kappa_0\quad\text{and}\quad \diam_{\mathcal{C}V'}\left(\pi_{V'}
(\phi(y))\cup\rho_{V'}^{W}(\pi_W(\phi(y)))\right)\leq\kappa_0.\] 
Let $\sigma$ be the geodesic in $\mathcal{C}W$ with endpoints $\pi_W(\phi(x))$ and $\pi_W(\phi(y))$. By the Bounded Geodesic Axiom there are two possibilities: 
\begin{enumerate}
	\item $\diam_{\mathcal{C}V'}\left(\rho_{V'}^W(\sigma)\right)\leq E$, \quad or
	\item $\sigma\cap\mathcal{N}_E(\rho_W^{V'})\neq\emptyset.$
\end{enumerate}
In the first case, applying the triangle inequality we conclude that 
\[d_{V'}(\pi_{V'}(\phi(x)),\pi_{V'}(\phi(y))\leqslant \kappa_0+E+\kappa_0=2\kappa_0+E\leqslant 2\kappa,\]
which contradicts Equation \eqref{thm3.6_eq2}.
	
For the second case, since $W\in\mathfrak{S}'\setminus\phi^{\lozenge}(\mathfrak{S})$ we know that $\pi_W(\phi(\mathcal{X}))$
is bounded by the uniform constant $\mu$. This means that $d_W\bigl(\pi_W(\phi(x)),\pi_W(\phi(y))\bigr)\leq\mu$. Furthermore, since there exists $z\in\sigma$ such that 
$d_W\left(z,\rho_W^{V'}\right)\leqslant E$, using the triangle inequality we have that
\[d_W\left(\pi_W(\phi(x)),\rho_W^{V'}\right)\leqslant E+\mu\quad\text{ and }\quad d_W\left(\pi_W(\phi(y)),\rho_W^{V'}\right)\leqslant E+\mu.\]
Using the triangle inequality we obtain that 
\[d_W\bigl(\pi_W(\phi(x)),\pi_W(\phi(y))\bigr)\leqslant d_W\left(\pi_W(\phi(x)),\rho_W^{V'}\right)+d_W\left(\pi_W(\phi(y)),\rho_W^{V'}\right)\leqslant 2(E+\mu)<2\kappa,\]
contradicting the assumption that the conditions in Equation~\eqref{thm3.6_eq1} are not satisfied. Therefore, Equation~\eqref{thm3.6_eq1} follows.

We have shown that for every $V'\in\supp_{2\kappa}(\phi(\mathcal{X}))$ and every $W\in\mathfrak{S}'\setminus\phi^{\lozenge}(\mathfrak{S})$ such that $W\sqsupseteq V'$ or $W\pitchfork V'$ we have that 
$d_W(\pi_W(\phi(\mathcal{X}),\rho_W^{V'})\leq 2\kappa$. For $S'=\phi^{\lozenge}(S)$, let $U\in\mathfrak{S}'$ be such that $U\sqsupseteq S'$ or $U\pitchfork S'$ (in particular, $ U\in\mathfrak{S}'\setminus\phi^{\lozenge}(\mathfrak{S}))$. 
By Lemma \ref{lemma_concreteness}, there exists $V'\in\supp_{2\kappa}(\phi(\mathcal{X}))$ for which $U\not\perp V'$.
Since $U\not\sqsubseteq S'$ and $V'\sqsubseteq S'$, it follows that $U\not\sqsubseteq V'$. 
Therefore, either $U\sqsupseteq V'$ or $U\pitchfork V'$, and by the above argument $d_U(\pi_U(\phi(\mathcal{X})),\rho_U^{V'})\leq 2\kappa$. Since $d_U(\rho_U^{S'},\rho_U^{V'})\leq\kappa_0$, it follows that $d_U(\rho_U^{S'},\pi_U(\phi(\mathcal{X})))\leq 3\kappa$.

We now claim that there exists some constant $\nu'$ such that $d_{\mathcal{X}'}\left({\bf F}_{S'},\phi(\mathcal{X})\right)\leq\nu'$. Fix $x_0\in\mathcal{X}$, and let 
$z\in{\bf F}_{S'}$ be the realization point of the consistent tuple 
\begin{equation*}
\begin{cases}
\pi_U(\phi(x_0)),&\quad \forall\  U\in\mathfrak{S}_{S'};\\
\pi_U(\phi(x_0)),& \quad\forall\ U\in\mathfrak{S}^{\perp}_{S'};\\
\rho_U^{S'}&\quad\forall\ U\pitchfork S' \text{ or } U\sqsupseteq S'.
\end{cases}
\end{equation*}
By the above argument and the choice of the realization point $z$, if follows that the distance $d_U(\pi_U(z),\pi_U(\phi(\mathcal{X}))$ is uniformly bounded, for all $U\in\mathfrak{S}'$. 
Since $\phi(\mathcal{X})$ is a hierarchical quasiconvex subspace of $\mathcal{X}'$, there exists a constant $\eta$ depending only  on the hierarchically hyperbolic structure of $(\mathcal{X}',\mathfrak{S}')$ for which $d_{\mathcal{X}'}(z,\phi(\mathcal{X}))\leq\eta$. Therefore $d_{\mathcal{X}'}({\bf F}_{S'},\phi(\mathcal{X}))\leq\eta$ and the proof is complete.

\end{proof}

From the previous theorem, we obtain the following lemmas:
\begin{lemma}\label{consequence_winning_theorem}
Let $\phi\colon (\mathcal{X},\mathfrak{S})\to(\mathcal{X}',\mathfrak{S}')$ be a full, coarsely lipschitz hieromoprhism, assume that $\mathcal{X}$ 
is unbounded and concrete, and let $S'=\phi^\lozenge (S)$, where $S\in\mathfrak{S}$ is the $\sqsubseteq$-maximal element. For all $U\in \mathfrak{S}'$
such that either $S'\sqsubsetneq U$ or $S'\pitchfork U$, the sets $\rho_{U}^{S'}$ and $\pi_{U}(\phi(\mathcal{X}))$ coarsely coincide.
\end{lemma}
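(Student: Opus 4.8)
The plan is to deduce this directly from Theorem~\ref{winning_theorem} together with the consistency inequalities of the hierarchically hyperbolic structure on $\mathcal{X}'$. First I would invoke Theorem~\ref{winning_theorem}: since $\phi$ is a full, coarsely lipschitz hieromorphism with hierarchically quasiconvex image (the latter is implicit, as the previous lemmas in this section already assume it; if not I would add it to the hypotheses or note that $\phi(\mathcal{X})$ being hierarchically quasiconvex is part of the standing assumptions), and since $\mathcal{X}$ is unbounded and concrete, we obtain a uniform constant $\eta$ with $d_{\mathcal{X}'}(\mathbf{F}_{S'},\phi(\mathcal{X}))\leqslant\eta$. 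Coarsely, then, $\phi(\mathcal{X})$ and $\mathbf{F}_{S'}$ have uniformly close projections to every $\mathcal{C}U$, so it suffices to show that $\rho_U^{S'}$ coarsely coincides with $\pi_U(\mathbf{F}_{S'})$ for all $U$ with $S'\sqsubsetneq U$ or $S'\pitchfork U$; but this last fact is exactly \cite[Construction 5.10]{BHS2} (the standard product region $\mathbf{F}_{S'}$ projects uniformly close to $\rho_U^{S'}$ whenever $U$ properly contains or is transverse to $S'$), which was already recalled in the paragraph preceding Lemma~\ref{lemma_inclusion1}.

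Alternatively, and perhaps more self-containedly, I would argue directly from the proof of Theorem~\ref{winning_theorem}. Inside that proof it is shown that for $S'=\phi^\lozenge(S)$ and for every $U\in\mathfrak{S}'$ with $U\sqsupsetneq S'$ or $U\pitchfork S'$, one has $d_U(\rho_U^{S'},\pi_U(\phi(\mathcal{X})))\leqslant 3\kappa$, which is precisely one of the two coarse-containment statements needed. For the reverse containment, I would take an arbitrary $x\in\mathcal{X}$; by the second bullet of the Partial Realization Axiom applied to $S$ (or directly by the consistency inequality for $V_j\sqsubseteq U$ applied to points of the product region $\mathbf{P}_{S'}$, as recalled before Lemma~\ref{lemma_inclusion1}), we have $d_U(\pi_U(\phi(x)),\rho_U^{S'})\leqslant\alpha$, since $\phi(x)$ lies in $\phi(\mathcal{X})$ which is coarsely contained in $\mathbf{P}_{S'}$. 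More carefully: by Theorem~\ref{winning_theorem}, every point of $\phi(\mathcal{X})$ is within $\eta$ of $\mathbf{F}_{S'}\subseteq\mathbf{P}_{S'}$, so its $\pi_U$-image is within $(K_U\eta+\text{const})$ of $\pi_U(\mathbf{F}_{S'})$, and $\pi_U(\mathbf{F}_{S'})$ is within $\alpha$ of $\rho_U^{S'}$ by construction of the product region (note $U\notin\mathfrak{S}_{S'}$ since $U\not\sqsubseteq S'$). Combining the two inclusions gives that $\rho_U^{S'}$ and $\pi_U(\phi(\mathcal{X}))$ coarsely coincide, with constants depending only on the hierarchical structures, $\phi$, and $\eta$.

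The main obstacle, such as it is, is bookkeeping rather than conceptual: one must be careful that the constant $\kappa$ used in the proof of Theorem~\ref{winning_theorem} (chosen larger than $\max\{2\kappa_0,2E,E+\mu\}$) and the constant $\alpha$ and the coarse-lipschitz constants of the various $\pi_U$ all combine into a single uniform bound, and that the case $S'\sqsubsetneq U$ is handled via the nesting consistency inequality while $S'\pitchfork U$ uses the transversality one — but in both cases the relevant $\rho$-sets have diameter at most $\xi$ and the estimate $d_U(\pi_U(\mathbf{F}_{S'}),\rho_U^{S'})\leqslant\alpha$ holds by the construction of $\mathbf{P}_{S'}$ regardless of which of the two relations holds. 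I would therefore conclude the proof in just a few lines: apply Theorem~\ref{winning_theorem}, push the resulting $\mathcal{X}'$-distance bound through the coarsely lipschitz projection $\pi_U$, and combine with the standard fact that the standard product region $\mathbf{F}_{S'}$ projects to within $\alpha$ of $\rho_U^{S'}$ for all $U$ transverse to or properly containing $S'$.

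\begin{proof}
Since $\phi\colon(\mathcal{X},\mathfrak{S})\to(\mathcal{X}',\mathfrak{S}')$ is a full, coarsely lipschitz hieromorphism with hierarchically quasiconvex image and $\mathcal{X}$ is unbounded and concrete, Theorem~\ref{winning_theorem} provides a constant $\eta\geqslant 0$, depending only on the hierarchical structures and on $\phi$, such that $d_{\mathcal{X}'}(\mathbf{F}_{S'},\phi(\mathcal{X}))\leqslant\eta$. Fix $U\in\mathfrak{S}'$ with either $S'\sqsubsetneq U$ or $S'\pitchfork U$; in particular $U\notin\mathfrak{S}'_{S'}$.

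By construction of the standard product region $\mathbf{P}_{S'}$ (see \cite[Construction 5.10]{BHS2} and the discussion preceding Lemma~\ref{lemma_inclusion1}), there exists a constant $\alpha$, depending only on $\mathcal{X}'$ and $\mathfrak{S}'$, such that $d_U(\pi_U(w),\rho_U^{S'})\leqslant\alpha$ for every $w\in\mathbf{F}_{S'}$, since $U$ is transverse to or properly contains $S'$. As $\diam_{\mathcal{C}U}(\rho_U^{S'})\leqslant\xi$, the set $\pi_U(\mathbf{F}_{S'})$ coarsely coincides with $\rho_U^{S'}$, with constants depending only on $\alpha$ and $\xi$.

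It remains to compare $\pi_U(\phi(\mathcal{X}))$ with $\pi_U(\mathbf{F}_{S'})$. Let $K_U$ be a coarsely lipschitz constant for the projection $\pi_U$ (uniform over $\mathfrak{S}'$, by the Projections Axiom). Given $y\in\phi(\mathcal{X})$, pick $w\in\mathbf{F}_{S'}$ with $d_{\mathcal{X}'}(y,w)\leqslant\eta$; then $d_U(\pi_U(y),\pi_U(w))\leqslant K_U\eta+K_U$. Conversely, given $w\in\mathbf{F}_{S'}$, pick $y\in\phi(\mathcal{X})$ with $d_{\mathcal{X}'}(w,y)\leqslant\eta$, and again $d_U(\pi_U(w),\pi_U(y))\leqslant K_U\eta+K_U$. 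Hence $\pi_U(\phi(\mathcal{X}))$ and $\pi_U(\mathbf{F}_{S'})$ coarsely coincide with constants depending only on $K_U$ and $\eta$.

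Combining the last two paragraphs, $\rho_U^{S'}$ and $\pi_U(\phi(\mathcal{X}))$ coarsely coincide, with constants depending only on the hierarchical structures and on $\phi$ (through $\eta$). As $U$ was an arbitrary element of $\mathfrak{S}'$ with $S'\sqsubsetneq U$ or $S'\pitchfork U$, this completes the proof.
\end{proof}
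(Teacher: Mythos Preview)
Your approach is the same as the paper's: invoke Theorem~\ref{winning_theorem} to get $d_{\mathcal{X}'}(\mathbf{F}_{S'},\phi(\mathcal{X}))\leqslant\eta$, use \cite[Construction~5.10]{BHS2} to identify $\pi_U(\mathbf{F}_{S'})$ with $\rho_U^{S'}$, and push the $\eta$-bound through the coarsely lipschitz projection $\pi_U$.

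There is, however, a genuine gap in your third paragraph. The conclusion of Theorem~\ref{winning_theorem} is a \emph{set-distance} bound $d_{\mathcal{X}'}(\mathbf{F}_{S'},\phi(\mathcal{X}))\leqslant\eta$, not a Hausdorff-distance bound (inspect its proof: a single realization point $z\in\mathbf{F}_{S'}$ is produced and shown to be $\eta$-close to $\phi(\mathcal{X})$). So you are not entitled to write ``given $y\in\phi(\mathcal{X})$, pick $w\in\mathbf{F}_{S'}$ with $d_{\mathcal{X}'}(y,w)\leqslant\eta$''; that statement is exactly the content of the \emph{subsequent} Lemma~\ref{conclusion_fourth_hypothesis}, which is proved with additional input and cannot be invoked here.

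The paper closes this gap differently, and in one line: since $S'\sqsubsetneq U$ or $S'\pitchfork U$ forces $U\notin\mathfrak{S}'_{S'}=\phi^\lozenge(\mathfrak{S})$, Theorem~\ref{thmB}(5) gives $\diam_{\mathcal{C}U}\bigl(\pi_U(\phi(\mathcal{X}))\bigr)\leqslant\mu$, while $\diam_{\mathcal{C}U}(\rho_U^{S'})\leqslant\xi$. Now the set-distance bound $d_{\mathcal{X}'}(\mathbf{F}_{S'},\phi(\mathcal{X}))\leqslant\eta$, pushed through $\pi_U$, shows that the two \emph{uniformly bounded} sets $\rho_U^{S'}$ and $\pi_U(\phi(\mathcal{X}))$ are at bounded distance, hence coarsely coincide. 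You mention the $\mu$-bound in your preamble but drop it from the formal argument; reinstating it there repairs the proof.
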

\begin{proof}
For any $U\in\mathfrak{S}'$ such that either $S'\sqsubsetneq U$ or $S'\pitchfork U$, we have that $\pi_U({\bf F}_{S'})=\rho_U^{S'}$ by \cite[Construction 5.10]{BHS2}.  
Moreover, the distance $d_U(\pi_U({\bf F}_{S'}),\pi_U(\phi(\mathcal{X})))$ is at most $K\eta + K$
by Theorem \ref{winning_theorem}.

Since $\diam_{\mathcal{C}U}(\pi_U(\phi(\mathcal{X})))\leq\mu$ and $\diam_{\mathcal{C}U}(\rho_U^{S'})\leq\xi$, any pair of elements in the sets 
$\rho_U^{S'}=\pi_U(\phi(\mathcal{X}))$ and $\pi_U({\bf F}_{S'})$ is at uniform bounded distance from each other.

\end{proof}

\begin{lemma}\label{conclusion_fourth_hypothesis}
Let $\phi\colon (\mathcal{X},\mathfrak{S})\to(\mathcal{X}',\mathfrak{S}')$ be a full, coarsely lipschitz hieromoprhism, assume that $\mathcal{X}$ 
is unbounded and concrete, and let $S\in\mathfrak{S}$ be the $\sqsubseteq$-maximal element. There exists a constant $J$, depending only 
on the hierarchically hyperbolic structure $(\mathcal{X}',\mathfrak{S}')$ and hieromorphism constants, such that $d_{Haus}(\phi(\mathcal{X}),\textbf{F}_{\phi^\lozenge(S)})\leqslant J$.
\end{lemma}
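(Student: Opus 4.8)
The plan is to show that the subspaces $\phi(\mathcal{X})$ and $\mathbf{F}_{S'}$, with $S':=\phi^\lozenge(S)$, have uniformly coarsely coinciding projections to $\mathcal{C}U$ for \emph{every} $U\in\mathfrak{S}'$, and then to invoke hierarchical quasiconvexity of both subspaces. First I would record that $\mathbf{F}_{S'}$ is hierarchically quasiconvex (this is free) and that $\phi(\mathcal{X})$ is too (by Theorem \ref{thmB}, since $\phi$ is full, coarsely lipschitz, with hierarchically quasiconvex image), and that, as $\phi$ is full, $\phi^\lozenge(\mathfrak{S})=\mathfrak{S}'_{S'}$, so that every $U\in\mathfrak{S}'$ satisfies exactly one of: $U\sqsubseteq S'$; or $S'\sqsubsetneq U$ or $S'\pitchfork U$; or $U\perp S'$.

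The key step is to bound, uniformly, the Hausdorff distance in $\mathcal{C}U$ between $\pi_U(\phi(\mathcal{X}))$ and $\pi_U(\mathbf{F}_{S'})$, according to the three cases. When $U\sqsubseteq S'$ this is precisely Lemma \ref{useful_lemma}. When $S'\sqsubsetneq U$ or $S'\pitchfork U$, one has $\pi_U(\mathbf{F}_{S'})=\rho_U^{S'}$ by \cite[Construction 5.10]{BHS2}, while $\pi_U(\phi(\mathcal{X}))$ coarsely equals $\rho_U^{S'}$ by Lemma \ref{consequence_winning_theorem}, so the two coarsely coincide. When $U\perp S'$, the element $U$ lies in $\mathfrak{S}'\setminus\phi^\lozenge(\mathfrak{S})$, so $\pi_U(\phi(\mathcal{X}))$ has diameter at most the constant $\mu$ of Theorem \ref{thmB}, and $\pi_U(\mathbf{F}_{S'})$ has diameter at most $\alpha$; Theorem \ref{winning_theorem} provides $z_0\in\mathbf{F}_{S'}$ and $x_0\in\mathcal{X}$ with $d_{\mathcal{X}'}(z_0,\phi(x_0))\leqslant\eta$, and since $\pi_U$ is $(K,K)$-coarsely lipschitz, $d_U(\pi_U(z_0),\pi_U(\phi(x_0)))\leqslant K\eta+K$; as $\pi_U(\phi(x_0))\in\pi_U(\phi(\mathcal{X}))$ and $\pi_U(z_0)\in\pi_U(\mathbf{F}_{S'})$, these two bounded sets meet up to error $K\eta+K$, hence are at Hausdorff distance at most $\alpha+\mu+K\eta+K$.

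To finish, I would combine the three cases into a single constant $\kappa$ and use hierarchical quasiconvexity. For $x\in\mathcal{X}$, every $\pi_U(\phi(x))$ lies in $\pi_U(\phi(\mathcal{X}))$, hence within $\kappa$ of $\pi_U(\mathbf{F}_{S'})$ for all $U\in\mathfrak{S}'$; by part (2) of Definition \ref{hierarchical_quasiconvexity} applied to $\mathbf{F}_{S'}$, $d_{\mathcal{X}'}(\phi(x),\mathbf{F}_{S'})\leqslant k_{\mathbf{F}_{S'}}(\kappa)$. Symmetrically, for $z\in\mathbf{F}_{S'}$, hierarchical quasiconvexity of $\phi(\mathcal{X})$ bounds $d_{\mathcal{X}'}(z,\phi(\mathcal{X}))$. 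Taking $J$ to be the larger of these two uniform bounds yields $d_{Haus}(\phi(\mathcal{X}),\mathbf{F}_{S'})\leqslant J$ with $J$ depending only on $(\mathcal{X}',\mathfrak{S}')$ and the hieromorphism constants.

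The main obstacle is the orthogonal case $U\perp S'$: the earlier lemmas are phrased through the $\rho$-maps and so control only the $\sqsubseteq$- and $\pitchfork$-directions, whereas for $U\perp S'$ both projections are merely bounded sets whose position a priori floats freely; pinning them together needs the genuinely geometric input of Theorem \ref{winning_theorem} (that the two subspaces actually come uniformly close in $\mathcal{X}'$) together with the boundedness of \emph{both} projections, so that a single near-point forces the whole bounded projections to agree. A secondary subtlety is that one genuinely needs hierarchical quasiconvexity of $\phi(\mathcal{X})$, not only of the standard $\mathbf{F}_{S'}$, which is exactly where the coarsely lipschitz hypothesis enters through Theorem \ref{thmB}.
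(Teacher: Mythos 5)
Your proof is correct, but it proceeds by a genuinely different route than the paper. The paper invokes part (3) of Theorem~\ref{thmB} to obtain that the gate maps $\mathfrak{g}_{\mathbf{F}_{S'}}$ and $\mathfrak{g}_{\phi(\mathcal{X})}$ are quasi-inverses, and then applies \cite[Lemma 1.26]{quasiflats} together with the bound $\eta$ from Theorem~\ref{winning_theorem} and the triangle inequality to extract the Hausdorff bound directly from the gate-map formalism. You instead verify projection-by-projection that $\pi_U(\phi(\mathcal{X}))$ and $\pi_U(\mathbf{F}_{S'})$ uniformly coarsely coincide for every $U\in\mathfrak{S}'$ --- via Lemma~\ref{useful_lemma} when $U\sqsubseteq S'$; via \cite[Construction 5.10]{BHS2} and Lemma~\ref{consequence_winning_theorem} when $S'\sqsubsetneq U$ or $S'\pitchfork U$; and, when $U\perp S'$, by noting that both projections are uniformly bounded sets and pinning them together with the near-pair of points supplied by Theorem~\ref{winning_theorem} --- and then apply part (2) of Definition~\ref{hierarchical_quasiconvexity} to each of the two hierarchically quasiconvex subspaces in turn. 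Your route trades the external citation to \cite[Lemma 1.26]{quasiflats} for a longer but more self-contained case analysis through the realization-type direction of hierarchical quasiconvexity, and you correctly identify the orthogonal case $U\perp S'$ as the one where the $\rho$-based lemmas are silent and the geometric input from Theorem~\ref{winning_theorem} is indispensable. Both routes implicitly assume that $\phi(\mathcal{X})$ is hierarchically quasiconvex in $\mathcal{X}'$; this hypothesis is elided from the wording of the lemma, but every tool invoked (Theorems~\ref{thmB} and~\ref{winning_theorem}, Lemma~\ref{consequence_winning_theorem}) requires it, and the paper's own proof uses it just as yours does.
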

\begin{proof}
By the third hypothesis of Theorem \ref{thmB}, there exist $\nu,\nu'$, depending only on the hierarchically hyperbolic structures, such that $d_{\mathcal{X}'}\bigl(\phi(x),\mathfrak{g}_{\phi(\mathcal{X})}\circ\mathfrak{g}_{\textbf{F}_{\phi^\lozenge(S)}}(\phi(x))\bigr)\leqslant\nu$ and $d_{\mathcal{X}'}\bigl(z,\mathfrak{g}_{\textbf{F}_{\phi^\lozenge(S)}}\circ\mathfrak{g}_{\phi(\mathcal{X})}(z)\bigr)\leqslant\nu'$, for all $\phi(x)\in\phi(\mathcal{X})$ and for all $z\in\textbf{F}_{\phi^\lozenge(S)}$. 

By \cite[Lemma 1.26]{quasiflats}, there exists a constant $\bar J$, depending on the hierarchically hyperbolic structure, such that 
\[d_{\mathcal{X}'}(\phi(\mathcal{X}),\textbf{F}_{\phi^\lozenge(S)})\asymp_{(\bar J,\bar J)} d_{\mathcal{X}'}\bigl(\mathfrak{g}_{\textbf{F}_{\phi^\lozenge(S)}}(\phi(x)),\mathfrak{g}_{\phi(\mathcal{X})}
\circ\mathfrak{g}_{\textbf{F}_{\phi^\lozenge(S)}}(\phi(x))\bigr)\] 
for all $\phi(x)\in\phi(\mathcal{X})$. 
Furthermore, if $\eta$ denotes the bound of Theorem \ref{winning_theorem}, using the previous equation we obtain that
\begin{equation*}
\begin{aligned}
d_{\mathcal{X}'}\bigl(\phi(x),\mathfrak{g}_{\textbf{F}_{\phi^\lozenge(S)}}(\phi(x))\bigr)&\leq d_{\mathcal{X}'}\bigl(\phi(x),\mathfrak{g}_{\phi(\mathcal{X})}\circ\mathfrak{g}_{\textbf{F}_{\phi^\lozenge(S)}}(\phi(x))\bigr) + d_{\mathcal{X}'}\bigl(\mathfrak{g}_{\phi(\mathcal{X})}\circ\mathfrak{g}_{\textbf{F}_{\phi^\lozenge(S)}}(\phi(x)),\mathfrak{g}_{\textbf{F}_{\phi^\lozenge(S)}}(\phi(x))\bigr)\\
&\leqslant\nu + \bar J d_{\mathcal{X}'}(\phi(\mathcal{X}),\textbf{F}_{\phi^\lozenge(S)})+\bar J \leqslant \nu + \bar J\eta+\bar J=:J
\end{aligned}
\end{equation*} 
for all $\phi(x)\in\phi(\mathcal{X})$. 

In an analogous manner, we obtain that $d_{\mathcal{X}'}(z,\mathfrak{g}_{\phi(\mathcal{X})}(z))\leq J$ for all $z\in\textbf{F}_{\phi^\lozenge(S)}$. Thus, the bound on the Hausdorff distance is proved.
\end{proof}

%%%%%%%%%%%%%%%%%%%%%%%%%%%%%%%%%%%%%%%%%%%%%%%%%%%%%%%%%%%%%%%%%%%%%%%%%
%%%%%%%%%%%%%%%%%COMBINATION WITH BOUNDED SUPPORTS%%%%%%%%%%%%%%%%%%%%%%%
%%%%%%%%%%%%%%%%%%%%%%%%%%%%%%%%%%%%%%%%%%%%%%%%%%%%%%%%%%%%%%%%%%%%%%%%%
%%%%%%%%%%%%%%%%%%%%%%%%%%%%%%%%%%%%%%%%%%%%%%%%%%%%%%%%%%%%%%%%%%%%%%%%%
\section{Combination Theorem}\label{Section5}
In this section we prove Theorem \ref{mainT} of the Introduction:
\begin{thmA}
Let $\mathcal{T}$ be a tree of hierarchically hyperbolic spaces. Suppose that: 
\begin{enumerate}
\item each edge-hieromorphism is hierarchically quasiconvex, uniformly coarsely lipschitz and full;
\item comparison maps are uniform quasi isometries;
\item the hierarchically hyperbolic spaces of $\mathcal{T}$ have the intersection property and clean containers.
\end{enumerate} 
Then the metric space $\mathcal{X}(\mathcal{T})$ associated to $\mathcal{T}$ is a hierarchically hyperbolic space with clean containers and the intersection property.
\end{thmA}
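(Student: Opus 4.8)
The plan is to verify the nine axioms of Definition~\ref{HHS_definition} for the pair $(\mathcal{X}(\mathcal{T}),\mathfrak{S})$, where the index set $\mathfrak{S}$ is the set of $\sim$-equivalence classes $[V]$ of elements of $\bigsqcup_v \mathfrak{S}_v$, augmented with auxiliary ``product-region'' indices coming from the tree $T$ itself (one for each subtree that should record the product structure $\mathcal{X}_e \hookrightarrow \mathcal{X}_{e^\pm}$), following the template of \cite[Section 8]{BHS2} but using our weaker hypotheses. The subsection structure announced in the introduction (decorations, then $\mathfrak{S}$, then projections) makes clear the intended route, so I will organize the proof accordingly.

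First I would perform the \emph{decoration} trick of Subsection~\ref{trees_with_decorations}: enlarge $\mathcal{T}$ to $\widetilde{\mathcal{T}}$ so that the converse of Lemma~\ref{support_inclusion} holds, i.e.\ the nesting of equivalence classes is detected by reverse inclusion of supports together with per-vertex nesting. This requires checking that the enlarged tree still satisfies hypotheses (1)--(3), that $\mathcal{X}(\widetilde{\mathcal{T}})$ is quasi-isometric to $\mathcal{X}(\mathcal{T})$ (so hierarchical hyperbolicity transfers), and that decorations interact well with the intersection property and clean containers — the latter using Lemma~\ref{wedge_container_lemma_blue} and Lemma~\ref{clean_containers_use}. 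Then I would define the nesting, orthogonality and transversality relations on $\mathfrak{S}$: two classes are nested/orthogonal/transverse according to the relation of any (equivalently, by Lemma~\ref{lemma.good.definition} and Remark~\ref{remark_comparison_maps}, every) pair of representatives at a common vertex, with the convention that $[U]\perp[V]$ also when their supports are disjoint in a way witnessed by a product-region index. The hyperbolic space $\mathcal{C}[V]$ is the hyperbolic space of a favourite representative $V_v$, and for $x\in \mathcal{X}_w$ the projection $\pi_{[V]}(x)$ is $\mathfrak{c}\bigl(\pi_{V_w}(\mathfrak{g}_w(x))\bigr)$, gated to the vertex $w\in T_{[V]}$ nearest $w$ and pushed over by a comparison map; the $\rho$-maps are built from the $\rho$'s of representatives together with the comparison maps, using Remark~\ref{remark_comparealso} to define $\rho^{[W]}_{[V]}$ as $\mathfrak{c}_V\bigl(\pi_{V_{e^+}}(\phi_{e^+}(\mathcal{X}_e))\bigr)$ when the supports of $[W]$ and $[V]$ do not meet.

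With these data in place, the bulk of the work is verifying the axioms. Projections, nesting, orthogonality and partial realization are handled vertex-by-vertex plus a gluing argument along the tree, exactly as in \cite[Section 8]{BHS2}, invoking hierarchical quasiconvexity of edge images so that the gate maps $\mathfrak{g}_v$ are well-defined and coarsely lipschitz. Consistency, bounded geodesic image and large links are the technical heart: here one must show that the comparison maps are genuine quasi-isometries (hypothesis (2)) so that $\rho$-maps pushed along different paths agree coarsely (Lemma~\ref{lemma.good.definition}), and that the new definition of $\rho^{[W]}_{[V]}$ is consistent — this is where Theorem~\ref{thmB} enters, since it guarantees that $\pi_{V_{e^+}}(\phi_{e^+}(\mathcal{X}_e))$ is a bounded set precisely because the edge hieromorphism is full, coarsely lipschitz with hierarchically quasiconvex image, and moreover that $\phi_{e^+}(\mathcal{X}_e)$ is quasi-isometric to $\mathbf{F}_{\phi_{e^+}^\lozenge(S_e)}$. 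Finite complexity follows from finite complexity of the vertex spaces plus the tree contributing at most a bounded increment; uniqueness follows from the distance formula for $\mathcal{X}(\mathcal{T})$ that one proves along the way (this is essentially \cite[Theorem 8.6]{BHS2} with the improved hypotheses, and Lemma~\ref{DFlemma} and Remark~\ref{DFremark} are the tools to compare the two sides).

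Finally I would check that $\bigl(\mathcal{X}(\mathcal{T}),\mathfrak{S}\bigr)$ inherits the intersection property and clean containers. For the intersection property one defines $[U]\wedge[V]$ using the wedge operations of the vertex spaces on $T_{[U]}\cap T_{[V]}$: by Lemma~\ref{lemma:intersection_wedge_commute} the edge-hieromorphisms commute with $\wedge$, so these local wedges are compatible across the intersection of supports and glue to a well-defined element; the axioms $(\wedge_1)$--$(\wedge_5)$ are then checked using Lemma~\ref{support_inclusion} (and its decorated converse) to translate nesting of classes into support containment. For clean containers one uses Lemma~\ref{wedge_container_lemma_blue} at each vertex together with Lemma~\ref{clean_containers_use} to show that the container of the orthogonal complement of $[U]$ in $[Z]$ is again realized by containers of representatives and is orthogonal to $[U]$. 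The main obstacle I anticipate is the consistency/large-links verification for the transverse pairs whose supports are disjoint: here the $\rho$-maps are not literally those of any representative, and one must leverage Theorem~\ref{winning_theorem} and Lemma~\ref{consequence_winning_theorem} (which identify $\rho_U^{S'}$ with $\pi_U(\phi(\mathcal{X}))$ coarsely, using concreteness — available after Proposition~\ref{concreteness_making}) together with Lemma~\ref{conclusion_fourth_hypothesis} to make the various definitions of $\rho^{[W]}_{[V]}$ along different geodesics in $T$ coarsely agree and satisfy the consistency inequalities uniformly.
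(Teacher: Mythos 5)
Your route matches the paper's at a high level: decorate the tree so that $\sim$-classes are discriminated by their supports, build an index set from equivalence classes plus auxiliary tree-indices, verify the axioms using comparison maps, Theorem~\ref{thmB}, Theorem~\ref{winning_theorem}, and concreteness (after Proposition~\ref{concreteness_making}), and conclude the intersection property and clean containers from Lemmas~\ref{lemma:intersection_wedge_commute}, \ref{clean_containers_use}, and~\ref{wedge_container_lemma_blue}. You also correctly pinpoint the role of hypothesis (2) and of Theorem~\ref{thmB} in making $\pi_{V_{e^+}}\bigl(\phi_{e^+}(\mathcal{X}_e)\bigr)$ uniformly bounded, which is what replaces the missing hypothesis of \cite[Theorem 8.6]{BHS2}.

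The gap is in your description of the auxiliary indices, which is too close to the template of \cite[Section 8]{BHS2} to survive precisely the weakening of hypotheses that the theorem is about. In \cite{BHS2} the added orthogonal-container indices carry bounded hyperbolic spaces, and this is harmless because supports are assumed uniformly bounded. Dropping that assumption (replacing it with ``comparison maps are uniform quasi-isometries'') allows $T_{[V]}$ to be an unbounded subtree, and if the auxiliary hyperbolic spaces remain bounded then uniqueness fails: two points of $\mathcal{X}(\mathcal{T})$ separated only by a long excursion inside a single unbounded support would have uniformly close projections to every coordinate. The paper's genuinely new ingredient is therefore $\mathfrak{S}_2=\{T_{[V]}\mid[V]\in\mathfrak{S}_1\}$, with $\mathcal{C}T_{[V]}$ the coned-off tree $\widehat T_{[V]}$ and the $\sqsubseteq$-maximal element $\widehat T$ the fully coned-off Bass--Serre tree; these are non-trivial hyperbolic spaces, and a large part of the proof is devoted to the projections $\pi_{T_{[V]}}$, to the $\rho$-maps between them and the other indices, and to the resulting consistency, bounded geodesic image, Large Links (notably Lemma~\ref{lemma_finite_set}), partial realization, and uniqueness cases involving $\mathfrak{S}_2$ — none of which your plan addresses. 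Two smaller but real slips: (i) you say $[U]\perp[V]$ when their supports are disjoint, but in the paper disjoint supports give $[U]\pitchfork[V]$; the orthogonality $[U]\perp T_{[V]}$, declared exactly when $[U]\sqsubseteq[V]$, is what makes $T_{[V]}$ serve as $\cont_\perp[V]$ and must not be conflated with orthogonality between two $\sim$-classes; and (ii) deriving uniqueness from a distance formula is circular unless you first establish that formula independently, whereas the paper's uniqueness argument goes directly via gate maps and an unfolding of paths in $\widehat T$ to bounded-length paths in $T$, again relying essentially on the coned-off structure of $\mathcal{C}T_{[V]}$.
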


We recall that $\mathcal{X}(\mathcal{T})$ denotes the metric space defined in Definition \ref{equiv_class_def}.

\begin{remark}
Comparison maps being uniform quasi isometries is, in particular, a consequence of uniformly bounded supports (see \cite[Lemma 8.20]{BHS2}), which is a hypothesis in the original Combination Theorem of Behrstock, Hagen, and Sisto \cite[Theorem 8.6]{BHS2}. 

\smallskip
The second part of the fourth hypothesis of \cite[Theorem 8.6]{BHS2} 
\begin{quote}
there exists $K\geq 0$ such that for all vertices $v$ of $\mathcal{T}$ and edges $e$ incident to $v$, we have $d_{Haus}(\phi_v(\mathcal{X}_e),\textbf{F}_{\phi^{\lozenge}_v(S_e)}\times\{\star\})\leq K$, where $S_e\in\mathfrak{S}_e$ is the unique maximal element and $\star\in~\textbf{E}_{\phi_v^{\lozenge}(S_e)}$
\end{quote}
is a consequence of Theorem \ref{winning_theorem} through Lemma \ref{conclusion_fourth_hypothesis}, and therefore is automatically satisfied in our setting.

\smallskip
Hypothesis \ref{hyp3} of Theorem \ref{mainT} cannot be further relaxed, or dropped. A counterexample of Theorem \ref{mainT} without the second hypothesis is given by Bass-Serre trees of Baumslag-Solitar groups. Indeed, non-abelian 
Baumslag-Solitar groups are HNN extensions $\Z\ast_{\Z}$, that is graph of groups of hierarchically hyperbolic groups, and their Dehn function is not quadratic~\cite[Theorem~B]{G92}. Therefore, they are not hierarchically hyperbolic, because hierarchically hyperbolic groups have quadratic Dehn function \cite[Corollary~7.5]{BHS2}.
\end{remark}

\begin{example}[Baumslag-Solitar groups]
Let us consider more in detail non-amenable Baumslag-Solitar groups $\BS(1,k)=\langle a,t\mid tat^{-1}=a^k\rangle$, where $k\neq \pm 1$. Let $T=(V,E)$ be the Bass-Serre tree associated to the HNN extension $\BS(1,k)$, so that $V=\{g\langle a\rangle\mid g\in \BS(1,k)\}$. Two distinct vertices $g\langle a\rangle$ and $h\langle a\rangle$ are joined by an edge $e\in E$ if and only if there exists $b\in \langle a\rangle$ such that either $h\langle a\rangle=gbt^{\pm 1}\langle a\rangle$, or $h\langle a\rangle=gbt^{-1}\langle a\rangle$. For a vertex $g\langle a\rangle=v \in V$ let $\bigl(\mathcal{X}_v,\mathfrak{S}_v\bigr):=\bigl(g\langle a\rangle,\{\langle a\rangle\}\bigr)$ be the hierarchically hyperbolic space associated to the vertex, and for any edge $e\in E$ let $\bigl(\mathcal{X}_e,\mathfrak{S}_e\bigr):=\bigl(\langle a\rangle,\{\langle a\rangle\} \bigr)$ be the hierarchically hyperbolic space associated to the edge.
Given $\{g\langle a\rangle, h\langle a\rangle\}=e\in E$, consider the hieromorphisms $\phi_{e_+}\colon \bigl(\langle a\rangle,\{\langle a\rangle\} \bigr)\to \bigl(g\langle a\rangle,\{\langle a\rangle\}\bigr)$ be defined as $\phi_{e_+}(a)=ga$, and $\phi_{e_-}\colon \bigl(\langle a\rangle,\{\langle a\rangle\} \bigr)\to \bigl(h\langle a\rangle,\{\langle a\rangle\}\bigr)$ be defined as $\phi_{e_-}(a)=ha^k$.

We have that
\[\mathcal{T}=\Bigl(T,\bigl\{\bigl(\bigl(\mathcal{X}_{g\langle a\rangle},\mathfrak{S}_{g\langle a\rangle}\bigr)\bigr)\bigr\}_{g\in G},\bigl\{\bigl(\mathcal{X}_e,\mathfrak{S}_e\bigr)\}\bigr)\bigr\}_{e\in E}, \bigl\{\phi_{e_\pm}\bigr\}_{e\in E}\Bigr)\]
is a tree of hierarchically hyperbolic spaces. The vertex-spaces and edge-spaces all have the intersection property and clean containers, because their index set consists of only one element. Moreover, hieromorphisms are hierarchically quasiconvex, uniformly coarsely lipschitz, and full.

Let us prove that comparison maps are not uniform quasi isometries. First notice that, as each hierarchically hyperbolic space has an index set of cardinality one, there is only equivalence class that spans the whole tree~$\mathcal{T}$. Let $v$ and $u$ be two vertices in $T$, at distance $d$. Then, the comparison map $\mathfrak{c}_{v\to u}\colon \langle a \rangle\to \langle a\rangle$ is a $(\lvert k\rvert^d,0)$-lipschitz map. Therefore, as $\lvert k\rvert>1$ and we cannot bound the distance $d$ between two vertices in the unbounded tree $T$, comparison maps cannot be uniform quasi isometries, as claimed.
\end{example}

\begin{remark}[{\bf Comparison between index sets}]
Before proving Theorem \ref{mainT} (and therefore before describing the index set $\mathfrak{S}$ for the tree of hierarchically hyperbolic spaces $\mathcal{T}$, compare Section \ref{nes_ort_tra}), we wish to compare the index set that we are using with the index set of \cite{BHS2}, for the familiar readers.

Although our approach might appear more complicated than the one of Berhstock, Hagen, and Sisto \cite{BHS2}, we want to emphasize that the index set we consider and use is more natural than (and a generalization of) the one constructed in \cite{BHS2}. Indeed, in \cite[Definition 8.11]{BHS2}, the index set of Behrstock, Hagen, and Sisto is defined as the inductive closure of the set $\mathfrak{S}_0=\{T\}\cup \bigl(\bigsqcup_{v\in V}\mathfrak{S}_v\bigr)/\sim$, with respect of adding orthogonal containers (compare the sets $\mathcal{K}_\eta$ in that definition).

On the other hand, our index set is defined (compare Equation \eqref{newfactorsystem}) as 
\[\{T\}\cup \Bigl(\bigl(\bigsqcup_{v\in V}\mathfrak{S}_v\bigr)/\sim\Bigr)\cup 
\Bigl\{T_{[V]}\mid [V]\in \bigl(\bigsqcup_{v\in V}\mathfrak{S}_v\bigr)/\sim\Bigr\},\]
because, allowing infinitely-supported equivalence classes, we are forced to add their supports to our index set. This turns out to be a very reasonable addition because, among other things, it ``fixes'' orthogonal containers, which no longer need to be added manually.

Conversely, if the support trees were uniformly bounded (as assumed in \cite{BHS2}), then the support trees we are considering would correspond to the artificial containers defined in \cite[Definition 8.11]{BHS2} (the ones with uniformly bounded attached hyperbolic spaces, and the others with trivial hyperbolic spaces), and  therefore the two constructions would coincide.
\end{remark}

Before going into the proof of Theorem \ref{mainT}, we state an immediate consequence of it:

\begin{corollary}
Let $M$ be a closed $3$-manifold that does not have a $Sol$ or $Nil$ component in its prime decomposition. Then $\pi_1(M)$ is a hierarchically hyperbolic space with the intersection property and clean containers.
\end{corollary}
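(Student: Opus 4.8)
The plan is to deduce this corollary from Theorem \ref{mainT} (equivalently, from Corollary \ref{mainC}) together with the known hierarchically hyperbolic structures on the pieces of the prime and JSJ decompositions of $M$. First I would invoke the geometrization/JSJ picture: after passing to the finite-index subgroup corresponding to the orientable case (hierarchical hyperbolicity of a group is a quasi-isometry invariant and passes to and from finite-index subgroups, so this is harmless), $\pi_1(M)$ is the fundamental group of a finite graph of groups whose vertex groups are fundamental groups of the geometric pieces of the (possibly trivial) JSJ decomposition of the irreducible summands, and whose edge groups are $\Z^2$ (the fundamental groups of the JSJ tori), together with the free-product structure coming from the prime decomposition. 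The hypothesis that $M$ has no $Sol$ or $Nil$ summand guarantees that none of these vertex pieces is modelled on $Sol$ or $Nil$ geometry; the remaining geometries ($\mathbb{H}^3$, $\mathbb{H}^2\times\R$, $\widetilde{SL_2}$, $\R^3$, $S^3$, $S^2\times\R$, and hyperbolic-with-cusps pieces) all have fundamental groups which are hierarchically hyperbolic groups by \cite[Theorem 10.1]{BHS2}. This is exactly the class of $3$-manifold groups already treated there.

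Next I would upgrade these structures so that the hypotheses of Corollary \ref{mainC} are met. For each vertex group $G_v$ I need a hierarchically hyperbolic group structure with the intersection property and clean containers: the structures used in \cite{BHS2} for $3$-manifold pieces are built from hyperbolic groups, products of hyperbolic groups (for $\mathbb{H}^2\times\R$ and $\widetilde{SL_2}$ pieces, which are virtually products), and toral relatively hyperbolic groups (for pieces with cusps, e.g. hyperbolic pieces with boundary), and each of these is handled by the results of Section~\ref{section3}: hyperbolic groups trivially have the intersection property and clean containers; direct products preserve both (Lemma~\ref{directproduct_intersection} for the intersection property, and \cite{ABD} for clean containers); and groups hyperbolic relative to hierarchically hyperbolic groups with these properties again have them (Lemma~\ref{directproduct_intersection}, plus the clean-containers discussion). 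The edge groups $\Z^2$ are hyperbolic-by-nothing — they are virtually abelian, hence hierarchically hyperbolic with a one-point index set (up to the standard structure), so they trivially have the intersection property and clean containers. It remains to check that the edge-hieromorphisms $\Z^2\hookrightarrow G_v$ (the boundary-torus inclusions) can be taken to be full, hierarchically quasiconvex, uniformly coarsely lipschitz, and to induce isometries (not merely quasi-isometries) at the level of the hyperbolic spaces $\mathcal{C}U$ after suitable choices — this is where the structures have to be selected compatibly along edges. Here one uses that a $\Z^2$ boundary subgroup sits as an undistorted (in fact, for relatively hyperbolic pieces, peripheral) subgroup, so its inclusion is a quasi-isometric embedding with hierarchically quasiconvex image, and that the induced maps at the level of curve-like hyperbolic spaces are, after composing with the canonical isometries, honest isometries because the structure on $\Z^2$ is pulled back from the structure on $G_v$.

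Having verified the three hypotheses, Corollary~\ref{mainC} (applied to the graph of groups realizing the prime-plus-JSJ decomposition) gives that $\pi_1(M)$ is a hierarchically hyperbolic group with the intersection property and clean containers, hence in particular a hierarchically hyperbolic space with these properties; removing the finite-index reduction at the end is immediate since all three properties descend along finite-index overgroups (hierarchical hyperbolicity is a QI-invariant, and the intersection property and clean containers persist under the relevant quasi-isometric constructions, compare Lemma~\ref{directproduct_intersection} and Proposition~\ref{concreteness_making}).

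The main obstacle I anticipate is the edge-map condition: ensuring simultaneously that the $\Z^2$-inclusions into adjacent vertex groups are \emph{full} hieromorphisms inducing \emph{isometries} on the associated hyperbolic spaces. Fullness forces the image of the index set of $\Z^2$ to be exactly the down-set of $\phi^\lozenge(S_{\Z^2})$ inside $\mathfrak{S}_v$, which means one cannot use an arbitrary hierarchically hyperbolic structure on $G_v$ but must choose one adapted to its peripheral $\Z^2$'s; for the hyperbolic-with-cusps and product pieces this is available from the explicit constructions in \cite{BHS2,HaSu,ABD}, but checking compatibility across all edges of the JSJ graph at once — so that a single tree of hierarchically hyperbolic spaces realizes the whole decomposition — is the delicate bookkeeping step. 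Everything else is an application of the machinery already assembled: Theorem~\ref{mainT}, Corollary~\ref{mainC}, and the preservation lemmas of Section~\ref{section3}.
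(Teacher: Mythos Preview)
The paper's argument is both different and shorter. It does not assemble a graph of groups from the JSJ/prime decomposition and then verify the hypotheses of Corollary~\ref{mainC}; instead it reuses the specific tree of hierarchically hyperbolic \emph{spaces} already built in the proof of \cite[Theorem~10.1]{BHS2}, whose vertex spaces are the products $\R_v\times\Sigma_v$ (with $\Sigma_v$ the universal cover of a hyperbolic surface with totally geodesic boundary, hence hyperbolic relative to its boundary components) and whose edge spaces are $\R_v\times\partial_0\Sigma_v$. That tree has uniformly bounded supports, so comparison maps are automatically uniform quasi-isometries; the only new work is to check, via Lemma~\ref{directproduct_intersection} and \cite{ABD}, that these product spaces have the intersection property and clean containers, after which Theorem~\ref{mainT} (not Corollary~\ref{mainC}) applies directly. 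Geometric irreducible pieces and the reducible case are handled by short separate appeals to Lemma~\ref{directproduct_intersection}.

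Your route also contains a concrete error: you claim the edge groups $\Z^2$ carry a hierarchically hyperbolic structure ``with a one-point index set''. This is false, since a one-element index set forces the space to be hyperbolic and $\Z^2$ is not. The correct structure on $\Z^2$ is the product structure, which does have the intersection property and clean containers (Lemma~\ref{directproduct_intersection}), but with that richer structure your subsequent verification that the inclusions $\Z^2\hookrightarrow G_v$ are \emph{full} hieromorphisms inducing isometries becomes genuinely nontrivial---you correctly flag this as the main obstacle, but you do not carry it out. A further issue is that you invoke Corollary~\ref{mainC}, which requires comparison maps to be isometries; since the statement to be proved concerns hierarchically hyperbolic \emph{spaces}, Theorem~\ref{mainT} is the right tool, and the isometry hypothesis is neither needed nor obviously available in your setup. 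Finally, the assertion that the intersection property and clean containers pass along finite-index inclusions as ``QI-invariants'' is unjustified: these are properties of a chosen structure, not of the quasi-isometry type.
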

\begin{proof}
The fact that $\pi_1(M)$ is a hierarchically hyperbolic space is proved in \cite[Theorem 10.1]{BHS2}, and it has clean containers by \cite[Proposition 3.5]{ABD}. Hierarchical hyperbolicity is proved, for closed non-geometric \emph{irreducible} $3$-manifolds, by constructing a tree of hierarchically hyperbolic spaces where supports are uniformly bounded (therefore, by \cite[Lemma 8.20]{BHS2}, comparison maps are uniform quasi isometries), and where vertex spaces are direct products $\R_v\times \Sigma_v$. Here, $\R_v$ is a copy of the real line, and $\Sigma_v$ is the universal cover of a hyperbolic surface with totally geodesic boundary, whose hierarchically hyperbolic structure originates from the fact that $\Sigma_v$ is hyperbolic relative to its boundary components. 
Edge spaces are $\R_v\times \partial_0\Sigma_v$, where $\partial_0\Sigma_v$ is a particular boundary component of $\Sigma_v$.

By Lemma \ref{directproduct_intersection}, the spaces $\Sigma_v$, $\R_v\times \partial_0\Sigma_v$ and $\R_v\times \Sigma_v$ are hierarchically hyperbolic spaces with the intersection property, and they all have clean containers by \cite[Section 3]{ABD}. Moreover, as seen in \cite[Theorem 10.1]{BHS2}, the hieromorphisms are coarsely lipschitz, full, and with hierarchically quasiconvex images. Therefore all the hypotheses of Theorem \ref{mainT} are satisfied, thus proving that $\pi_1(M)$ is a hierarchically hyperbolic space with the intersection property and clean containers.

Geometric irreducible $3$-manifolds have fundamental groups that are quasi isometric to direct products of hyperbolic groups, and therefore they have the intersection property by Lemma \ref{directproduct_intersection}. Finally, the fundamental group of any reducible $3$-manifold is the free product of the fundamental groups of irreducible $3$-manifolds, and therefore it has the intersection property by what just proved, and Lemma \ref{directproduct_intersection}.
\end{proof}

%%%%%%%%%%%%%%%%%%%%%%%%%%%%%
%%%%%%%%%%%%%%%%%%%%%%%%%%%%%
%%%%%%%%%%%%%%%%%%%%%%%%%%%%%
\subsection{Trees with decorations}\label{trees_with_decorations}
Recall that a tree of hierarchically hyperbolic spaces (as defined in Definition~\ref{equiv_class_def}) is a tuple
\begin{equation}\label{how_to_decorate}
\mathcal{T}=\Bigl(T,\{(\mathcal{X}_v,\mathfrak{S}_v)\}_{v\in V},\{(\mathcal{X}_e,\mathfrak{S}_e)\}_{e\in E}\bigr),\{\phi_{e^\pm}\colon (\mathcal{X}_e,\mathfrak{S}_e) \to(\mathcal{X}_{e^\pm},\mathfrak{S}_{e^\pm})\}\Bigr),
\end{equation}
where $T=(V,E)$ is a tree, $\{(\mathcal{X}_{v},\mathfrak{S}_{v})\}_{v\in V}\}$ and $\{(\mathcal{X}_{e},\mathfrak{S}_{e})\}_{e\in E}\}$ are families of uniformly hierarchically hyperbolic spaces, and $\phi_{e^+}\colon (\mathcal{X}_{e},\mathfrak{S}_{e})\to (\mathcal{X}_{e^+},\mathfrak{S}_{e^+})$ and
$\phi_{e^-}\colon (\mathcal{X}_{e},\mathfrak{S}_{e})\to (\mathcal{X}_{e^-},\mathfrak{S}_{e^-})$ are hieromorphisms with constants all bounded uniformly.

On $\bigsqcup_{v\in V}\mathfrak{S}_v$ one defines the following equivalence class: given an edge $e=\{v,w\}\in E$ and $U\in \mathfrak{S}_e$, impose $\phi_{v}^\lozenge(U)$ to be equivalent to $\phi_w^\lozenge(U)$, and take the transitive closure of this to obtain the desired equivalence relation. Given $U\in \bigsqcup_{v\in V}\mathfrak{S}_v$, its equivalence class is denoted by $[U]$.

\smallskip
In general, in a tree of hierarchically hyperbolic spaces $\mathcal{T}$ it might happen that two distinct equivalence classes $[U]\neq [V]$ are supported on exactly the same vertices of the tree $T$, that is $T_{[U]}=T_{[V]}$.
This is not desirable, and in this subsection we describe a slight modification of the tree $\mathcal{T}$ (and therefore of the metric space $\mathcal{X}(\mathcal{T})$ associated to it) that ensures that $[U]=[V]$ if and only if $T_{[U]}=T_{[V]}$. 
We achieve this by attaching to each vertex $v$ of $T$ a tree of uniformly bounded diameter, 
and refer to these attached trees as \emph{decorations}. We denote the tree that is obtained with this process by $\widetilde{T}$. As a consequence, the new support trees $\widetilde{T}_{[U]}$ will become larger than the original ones (i.e. $T_{[U]}\subseteq \widetilde{T}_{[U]}$ for each equivalence class $[U]$).

All the hypotheses of Theorem \ref{mainT} are preserved by adding these decorated trees (furthermore, the metric spaces associated to the two trees of hierarchically hyperbolic spaces are quasi-isometric), and therefore for the proof of the theorem we will assume without loss of generality that equivalence classes are discriminated by their supports.

\smallskip
We now describe how to decorate the tree $\mathcal{T}$ of hierarchically hyperbolic spaces of Equation \eqref{how_to_decorate}, to ensure that $[U]=[V]$ if and only if $T_{[U]}=T_{[V]}$.

For any vertex $v\in T$, let $S_v$ be the $\sqsubseteq$-maximal element in $\mathfrak{S}_v$, let $U$ be any $\sqsubseteq$-maximal element of $\mathfrak{S}_v\setminus\{S_v\}$ and let ${\textbf{F}_U\times\lbrace f\rbrace}$ be a parallel copy of the $\textbf{F}_U$ inside of $\mathcal{X}_v$. For any such choice, we add a new vertex $\tilde{v}$
and a new edge $\tilde e$ connecting $v$ and $\tilde v$.
The metric spaces $\mathcal{X}_{\tilde{v}}$ and $\mathcal{X}_{\tilde e}$ are defined to be ${\textbf{F}_U}\times\lbrace f\rbrace$, with the induced metric. 

It follows from \cite[Proposition 5.11]{BHS2} that $\bigl(\mathcal{X}_{\widetilde{v}},\mathfrak{S}_U\bigr)$ and $\bigl(\mathcal{X}_{\widetilde{e}},\mathfrak{S}_U\bigr)$ are hierarchically hyperbolic spaces, of complexity strictly lower than $\bigl(\mathcal{X}_v,\mathfrak{S}_v\bigr)$. We refer to these
index sets as $\mathfrak{S}_{\widetilde v}^{U,f}$ and $\mathfrak{S}_{\widetilde e}^{U,f}$ respectively, where the exponent is added to keep track of the choices of the $\sqsubseteq$-maximal element $U\in\mathfrak{S}_v\setminus\{S_v\}$, and of the parallel copy ${\bf F}_U\times \{f\}$.

The hieromorphisms $\phi_{\widetilde e^+}$ and $\phi_{\widetilde e^-}$ are defined as follows. At the level of metric spaces, $\phi_{\widetilde{e}^+}\colon \mathcal{X}_{\widetilde{e}}\to\mathcal{X}_{\widetilde{v}}$ is the identity map and $\phi_{\widetilde{e}^-}\colon \mathcal{X}_{\widetilde{e}}\to\mathcal{X}_{v}$ is the subspace inclusion. The map $\phi^{\lozenge}_{\widetilde{e}^+}:\mathfrak{S}_{\widetilde{e}}^{U,f}\to\mathfrak{S}_{\widetilde{v}}^{U,f}$ is the identity of the set $\mathfrak{S}_U$, and $\phi_{\widetilde{e}^-}^{\lozenge}\colon \mathfrak{S}_{\widetilde{e}}^{U,f}\to\mathfrak{S}_v$ is the inclusion. At the level of hyperbolic spaces, the maps $\phi_{\widetilde{e}^-,W}^*,\phi_{\widetilde{e}^+,W}^*\colon\mathcal{C}W\to\mathcal{C}W$ are the identity for each $W\in\mathfrak{S}_{\widetilde{e}}^{U,f}$. It is straightforward to check that the commutative diagrams of Definition~\ref{HHS_hieromorphism} are satisfied. Furthermore, since $\phi_{\widetilde{e}^+}^{\lozenge},\phi_{\widetilde{e}^-}^{\lozenge}$ and $\phi_{\widetilde{e}^+,W}^*,\phi_{\widetilde{e}^-,W}^*$ are identity maps or inclusions, it follows that $\phi_{\widetilde{e}^+}$ and $\phi_{\widetilde{e}^-}$ are full hieromorphism. Moreover, they are quasiconvex.

We repeat this process for any newly produced vertex, until the complexity of the resulting hierarchically hyperbolic spaces is one. 
In particular, given a new vertex $\widetilde v$ with associated hierarchically hyperbolic space $\bigl({\bf F}_U\times\{f\},\mathfrak{S}_U\bigr)$ not of complexity one, consider a $\sqsubseteq$-maximal element $V\in\mathfrak{S}_U\setminus\{U\}$. Consider moreover a parallel copy ${\bf F}_V\times \{f'\}$ of ${\bf F}_V$ in ${\bf F}_U\times \{f\}$, and repeat the process to construct a new vertex with associated hierarchically hyperbolic space $\bigl({\bf F}_V\times \{f'\},\mathfrak{S}_V\bigr)$. We stress that ${\bf F}_V$ is defined \emph{in} the hierarchically hyperbolic space $\bigl({\bf F}_U\times\{f\},\mathfrak{S}_U\bigr)$, and not in the space $\bigl(\mathcal{X}_v,\mathfrak{S}_v\bigr)$ for which $U\in\mathfrak{S}_v$.

We denote by $\widetilde{\mathcal{T}}$ the tree of hierarchically hyperbolic spaces obtained from $\mathcal{T}$ following this process. Notice that $\mathcal{X}(\mathcal{T})$ can be 
naturally seen as a subspace of $\mathcal{X}(\widetilde{\mathcal{T}})$, that is 
$\mathcal{X}(\mathcal{T})\subseteq \mathcal{X}(\widetilde{\mathcal{T}})$.
Moreover, as the complexity of the hierarchically hyperbolic spaces of $\mathcal{T}$ is uniformly bounded and each step of the described process reduces the complexity by one, there exists a uniform constant $C$ such that $\mathcal{N}_C\bigl(\mathcal{X}(\mathcal{T})\bigr)=\mathcal{X}(\widetilde{\mathcal{T}})$. In particular, the inclusion map $\iota\colon\mathcal{X}(\mathcal{T})\hookrightarrow \mathcal{X}(\widetilde{\mathcal{T}})$ is a quasi isometry, and therefore the two spaces $\mathcal{X}(\mathcal{T})$ and $\mathcal{X}(\widetilde{\mathcal{T}})$ are quasi isometric.

In $\mathcal{X}(\widetilde{\mathcal{T}})$, we denote by $\sim_\star$ the equivalence relation described in Subsection \ref{section_trees}, by $[U]_{\star}$ the equivalence class of $U\in\bigsqcup_{\widetilde v\in\widetilde V}\mathfrak{S}_{\widetilde v}$ with respect to $\sim_\star$, and by $\widetilde T_{[U]_\star}$ the support of $[U]_\star$. Notice that $\widetilde T_{[U]_\star}\cap T=T_{[U]}$ for all $U\in \bigsqcup_{v\in V}\mathfrak{S}_{v}$, and that for all $\widetilde V\in \bigsqcup_{\widetilde v\in\widetilde V\setminus V}\mathfrak{S}_{\widetilde{v}}$
there exists $V\in \bigsqcup_{ v\in V}\mathfrak{S}_v$ such that $\widetilde V\sim_\star V$.

\begin{remark}
In the context of hierarchically hyperbolic groups, decorating a tree $\mathcal{T}$ amounts to the following.
Let $v$ be a vertex in $\mathcal{T}$ with associated group $G$, and consider the Bass-Serre tree of $G\ast_H H$, where $H$ is a hierarchically quasiconvex subgroup of $G$ of maximal, strictly smaller complexity, and the two edge-embeddings are given by the identity map
$\id_H\colon H\to H$ and by the inclusion $\iota\colon H\to G$.
This Bass-Serre tree has one vertex $v_0$
with associated group $G$, and $[G:H]$ vertices $v_i$ whose associated groups are the $G$-cosets of the subgroup $H$, and edges $e_i$ connecting $v_0$ to $v_i$. 

In the tree $\mathcal{T}$, we replace the vertex $v$ by $v_0$, and we add new vertices $v_i$ and
edges $e_i$ connecting $v_0$ to $v_i$. To these new vertices $v_0$ and $v_i$, we associate the groups given by the Bass-Serre tree of the splitting $G\ast_H H$.

For any new vertex $v_i$ added in such way, we repeat the process unless the vertex group $H$ has complexity~one.
\end{remark}

\begin{lemma}\label{lemma_decorations}
In the tree of hierarchically hyperbolic spaces $\widetilde{\mathcal{T}}$ we have that $[U]_\star=[V]_\star$ if and only if $\widetilde T_{[U]_\star}=\widetilde T_{[V]_\star}$.
\begin{proof}
One implication is trivial. Assume now that $\widetilde T_{[V]_\star}=\widetilde T_{[W]_\star}$. If the complexity of the two equivalence classes $[V]_\star$ and $[U]_\star$ is different, then the decorations added to the tree $T$ are trees of different diameter, and therefore we cannot have that $\widetilde T_{[V]_\star}=\widetilde T_{[W]_\star}$. Thus, the equivalence classes have the same complexity, so neither cannot be properly nested into the other. 

By construction, in the tree $\widetilde T$ there are vertices $\widetilde u$ and $\widetilde v$ such that $U$ and $V$ are $\sqsubseteq$-maximal elements of $\mathfrak{S}_{\widetilde u}$ and $\mathfrak{S}_{\widetilde v}$, respectively. 
As $\widetilde T_{[U]_\star}=\widetilde T_{[V]_\star}$, the equivalence class $[U]_\star$ must have a representative in $\mathfrak{S}_{\widetilde v}$, and $[V]_\star$ must have a representative in $\mathfrak{S}_{\widetilde u}$. As neither equivalence class can be properly nested into the other, it must then be that $[U]_\star=[V]_\star$.
\end{proof}
\end{lemma}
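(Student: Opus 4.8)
One implication is immediate, since the support of a $\sim_\star$-class depends only on the class. For the converse the plan is to first isolate the following consequence of the decoration construction of Subsection~\ref{trees_with_decorations}: \emph{every $\sim_\star$-class $[W]_\star$ has a representative which is the $\sqsubseteq$-maximal element of the index set of some vertex of $\widetilde T$.} To prove this I would start from a representative $W_v\in\mathfrak{S}_v$ at an original vertex $v\in T$, which exists by the last observation of Subsection~\ref{trees_with_decorations}. If $W_v=S_v$ is already $\sqsubseteq$-maximal in $\mathfrak{S}_v$ there is nothing to do; otherwise I would fix a maximal $\sqsubseteq$-chain $S_v=Z_0\sqsupsetneq Z_1\sqsupsetneq\dots\sqsupsetneq Z_k=W_v$. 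Maximality of the chain forces each $Z_{i+1}$ to be $\sqsubseteq$-maximal in $\mathfrak{S}_{Z_i}\setminus\{Z_i\}$, so the decoration procedure, which at every step attaches to a vertex with index set $\mathfrak{S}_{Z_i}$ vertices with index sets $\mathfrak{S}_{Z_{i+1}}$ (one for each such co-atom $Z_{i+1}$ and each parallel copy), successively produces vertices with index sets $\mathfrak{S}_{Z_1},\dots,\mathfrak{S}_{Z_k}$; the termination rule at complexity one does not interrupt this, since $\mathfrak{S}_{Z_i}$ contains the $\sqsubseteq$-chain $Z_i\sqsupsetneq\dots\sqsupsetneq Z_k$ and hence has complexity at least two whenever $i<k$. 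The resulting vertex $\widetilde w$ with index set $\mathfrak{S}_{W_v}$ has $W_v$ as its $\sqsubseteq$-maximal element, and, the decoration hieromorphisms being identities or inclusions on the relevant index sets, $W_v$ regarded in $\mathfrak{S}_{\widetilde w}$ is a representative of $[W]_\star$.

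Granting this, suppose $\widetilde T_{[U]_\star}=\widetilde T_{[V]_\star}$ and choose vertices $\widetilde u,\widetilde v$ as above, together with representatives $U_{\widetilde u}\in\mathfrak{S}_{\widetilde u}$ of $[U]_\star$ and $V_{\widetilde v}\in\mathfrak{S}_{\widetilde v}$ of $[V]_\star$ that are $\sqsubseteq$-maximal in their respective index sets. Since $\widetilde u\in\widetilde T_{[U]_\star}=\widetilde T_{[V]_\star}$, the class $[V]_\star$ has a representative $V_{\widetilde u}\in\mathfrak{S}_{\widetilde u}$; as $U_{\widetilde u}$ is $\sqsubseteq$-maximal in $\mathfrak{S}_{\widetilde u}$, we get $V_{\widetilde u}\sqsubseteq U_{\widetilde u}$, that is $[V]_\star\sqsubseteq[U]_\star$. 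Symmetrically, from $\widetilde v\in\widetilde T_{[V]_\star}=\widetilde T_{[U]_\star}$ we obtain $[U]_\star\sqsubseteq[V]_\star$.

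Finally I would rule out that both of these nestings are proper. Every edge-hieromorphism of $\widetilde{\mathcal{T}}$ is full, for the original edges by the first hypothesis of Theorem~\ref{mainT} and for the decoration edges as checked in Subsection~\ref{trees_with_decorations}, and fullness together with injectivity and the mutual exclusivity of nesting, orthogonality and transversality makes each $\phi^\lozenge_{e^\pm}$ a nesting-preserving bijection from $\mathfrak{S}_e$ onto the down-set of $\phi^\lozenge_{e^\pm}(S_e)$ in $\mathfrak{S}_{e^\pm}$ whose inverse also preserves nesting, the decoration edges having this property trivially. Hence the length of a longest $\sqsubseteq$-chain below a given element is invariant along a $\sim_\star$-class, so the complexity of a class is well defined, and since proper nesting of elements strictly lowers this length (append the larger element to a maximal chain below the smaller one), it is impossible to have $[U]_\star\sqsubsetneq[V]_\star$ and $[V]_\star\sqsubsetneq[U]_\star$ simultaneously. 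Therefore $[U]_\star=[V]_\star$.

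I expect the first step to be the main obstacle: one has to argue carefully that the iterated decoration — repeated passage to $\sqsubseteq$-maximal proper elements and their parallel copies — really does exhibit, for \emph{every} class, a vertex at which a representative is $\sqsubseteq$-maximal, and that stopping at complexity one never truncates the process before that vertex is produced. The remaining steps are bookkeeping with the already-established properties of full hieromorphisms and of support trees.
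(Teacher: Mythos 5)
Your proof is correct, and it follows the same skeleton as the paper's: locate vertices $\widetilde u,\widetilde v$ of $\widetilde T$ at which representatives of $[U]_\star$ and $[V]_\star$ are $\sqsubseteq$-maximal, deduce from $\widetilde T_{[U]_\star}=\widetilde T_{[V]_\star}$ that the two classes are mutually nested, and rule out proper nesting. There are two local differences worth noting. First, for the ``no proper nesting'' step the paper argues that unequal complexities would produce decoration trees of different diameter hanging off the support trees, so the supports could not coincide; you instead observe that fullness of the edge-hieromorphisms (together with injectivity and the mutual exclusivity of the three relations) makes the length of a longest $\sqsubseteq$-chain below an element invariant along its $\sim_\star$-class, so a proper nesting in either direction is impossible. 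Your variant is a little more robust, as it avoids reasoning about the geometry of the decoration trees and reduces instead to an order-theoretic invariant. Second, you actually prove the assertion that the paper dispatches with ``by construction'' — that iterating the co-atom/parallel-copy step along a maximal $\sqsubseteq$-chain $S_v\sqsupsetneq Z_1\sqsupsetneq\dots\sqsupsetneq Z_k=W_v$ produces a decoration vertex whose index set is $\mathfrak{S}_{W_v}$, and that the stopping rule at complexity one never truncates this chain prematurely since $\mathfrak{S}_{Z_i}$ still contains a chain of length at least two for $i<k$. Your worry in the last paragraph is exactly the right one to have, and the chain argument handles it correctly.
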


If the tree $\mathcal{T}$ satisfies the hypotheses of Theorem \ref{mainT}, then also $\widetilde{\mathcal{T}}$ does. We prove this in the following lemmas.

\begin{lemma}
In the tree of hierarchically hyperbolic spaces $\widetilde{\mathcal{T}}$ the edge hieromorphisms are full, coarsely lipschitz,  and hierarchically quasiconvex.
\begin{proof}
Let $e$ be an edge in $\widetilde T$. Two cases can occur: either $e$ is an edge already in the tree $T$, or it was added with the decoration of $T$.

If $e$ was already an edge in $T$, then the edge hieromorphisms are full, coarsely lipschitz, and hierarchically quasiconvex by the hypotheses of Theorem \ref{mainT}.
On the other hand, if $e$ is a new edge then the two maps $\phi_{e_-}$ and $\phi_{e_+}$ are full, hierarchically quasiconvex isometric embeddings (one is actually an isometry), by construction.

\end{proof}
\end{lemma}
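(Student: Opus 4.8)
The plan is to split on whether an edge $e$ of $\widetilde T$ is one of the original edges of $T$ or one of the edges introduced during the decoration procedure described above. If $e$ is an edge of $T$, there is nothing to prove: the hieromorphisms $\phi_{e^\pm}$ are full, (uniformly) coarsely lipschitz and hierarchically quasiconvex by the first hypothesis of Theorem~\ref{mainT}.

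So suppose $\tilde e$ is a new edge, joining a vertex $v$ (whose associated space is of the form $\bigl({\bf F}_U\times\{f\},\mathfrak{S}_U\bigr)$, the base case being $v$ a vertex of $T$ with $U\in\mathfrak{S}_v\setminus\{S_v\}$ a $\sqsubseteq$-maximal element) to a new vertex $\tilde v$. By construction, at the level of metric spaces $\phi_{\tilde e^+}\colon\mathcal{X}_{\tilde e}\to\mathcal{X}_{\tilde v}$ is the identity and $\phi_{\tilde e^-}\colon\mathcal{X}_{\tilde e}\to\mathcal{X}_v$ is the subspace inclusion; the induced maps on index sets are respectively the identity of $\mathfrak{S}_U$ and the inclusion $\mathfrak{S}_U\hookrightarrow\mathfrak{S}_v$; and the induced maps on the hyperbolic spaces are all identities. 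From this all three properties follow by inspection. The identity is $(1,0)$-Lipschitz and a subspace inclusion (with the induced metric) is $1$-Lipschitz, so both maps are coarsely lipschitz with absolute constants. The induced maps on hyperbolic spaces are $(1,1)$-quasi-isometries, and $\phi_{\tilde e^-}^\lozenge(\mathfrak{S}_U)=\mathfrak{S}_U=\{W\in\mathfrak{S}_v\mid W\sqsubseteq U\}$ is exactly the set of elements of $\mathfrak{S}_v$ nested into $\phi_{\tilde e^-}^\lozenge(U)=U$ (and $\phi_{\tilde e^+}^\lozenge$ is a bijection), so both maps are full in the sense of Definition~\ref{fullness_definition}. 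Finally, $\phi_{\tilde e^+}(\mathcal{X}_{\tilde e})=\mathcal{X}_{\tilde v}$ is trivially hierarchically quasiconvex in itself, while $\phi_{\tilde e^-}(\mathcal{X}_{\tilde e})={\bf F}_U\times\{f\}$ is a (parallel copy of a) factor of a standard product region, hence hierarchically quasiconvex in $\mathcal{X}_v$ by \cite[Proposition~5.11]{BHS2} and the discussion following it (this was already recorded in the construction of the decoration).

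I do not anticipate a real obstacle here: every map in play is an identity or a subspace inclusion, so the constants are absolute and uniform over the edges of $\widetilde T$. The only point requiring care is the bookkeeping when $\tilde e$ is produced by iterating the decoration on an already-new vertex $v$ with space $\bigl({\bf F}_U\times\{f\},\mathfrak{S}_U\bigr)$: in that case ${\bf F}_V$ is formed \emph{inside} that space rather than inside the original $\mathcal{X}_v$. But the computations above are carried out level by level and go through unchanged, since hierarchical quasiconvexity of factors of standard product regions holds inside any hierarchically hyperbolic space, and the complexity drops at each step so the iteration terminates.
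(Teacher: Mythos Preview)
Your proof is correct and follows the same case split as the paper, which also distinguishes original edges (handled by the hypotheses of Theorem~\ref{mainT}) from new edges (handled ``by construction''). You simply spell out in detail what the paper leaves implicit in the phrase ``by construction'': that the new edge maps are identities or subspace inclusions at every level, hence isometric embeddings, full, and with hierarchically quasiconvex image.
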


\begin{lemma}
The hierarchically hyperbolic spaces of $\widetilde{\mathcal{T}}$ have the intersection property and clean containers.
\begin{proof}
Let $\widetilde v$ be a vertex of $\widetilde T$. If $\widetilde v\in T$ then $\mathfrak{S}_{\widetilde v}$ has the intersection property and clean containers, by the hypotheses of Theorem \ref{mainT}. If $\widetilde v\in\widetilde T\setminus T$, then $\mathfrak{S}_{\widetilde v}=\mathfrak{S}_{\widetilde v}^{U,f}$ coincides with $\mathfrak{S}_U$, for some $U\in\bigsqcup_{v\in V}\mathfrak{S}_v$. Therefore, $\mathfrak{S}_{\widetilde v}$ has in intersection property. Let $v\in T$ be the vertex such that $U\in\mathfrak{S}_v$.

Suppose that $\mathfrak{S}_{\widetilde v}=\mathfrak{S}_{\widetilde v}^{U,f}=\mathfrak{S}_U$ does not have clean containers. Therefore, there exists $W\in\mathfrak{S}_U\setminus\{U\}$ such that the set $\{Z\in\mathfrak{S}_U\mid Z\perp W\}$ is not empty, and $W\not\perp\cont_\perp^UW$. By Lemma \ref{wedge_container_lemma_blue} we know that $\cont_\perp^UW=U\wedge \cont_\perp W$, where $\cont_\perp W$ is the orthogonal container of $W$ in $\mathfrak{S}_v$. Moreover $W\perp \cont_\perp W$ by clean containers in $\mathfrak{S}_v$, and therefore we reach a contradiction, as $\cont_\perp^UW\sqsubseteq\cont_\perp U$. Thus, $\mathfrak{S}_{\widetilde v}^{U,f}$ has clean containers.

The argument for edge spaces is similar.
\end{proof}
\end{lemma}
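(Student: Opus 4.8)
The plan is to partition the vertices and edges of the decorated tree $\widetilde{T}$ into the ones already present in $T$ and the ones created by the decoration procedure. For a vertex or edge of the first kind the associated hierarchically hyperbolic space is unchanged, hence it has the intersection property and clean containers by the third hypothesis of Theorem~\ref{mainT}; thus the whole content of the lemma concerns the newly created vertices and edges. Since both properties depend only on the combinatorics of the index set (its nesting and orthogonality relations), I would work entirely at that level.

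Next I would unwind the finitely many iterations of the decoration: each new vertex carries a space with index set $\mathfrak{S}_U=\{V\in\mathfrak{S}\mid V\sqsubseteq U\}$ for some $U$, and since a down-set of a down-set is again a down-set, after all iterations $U$ lies in the index set $\mathfrak{S}_v$ of an original vertex $v$ and $\mathfrak{S}_U=\{V\in\mathfrak{S}_v\mid V\sqsubseteq U\}$ with the restricted relations; each new edge has index set $\mathfrak{S}_{\widetilde e}^{U,f}$ equal to such an $\mathfrak{S}_U$ as well. So it is enough to prove: if $\mathfrak{S}$ has the intersection property and clean containers and $U\in\mathfrak{S}$, then so does $\mathfrak{S}_U$ (one may alternatively phrase this as an induction on decoration steps).

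For the intersection property I would take the wedge on $\mathfrak{S}_U$ to be the restriction of the wedge on $\mathfrak{S}$: this lands in $\mathfrak{S}_U\cup\{\emptyset\}$ because, when $X\wedge Y\neq\emptyset$, property $(\wedge_4)$ and transitivity of nesting give $X\wedge Y\sqsubseteq X\sqsubseteq U$; axioms $(\wedge_1)$--$(\wedge_5)$ are then inherited verbatim. For clean containers, suppose $W,Z\in\mathfrak{S}_U$ are such that the orthogonal container $\cont_\perp^Z W$ is defined in $\mathfrak{S}_U$. The key observation is that the set of elements of $\mathfrak{S}_U$ nested in $Z$ equals the set of elements of $\mathfrak{S}$ nested in $Z$, so $\cont_\perp^Z W$ computed in $\mathfrak{S}_U$ is literally the same element as the one computed in $\mathfrak{S}$; clean containers of $\mathfrak{S}$ then give $W\perp\cont_\perp^Z W$. (For the higher-container case $Z=U$ one can alternatively invoke Lemma~\ref{wedge_container_lemma_blue} to write $\cont_\perp^U W=U\wedge\cont_\perp W$ in $\mathfrak{S}$, and then $\cont_\perp^U W\sqsubseteq\cont_\perp W\perp W$ together with the orthogonality axiom of a hierarchically hyperbolic space yields $\cont_\perp^U W\perp W$.) The argument for edge spaces is identical.

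The step I expect to need the most care is this comparison of orthogonal containers: one must check that the container computed inside the smaller index set $\mathfrak{S}_U$ coincides, as an element of $\mathfrak{S}$, with the one computed in the ambient index set, which is exactly what allows clean containers to descend. Everything else is routine, purely order-theoretic bookkeeping.
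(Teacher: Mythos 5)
Your proposal is correct and takes fundamentally the same reductionist approach as the paper: partition vertices and edges into original and new, unwind the decoration to reduce to showing both properties descend to down-sets $\mathfrak{S}_U$, and argue at the purely order-theoretic level. The intersection-property step matches the paper exactly. For clean containers, however, you offer two routes, and the paper uses only the one you mention in parentheses: it invokes Lemma \ref{wedge_container_lemma_blue} to get $\cont_\perp^U W = U\wedge \cont_\perp W \sqsubseteq \cont_\perp W$, then concludes from $W\perp\cont_\perp W$ in $\mathfrak{S}_v$ and the orthogonality axiom. Your primary argument — observing that for $Z\sqsubseteq U$ the set $\{V\in\mathfrak{S}_U\mid V\sqsubseteq Z\}$ coincides with $\{V\in\mathfrak{S}_v\mid V\sqsubseteq Z\}$, so that the orthogonal containers in $\mathfrak{S}_U$ can be chosen to be \emph{literally the same elements} as in $\mathfrak{S}_v$ and clean containers descends with no computation — is a genuinely simpler route. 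It also handles all $Z\sqsubseteq U$ uniformly, whereas the paper's written argument explicitly treats only the higher container $Z=U$. A further bonus of your primary argument is that it does not use the intersection property at all for the clean-containers half, whereas the paper's route goes through Lemma \ref{wedge_container_lemma_blue}, which does. Both arguments are sound, but yours isolates more cleanly why the statement is essentially automatic.
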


\begin{lemma}
Comparison maps in $\widetilde{\mathcal{T}}$ are uniformly quasi-isometries.
\begin{proof}
Let $v,w$ be two vertices in $\widetilde T$ and let $[V]_\star$ be an equivalence class supported on both vertices, with representatives $V_v$ and $V_w$ respectively.
Consider the comparison map $\mathfrak{c}\colon \mathcal{C}V_v\to \mathcal{C}V_w$, as defined in Equation \eqref{definition_comparison}.
If both vertices already belong to $T\subseteq\widetilde T$, then the map $\mathfrak{c}$ is a uniform quasi-isometry by the hypotheses of Theorem \ref{mainT}.

If one vertex, say $w$, belongs in $\widetilde T\setminus T$, and $v\in T$, consider the geodesic $\sigma$ in $\widetilde T$ connecting $v$ to $w$. Let $v=v_0,\dots v_n=w$ be the vertices of $\sigma$, such that $v_i$ is joined by an edge to $v_{i+i}$ for all $i=0,\dots,n-1$.
Then, there exists a maximal index $i_\star$ such that $v_{i_\star}\in T$ and $v_{i_\star+1}\in\widetilde T\setminus T$; let $V_\star$ be the representative of $[V]$ in $\mathfrak{S}_{v_{i_\star}}$.
From Equation \eqref{definition_comparison} we see that $\mathfrak{c}$ is the composition of $\mathfrak{c}_1\colon \mathcal{C}V_v\to\mathcal{C}V_{v_{i_\star}}$
with $\mathfrak{c}_2\colon \mathcal{C} V_{v_{i_\star}}\to
\mathcal{C}V_w$. As noticed in the previous case, the map $\mathfrak{c}_1$ is a uniform quasi-isometry. Moreover, by construction, the map $\mathfrak{c}_2$ is an isometry, and therefore $\mathfrak{c}$ is a uniform quasi-isometry, being the composition of these two maps.

The last case to consider is when both vertices belong to $\widetilde T\setminus T$. Depending on whether the geodesic $\sigma$ does not intersect $T$, or does intersect it, the map $\mathfrak{c}$ will be an isometry, or a composition of three maps, two of which isometies and the remaining a uniform quasi isometry. 

Therefore, all comparison maps are uniform quasi isometries.
\end{proof}
\end{lemma}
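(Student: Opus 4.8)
The plan is to reduce everything to the second hypothesis of Theorem~\ref{mainT} (that comparison maps in $\mathcal{T}$ are uniform quasi-isometries), exploiting the fact that along every edge created in the decoration process the induced maps at the level of hyperbolic spaces are identity maps, so that by Remark~\ref{we_use_this_graphproduct} the comparison map along any path made of decoration edges is an isometry. First I would fix two vertices $v,w\in\widetilde T$ and an equivalence class $[V]_\star$ supported on both, with representatives $V_v$ and $V_w$, and recall from Equation~\eqref{definition_comparison} that $\mathfrak{c}\colon\mathcal{C}V_v\to\mathcal{C}V_w$ is the composition of the induced quasi-isometries and their quasi-inverses associated to the consecutive edges of the geodesic $\sigma$ in $\widetilde T$ joining $v$ to $w$.

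The structural input is that $T$ is a convex subtree of $\widetilde T$ and that each decoration is a finite tree hanging off a single vertex of $T$; hence $\sigma$ decomposes into at most three consecutive (possibly empty) subsegments: an initial segment inside one decoration, a middle segment inside $T$, and a final segment inside one decoration. This yields the case analysis. (i) If $v,w\in T$, then $\sigma\subseteq T$ and $\mathfrak{c}$ coincides with the comparison map computed in $\mathcal{T}$, hence is a uniform quasi-isometry by hypothesis. (ii) If exactly one of $v,w$ lies in $\widetilde T\setminus T$, then $\mathfrak{c}=\mathfrak{c}_2\circ\mathfrak{c}_1$, where one factor is a comparison map inside $T$ (a uniform quasi-isometry by case (i)) and the other is a comparison map along decoration edges (an isometry); a composition of a uniform quasi-isometry with an isometry is again a uniform quasi-isometry. (iii) If both $v,w$ lie in $\widetilde T\setminus T$, then either $\sigma$ does not meet $T$, in which case $\mathfrak{c}$ is a composition of isometries, or $\sigma$ meets $T$, in which case $\mathfrak{c}$ is an isometry followed by a comparison map inside $T$ followed by an isometry; in every case $\mathfrak{c}$ is a uniform quasi-isometry.

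The only point needing care is uniformity of the constants. Since the only non-isometric contribution to $\mathfrak{c}$ in each case is the comparison map of the subsegment of $\sigma$ contained in $T$, and that map is a $(\xi,\xi)$-quasi-isometry with $\xi$ independent of its endpoints (Remark~\ref{remark_comparison_maps} and the second hypothesis of Theorem~\ref{mainT}), the composite $\mathfrak{c}$ is a quasi-isometry with constants depending only on $\xi$ and not on the chosen vertices $v,w$. I expect this bookkeeping, together with verifying that the three-segment decomposition of $\sigma$ is exhaustive (which is immediate from the way decorations are attached), to be the only mildly technical part; the rest is a direct unwinding of Equation~\eqref{definition_comparison} and of the construction of $\widetilde{\mathcal{T}}$.
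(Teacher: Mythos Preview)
Your proposal is correct and follows essentially the same approach as the paper's proof: both arguments decompose the geodesic $\sigma$ in $\widetilde T$ into a segment inside $T$ (where the hypothesis of Theorem~\ref{mainT} applies) and segments inside decorations (where the edge-level maps are identities, hence the comparison maps are isometries), and then run the same three-case analysis. Your invocation of Remark~\ref{we_use_this_graphproduct} makes explicit what the paper calls ``by construction,'' and your remark on uniformity is a useful addition; the only cosmetic inaccuracy is calling each decoration a \emph{finite} tree (it has bounded diameter but possibly infinite valence), though this does not affect the argument since all you use is that each decoration component attaches to $T$ at a single vertex.
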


In view of this, for the whole proof of Theorem \ref{mainT} we assume without loss of generality that equivalence classes are differentiated by their supports already in the tree of hierarchically hyperbolic space $\mathcal{T}$, that is 
$[U]=[V]$ if and only if $T_{[U]}=T_{[V]}$.

On the other hand, for the proof of Corollary \ref{mainC}, that is the application of Theorem \ref{mainT} to hierarchically hyperbolic groups, we will \emph{not} decorate the tree $\mathcal{T}$. This is because, even if a hierarchically hyperbolic group $\bigl(G,\mathfrak{S}\bigr)$ acts on the index set $\mathfrak{S}$, the set of product regions $\bigl\{{\bf F}_U\times\{f\}\mid U\in\mathfrak{S},\, f\in{\bf E}_U\bigr\}$ might not be $G$-invariant. Therefore, it might happen that the hierarchically hyperbolic space $(\mathcal{X}(\widetilde{\mathcal{T}}),\widetilde{\mathfrak{S}})$, where $\widetilde{\mathfrak{S}}$ denotes the index set associated to the decorated tree $\widetilde{\mathcal{T}}$, does not admit a non-trivial action of $G$ onto $\widetilde{\mathfrak{S}}$. We refer to Section 
\ref{subsection_graph_hhgs} for the complete treatment of this delicate point.

\smallskip
We now define the hierarchically hyperbolic structure on this tree of hierarchically hyperbolic spaces.

%%%%%%%%%%%%%%%%%%%%%%%%%%%%%%%%%%%%%%%%%%%%%%%
%%%%%%%%%%%%%%%%%%%%%%%%%%%%%%%%%%%%%%%%%%%%%%%
%%%%%%%%%%%%%%%%%%%%%%%%%%%%%%%%%%%%%%%%%%%%%%%
\subsection{Index set, nesting, orthogonality, and transversality}\label{nes_ort_tra}

\begin{remark}[{\bf Concreteness of the edge spaces}]\label{remark_crucial_concrete}
In the proof of Theorem \ref{mainT} we will need to exploit concreteness of the edge spaces, which is not an hypothesis of the theorem. We now explain why we can suppose, 
without loss of generality, that all the hierarchically hyperbolic edge-spaces of $\mathcal{T}$ are $\varepsilon$-concrete. 

Let $\varepsilon\geq 3\max\{\alpha,\xi\}$ as in Lemma \ref{lemma_inclusion1}. If the edge spaces are not all $\varepsilon$-concrete,
then we apply Proposition \ref{concreteness_making} to each edge space $\mathfrak{S}_e$ of $\mathcal{T}$ to obtain a sub-index set $\mathfrak{S}_{e,\varepsilon}\subseteq\mathfrak{S}_e$ such that $(\mathcal{X}_e,\mathfrak{S}_{e,\varepsilon})$ is $\varepsilon$-concrete. Notice that if $\mathfrak{S}_e$ is already $\varepsilon$-concrete, then $\mathfrak{S}_{e,\varepsilon}=\mathfrak{S}_e$. 

Similarly to what defined in Subsection \ref{section_trees}, define $\sim_\varepsilon$ to be the transitive closure of $\sim_{d,\varepsilon}$: for any edge $e$ and any $U\in\mathfrak{S}_{e,\varepsilon}$,
we have that $\phi_{e_+}(U)\sim_{d,\varepsilon}\phi_{e_-}(U)$.

Doing so (and not defining equivalence classes with respect to the equivalence class $\sim$ of Subsection \ref{section_trees}) will be crucial to be able to apply Lemma \ref{consequence_winning_theorem} during the proof of Theorem \ref{mainT}. Moreover, this does not affect the hypotheses of the theorem, that continue to be satisfied. Indeed, edge spaces continue to be uniformly hierarchically quasiconvex in vertex spaces, with edge hieromorphisms being full and uniformly coarsely lipschitz. Comparison maps are not affected by this change (but there might be fewer of them, as we are considering possibly smaller edge-space index sets). Finally, the intersection property is preserved by Proposition \ref{concreteness_making}, and clean containers are preserved by Lemma \ref{wedge_container_lemma_blue}.
\end{remark}

In view of Remark \ref{remark_crucial_concrete}, from now on we assume without loss of generality that all edge spaces are $\varepsilon$-concrete for some appropriate $\varepsilon$, that is that the equivalence relations $\sim_\varepsilon$ and $\sim$ are the same.

Let $\widehat{T}$ be the result of coning off the underlying tree associated to the tree of spaces $\mathcal{T}$ with respect to every support tree $T_{[V]}$. We define the index set $\mathfrak{S}$ associated to the tree of hierarchically hyperbolic spaces $\mathcal{T}$ as
\begin{equation}\label{newfactorsystem}
\mathfrak{S}=\mathfrak{S}_1\sqcup\mathfrak{S}_2\sqcup\{\widehat{T}\}.
\end{equation}
The set $\mathfrak{S}_1$ is 
\begin{equation}\label{S_1}
\mathfrak{S}_1\vcentcolon=\Bigl(\bigsqcup_{ v \in V}\mathfrak{S}_{ v}\Bigr)/\sim,
\end{equation} 
as defined in Subsection \ref{section_trees}.

Elements of $\mathfrak{S}_2$ correspond to supports of elements 
in $\mathfrak{S}_1$:
\begin{equation}\label{S_2}
\mathfrak{S}_2\vcentcolon=\lbrace {T}_{[V]}\mid [V]\in\mathfrak{S}_1\rbrace.
\end{equation}
We stress that all these elements are subtrees of  the tree $T$, the tree attached to the tree of hierarchically hyperbolic spaces $\mathcal{T}$.
By the following lemma, the set $\mathfrak{S}_2$ is closed under intersections.

\begin{lemma}\label{lemma_equality_supports}
Suppose that ${T}_{[U]}\cap {T}_{[V]}$ is not empty. Then there exists $[A]\in\mathfrak{S}_1$ for which ${T}_{[A]}={T}_{[U]}\cap {T}_{[V]}$ and $[U], [V]\sqsubseteq [A]$.
\end{lemma}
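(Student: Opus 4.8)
The statement asserts that the support trees $\mathfrak{S}_2$ are closed under (nonempty) intersection, and moreover that the intersection is realised as the support of an element of $\mathfrak{S}_1$ that is a $\sqsubseteq$-upper bound for both $[U]$ and $[V]$. The natural candidate for $[A]$ is the join $[U]\vee[V]$ in the hierarchically hyperbolic structure of a vertex space lying in $T_{[U]}\cap T_{[V]}$, so the first thing I would do is fix a vertex $v\in T_{[U]}\cap T_{[V]}$, take the representatives $U_v, V_v\in\mathfrak{S}_v$, and set $A_v := U_v\vee V_v$, which exists by the intersection-property discussion following Definition~\ref{intersectionproperty_definition} (finite complexity guarantees the join is attained). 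Then $[A] := [A_v]$ should be the element we want. Note $[U],[V]\sqsubseteq[A]$ holds at the vertex $v$ by construction of the join, hence holds for the equivalence classes.

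\textbf{The inclusion $T_{[A]}\subseteq T_{[U]}\cap T_{[V]}$.} Since $[U]\sqsubseteq[A]$ and $[V]\sqsubseteq[A]$, Lemma~\ref{support_inclusion} (which applies because all edge-hieromorphisms in $\mathcal{T}$ are full) gives $T_{[A]}\subseteq T_{[U]}$ and $T_{[A]}\subseteq T_{[V]}$, hence $T_{[A]}\subseteq T_{[U]}\cap T_{[V]}$. This direction is immediate.

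\textbf{The inclusion $T_{[U]}\cap T_{[V]}\subseteq T_{[A]}$.} This is the substantive direction and I expect it to be the main obstacle. I would argue as follows: let $w\in T_{[U]}\cap T_{[V]}$ be arbitrary, with representatives $U_w,V_w\in\mathfrak{S}_w$. I claim the representative of $[A]$ in $\mathfrak{S}_w$ is precisely $U_w\vee V_w$. To see this, take the geodesic $\sigma$ in $T$ from $v$ to $w$; since supports are connected (being subtrees), every vertex of $\sigma$ lies in $T_{[U]}\cap T_{[V]}$, so along $\sigma$ we have matching representatives of $[U]$ and of $[V]$ at every vertex and every edge, compatible under the edge-hieromorphisms. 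Now I propagate the join: at each edge $e$ along $\sigma$, say $\phi_{e^-}^\lozenge(U_e) = U_{e^-}$, $\phi_{e^-}^\lozenge(V_e)=V_{e^-}$, fullness of $\phi_{e^-}$ together with Lemma~\ref{lemma:intersection_wedge_commute} (which states $\phi^\lozenge$ commutes with both $\wedge$ and $\vee$ for full hieromorphisms between hhs with the intersection property) shows that $U_e\vee V_e$ is an element of $\mathfrak{S}_e$ whose $\phi_{e^-}^\lozenge$-image is $U_{e^-}\vee V_{e^-}$ and whose $\phi_{e^+}^\lozenge$-image is $U_{e^+}\vee V_{e^+}$. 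Hence the classes $[U_{e^-}\vee V_{e^-}]$ and $[U_{e^+}\vee V_{e^+}]$ coincide, and stepping along $\sigma$ from $v$ to $w$ shows $[U_w\vee V_w] = [U_v\vee V_v] = [A]$. Therefore $w\in T_{[A]}$, completing the reverse inclusion.

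\textbf{Remaining check.} One should also verify the join is genuinely a single well-defined element and that the above propagation is independent of the choices of representatives $U_e,V_e$ in the edge spaces — but this follows because the edge-hieromorphisms are full, so $U_e$ and $V_e$ are uniquely determined as the preimages of $U_{e^-},V_{e^-}$ nested appropriately, and Lemma~\ref{lemma:intersection_wedge_commute} makes the assignment $e\mapsto U_e\vee V_e$ coherent. With both inclusions and the nesting relations $[U],[V]\sqsubseteq[A]$ established, the lemma follows. The one subtlety to watch is ensuring that at a generic vertex $w$ the element $U_w\vee V_w$ is computed in $\mathfrak{S}_w$ and not confused with a join computed in some sub-structure; but since $\vee$ is intrinsic to each $\mathfrak{S}_w$ and Lemma~\ref{lemma:intersection_wedge_commute} transports it correctly across edge-hieromorphisms, this causes no difficulty.
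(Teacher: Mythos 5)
Your proof is correct and takes essentially the same route as the paper: you define $[A]$ via the join $U_v\vee V_v$ at a vertex $v\in T_{[U]}\cap T_{[V]}$, obtain $T_{[A]}\subseteq T_{[U]}\cap T_{[V]}$ from Lemma~\ref{support_inclusion}, and obtain the reverse inclusion by propagating the join along geodesics using Lemma~\ref{lemma:intersection_wedge_commute} and fullness of the edge-hieromorphisms. The paper's proof is organised identically (with a separate remark on the single-vertex case), so no further comment is needed.
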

\begin{proof}
Let $V_v$ and $U_v$ be the representatives of $[V]$ and $[U]$ in the index set $\mathfrak{S}_v$, for all $v\in {T}_{[U]}\cap {T}_{[V]}$.
	
For all $v\in {T}_{[U]}\cap {T}_{[V]}$, consider the set 
\[ \Lambda_v=\{W\in\mathfrak{S}_v\mid V_v,U_v\sqsubseteq W\},\]
which is non-empty since it contains the maximal element of $\mathfrak{S}_v$. 

Since $V_v\vee W_v$ is, by definition, the $\sqsubseteq$-minimal element of $\mathfrak{S}_v$ containing both $V_v$ and $W_v$, it is the unique $\sqsubseteq$-minimal element of $\Lambda_v$, 
which we denote also by $A_v$.
If ${T}_{[U]}\cap {T}_{[V]}$ consists of just one vertex $v$, then $[A]=[V_v\vee U_v]$ is the desired equivalence class: as $[V_v]$ and $[U_v]$ are nested into $[A]$, it follows that ${T}_{[A]}\subseteq {T}_{[V]}\cap T_{[U]}$. 
Therefore ${T}_{[A]}= {T}_{[V]}\cap {T}_{[U]}$. 

If ${T}_{[V]}\cap {T}_{[U]}$ has more than one vertex, analogously to what constructed in the index sets of the vertices, 
there is a unique $\sqsubseteq$-minimal element in the edge-index set $\mathfrak{S}_e$ that we denote by $A_e$, where $e$ is any edge that contains representatives of both $[U]$ and $[V]$.
	 
Assume now that $v,w\in {T}_{[U]}\cap {T}_{[V]}$ and that there is an edge $e$ that connects these two vertices. Then $\phi_v^{\lozenge}(A_e)=A_v$ and $\phi_w^{\lozenge}(A_e)=A_w$. 
Therefore
\[\phi^\lozenge_{v}(A_e)=\phi^\lozenge_{v}(V_e\vee U_e)=\phi^\lozenge_{v}(V_e)\vee \phi^\lozenge_{v}(U_e)=V_v\vee U_v=A_v\] 
by Lemma \ref{lemma:intersection_wedge_commute}.

Thus $A_v\sim A_w$ for all $v,w\in {T}_{[U]}\cap {T}_{[V]}$, and we denote by $[A]$ the equivalence class of (any of the) $[A_v]$. By construction, $[A]$ has a representative where both $[V]$ and $[U]$ have, and hence
${T}_{[U]}\cap {T}_{[V]}\subseteq {T}_{[A]}$.
	 
On the other hand we have that $[V]$ and $[U]$ are nested in $[U_v\vee V_v]=[A]$, and therefore ${T}_{[A]}\subseteq {T}_{[U]}\cap {T}_{[V]}$ by Lemma~\ref{support_inclusion}. Thus, the lemma is proved.
\end{proof}

\begin{corollary}\label{reverse_nesting_blue}
Let $[V],[W]$ be equivalence classes. Then, $[V]\sqsubseteq [W]$ if and only if $T_{[W]}\subseteq T_{[V]}$.
\end{corollary}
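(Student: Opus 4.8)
The plan is to prove Corollary \ref{reverse_nesting_blue} by deducing both implications from results already established, chiefly Lemma \ref{support_inclusion}, Lemma \ref{lemma_equality_supports}, and Lemma \ref{lemma_decorations}. Recall that by the discussion following Lemma \ref{lemma_decorations} we are assuming without loss of generality (after decorating the tree) that equivalence classes are differentiated by their supports, i.e.\ $[U]=[V]$ if and only if $T_{[U]}=T_{[V]}$; this will be the key extra ingredient that makes the converse of Lemma \ref{support_inclusion} hold.

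First I would record the easy direction. If $[V]\sqsubseteq [W]$, then Lemma \ref{support_inclusion} gives exactly $T_{[W]}\subseteq T_{[V]}$, so there is nothing to do.

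For the converse, suppose $T_{[W]}\subseteq T_{[V]}$. In particular $T_{[W]}\cap T_{[V]}=T_{[W]}$ is non-empty, so Lemma \ref{lemma_equality_supports} produces an equivalence class $[A]\in\mathfrak{S}_1$ with $T_{[A]}=T_{[W]}\cap T_{[V]}=T_{[W]}$ and with $[V]\sqsubseteq[A]$ and $[W]\sqsubseteq[A]$. Now $T_{[A]}=T_{[W]}$, so by the standing assumption that equivalence classes are distinguished by their supports we get $[A]=[W]$. Substituting this into $[V]\sqsubseteq[A]$ yields $[V]\sqsubseteq[W]$, as desired. Hence $[V]\sqsubseteq[W]$ if and only if $T_{[W]}\subseteq T_{[V]}$.

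The only subtle point—and the step I expect to be the crux—is making sure we are entitled to invoke ``equivalence classes are differentiated by their supports'': this is precisely the property guaranteed by the decoration construction of Subsection \ref{trees_with_decorations} (see Lemma \ref{lemma_decorations}), and the remark there explains that decorating does not affect any hypothesis of Theorem \ref{mainT} and produces a quasi-isometric space, so the reduction is harmless. With that reduction in force, the corollary is an immediate combination of Lemma \ref{support_inclusion} (for one direction) and Lemma \ref{lemma_equality_supports} together with the support-distinguishing property (for the other). No further calculation is needed.
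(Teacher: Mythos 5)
Your proof is correct and is essentially identical to the paper's: one direction is Lemma \ref{support_inclusion}, and for the converse both arguments apply Lemma \ref{lemma_equality_supports} to obtain $[A]$ with $T_{[A]}=T_{[W]}$ and $[V],[W]\sqsubseteq[A]$, then invoke the decoration assumption (Lemma \ref{lemma_decorations}) to conclude $[A]=[W]$. Nothing to add.
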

\begin{proof}
If $[V]\sqsubseteq [W]$ then $T_{[W]}\subseteq T_{[V]}$, by Lemma \ref{support_inclusion}.
On the other hand, if $T_{[W]}\subseteq T_{[V]}$ we can see that $T_{[W]}=T_{[W]}\cap T_{[V]}$. By Lemma \ref{lemma_equality_supports} there exists $[A]\in\mathfrak{S}_1$ for which $T_{[A]}=T_{[W]}\cap T_{[V]}$ and $[V],[W]\sqsubseteq[A]$. It follows that $T_{[W]}=T_{[A]}$, and therefore that $[W]=[A]$, because we are assuming that the tree $\mathcal{T}$ is decorated (compare Lemma \ref{lemma_decorations}). Thus $[V]\sqsubseteq [W]$.
\end{proof}

\smallskip
To define nesting, orthogonality, and transversality, we proceed as follow.
The element
$\widehat{T}$ is the $\sqsubseteq$-maximal element. 

Relations in $\mathfrak{S}_1$ are as in \cite{BHS2}: two $\sim$-equivalence classes $[V]$ and $[W]$ are nested (respectively orthogonal), 
$[V]\sqsubseteq [W]$ (respectively $[V]\perp [W]$), if there exist a vertex $v\in T$ and representatives $V_v,W_v\in\mathfrak{S}_v$ such that 
$[V]=[V_v]$, $[W]=[W_v]$ and $V_v\sqsubseteq W_v$ (respectively $V_v\perp W_v$) in $\mathfrak{S}_v$. 
If $[V]$ and $[W]$ are not orthogonal and neither is nested into the other, then they are transverse: $[V]\pitchfork [W]$.

Relations in $\mathfrak{S}_2$ are as follows. For two elements $T_{[V]}, T_{[U]}\in\mathfrak{S}_2$, if $T_{[V]}$ is contained as a set in $T_{[U]}$ then $T_{[V]}\sqsubseteq T_{[U]}$, and vice versa. 
Otherwise they are transverse, $T_{[V]}\pitchfork T_{[U]}$.

Relations between an equivalence class $[W]$ and an element $T_{[V]}\in\mathfrak{S}_2$ are as follows:
\begin{itemize}
\item[$(c_1)$] if $[W]\sqsubseteq [V]$ we declare $[W]\perp T_{[V]}$;
\item[$(c_2)$] if $[W]\perp [V]$ we declare $[W]\sqsubseteq T_{[V]}$;
\item[$(c_3)$] otherwise, we declare $[W]\pitchfork T_{[V]}$;
\end{itemize}
Notice that $[W]\perp T_{[V]}$ if and only if $T_{[V]}\subseteq T_{[W]}$, by Corollary \ref{reverse_nesting_blue}.

%%%%%%%%%%%%%%%%%%%%%%%%%%%%%%%%%%%%%%%%%%%%%%%%%%%%%%%%%%%%%%%%%%%%%%%%%%%%%%%%%%%%%%%%%%%
%%%%%%%%%%%%%%%%%%%%FAVORITE REPRESENTATIVES AND COMPARISON MAPS%%%%%%%%%%%%%%%%%%%%%%%%%%%
\subsection{Hyperbolic spaces associated to elements of the index set, and projections onto them}\label{unif_hyp}
Let $\mathcal{C}\hat T=\hat T$, which is produced from the tree $T$ by coning-off each subtree $T_{[W]}\in\mathfrak{S}_2$.

\begin{remark}
As soon as there exists a vertex space $(\mathcal{X}_v,\mathfrak{S}_v)$ and two orthogonal elements $U\perp V$ in $\mathfrak{S}_v$, then the decoration trick of Section \ref{trees_with_decorations} implies that all supports trees $T_{[W]}\in\mathfrak{S}_2$ are properly contained into the tree $T$. Indeed, if $T_{[W]}=T$ for some equivalence
class, it must then be that $T_{[U]}$ and $T_{[V]}$ are properly nested into $T_{[W]}$, and thus $[W]\sqsubseteq [U]$ and $[W]\sqsubseteq [V]$ by 
Lemma \ref{lemma_decorations}. This contradicts the fact that $[U]\perp [V]$, and in particular that there is no equivalence class nested into both.
\end{remark}

To each equivalence class $[V]$ we associate a \emph{favorite vertex} $v\in T_{[V]}$ and the \emph{favorite representative} $V_v\in\mathfrak{S}_v$, so that 
$[V]=[V_v]$. 
Then, define $\mathcal{C}[V]$ to be $\mathcal{C}V_v$. By assumption, there exists a uniform constant $\xi\geqslant 1$ such 
that for all vertices $w$ such that there exists $W\in\mathfrak{S}_w$ with $W\sim V_v$, the comparison map $\mathfrak{c}\colon V_v\to W$ is a 
$(\xi,\xi)$-quasi-isometry.

For $T_{[W]}\in\mathfrak{S}_2$, let $\mathcal{C}T_{[W]}:=\widehat T_{[W]}$ be the hyperbolic space obtained from the tree $T_{[W]}$ by coning-off each subtree 
$T_{[V]}\in\mathfrak{S}_2$ properly contained in $T_{[W]}$, that is $T_{[V]}\subsetneq T_{[W]}$.

\smallskip
Define $\pi_{\widehat T}\colon \mathcal{X}(\mathcal{T})\to \widehat T$ as follows: for $x\in \mathcal{X}_v$, define $\pi_{\widehat T}(x)=v$. 
Notice that $\pi_{\widehat T}$ is the composition of the projection $\mathcal{X}\to T$ of $\mathcal{X}$ on its Bass-Serre tree with the inclusion of the tree $T$ into $\widehat T$.
For all $T_{[W]}\in\mathfrak{S}_2$ the projection $\pi_{T_{[W]}}$ is defined analogously: for $x\in\mathcal{X}_v$, consider the closest-point projection of the vertex $v$ onto the subtree $T_{[W]}$ in the tree $T$. The image of this point under the inclusion map $T\hookrightarrow \widehat T$ is $\pi_{T_{[W]}}(x)\in\mathcal{C}T_{[W]}=\widehat{T}_{[W]}$.
These projection maps $\pi_{T_{[W]}}$ and the projection map $\pi_{\widehat T}$ are uniformly coarsely surjective, being surjective on the set of non-cone points.

Given $[V]\in\mathfrak{S}$ with favorite representative 
$V_{\tilde v}\in\mathfrak{S}_{\tilde v}$, we define $\pi_{[V]}\colon \mathcal{X}\to\mathcal{C}[V]$ as follows. 
If $\pi_{\widehat T}(x)=v$ is a vertex in the support of $[V]$, then there exists a representative $V_{v}\in\mathfrak{S}_{v}$ of the class $[V]$, and 
$\pi_{[V]}(x)$ is defined to be
\begin{equation}\label{proj1}
\pi_{[V]}(x)\vcentcolon=\mathfrak{c}\circ \pi_{V_{v}}(x)\subseteq \mathcal{C}V_{\tilde v}=\mathcal{C}[V],
\end{equation}
where $\mathfrak{c}\colon \mathcal{C}V_v\to\mathcal{C}V_{\tilde v}$ is the comparison map.

If $\pi_{\widehat T}(x)=v$ is not in the support of $[V]$, let $e$ be the last edge in the geodesic connecting $v$ to $T_{[V]}$, so that $e^+\in T_{[V]}$. Define
\begin{equation}\label{proj2}
\pi_{[V]}(x)\vcentcolon=\mathfrak{c}\circ \pi_{V_{e^+}}\bigl(\phi_{e^+}(\mathcal{X}_e)\bigr)\subseteq \mathcal{C}V_{\tilde v}=\mathcal{C}[V],
\end{equation}
where $\mathfrak{c}\colon \mathcal{C}V_{e^+}\to\mathcal{C}V_{\tilde v}$ is the comparison map.

\begin{lemma}\label{proj_ucl}
The projections defined in Equation \eqref{proj1} and Equation \eqref{proj2} are uniform coarsely lipschitz maps. Moreover, they are uniformly coarsely surjective. 
\begin{proof}
In Equation \eqref{proj1} the projections are defined as a composition of a uniform quasi isometry with a uniform coarsely lipschitz map. Therefore, it 
suffices to show that the projections in Equation \eqref{proj2} are uniformly coarsely lipschitz too.

To prove so, notice that the edge $e$ connects the vertex $e^-$, which lies outside of $T_{[V]}$, with the vertex $e^+\in T_{[V]}$, and notice that there exists a representative $V_{e^+}\in\mathfrak{S}_{e^+}$ of $[V]$.
This means that $V_{e^+} \not\sim U$ for any $U\in\mathfrak{S}_{e^-}$, that is $V_{e^+}\in\mathfrak{S}_{e^+}\setminus\phi_{e^+}^\lozenge(\mathcal{X}_e)$.

As all hieromorphisms are full and coarsely lipschitz, invoking Theorem \ref{thmB} we know that the set $\pi_{V_{e^+}}(\phi_{e^+}(\mathcal{X}_e))$ are uniformly 
bounded. Therefore the projections as defined in Equation \eqref{proj2} are uniformly coarsely lipschitz, because the comparison maps $\mathfrak{c}$ are uniform 
quasi-isometries and the sets on which they are applied to are uniformly bounded.

These projections are uniformly coarsely surjective, because the projections of the vertex spaces are, following the assumption of Remark \ref{remark_coarsely_surjective}.
\end{proof}
\end{lemma}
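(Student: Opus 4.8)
The plan is to verify the coarsely lipschitz condition locally, exploiting the standard structure of $\mathcal{X}(\mathcal{T})$ as a tree of spaces. Since the edge-hieromorphisms are full, coarsely lipschitz and have hierarchically quasiconvex image, Theorem \ref{thmB} tells us they are quasi-isometric embeddings; consequently each vertex space $\mathcal{X}_v$ is uniformly quasi-isometrically embedded into $\mathcal{X}(\mathcal{T})$ (a meander out of $\mathcal{X}_v$ and back must, in a tree, re-enter through the same edge, so it cannot provide a genuine shortcut). Hence a map out of $\mathcal{X}(\mathcal{T})$ is uniformly coarsely lipschitz provided that (a) its restriction to each vertex space is uniformly coarsely lipschitz, and (b) for every edge $e$ and every $z\in\mathcal{X}_e$ the images of $\phi_{e^-}(z)$ and $\phi_{e^+}(z)$ are at uniformly bounded distance. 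So I would reduce to checking (a) and (b) for $\pi_{[V]}$.

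For (a): if $v\in T_{[V]}$, then by \eqref{proj1} the restriction $\pi_{[V]}|_{\mathcal{X}_v}$ equals $\mathfrak{c}\circ\pi_{V_v}$, the composition of the projection $\pi_{V_v}$, which is uniformly coarsely lipschitz by the Projections axiom of Definition \ref{HHS_definition} (the vertex spaces being uniformly hierarchically hyperbolic), with the comparison map $\mathfrak{c}$, which is a $(\xi,\xi)$-quasi-isometry by Remark \ref{remark_comparison_maps}; hence it is uniformly coarsely lipschitz. If instead $v\notin T_{[V]}$, then by \eqref{proj2} the restriction is constant with value the set $\mathfrak{c}\bigl(\pi_{V_{e^+}}(\phi_{e^+}(\mathcal{X}_e))\bigr)$; since $[V]$ has no representative in $\mathfrak{S}_e$, we have $V_{e^+}\in\mathfrak{S}_{e^+}\setminus\phi_{e^+}^\lozenge(\mathfrak{S}_e)$, so condition $5$ of Theorem \ref{thmB} (applicable because $\phi_{e^+}$ is full, coarsely lipschitz and hierarchically quasiconvex) forces $\pi_{V_{e^+}}(\phi_{e^+}(\mathcal{X}_e))$ to be uniformly bounded, and the restriction is uniformly coarsely constant.

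For (b) I would split into cases according to which of $e^-,e^+$ lie in $T_{[V]}$. If both do, then $[V]$ has a representative $V_e\in\mathfrak{S}_e$ and, using the coarse commutativity of the first hieromorphism square of Definition \ref{HHS_hieromorphism} together with the coarse transitivity of comparison maps (which follows from the coarse commutativity of the second square), both $\pi_{[V]}(\phi_{e^-}(z))$ and $\pi_{[V]}(\phi_{e^+}(z))$ coarsely coincide with $\mathfrak{c}\bigl(\phi^*_{e^+,V_e}(\pi_{V_e}(z))\bigr)$. If exactly one, say $e^-$, lies in $T_{[V]}$, then the $T$-geodesic from $e^+$ to $T_{[V]}$ has $e$ as its last edge, so $\pi_{[V]}(\phi_{e^+}(z))=\mathfrak{c}\bigl(\pi_{V_{e^-}}(\phi_{e^-}(\mathcal{X}_e))\bigr)$ by \eqref{proj2}, while $\pi_{[V]}(\phi_{e^-}(z))=\mathfrak{c}\bigl(\pi_{V_{e^-}}(\phi_{e^-}(z))\bigr)$ by \eqref{proj1}; since $\phi_{e^-}(z)\in\phi_{e^-}(\mathcal{X}_e)$ and this time $V_{e^-}\in\mathfrak{S}_{e^-}\setminus\phi_{e^-}^\lozenge(\mathfrak{S}_e)$, both arguments lie in the uniformly bounded set $\pi_{V_{e^-}}(\phi_{e^-}(\mathcal{X}_e))$ (Theorem \ref{thmB}), so their $\mathfrak{c}$-images are uniformly close. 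If neither lies in $T_{[V]}$, then the $T$-geodesics from $e^-$ and from $e^+$ to $T_{[V]}$ share their last edge, so $\pi_{[V]}(\phi_{e^-}(z))=\pi_{[V]}(\phi_{e^+}(z))$ literally. This gives (b), and with (a) the uniform coarsely lipschitz statement. For uniform coarse surjectivity I would take a favorite vertex $v$ of $[V]$, so that $\pi_{[V]}|_{\mathcal{X}_v}=\mathfrak{c}\circ\pi_{V_v}$ with $\pi_{V_v}$ uniformly coarsely surjective onto $\mathcal{C}V_v$ by Remark \ref{remark_coarsely_surjective} and $\mathfrak{c}$ a quasi-isometry onto $\mathcal{C}[V]=\mathcal{C}V_{\tilde v}$; the composition already covers $\mathcal{C}[V]$ up to uniform error. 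I expect the main obstacle to be the bookkeeping in case (b) with both endpoints in $T_{[V]}$: making the comparison maps attached to $V_{e^-}$, $V_{e^+}$ and $V_{\tilde v}$ fit together coarsely, which is exactly where the coarse commutativity of both hieromorphism diagrams is needed; the only other nontrivial input anywhere is the boundedness supplied by Theorem \ref{thmB}.
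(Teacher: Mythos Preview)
Your proof is correct and uses the same key ingredient as the paper's proof: Theorem~\ref{thmB} to show that $\pi_{V_{e^+}}(\phi_{e^+}(\mathcal{X}_e))$ is uniformly bounded whenever $V_{e^+}\notin\phi_{e^+}^\lozenge(\mathfrak{S}_e)$, together with the fact that comparison maps are uniform quasi-isometries. The paper's argument is considerably terser and leaves the edge-crossing verification (your case (b)) implicit, whereas you spell out the full case analysis; your preliminary remark about vertex spaces being quasi-isometrically embedded is true but not strictly needed, since the coarsely lipschitz condition can be checked directly on the generating sequences defining the metric on $\mathcal{X}(\mathcal{T})$.
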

%%%%%%%%%%%%%%%%%%%%%%%%%%%%%%%%%%%%%%%%%%%%%%%
%%%%%%%%%%%%%%%%%%%%%%%%%%%%%%%%%%%%%%%%%%%%%%%
%%%%%%%%%%%%%%%%PROJECTIONS%%%%%%%%%%%%%%%%%%%%
\subsection{Projections between hyperbolic spaces}\label{proj_hyp}
Given an equivalence class $[V]$, define $\rho^{[V]}_{\widehat T}$ to be the support $T_{[V]}$ of the equivalence class $[V]$, which is uniformly bounded in $\widehat T$ because it is coned-off. Define 
$\rho_{[V]}^{\widehat T}\colon \widehat T\to \mathcal{C}[V]$
as follows. 
For $w\in T\setminus T_{[V]}$, consider the geodesic connecting $w$ to $T_{[V]}$, and let $e$ be its last edge, 
so that $e^+\in T_{[V]}$. Define
\begin{equation}\label{proj_maximal}
\rho_{[V]}^{\widehat T}(w)\vcentcolon=\mathfrak{c}\circ \pi_{V_{e^+}}\bigl(\phi_{e^+}(\mathcal{X}_e)\bigr)\subseteq \mathcal{C}V_{\tilde v}=\mathcal{C}[V],
\end{equation}
where $\mathfrak{c}\colon \mathcal{C}V_{e^+}\to\mathcal{C}V_{\tilde v}$ is the comparison map. If $w\in T_{[V]}$, then $\rho_{[V]}^{\widehat T}(w)$ can be 
chosen arbitrarily. 
On the other hand, if $w\in\widehat T\setminus T$, that is $w$ is a cone point, then define $\rho^{\widehat T}_{T_{[V]}}(w)=\rho^{\widehat T}_{T_{[V]}}(w')$, where $w'$ is an arbitrarily chosen vertex in the support tree associated to the cone-point $w$.

For an element $T_{[W]}\in\mathfrak{S}_2$, define $\rho_{\widehat T}^{T_{[W]}}$ to be $ T_{[W]}$, and $\rho^{\widehat T}_{T_{[W]}}\colon \widehat T\to\widehat T_{[W]}$ as follows. For $v\in T$, let $\rho^{\widehat T}_{T_{[W]}}(v)$ be the closest-point projection (in the tree $T$) of $v$ onto $T_{[W]}$. On the other hand, if $v\in\widehat T\setminus T$, that is $v$ is a cone point, then define $\rho^{\widehat T}_{T_{[W]}}(v)=\rho^{\widehat T}_{T_{[W]}}(v')$, where $v'$ is any of the points in the support tree associated to the cone-point $v$.

\smallskip
To define the projections $\rho_{[W]}^{[V]}$ between ($\sim$-classes of) hyperbolic spaces, we proceed as follows. If  $[V]\sqsubseteq [W]$ or 
$[V]\pitchfork[W]$,
then we define the projections as in \cite[Theorem 8.6]{BHS2}. In particular, if $[V]\sqsubseteq [W]$ there exist vertices $v,w, v'$ such that $V_v,W_w$ are the favorite 
representatives of $[V]$ and $[W]$ respectively, $V_{v'}$ and $W_{v'}$ are representatives of $[V]$ and $[W]$ (possibly different from the favorite ones), and 
$V_{v'}\sqsubseteq W_{v'}$. Moreover, let 
$\mathfrak{c}_V\colon \mathcal{C}V_{v'}\to\mathcal{C}V_v$ and $\mathfrak{c}_W\colon \mathcal{C}W_{v'}\to\mathcal{C}W_w$ be comparison maps.
Define
\begin{equation}
\rho_{[W]}^{[V]}=\mathfrak{c}_W\Bigl(\rho_{W_{v'}}^{V_{ v'}}\Bigr)\subseteq \mathcal{C}W_w=\mathcal{C}[W],
\end{equation}
which is a uniformly bounded set in $\mathcal{C}[W]$, and define $\rho_{[V]}^{[W]}\colon \mathcal{C}[W]\to \mathcal{C}[V]$ as
\begin{equation}\label{projrho_eq1}
\rho_{[V]}^{[W]}=\mathfrak{c}_V\circ\rho_{V_{v'}}^{W_{v'}}\circ \bar{\mathfrak{c}}_W,
\end{equation}
where $\bar{\mathfrak{c}}_W$ is a quasi inverse of $\mathfrak{c}_W$ and $\rho_{V_{v'}}^{W_{v'}}\colon \mathcal{C}W_{v'}\to \mathcal{C}V_{v'}$ is the 
projection provided by the hierarchical hyperbolicity of the vertex space $(\mathcal{X}_{v'},\mathfrak{S}_{v'})$.

Analogously, if $[V]\pitchfork[W]$ and there exists a vertex $w'\in T$ such that $\mathfrak{S}_{w'}$ contains representatives $V_{w'}\pitchfork W_{w'}$ of $[V]$ 
and $[W]$, then define
\begin{equation}
\rho_{[W]}^{[V]}=\mathfrak{c}_W\Bigl(\rho_{W_{w'}}^{V_{w'}}\Bigr)\subseteq \mathcal{C}W_w=\mathcal{C}[W]
\end{equation}
and 
\begin{equation}\label{projrho_eq2}
\rho_{[V]}^{[W]}=\mathfrak{c}_V\Bigl(\rho_{V_{w'}}^{W_{w'}}\Bigr).
\end{equation}
If there is no common vertex for the supports of $[V]$ and $[W]$, let $v,w$ be the closest pair of vertices such that $\mathfrak{S}_v,\mathfrak{S}_w$ contain 
representatives $V_v$ of $[V]$ and $W_w$ of $[W]$ respectively, and let $e$ be the last edge of the geodesic starting at $w$ and ending at $v=e^+$.
Define
\begin{equation}\label{new_rhos}
\rho_{[V]}^{[W]}=\mathfrak{c} \circ\pi_{V_{e^+}}( \phi_{e^+}(\mathcal{X}_e)),
\end{equation}
where $\mathfrak{c}\colon \mathcal{C}V_v\to\mathcal{C}V_{\tilde v}$ is the comparison map to the favorite representative.
In a completely symmetrical way we also define $\rho^{[V]}_{[W]}$.

\smallskip
For two elements $T_{[V]}$ and $T_{[V']}$ of $\mathfrak{S}_2$, if $T_{[V]}\sqsubsetneq T_{[V']}$ then define $\rho_{T_{[V']}}^{T_{[V]}}$
to be $\widehat T_{[V]}$, which is uniformly bounded in $\widehat T_{[V']}$ since it is coned-off. Define $\rho_{T_{[V]}}^{T_{[V']}}\colon \widehat T_{[V']} \to\widehat T_{[V]}$ as the 
closest-point projection.

If $T_{[V]}\pitchfork T_{[V']}$, then $\rho_{T_{[V]}}^{T_{[V']}}$ and $\rho_{T_{[V']}}^{T_{[V]}}$ are either the closest-point projections (if $T_{[V]}$ and $T_{[V']}$ do not intersect), or
are defined to be $\widehat T_{[V]}\cap\widehat T_{[V']}$, which by (the proof of) Lemma \ref{lemma_equality_supports} is equal to $\widehat T_{[V_v\vee V'_v]}$, where $V_v$ and $V_v'$ are representatives of $[V]$ and $[V']$ in a vertex $v\in T_{[V]}\cap T_{[V']}$. Notice that if $T_{[V]}\cap T_{[V']}$ is not empty, then it is properly contained in both $T_{[V]}$ and $T_{[V']}$, and therefore will be coned-off in both $\widehat T_{[V]}$ and $\widehat T_{[V']}$. 

\smallskip
Finally, we define projections between an equivalence class $[W]$ and an element $T_{[V]}\in\mathfrak{S}_2$ as follows. The relations between $[W]$ and $T_{[V]}$ were described at the end of Subsection \ref{nes_ort_tra}, as follows:
\begin{itemize}
\item[$(c_1)$] if $[W]\sqsubseteq [V]$ then $[W]\perp T_{[V]}$;
\item[$(c_2)$] if $[W]\perp [V]$ then $[W]\sqsubseteq T_{[V]}$;
\item[$(c_3)$] in any other case, $[W]\pitchfork T_{[V]}$.
\end{itemize}
The projections are defined according to each case:
\begin{itemize}
\item[$(c_1)$] \label{itm:first} in this case $[W]$ and $T_{[V]}$ are orthogonal, and no projection needs to be defined;
\item[$(c_2)$] \label{itm:second} define the set $\rho_{T_{[V]}}^{[W]}$ to be $T_{[V]}\cap T_{[W]}$, which is uniformly bounded in $\widehat T_{[V]}$ because it is coned off, being properly contained in $T_{[V]}$. Define the map $\rho_{[W]}^{T_{[V]}}\colon \widehat T_{[V]}\to 2^{\mathcal{C}[W]}$ as follows. For $x\in\widehat T_{[V]}\setminus\widehat T_{[W]}$, define $\rho_{[W]}^{T_{[V]}}(x)=\mathfrak{c}\circ \pi_{W_{e^+}}\bigl(\phi_{e^+}(\mathcal{X}_e)\bigr)$, 
where the edge $e$ is the last edge on the geodesic connecting $x$ to the support $T_{[W]}$, the vertex $e^+$ is in $T_{[W]}$, the element $W_{e^+}\in\mathfrak{S}_{e^+}$ is the 
representative of $[W]$, and $\mathfrak{c}\colon \mathcal{C}W_{e^+}\to\mathcal{C}W_{v}$ is the comparison map to the favorite representative of $[W]$. For $x\in\widehat T_{[W]}$, define $\rho_{[W]}^{T_{[V]}}(x)$ arbitrarily;
\item[$(c_3)$] \label{itm:third} assume first that $T_{[V]}\cap T_{[W]}\neq\emptyset$. Define $\rho_{T_{[V]}}^{[W]}$ to be $\widehat T_{[V]}\cap\widehat T_{[W]}
$ (the intersection $T_{[V]}\cap T_{[W]}$ must be properly contained in $T_{[V]}$, if not we would fall in case $(c_1)$), and define $\rho_{[W]}^{T_{[V]}}=\rho_{[W]}^{[V\vee W]}$. 

On the other hand, suppose $T_{[V]}\cap T_{[W]}=\emptyset$. Define the set $\rho_{T_{[V]}}^{[W]}$ to be the closest-point projection from $T_{[W]}$ to $T_{[V]}$, and the set $\rho_{[W]}^{T_{[V]}}\subseteq\mathcal{C}[W]$ as follows: let $e$ be the 
last edge on the geodesic (in the tree $T$) connecting $T_{[V]}$ to $T_{[W]}$, and define $\rho_{[W]}^{T_{[V]}}=\mathfrak{c}\circ\pi_{W_{e^+}}\bigl(\phi_{e^+}
(\mathcal{X}_e)\bigr)$, where $\mathfrak{c}\colon \mathcal{C}W_{e^+}\to\mathcal{C}W_{v}$ is the comparison map to the favorite representative of $[W]$.
\end{itemize}

\begin{lemma}\label{rhos_uniformly}
All the maps and sets $\rho_{\bullet}^{\star}$ between hyperbolic spaces defined in this subsection are uniformly bounded sets and well-defined maps, for all
$\bullet,\star \in\mathfrak{S}$.
\begin{proof}
The case when $T_{[V]}\sqsubsetneq T_{[W]}$ is
immediate.

For any equivalence class $[W]$, the set $\rho_{\widehat T}^{[W]}=T_{[W]}$ is uniformly bounded because it is coned off in $\widehat T$, and the map $\rho^{\widehat T}_{[W]}$ is well defined: if $w\in T\setminus T_{[W]}$, then $\rho^{\widehat T}_{[W]}(w)$ is defined in terms of the closest-point projection in the tree $T$ of $w$ onto $T_{[W]}$. Suppose now that $w$ is a cone point of a support which is not $T_{[W]}$, nor contained in $T_{[W]}$. By definition $\rho^{\widehat T}_{[W]}(w)=
\rho^{\widehat T}_{[W]}(w')$, where $w'$ is a chosen vertex in the support whose cone point is $w$. If $w$ is a vertex in $T_{[W]}$, or a cone point of a support contained in $T_{[W]}$, then $\rho^{\widehat T}_{[W]}(w)$ is defined arbitrarily.
Analogously, for a support $T_{[V]}$, the set $\rho_{\widehat T}^{T_{[V]}}$ is uniformly bounded and the map $\rho^{\widehat T}_{T_{[V]}}$ is well defined.

The sets and maps $\rho_{[V]}^{[W]}$ between two equivalence classes are uniformly bounded sets and well-defined maps because comparison maps are quasi isometries, and by Theorem \ref{thmB} (compare also Remark \ref{remark_comparealso}).
For instance, the set $\rho_{[V]}^{[W]}$ of Equation \eqref{new_rhos} is uniformly bounded, because comparison maps are uniform quasi isometries by hypotheses, and because the set $\pi_{V_{e^+}}\bigl(\phi_{e^+}(\mathcal{X}_e)\bigr)$ happearing in the equation is uniformly bounded by Theorem~\ref{thmB}.

The set $\rho_{T_{[W]}}^{[V]}= T_{[V]}\cap T_{[W]}$ defined in item $(c_2)$ is uniformly bounded, because $T_{[V]}\cap  T_{[W]}$ is properly contained in $T_{[W]}$, and therefore it is coned off, and an analogous argument proves that the sets defined in item $(c_3)$ are uniformly bounded. The map $\rho_{[V]}^{T_{[W]}}$ of item $(c_2)$ is also well defined because $T$ is a tree, and therefore for $x\in \widehat T_{[V]}\setminus \widehat T_{[W]}$ the image  $\rho_{[V]}^{T_{[W]}}(x)$ is well-defined.

\end{proof}
\end{lemma}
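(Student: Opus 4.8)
The plan is to verify uniform boundedness and well-definedness case by case, following exactly the enumeration of the projection maps and sets given in Subsection \ref{proj_hyp}. For the sets valued in the coned-off trees $\widehat T$, $\widehat T_{[W]}$, uniform boundedness is always a consequence of the fact that a properly contained support tree is coned off, so it has diameter $1$ in the ambient coned-off tree. Concretely: for an equivalence class $[W]$, the set $\rho_{\widehat T}^{[W]}=T_{[W]}$ has diameter $\leq 1$ in $\widehat T$ because, by construction of $\widehat T$ in Subsection \ref{unif_hyp}, every support tree (in particular $T_{[W]}$) is coned off; similarly $\rho_{\widehat T}^{T_{[V]}}= T_{[V]}$ is coned off in $\widehat T$; and for $T_{[V]}\sqsubsetneq T_{[W]}$ the set $\rho_{T_{[W]}}^{T_{[V]}}=\widehat T_{[V]}$ is coned off in $\widehat T_{[W]}$. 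The sets $T_{[V]}\cap T_{[W]}$ appearing in items $(c_2)$ and $(c_3)$ and in the transverse-$\mathfrak{S}_2$ case are, whenever they are not already the whole of $T_{[V]}$ (which is excluded in those cases — otherwise we would be in the orthogonal case $(c_1)$, resp.~the nesting case), properly contained in $T_{[V]}$, hence coned off in $\widehat T_{[V]}$; by (the proof of) Lemma \ref{lemma_equality_supports} this intersection equals $T_{[V_v\vee V'_v]}$, which reassures us it is itself a support tree.

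For the sets valued in the hyperbolic spaces $\mathcal{C}[V]=\mathcal{C}V_{\tilde v}$, there are two mechanisms. First, the ``coming from a vertex space'' sets such as $\mathfrak{c}(\rho_{W_{v'}}^{V_{v'}})$ or $\mathfrak{c}(\rho_{V_{w'}}^{W_{w'}})$ (Equations \eqref{projrho_eq1}, \eqref{projrho_eq2}): the sets $\rho^\bullet_\star$ inside a fixed vertex space are uniformly bounded by the axioms of a hierarchically hyperbolic space (diameter $\leq \xi$), the vertex spaces of $\mathcal{T}$ are uniformly hierarchically hyperbolic, and the comparison maps $\mathfrak{c}$ are $(\xi,\xi)$-quasi-isometries by hypothesis (2) of Theorem \ref{mainT} and Remark \ref{remark_comparison_maps}; a $(\xi,\xi)$-quasi-isometry sends a set of diameter $\leq\xi$ to a set of diameter $\leq \xi^2+\xi$, so the image is uniformly bounded. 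Second, the sets of the form $\mathfrak{c}\circ\pi_{V_{e^+}}(\phi_{e^+}(\mathcal{X}_e))$ appearing in Equation \eqref{new_rhos}, in $\rho_{[V]}^{\widehat T}$ (Equation \eqref{proj_maximal}), and in items $(c_2)$, $(c_3)$: here $V_{e^+}\in\mathfrak{S}_{e^+}\setminus\phi_{e^+}^\lozenge(\mathfrak{S}_e)$ because $e^+\in T_{[V]}$ but $e^-\notin T_{[V]}$ (so no representative of $[V]$ survives in $\mathfrak{S}_e$), and then Theorem \ref{thmB} (equivalence of conditions (1) and (5), applicable since the edge hieromorphisms are full, coarsely lipschitz, with hierarchically quasiconvex image by hypothesis (1) of Theorem \ref{mainT}) guarantees that $\pi_{V_{e^+}}(\phi_{e^+}(\mathcal{X}_e))$ is uniformly bounded in $\mathcal{C}V_{e^+}$; composing again with the uniform quasi-isometry $\mathfrak{c}$ keeps it uniformly bounded. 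Uniformity across all choices of edge $e$ follows because all hieromorphism and comparison-map constants are uniform.

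Well-definedness amounts to checking that the maps $\rho^\star_\bullet(-)$ are defined on \emph{every} point of their domain. For the tree-valued maps $\rho^{\widehat T}_{[W]}$, $\rho^{\widehat T}_{T_{[V]}}$, $\rho^{T_{[V']}}_{T_{[V]}}$, $\rho^{T_{[V]}}_{[W]}$: on genuine vertices of $T$ these are closest-point projections onto a subtree of the tree $T$, which always exist and are points; on cone points the value is declared to equal the value at an arbitrarily-chosen vertex of the corresponding support tree; and on the ``forbidden'' part of the domain (e.g.~$w\in T_{[W]}$ for $\rho^{\widehat T}_{[W]}$) the value is chosen arbitrarily — so the map is defined everywhere. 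For $\rho^{T_{[V]}}_{[W]}$ of item $(c_2)$, a point $x\in\widehat T_{[V]}\setminus\widehat T_{[W]}$ determines a well-defined last edge $e$ on the geodesic from $x$ to $T_{[W]}$ in the tree $T$ (uniqueness of geodesics in a tree), hence a well-defined $W_{e^+}$ and value; on $\widehat T_{[W]}$ the value is arbitrary. The same tree-geodesic argument handles $(c_3)$. I expect the only mild subtlety — and thus the ``main obstacle'' in writing this up cleanly — is making sure that in the transverse cases the relevant intersection $T_{[V]}\cap T_{[W]}$ is genuinely \emph{properly} contained in the support(s) we claim, i.e.~that we have correctly excluded the degenerate configurations that would instead force nesting or orthogonality; this is exactly what the case-defining conditions $(c_1)$–$(c_3)$ and the remark following them (together with Corollary \ref{reverse_nesting_blue}) pin down, so it is bookkeeping rather than a real difficulty. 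All of this is routine, so I will simply indicate the representative cases and leave the remaining ones to the reader.
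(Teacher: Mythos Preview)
Your proposal is correct and follows essentially the same case-by-case approach as the paper's proof: cone-off arguments for boundedness in the tree-valued cases, the vertex-space HHS constants plus uniform comparison maps for the $\mathfrak{c}(\rho^{V}_{W})$ sets, and Theorem~\ref{thmB} for the sets $\pi_{V_{e^+}}(\phi_{e^+}(\mathcal{X}_e))$. One tiny quibble: a coned-off support tree has diameter $\leq 2$, not $\leq 1$, in the ambient coned-off tree; and you might explicitly note (as the paper does not, but should) that Lemma~\ref{lemma.good.definition} is what guarantees the ``coming from a vertex space'' sets are independent, up to uniform error, of the choice of common vertex $v'$.
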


%%%%%%%%%%%%%%%%%%%%%%%%%%%%%%%%%%%%%%%%%%%
%%%%%%%%%%%%%%%%%%%%%%%%%%%%%%%%%%%%%%%%%%%
%%%%%%%%%%%%%%  PROOF  %%%%%%%%%%%%%%%%%%%%
%%%%%%%%%%%%%%%%%%%%%%%%%%%%%%%%%%%%%%%%%%%
%%%%%%%%%%%%%%%%%%%%%%%%%%%%%%%%%%%%%%%%%%%

We are now ready to prove Theorem \ref{mainT}:
\subsection{Proof of Theorem A}\label{proof_mainT}
We verify that the axioms for hierarchically hyperbolic spaces hold for $\bigl(\mathcal{X},\mathfrak{S}\bigr)$.

The set of uniform hyperbolic spaces is described in Subsection \ref{unif_hyp}, along with the projections from $\mathcal{X}$ onto these hyperbolic 
spaces. These are uniformly coarsely lipschitz maps, as proved in Lemma \ref{proj_ucl}.
The projections $\rho_\bullet^\star$ between hyperbolic spaces are uniformly bounded sets, and well defined maps, by Lemma \ref{rhos_uniformly}.

Nesting, orthogonality, and transversality are defined in Subsection \ref{nes_ort_tra}.

%%%%%%%%%%%%%%%%%%%%%%%%%%%%%%%%%%
\medskip
\noindent

{\bf (Nesting)}
The only non-immediate condition to check is the transitivity of the nesting we defined, and in particular that if $[U]\sqsubseteq [V]$ and $[V]\sqsubseteq T_{[W]}$, then $[U]\sqsubseteq T_{[W]}$.
If $[V]\sqsubseteq T_{[W]}$, by definition $[V]\perp [W]$. Furthermore, since $[U]\sqsubseteq [V]$ then $[W]\perp[U]$, which implies that $[U]\sqsubseteq T_{[W]}$.

Assume now that $[U]\sqsubseteq T_{[V]}$ and $T_{[V]}\sqsubseteq T_{[W]}$. By Corollary \ref{reverse_nesting_blue} it follows that $[W]\sqsubseteq[V]$. By definition we get $[U]\perp [V]$. Therefore $[W]\perp[U]$, which implies that $[U]\sqsubseteq T_{[W]}$.

%%%%%%%%%%%%%%%%%%%%%%%%%%%%%%%%%%
\medskip
\noindent
{\bf (Intersection property)}
We construct the wedges between elements of $\mathfrak{S}$, for all possible cases.

\smallskip
\noindent
\fbox{$[V]\wedge [W]$}
Let $[V]$ and $[W]$ be two equivalence classes. If $T_{[V]}\cap T_{[W]}$ is non-empty, then there exists a vertex $v$ and representatives 
$V_v$ and $W_v$ of the two classes in $\mathfrak{S}_v$. We have that
\[[V]\wedge [W]= [V_v\wedge W_v],\] 
where we define $[V_v\wedge W_v]=\emptyset$ if $V_v\wedge W_v=\emptyset$. 

If the supports $T_{[V]}$ and $T_{[W]}$ do not intersect, then $[V]$ and $[W]$ are transverse. If  $\mathfrak{S}_{[V]}\cap\mathfrak{S}_{[W]}=\emptyset$ then we define $[V]\wedge [W]=\emptyset$. On the other hand, suppose that $\mathfrak{S}_{[V]}\cap 
\mathfrak{S}_{[W]}$ is non-empty, and suppose that it has more than one $\sqsubseteq$-maximal.
Call these maximals $[U_i]$, for $i\in I$. 
As $[U_i]\sqsubseteq [V]$ and $[U_i]\sqsubseteq [W]$, the supports $T_{[V]}$ and $T_{[W]}$ are both contained into $T_{[U_i]}$, for all $i$. 
As supports are connected, each $T_{[U_i]}$ contains the geodesic $\sigma$ that connects $T_{[V]}$ to $T_{[W]}$. Therefore, each $[U_i]$ has 
representatives
in all edge-spaces in the geodesic $\sigma$, which by abuse of notation we also denote by $U_i$. 

Let $U_{\vee}:=\bigvee_{i\in I}U_i$. Notice that 
$U_\vee$ is nested into each $\sqsubseteq$-maximal element of each edge-space on $\sigma$. Moreover,
$[U_i]\sqsubseteq[U_\vee]$
for all $i\in I$, which leads to a contradiction if $\lvert I\rvert >1$. Therefore, there is only one $\sqsubseteq$-maximal element $[U_1]$
in $\mathfrak{S}_{[V]}\cap \mathfrak{S}_{[U]}$, and $[V]\wedge [W]=[U_1]$.

\smallskip 
\noindent
\fbox{$[V]\wedge T_{[W]}$}
Let $[V]$ be an equivalence class and $T_{[W]}$ be a support. We have that
\begin{equation}\label{delicate}
\begin{aligned}
[V]\wedge T_{[W]}&=\bigvee\bigl\{[U]\mid [U]\sqsubseteq [V]\text{ and }[U]\sqsubseteq T_{[W]}\bigr\}\\
&=[V]\wedge[\cont_{\perp}W_v],
\end{aligned}
\end{equation}
where $v\in T_{[W]}$ is the favorite vertex of $[W]$.

The only non-immediate point of Equation \eqref{delicate} is to check that if two
equivalence classes $[U]$ and $[U']$ are nested into $T_{[W]}$, then so is their join $[U]\vee [U']$. This is indeed the case, by clean containers, as proved in Lemma \ref{clean_containers_use}. 

Therefore, $[V]\wedge T_{[W]}$ is nested into both $[V]$ and $T_{[W]}$, and by construction is the $\sqsubseteq$-maximal of such elements.

\smallskip
\noindent
\fbox{$T_{[V]}\wedge T_{[W]}$}
Let $T_{[V]}$ and $T_{[W]}$ be two distinct supports. If $T_{[V]}\cap T_{[W]}\neq \emptyset$, then the support $T_{[V]}\cap T_{[W]}$ is nested in both $T_{[V]}$ and $T_{[W]}$. We prove that
\begin{equation}\label{intersection_intersection}
T_{[V]}\wedge T_{[W]}=T_{[V]}\cap T_{[W]}.
\end{equation}
To prove that Equation \eqref{intersection_intersection} defines the wedge between $T_{[V]}$ and $T_{[W]}$, it needs to be shown that if $[U]$ is
nested into both $T_{[V]}$ and $T_{[W]}$, then it is also nested into $T_{[V]}\cap T_{[W]}$.

By definition of nesting, we have that $[U]\perp [V]$ 
and $[U]\perp [W]$, and therefore, by Lemma \ref{clean_containers_use}, we have that $[U]\perp \bigl([V]\vee[W]\bigr)=[V\vee W]$, that is $[U]\sqsubseteq T_{[V\vee W]}=T_{[V]}\cap T_{[W]}$.

If $T_{[V]}\cap T_{[W]}=\emptyset$, then there is no element $S\in\mathfrak{S}_2$ (compare Equation \eqref{S_2}) that is nested in both $T_{[V]}$ and $T_{[W]}$. 
The wedge between these two elements of the index set is
\begin{equation}\label{equation_intersection_final}
\begin{aligned}
T_{[V]}\wedge T_{[W]}&=\bigvee \bigl\{[U]\mid [U]\sqsubseteq T_{[V]}\text{ and } [U]\sqsubseteq T_{[W]}\bigr\}\\
&=[\cont_{\perp}{V_v}]\wedge[\cont_{\perp}{W_w}]
\end{aligned}
\end{equation}
Notice that any $[U]$ as in Equation \eqref{equation_intersection_final} will be supported on the geodesic $\sigma$ connecting $T_{[V]}$ to $T_{[W]}$.

%%%%%%%%%%%%%%%%%%%%%%%%%%%%%%%%%%%%%%%%%%%%
\medskip
\noindent
{\bf (Orthogonality)} 
We first prove that if $T_{[V]}\sqsubseteq T_{[W]}$ and $T_{[W]}\perp [U]$, then $ T_{[V]}\perp [U]$. As $[U]\perp T_{[W]}$, we have that $T_{[W]}\subseteq T_{[U]}$. Therefore $T_{[V]}\subseteq T_{[U]}$, that is $[U]\perp T_{[V]}$.
The analogous case of three equivalence classes satisfying the relations $[V]\sqsubseteq [W]$ and $[W]\perp [U]$ is proved in \cite[Lemma 8.9]{BHS2}.

We now construct the (upper) orthogonal containers for elements of $\mathfrak{S}$. 
Consider $T_{[V]}\in\mathfrak{S}_2$. By definition, there is no orthogonality between elements of $\mathfrak{S}_2$. We have that $\cont_\perp T_{[V]}=[V]$. This follows from the definition of orthogonality between equivalence classes and supports.

We claim that $\cont_\perp [V]=T_{[V]}$. To prove this claim, first notice that a support $T_{[W]}$ is orthogonal to $[V]$ if and only if
$T_{[W]}\subseteq T_{[V]}$. Consider now an equivalence class $[W]$ orthogonal to $[V]$. By definition, $[W]\sqsubseteq T_{[V]}$, thus all elements orthogonal to $[V]$ are nested into $T_{[V]}$, proving the claim $\cont_\perp [V]=T_{[V]}$.

To conclude, exploiting the fact that $\mathfrak{S}$ has a wedge operation and just constructed upper orthogonal containers, we notice that the argument of Lemma \ref{wedge_container_lemma_blue} proves that the lower orthogonal containers are $\cont_\perp^UV=U\wedge \cont_\perp V$, for all $U,V\in\mathfrak{S}$.

%%%%%%%%%%%%%%%%%%%%%%%%%%%%%%%%%%%%%%%%%%%%
\medskip
\noindent
{\bf (Consistency)} 
We verify the various cases for this Axiom.

%%%%%%%%%%%%%%%%%%%%%%%%%%%%%%%%%%%%%%%%%%%%%
\fbox{$\lbrack W\rbrack \sqsubseteq \hat T$}
Choose a vertex $z\notin T_{[W]}$ and let $x\in\mathcal{X}_z$. 
Let $e$ be the last edge in the geodesic connecting the vertex $z$ to $T_{[W]}$, so that $e^+=w\in T_{[W]}$. 

As $\pi_T(x)=z$, we have that $\rho_{[W]}^{\widehat T}(\pi_T(x))=\mathfrak{c}_W\circ\pi_{W_w}(\phi_w(\mathcal{X}_e))$, where $\mathfrak{c}_W$ is
the comparison map from $\mathcal{C}W_w$ to the favorite representative of $[W]$.
On the other hand, $\pi_{[W]}(x)=\mathfrak{c}_W\circ\pi_{W_w}(\phi_w(\mathcal{X}_e))$.
This means that 
\begin{equation*}
\rho_{[W]}^{\widehat T}(\pi_{T}(x))=\pi_{[W]}(x)=\mathfrak{c}_W\circ\pi_{W_w}(\phi_w(\mathcal{X}_e))
\end{equation*}
is a uniformly bounded set by Theorem \ref{thmB}, and therefore
\begin{equation*}
\diam_{\mathcal{C}[W]}\left(\pi_{[W]}(x)\cup\rho_{[W]}^{\hat T}(\pi_T(x))\right)=\diam\bigl(\mathfrak{c}_W\circ\pi_{W_w}(\phi_w(\mathcal{X}_e))\bigr)
\end{equation*}
is uniformly bounded.

If $z\in T_{[W]}$, then
\begin{equation*}
d_{\hat T}(\pi_{\widehat T}(x),\rho_{\widehat T}^{[W]})=d_{\widehat T}(z,T_{[W]})=0.
\end{equation*}
Therefore, there exists a uniform bound $N$ such that
\begin{equation*}
\min\Bigl\{ d_{\widehat T}\bigl(\pi_{\widehat T}(x),\rho_{\widehat T}^{[W]}\bigr),\diam_{\mathcal{C}[W]}\bigl(\pi_{[W]}(x)\cup\rho_{[W]}^{\widehat T}(\pi_{\widehat T}(x))\bigr)\Bigr\}\leqslant N
\end{equation*}
for all $x\in\mathcal{X}$ and for all $[W]\in\mathfrak{S}$.

%%%%%%%%%%%%%%%%%%%%%%%%%%%%%%%%%%%%%%%%%%%%%
\fbox{$T_{[W]}\sqsubseteq \hat T$} Let $T_{[W]}\in\mathfrak{S}_2$ and $x\in\mathcal{X}$. If $x\in\mathcal{X}_z$ for some $z\in T_{[W]}$, then $\pi_{\hat T}(x)\in\rho_{\hat T}^{T_{[W]}}$, and 
therefore $d_{\hat T}(\pi_{\hat T}(x),\rho_{\hat T}^{T_{[W]}})=0$.
On the other hand, if $d_{\hat T}(\pi_{\hat T}(x),\rho_{\hat T}^{T_{[W]}})>1$, and in particular $x\in\mathcal{X}_v$, where $v\notin T_{[W]}$, then $\pi_{T_{[W]}}(x)=\rho^{\hat T}_{T_{[W]}}\bigl(\pi_{\hat T}(x)\bigr)$, and therefore
\[\diam_{T_{[W]}}\bigl(\pi_{T_{[W]}}(x)\cup\rho^{\hat T}_{T_{[W]}}\bigl(\pi_{\hat T}(x)\bigr)\bigr)=
\diam_{T_{[W]}}\bigl(\pi_{T_{[W]}}(x)\bigr)=0.\]
This concludes consistency for this case.

%%%%%%%%%%%%%%%%%%%%%%%%%%%%%%%%%%%%%%%%%%%%%
\fbox{$\lbrack U\rbrack \pitchfork\lbrack V\rbrack$} Let $[U],[V]\in\mathfrak{S}$ and assume that $[U]\pitchfork[V]$. We need to prove that there exists some uniform 
constant $\kappa$ such that either 
\begin{equation}\label{eq_consistency_1}
d_{[U]}\bigl(\pi_{[U]}(x),\rho_{[U]}^{[V]}\bigr)\leqslant\kappa\quad\text{ or }\quad d_{[V]}\bigl(\pi_{[V]}(x),\rho_{[V]}^{[U]}\bigr)\leqslant\kappa
\end{equation}
for each $x\in\mathcal{X}$.
We proceed by induction on $d_T(T_{[U]},T_{[V]})$.

If $d_T(T_{[U]},T_{[V]})=0$, then these two finite sets intersect. Therefore,
there exists a vertex $w$ such that $\mathfrak{S}_w$ contains representatives $U_w\pitchfork V_w$ of $[U]$ and $[V]$ respectively. 
Since consistency holds in each hierarchically hyperbolic vertex space, it follows that there exists $\kappa_0$ that satisfies Equation \eqref{eq_consistency_1}. 

Suppose now that $d_T(T_{[U]},T_{[V]})>0$, and consider the geodesic $\gamma$ in $T$ connecting $T_{[U]}$ to $T_{[V]}$, with initial vertex $u$ and final vertex
$v$, so that $u\in T_{[U]}$ and $v\in T_{[V]}$. Let $x\in\mathcal{X}$ be so that 
$x\in\mathcal{X}_z$ for some vertex $z\in T$. There are three possible configurations: either $d_T(u,z)<d_T(v,z)$, or $d_T(u,z)>d_T(v,z)$, or $d_T(u,z)=d_T(v,z)$.

If one of the geodesics in $T$ connecting $z$ either to $T_{[U]}$ or to $T_{[V]}$ has a vertex that lies in $T_{[V]}$ or $T_{[U]}$, then Equation 
\eqref{eq_consistency_1} is trivially satisfied.
Indeed, suppose that the geodesic connecting the vertex $z$ to $T_{[U]}$ passes through $T_{[V]}$. In this case, it follows from the definitions that 
$\pi_{[V]}(x)\in \rho_{[V]}^{[U]}$, and thus $d_{[V]}(\pi_{[V]}(x),\rho_{[V]}^{[U]})=0$.

Therefore, it remains to check the case in which the geodesics $\sigma$ and $\sigma'$ connecting $z$ to $T_{[U]}$ and to $T_{[V]}$ respectively have that $\gamma\cap\sigma\neq\emptyset$ and $\gamma\cap\sigma'\neq\emptyset$, but $\gamma\not\subseteq\sigma$ and $\gamma\not\subseteq\sigma'$.
Let $e$ and $\tilde e$ be the first and the last edges (possibly equal) of $\gamma$, so that $e^{-}=u\in T_{[U]}$ and $\tilde e^{+}=v\in T_{[V]}$.

The first two cases are symmetric, so suppose that $d_T(u,z)<d_T(v,z)$. In particular, $z\not\in T_{[V]}$, for otherwise we would have $d_T(u,z)\geq d_T(v,z)$.
Let $w\in T_{[V]}$ be the favorite vertex of the class $[V]$, and $V_w\in\mathfrak{S}_w$ be its the favorite representative.
By definition
\begin{equation*}
\pi_{[V]}(x)=\mathfrak{c}_V\circ\pi_{V_{v}}\bigl(\phi_{v}(\mathcal{X}_{\tilde e})\bigr),
\end{equation*}
where $\mathfrak{c}_V\colon \mathcal{C}V_v\to \mathcal{C}V_w$ is the comparison map.
We obtain that 
\begin{equation*}
d_{[V]}\bigl(\pi_{[V]}(x),\rho_{[V]}^{[U]}\bigr)=d_{V_w}\bigl(\mathfrak{c}_V\circ\pi_{V_v}(\phi_v(\mathcal{X}_{\tilde e})),
\mathfrak{c}_V\circ\pi_{V_v}(\phi_v(\mathcal{X}_{\tilde e}))\bigr)=0.
\end{equation*}
If $d_T(u,z)>d_T(v,z)$, the same argument shows that
\begin{equation*}
d_{[U]}(\pi_{[U]}(x),\rho_{[U]}^{[V]})=0.
\end{equation*}
We consider now the case $d_T(u,z)=d_T(v,z)$. As $z\not\in T_{[U]}\cup T_{[V]}$, we have that 
\begin{equation*}
\pi_{[V]}(x)=\mathfrak{c}_V\circ\pi_{V_v}(\phi_v(\mathcal{X}_{\tilde e}))\quad\text{ and }
\quad\pi_{[U]}(x)=\mathfrak{c}_U\circ\pi_{U_u}(\phi_u(\mathcal{X}_{e})).
\end{equation*}
It follows that
\begin{equation*}
d_{[V]}\bigl(\rho_{[V]}^{[U]},\pi_{[V]}(x)\bigr)=0\quad\text{ and }\quad d_{[U]}\bigl(\rho_{[U]}^{[V]},\pi_{[U]}(x)\bigr)=0.
\end{equation*}
Therefore, consistency holds for every $[U]\pitchfork[V]\in\mathfrak{S}$.

%%%%%%%%%%%%%%%%%%%%%%%%%%%%%%%%%%%%%%%%%%%%%%%%%%%%%%
\fbox{$\lbrack U\rbrack \sqsubseteq\lbrack V\rbrack$} Consistency for the pair $\lbrack U\rbrack \sqsubseteq\lbrack V\rbrack$
is immediate: by definition there exist a vertex $v$ and representatives $U_v\sqsubseteq V_v$ of $[U]$ and $[V]$ respectively. As Consistency 
holds in all vertex spaces, the statement follows.

Suppose now that $[W]$ is such that either 
\begin{enumerate}
\item $[V]\sqsubsetneq [W]$, or  
\item $[V]\pitchfork[W]$ and $[U]\not\perp[W]$.
\end{enumerate}
We claim that $d_{[W]}(\rho_{[W]}^{[V]},\rho_{[W]}^{[U]})$ is uniformly bounded. 

As $[U]\sqsubseteq [V]$, let $U_u,V_u\in\mathfrak{S}_u$ be representatives of $[U]$ and $[V]$ such that $U_u\sqsubseteq V_u$. We now check all the possible cases.

Suppose that $T_{[U]}\cap T_{[W]}\neq\emptyset$ and 
$T_{[V]}\cap T_{[W]}\neq\emptyset$: this can happen either if $[U]\sqsubseteq [W]$ or if $[U]\pitchfork[W]$ and there exist transverse representatives of $[U]$ and $[V]$. 
Let $v,w\in T$ be such that there exist representatives $V_w,W_w\in\mathfrak{S}_w$ satisfying $V_w\sqsubseteq W_w$ (respectively $V_w\pitchfork W_w$), and 
representatives $U_v, W_v\in\mathfrak{S}_v$ such that $U_v\sqsubseteq W_v$ (respectively $U_v\pitchfork W_v$).

Let $m\in T$ be the median of $u,v,w$. As $u,w$ belong to the support of $[U]$ and $[W]$, then so does $m$, since supports are connected trees. 
Likewise, $m$ lies in the support of $[V]$. Let $U_m,V_m$ and $W_m$ be representatives of $[U],[V]$ and $[W]$ in $\mathfrak{S}_m$. Since edge-hieromorphisms are full, we have that 
$U_m\sqsubseteq V_m$, and $U_m\not\perp W_m$, and $V_m\sqsubseteq W_m$ (respectively $V_m\pitchfork W_m$). Because consistency holds in each vertex space, and in particular in 
$(\mathcal{X}_m,\mathfrak{S}_m)$, we conclude that $d_{W_m}(\rho_{W_m}^{U_m},\rho_{W_m}^{V_m})$ is uniformly bounded.
Applying the appropriate comparison maps (that are uniform quasi isometries), it follows that $d_{[W]}(\rho_{[W]}^{[U]},\rho_{[W]}^{[V]})$ is uniformly bounded.

If $T_{[U]}\cap T_{[W]}\neq\emptyset$ and $T_{[V]}\cap T_{[W]}=\emptyset$, let $w$ be a vertex such that there are transverse representatives $U_{w}\pitchfork W_{w}$ of $[U]$ and $[W]$. 
Moreover, let $e$ be the edge separating $T_{[V]}$ from $T_{[W]}$, so that $e^+\in T_{[W]}$.
We have that $\rho_{[W]}^{[V]}=\overline{\mathfrak{c}}_{W}\circ\pi_{W_{e^+}}(\phi_{e^+}(\mathcal{X}_e))$ and $\rho_{[W]}^{[U]}=\mathfrak{c}_{W}\left(\rho_{W_{w}}^{U_{w}}\right)$, where $\mathfrak{c}_{W}\colon \mathcal{C}W_w\to\mathcal{C}[W]$ and $\overline{\mathfrak{c}}_{W}\colon \mathcal{C}W_{e^+}\to\mathcal{C}[W]$ are the comparison maps to the favorite representative of the equivalence class $[W]$.

Let $S_e$ denote the $\sqsubseteq$-maximal element of the index set $\mathfrak{S}_e$ and $S_e'=\phi_{e^+}^{\lozenge}(S_e)$. Recall that the constant $\kappa_0$ denotes the constant coming from the consistency axiom of Definition \ref{HHS_definition} and $\xi$ denotes the constant which uniformly bounds the multiplicative and additive constant of comparison maps (see Definition \ref{comparison_maps} and the second hypothesis of Theorem \ref{mainT}). Therefore
\begin{equation}\label{terrible_equation}
\begin{aligned}
d_{[W]}\left(\rho_{[W]}^{[V]},\rho_{[W]}^{[U]}\right)&=d_{[W]}\left(\overline{\mathfrak{c}}_W(\pi_{W_{e^+}}(\phi_{e^+}(\mathcal{X}_e))),\mathfrak{c}_W\left(\rho_{W_w}^{U_w}\right)\right)\\
&\leq d_{[W]}\left(\overline{\mathfrak{c}}_W(\pi_{W_{e^+}}(\phi_{e^+}(\mathcal{X}_e))), 
\overline{\mathfrak{c}}_{W}\left(\rho_{W_{e^+}}^{S_e'}\right)\right)+d_{[W]}\left(\overline{\mathfrak{c}}_W\left(\rho_{W_{e^+}}^{S_e'}\right),\mathfrak{c}_W\left(\rho_{W_w}^{U_w}\right)
\right)\\
&\leq d_{[W]}\left(\overline{\mathfrak{c}}_W(\pi_{W_{e^+}}(\phi_{e^+}(\mathcal{X}_e))), \overline{\mathfrak{c}}_{W}\left(\rho_{W_{e^+}}^{S_e'}\right)\right)+\\
&\qquad\qquad +d_{[W]}\left(\overline{\mathfrak{c}}_W\left(\rho_{W_{e^+}}^{S_e'}\right),\overline{\mathfrak{c}}_W\left(\rho_{W_{e^+}}^{U_{e^+}}\right)\right)
+d_{[W]}\left(\overline{\mathfrak{c}}_W\left(\rho_{W_{e^+}}^{U_{e^+}}\right),\mathfrak{c}_W\left(\rho_{W_w}^{U_w}\right)\right).
\end{aligned}
\end{equation}
By hypothesis $[U]\sqsubseteq [V]$, so $T_{[V]}\subseteq T_{[U]}$. Moreover, since $T_{[W]}\cap T_{[U]}\neq\emptyset$, $T_{[W]}\cap T_{[V]}=\emptyset$ and $e$ is the last in the geodesic connecting $T_{[V]}$ to $T_{[W]}$, we have that $e^+\in T_{[U]}\cap T_{[W]}$. Therefore, by Lemma \ref{lemma.good.definition} we have that $\overline{\mathfrak{c}}_{W}(\rho^{U_{e^+}}_{W_{e^+}})\asymp\mathfrak{c}_W(\rho^{U_{w}}_{W_{w}})$, and so the last term $d_{[W]}(\overline{\mathfrak{c}}_W(\rho_{W_{e^+}}^{U_{e^+}}),\mathfrak{c}_W(\rho_{W_w}^{U_w}))$ of Equation \eqref{terrible_equation} is uniformly bounded by some $J$. Therefore
\begin{equation}\label{terrible_equation_2}
\begin{aligned}
d_{[W]}\left(\rho_{[W]}^{[V]},\rho_{[W]}^{[U]}\right)&\leq \xi d_{W_{e^+}}\left(\pi_{W_{e^+}}(\phi_{e^+}(\mathcal{X}_e)), \rho_{W_{e^+}}^{S_e'}\right)+\xi+\xi d_{W_{e^+}}\left(\rho_{W_{e^+}}^{S_e'},\rho_{W_{e^+}}^{U_{e^+}}\right)+\xi+J\\
&\leq \xi d_{W_{e^+}}\left(\pi_{W_{e^+}}(\phi_{e^+}(\mathcal{X}_e)), \rho_{W_{e^+}}^{S_e'}\right)+\xi+\xi\kappa_0+\xi+J.
\end{aligned}
\end{equation}
Notice that 
\begin{equation*}
\begin{aligned}
d_{W_{e^+}}\left(\pi_{W_{e^+}}(\phi_{e^+}(\mathcal{X}_e)), \rho_{W_{e^+}}^{S_e'}\right)&\asymp d_{W_{e^+}}\left(\pi_{W_{e^+}}(\phi_{e^+}(\mathcal{X}_e)), \pi_{W_{e^+}}({\bf F}_{S_{e}'})\right)\\
&\leq Kd(\phi_{e^+}(\mathcal{X}_e),{\bf F}_{S_{e}'})+K,
\end{aligned}
\end{equation*}
and so, by Theorem \ref{winning_theorem}, we have that
\begin{equation}\label{terrible2}
d_{W_{e^+}}\left(\pi_{W_{e^+}}(\phi_{e^+}(\mathcal{X}_e)), \rho_{W_{e^+}}^{S_e'}\right)\leq K\eta+K.
\end{equation}
Combining Equation \eqref{terrible_equation_2} and Equation \eqref{terrible2} we obtain that $d_{[W]}\bigl(\rho_{[W]}^{[V]},\rho_{[W]}^{[U]}\bigr)$ is uniformly bounded.

Assume now that $T_{[U]}\cap T_{[W]}=\emptyset$: in particular $[U]\pitchfork[W]$.
By Lemma \ref{support_inclusion} we know that $T_{[V]}\subseteq T_{[U]}$. Therefore, there exists an edge $e$ separating $T_{[V]}$ 
(and $T_{[U]}$) from $T_{[W]}$, so that $e^+\in T_{[W]}$.

As defined in Equation \eqref{new_rhos}, we have that
\begin{equation*}
\rho_{[W]}^{[V]}=\mathfrak{c}_W\circ\pi_{W_{e^+}}( \phi_{e^+}(\mathcal{X}_e))=\rho_{[W]}^{[U]}.
\end{equation*} 
Therefore $\rho_{[W]}^{[V]}= \rho_{[W]}^{[U]}$, and $d_{[W]}(\rho_{[W]}^{[U]},\rho_{[W]}^{[V]})=0$ is uniformly bounded.

%%%%%%%%%%%%%%%%%%%%%%%%%%%%%%%%%%%%%%%%
\fbox{$T_{[W_1]}\pitchfork T_{[W_2]}$}
Let $T_{[W_1]},T_{[W_2]}\in\mathfrak{S}_2$ satisfying $T_{[W_1]}\pitchfork T_{[W_2]}$, and let $x\in\mathcal{X}$. 
In this case, we always have that
\[\min \bigl\{d_{T_{[W_1]}}(\pi_{T_{[W_1]}}(x),\rho_{T_{[W_1]}}^{T_{[W_2]}}),d_{T_{[W_2]}}(\pi_{T_{[W_2]}}(x),\rho_{T_{[W_2]}}^{T_{[W_1]}})\bigr\}=0,\]
because $\rho_{T_{[W_2]}}^{T_{[W_1]}}$ and $\rho_{T_{[W_1]}}^{T_{[W_2]}}$ are defined as closest-point projections if $T_{[W_1]}\cap T_{[W_2]}=\emptyset$, or as the (coned-off) intersection, if it is non-empty.

%%%%%%%%%%%%%%%%%%%%%%%%%%%%%%%%%%%%%%%%
\fbox{$T_{[W_1]}\sqsubseteq T_{[W_2]}$}
Let $T_{[W_1]},T_{[W_2]}\in\mathfrak{S}_2$ satisfying $T_{[W_1]}\sqsubseteq T_{[W_2]}$. Consistency follows, because for all $x\in\mathcal{X}$ we have that
\begin{equation*}
\pi_{T_{[W_1]}}(x)=\rho_{T_{[W_1]}}^{T_{[W_2]}}\bigl(\pi_{T_{[W_2]}}(x)\bigr).
\end{equation*}
Therefore $\diam_{\mathcal{C}T_{[W_1]}}\bigl(\pi_{T_{[W_1]}}(x)\cup\rho_{T_{[W_1]}}^{T_{[W_2]}}(\pi_{T_{[W_1]}}(x))\bigr)=0$, where $\mathcal{C}T_{[V]}=\widehat{T}_{[V]}$, and the consistency inequality is satisfied.

Let $T_{[W_3]}\in\mathfrak{S}_2$ be such that either 
\begin{enumerate}
\item $T_{[W_1]}\sqsubseteq T_{[W_2]}\sqsubsetneq T_{[W_3]}$, or 
\item $T_{[W_2]}\pitchfork T_{[W_3]}$.
\end{enumerate}
In either case we have that $\rho_{T_{[W_3]}}^{T_{[W_1]}}\subseteq \rho_{T_{[W_3]}}^{T_{[W_2]}}$, and therefore $d_{T_{[W_3]}}\bigl(\rho_{T_{[W_3]}}^{T_{[W_1]}},\rho_{T_{[W_1]}}^{T_{[W_2]}}\bigr)=0$.

Let now $[V]\in\mathfrak{S}_1$ be such that $[V]\pitchfork T_{[W_2]}$ and $[V]\not\perp T_{[W_1]}$. We want to prove that $d_{[V]}(\rho^{T_{[W_1]}}_{[V]},
\rho^{T_{[W_2]}}_{[V]})$ is uniformly bounded. We now check every possible case. If the support of $[V]$ does not intersect $T_{[W_2]}$ (and therefore, does not
intersect $T_{[W_1]}\subseteq T_{[W_2]}$), then $\rho^{T_{[W_1]}}_{[V]}=\rho^{T_{[W_2]}}_{[V]}$ and the claim is satisfied. If the support $T_{[V]}$ intersects both $T_{[W_1]}$ and $T_{[W_2]}$,
then also in this case we have that $\rho^{T_{[W_1]}}_{[V]}=\rho^{T_{[W_2]}}_{[V]}$.
Finally, if $T_{[V]}$ intersects $T_{[W_2]}$ but not $T_{[W_1]}$, then $\rho^{T_{[W_1]}}_{[V]}=\mathfrak{c}\circ\pi_{V_{e^+}}\bigl(\phi_{e^+}(\mathcal{X}_e)\bigr)$, 
where $e$ is the last edge in the geodesic connecting $T_{[W_1]}$ to $T_{[V]}$, the vertex $e^+$ lies in $T_{[V]}$, and $V_{e^+}$ is the representative 
of $[V]$ in $\mathfrak{S}_{e^+}$. On the other hand, $\rho^{T_{[W_2]}}_{[V]}=\rho^{[W_2]}_{[V]}$, and $[W_2]\pitchfork [V]$.
As both classes $[V]$ and $[W_2]$ are supported on the vertex $e^+$, we have that $\rho^{[W_2]}_{[V]}=\mathfrak{c}\circ\rho^{{W_2}_{e^+}}_{V_{e^+}}$, where 
${W_2}_{e^+}$ is the representative of $[W_2]$ in that vertex.

By Lemma \ref{consequence_winning_theorem} we have that $\pi_{V_{e^+}}\bigl(\phi_{e^+}(\mathcal{X}_e)\bigr)$ is coarsely equal to 
$\rho^{\widetilde S_{e^+}}_{V_{e^+}}$, where $\widetilde S_{e^+}=\phi_{e^+}^\lozenge(S_e)$ and $S_e$ is the $\sqsubseteq$-maximal element of $\mathfrak{S}_{e}$. Therefore $d_{[V]}(\rho_{[V]}^{T_{[W_1]}},\rho_{[V]}^{T_{[W_2]}})$ is uniformly bounded.

%%%%%%%%%%%%%%%%%%%%%%%%%%%%%%%%%%%%%
\fbox{$\lbrack V\rbrack\pitchfork T_{[W]}$}
Let $T_{[W]}\in\mathfrak{S}_2$. If $T_{[V]}\cap T_{[W]}=\emptyset$, then 
\[\min \bigl\{d_{[V]}(\pi_{[V]}(x),\rho_{[V]}^{T_{[W]}}),d_{T_{[W]}}(\pi_{T_{[W]}}(x),\rho_{T_{[W]}}^{[V]})\bigr\}=0,\qquad \forall\ x\in\mathcal{X}.\]
Thus, suppose that the intersection is non-empty. Since $[V]\pitchfork T_{[W]}$ it follows that $[V]\pitchfork [W]$.
Suppose that $d_{T_{[W]}}\bigl(\pi_{T_{[W]}}(x),\rho_{T_{[W]}}^{[V]}\bigr)$ is big, so that in particular $x\notin T_{[V]}\cap T_{[W]}=\rho_{T_{[W]}}^{[V]}$ and the geodesic 
connecting $x$ to $T_{[V]}$ passes through the set $T_{[W]}$.

By definition, $\pi_{[V]}(x)=\mathfrak{c}\circ\pi_{V_{e^+}}\bigl(\phi_{e^+}(\mathcal{X}_e)\bigr)$, and $\rho_{[V]}^{T_{[W]}}=\rho_{[V]}^{[W]}=\mathfrak{c}(
\rho_{V_{e^+}}^{W_{e^+}})$, where $e^+$ is the vertex of the edge $e$ that belongs to $T_{[V]}\cap T_{[W]}$, while $e^-\in T_{[W]}\setminus T_{[V]}$, and 
$V_{e^+}$ and $W_{e^+}$ are the representatives of $[V]$ and $[W]$ respectively at the vertex $e^+$.

Let $S_e$ be the $\sqsubseteq$-maximal element of $\mathfrak{S}_e$.
As the equivalence class $[V]$ is not supported in the vertex $e^-$, it follows that $V_{e^+}$ is not nested into $\phi_{e^+}^{\lozenge}(S_e)=\widetilde S_{e}$.
On the other hand $W_{e^+}\sqsubseteq \widetilde S_e$.
Therefore, $\rho^{W_{e^+}}_{V_{e^+}}$ and $\rho^{\widetilde S_e }_{V_{e^+}}$ coarsely coincide by Definition 
\ref{HHS_definition}\eqref{HHS_definition_4}, and by Lemma \ref{consequence_winning_theorem} we obtain that
\begin{equation*}\label{equation_consistency_case}
\pi_{V_{e^+}}\bigl(\phi_{e^+}(\mathcal{X}_e)\bigr)\asymp \rho_{V_{e^+}}^{\tilde{S_{e}}}\asymp \rho_{V_{e^+}}^{W_{e^+}},
\end{equation*}
that is, $\pi_{[V]}(x)$ and $\rho_{[V]}^{T_{[W]}}$ coarsely coincide.
Thus, $d_{[V]}\bigl(\pi_{[V]}(x),\rho^{T_{[W]}}_{[V]}\bigr)$ is uniformly bounded.

%%%%%%%%%%%%%%%%%%%%%%%%%%%%%%%%%%%%%
\fbox{$\lbrack V\rbrack \sqsubseteq T_{[W]}$}
If the distance 
$d_{T_{[W]}}(\pi_{T_{[W]}}(x),\rho_{T_{[W]}}^{[V]})> \kappa_0$, it follows in particular that $\pi_T(x)\notin \rho_{T_{[W]}}^{[V]}= T_{[V]}\cap T_{[W]}$, and that the geodesic in $\widehat T$
connecting $x$ to $\rho_{T_{[W]}}^{[V]}$ passes through the set $T_{[W]}\setminus T_{[V]}$. In this case, we have that $\pi_{[V]}(x)=\pi_{[V]}\bigl(\pi_{T_{[W]}}(x)\bigr)$ is equal
to $\rho_{[V]}^{T_{[W]}}\bigl(\pi_{T_{[W]}}(x)\bigr)$. Therefore the consistency inequality is satisfied also in this case.

%%%%%%%%%%%%%%%%%%%%%%%%%%%%%%%%%%%%%%%%%%%%%%%%%
\medskip
\noindent
{\bf (Finite complexity)}
It is enough to show finite complexity in $\mathfrak{S}_1$ and $\mathfrak{S}_2$ independently.

Finite complexity in $\mathfrak{S}_1$ follows from \cite[Lemma 8.11]{BHS2}.
For $\mathfrak{S}_2$, notice that any chain of proper
nestings
\begin{equation*}
T_{[U_1]}\sqsupsetneq T_{[U_2]}\sqsupsetneq \dots \sqsupsetneq T_{[U_n]}
\end{equation*}
induces the corresponding chain of proper nestings
$[U_1]\sqsubsetneq [U_2]\sqsubsetneq\ldots\sqsubsetneq[U_n]$ in $\mathfrak{S}_1$, by Corollary \ref{reverse_nesting_blue}.

As only equivalence classes are allowed to be nested into an intersection of supports, and not vice versa, finite complexity is proved.

In particular, it follows that the complexity of $\bigl(\mathcal{X}(\mathcal{T}),\mathfrak{S}\bigr)$ is twice the complexity of $\mathfrak{S}_1$ plus one, and the complexity of $\mathfrak{S}_1$ is $\max_v\chi_v +1$, where $\chi_v$ is the complexity of the vertex space $(\mathcal{X}_v,\mathfrak{S}_v)$.

%%%%%%%%%%%%%%%%%%%%%%%%%%%%%%%%%%%%%%%%%%%%%%%%%%%%
\medskip
\noindent
{\bf (Large links)}
Let $[W]\in\mathfrak{S}_1$ and $x,x'\in\mathcal{X}$.
Suppose that $x\in\mathcal{X}_v$ and $x'\in\mathcal{X}_{v'}$ for some $v,v'\in T$, and let $w$ be the favorite vertex for $[W]$. Let $E$ denote the maximal of the constants $E_v$ of the Bounded Geodesic Axiom of the hierarchically hyperbolic space $(\mathcal{X}_v,\mathfrak{S}_v)$.

Suppose that, for some $[V]\sqsubseteq[W]$, we have $d_{[V]}(\pi_{[V]}(x),\pi_{[V]}(x'))\geqslant E'$, where $E'$ depends on $E$ and on the quasi-isometry 
constants of the edge hieromorphisms. 
Then $d_{V_w}(\mathfrak{c}\circ\pi_{V_v}(x),\mathfrak{c}\circ\pi_{V_{v'}}(x'))\geqslant E$, for a representative $V_w\in\mathfrak{S}_w$ of $[V]$. 
As the large links axiom holds in $\mathfrak{S}_w$, we have that $V_w\sqsubseteq T_i$, where $\{ T_i\in\mathfrak{S}_w\}_{i=1}^N$ is a set of $N$ elements in $\mathfrak{S}_w$, where $N=\lfloor d_{[W]}(\pi_{[W]}(x),\pi_{[W]}(x'))\rfloor$ and each $T_i$ satisfies $T_i\sqsubsetneq W_w$. 
Moreover, the Large Links Axiom in $\mathfrak{S}_w$ implies that $d_{[W]}(\pi_{[W]}(x),\rho_{[W]}^{[T_i]})=d_{W_w}(
\mathfrak{c}_W\circ\pi_{W_v}(x)),\rho_{W_w}^{{T_i}})\leqslant N$ for all $i=1,\dots,N$. Thus the large links axiom for elements 
$[V]\in\mathfrak{S}_1$ and $[U]\in\mathfrak{S}_{[V]}$ follows.

\smallskip
We now consider the case of $T_{[W]}\in\mathfrak{S}_2$, and $X\in\mathfrak{S}_{T_{[W]}}$. This can happen both when $X$ is an equivalence class, or when $X\in\mathfrak{S}_2$.
We deal with the case $X\in\mathfrak{S}_2$ in the following lemma, whilst the case $X=[V]\in\mathfrak{S}_1$ is considered after the lemma.

\begin{lemma}\label{lemma_finite_set}
Let $x,x'\in\mathcal{X}$ and $S\in\mathfrak{S}_2\cup\{\widehat{T}\}$. 
The set 
\[Y=\lbrace X\in\mathfrak{S}_2\mid X\sqsubsetneq S,\, d_X(\pi_X(x),\pi_X(x'))>4\rbrace\]
is finite. Moreover, the set of $\sqsubseteq$-maximal elements in $Y$ has cardinality bounded linearly in terms of the distance $d_{S}\bigl(\pi_{S}(x),\pi_{S}(x')\bigr)$.
\begin{proof}
Let $\sigma$ be the geodesic in $T$ connecting $v=\pi_T(x)$ to $v'=\pi_T(x')$.
We begin by noticing that, if $X\cap \sigma=\emptyset$, then $d_X(\pi_X(x),\pi_X(x'))=0$ because these two sets coincide, 
and therefore $X\notin Y$. In particular, as nesting between elements of $\{ \widehat T\}\cup\mathfrak{S}_2$ is inclusion, if $\sigma$ does not intersect $S$ then $Y$ will be empty, and the lemma is trivially satisfied.

Suppose now that $\sigma$ intersects $S$, and consider the map $\varphi\colon Y\to \mathcal{P}(\sigma)$ defined as $\varphi(X)=X\cap\sigma$, where $\mathcal{P}(\sigma)$ is the set of subpaths of $\sigma$. We first prove that $\varphi$ is an injective map. 
Let $X,X'\in Y$ be such that $X\neq X'$ and, looking for a contradiction, suppose that $\varphi(X)=\varphi(X')$, so that $X\cap\sigma=X'\cap\sigma$ and therefore
$X\cap\sigma=X\cap X'\cap\sigma$. 

Since $X$ intersects $\sigma$, we have that $\pi_X(x)$ and $\pi_X(x')$ are vertices of $\sigma$. Therefore
$\pi_X(x)$ and $\pi_X(x')$ lie in $X\cap\sigma\subset X\cap X'$. Since $X\cap X'$ is properly contained in both $X$ and $X'$, it will be coned-off 
in both $\mathcal{C}X$ and $\mathcal{C}X'$ by construction. 
Therefore $d_X(\pi_X(x),\pi_X(x'))\leqslant 2$, which contradicts the definition of the set $Y$. Therefore the map $\varphi$ is injective, 
and the set $Y$ is finite.

\smallskip
We now claim that, for elements $X,X'\in Y$, we have that $\varphi(X)\subsetneq \varphi(X')$ if and only if $X\sqsubsetneq X'$. Indeed, if $X\sqsubsetneq X'$, that is $X\subsetneq X'$, then $\varphi(X)\subsetneq \varphi(X')$.
On the other hand, suppose that $\varphi(X)\subsetneq \varphi(X')$, and let $X=T_{[V]}$ and $X'=T_{[V']}$, for some equivalence classes $[V]$ and $[V']$.
Since $\varphi(X)=X\cap\sigma\subsetneq \varphi(X')=X'\cap\sigma$, we have that \begin{equation}\label{eq1_lemma_largelinks}
X\cap \sigma= X\cap X'\cap \sigma.
\end{equation}
Moreover, as $X\cap X'=T_{[V]}\cap T_{[V']}=T_{[V\vee V']}$, from Equation \eqref{eq1_lemma_largelinks} we obtain that
\begin{equation}\label{eq2_lemma_largelinks}
T_{[V]} \cap \sigma=T_{[V\vee V']}\cap \sigma.
\end{equation}
As $[V]\sqsubseteq [V\vee V']$, Lemma \ref{support_inclusion} implies that $T_{[V\vee V']}\sqsubseteq T_{[V]}$. If $T_{[V\vee V']}$ is properly nested into $T_{[V]}$, then $T_{[V\vee V']}$ is coned off in $\mathcal{C}T_{[V]}=\widehat{T}_{[V]}$. Equation \eqref{eq2_lemma_largelinks} implies that $d_{T_{[V]}}(\pi_{T_{[V]}}(x),\pi_{T_{[V]}}(y))=2$, which is a contradiction since $T_{[V]}\in Y$ by hypothesis. Therefore, $T_{[V\vee V']}=T_{[V]}$, which implies that $T_{[V]}\subseteq T_{[V']}$, as desired.

\smallskip
We now show that $Y_{max}=\{X_1,\ldots,X_n\}\subseteq Y$, the set of $\sqsubseteq$-maximal elements in $Y$, has cardinality at most $d_{S}(\pi_{S}(x),\pi_{S}(x'))$. 
Since every element of $Y_{max}$ is properly nested into $S$, it follows that its support is coned off in $\mathcal{C}S=\widehat{S}$. 
We now prove that $X_j\cap \sigma\not\subseteq (X_{k_1}\cup\dots \cup X_{k_r})\cap \sigma$ for any pairwise distinct elements $X_j,X_{k_1},\dots, X_{k_r}$ all belonging to $Y_{max}$.

The claim was just proved for $r=1$. Indeed, if $X_j\cap \sigma\subseteq X_{k_1}\cap\sigma$ then $X_j\sqsubseteq X_{k_1}$, and this contradicts the fact that $X_j$ and $X_{k_1}$ are distinct $\sqsubseteq$-maximal elements of $Y$.
Suppose that $X_j\cap\sigma\subseteq (X_{k_1}\cup X_{k_2})\cap\sigma$, and let $T_{[U_j]},T_{[U_{k_1}]}$ and $T_{[U_{k_2}]}$ denote $X_j,X_{k_1}$ and $X_{k_2}$ respectively. In this case, there exists a path in $\mathcal{C}X_j$ from $\pi_{X_j}(x)$ to $\pi_{X_j}(x')$ that passes through the cone points of $T_{[U_j\vee U_{k_1}]}$ and $T_{[U_j\vee U_{k_2}]}$, which are properly nested into $X_j$.
Then, $d_{X_j}(\pi_{X_j}(x),\pi_{X_j}(x'))\leq 4$, contradicting the assumption that $X_j\in Y_{max}$.

On the other hand,
assume that $X_j\cap\sigma\subseteq (X_{k_1}\cup X_{k_2}\cup \ldots\cup X_{k_r})\cap\sigma$ where $r>2$, $k_i\neq j$ for all $i$, $k_a\neq k_b$ for all $a\neq b$, and there does not exist $k_i\neq k_j$ such that $X_j\cap\sigma\subseteq (X_{k_i}\cup X_{k_j})\cap\sigma$. 
We claim that there exists $s$ such that $X_{k_s}\cap\sigma\subseteq X_j\cap\sigma$. 

Indeed, assume without loss of generality that the endpoints of $X_j\cap\sigma$ are contained in $X_{k_1}\cap\sigma$ and $X_{k_r}\cap\sigma$ respectively. By hypothesis, $ X_j\cap\sigma$ cannot be entirely contained in $(X_{k_1}\cup X_{k_r})\cap\sigma$. Therefore, there exists $v\in X_j\cap\sigma\setminus (X_{k_1}\cup X_{k_r})\cap\sigma$, that is $v\in X_{k_s}\cap\sigma$ for $1< s< r$. Note that $X_{k_s}\cap\sigma$ cannot contain either of the endpoints of $X_j\cap\sigma$, since that would imply that $X_j\cap\sigma$ is contained in either $(X_{k_1}\cup X_{k_s})\cap\sigma$ or $(X_{k_r}\cup X_{k_s})\cap\sigma$. As a consequence we obtain that $X_{k_s}\cap\sigma\subseteq X_j\cap\sigma$, which is a contradiction, since $X_{k_s}$ is maximal with respect to nesting.

From here we can conclude that $\lvert Y_{max}\rvert\leq d_{S}(\pi_{S}(x),\pi_{S}(x'))$. Indeed, given any $\sqsubseteq$-maximal element $X_i\in Y_{max}$ and its cone point $v_i$, the following dichotomy holds: either $v_i$ is a vertex in the geodesic path $\widehat{\sigma}$, or not, where $\widehat{\sigma}$ is a geodesic path in $\mathcal{C}S$ connecting $\pi_{S}(x)$ to $\pi_{S}(x')$. In the latter case, it must be that $\widehat \sigma$ contains either one or two edges of the support $X_i$. Therefore, the bound is proved.
\end{proof}
\end{lemma}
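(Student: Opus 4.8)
The plan is to push everything down to the tree $T$. Fix the geodesic $\sigma$ in $T$ joining $v=\pi_T(x)$ to $v'=\pi_T(x')$; since $d(x,x')<\infty$ this is a finite path, so it has only finitely many subpaths. The first point to record is that for any $X=T_{[W]}\in\mathfrak{S}_2$ the projection $\pi_X$ is, by construction, a closest-point projection in the tree $T$ onto the subtree $X$; hence $\pi_X(x)$ and $\pi_X(x')$ coincide when $X\cap\sigma=\emptyset$, and otherwise $X\cap\sigma$ is a subpath of $\sigma$ whose two endpoints are exactly $\pi_X(x)$ and $\pi_X(x')$. In particular, if $\sigma\cap S=\emptyset$ then every $X\sqsubsetneq S$ (i.e.\ $X\subseteq S$ as a set) misses $\sigma$, so $Y=\emptyset$ and there is nothing to prove; from now on I assume $\sigma\cap S\neq\emptyset$.

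For finiteness I would consider the map $\varphi\colon Y\to\{\text{subpaths of }\sigma\}$, $\varphi(X)=X\cap\sigma$, and show it is injective, which gives finiteness of $Y$ at once. Indeed, if $X\neq X'$ both lay in $Y$ with $\varphi(X)=\varphi(X')$, then $X\cap\sigma=(X\cap X')\cap\sigma$, and by Lemma \ref{lemma_equality_supports} the intersection $X\cap X'$ is nonempty and belongs to $\mathfrak{S}_2$; it cannot equal both $X$ and $X'$, so it is properly nested in one of them, say $X$. As a proper sub-support of $X$ it is coned to a point in $\mathcal{C}X$, and the endpoints $\pi_X(x),\pi_X(x')$ of $X\cap\sigma$ lie in $X\cap X'$, whence $d_X(\pi_X(x),\pi_X(x'))\leqslant 2<4$, contradicting $X\in Y$. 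The same coned-point argument also yields the order-embedding property $\varphi(X)\subsetneq\varphi(X')\iff X\sqsubsetneq X'$ for $X,X'\in Y$: the implication $(\Leftarrow)$ is immediate from the definition of nesting in $\mathfrak{S}_2$, and for $(\Rightarrow)$ one has $X\cap\sigma=(X\cap X')\cap\sigma$, so if $X\cap X'$ were properly contained in $X$ we would again get $d_X\leqslant 2$; hence $X\cap X'=X$, i.e.\ $X\subseteq X'$, and $X\neq X'$ since the images differ.

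For the linear bound on $Y_{\max}=\{X_1,\dots,X_n\}$, the order-embedding shows that the subpaths $X_i\cap\sigma$ are pairwise $\subseteq$-incomparable. The crux is to prove that no $X_j\cap\sigma$ is contained in the union of the others. For two covering pieces: if $X_j\cap\sigma\subseteq(X_{k_1}\cup X_{k_2})\cap\sigma$, then the two endpoints of $X_j\cap\sigma$ lie respectively in the intersection sub-supports $X_j\cap X_{k_1}$ and $X_j\cap X_{k_2}$, which are properly nested in $X_j$ by $\sqsubseteq$-maximality and hence coned in $\mathcal{C}X_j$; moreover $X_{k_1}\cap\sigma$ and $X_{k_2}\cap\sigma$ must overlap along $\sigma$, so the two cone points are joined by a path of length $\leqslant 2$ through the common vertex, producing a path of length $\leqslant 4$ between $\pi_{X_j}(x)$ and $\pi_{X_j}(x')$ in $\mathcal{C}X_j$ and contradicting $X_j\in Y_{\max}$. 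For more than two covering pieces I would pass to a minimal subcover and single out the two pieces containing the endpoints of $X_j\cap\sigma$; any further piece needed to cover the interior must have its $\sigma$-trace trapped inside $X_j\cap\sigma$, for otherwise it would, together with one of the two endpoint pieces, already cover $X_j\cap\sigma$, contradicting minimality — and being trapped makes that piece $\sqsubseteq$-nested in $X_j$, contradicting $\sqsubseteq$-maximality. With the covering claim established, I would conclude inside $\mathcal{C}S=\widehat S$: each $X_i\sqsubsetneq S$ is coned to a point $v_i$, and along a geodesic $\widehat\sigma$ from $\pi_S(x)$ to $\pi_S(x')$ in $\widehat S$ each $v_i$ is either a vertex of $\widehat\sigma$, or $\widehat\sigma$ traverses $X_i\cap\sigma$ using only one or two genuine edges; in either case the $X_i$ can be injectively (up to a bounded multiplicity) charged to vertices or edges of $\widehat\sigma$, bounding $n$ linearly in $d_S(\pi_S(x),\pi_S(x'))$.

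The step I expect to be the main obstacle is precisely this covering claim. The naive statement ``pairwise $\subseteq$-incomparable subintervals of a path cannot have one covered by the union of the rest'' is simply false for arbitrary intervals, so the proof must genuinely exploit both the coned-off geometry of $\mathcal{C}X_j$ (to extract the length-$\leqslant 4$ bound from two overlapping intersection sub-supports) and the $\sqsubseteq$-maximality of the elements of $Y_{\max}$ (to cut the number of relevant covering pieces down to two); making the reduction bookkeeping rigorous — in particular the assertion that an ``interior'' covering piece is forced to lie inside $X_j\cap\sigma$ — is the delicate combinatorial point of the argument.
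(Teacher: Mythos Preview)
Your proposal is correct and follows essentially the same approach as the paper's proof: the same map $\varphi(X)=X\cap\sigma$, the same injectivity and order-embedding arguments via coned-off proper sub-supports, the same covering claim for $Y_{\max}$ handled by the $r=1$, $r=2$, $r>2$ case split, and the same final charging to a geodesic $\widehat\sigma$ in $\mathcal{C}S$. Your presentation of the $(\Rightarrow)$ direction of the order-embedding (directly via $X\cap X'$ rather than naming $T_{[V\vee V']}$) and your explicit mention of the common vertex in the $r=2$ case are minor expository variations, not substantive differences.
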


Therefore, if $d_{T_{[U]}}(\pi_{T_{[U]}}(x),\pi_{T_{[U]}}(x'))>4$ for some $T_{[U]}\in\mathfrak{S}_{S}\setminus\{S\}$, that is $T_{[U]}\in Y$, then $T_{[U]}\sqsubseteq X$ for some $\sqsubseteq$-maximal element $X$ of the set $Y$.

We now address the case when $X$ is an equivalence classes $X=[V]\in\mathfrak{S}_{T_{[W]}}$. By definition, $[V]\sqsubseteq T_{[W]}$ if and only if $[V]$ is orthogonal to $[W]$. In particular, it follows that $T_{[V]}\cap T_{[W]}\neq\emptyset$. 

If $T_{[V]}$ does not intersect the geodesic $\sigma$ then the distance $d_{[V]}\bigl(\pi_{[V]}(x),\pi_{[V]}(x') \bigr)$ is equal to zero by Equation~\eqref{proj2}, because the edge $e$ appearing in the cited equation will be the same for both $x$ and $x'$.

Now assume that $T_{[V]}\cap\sigma\neq\emptyset$.
As a fist sub-case, suppose that $\sigma\cap T_{[W]}$ is empty, let
\begin{equation}\label{largelinks_won_eq1}
\mathcal{I}\vcentcolon=\bigl\{[V]\sqsubseteq T_{[W]}\mid T_{[V]}\cap \sigma\neq\emptyset\bigr\},
\end{equation}
and notice that $\mathcal{I}$ could be infinite. Consider the geodesic $\alpha$ connecting $T_{[W]}$ to $\sigma$ in the tree $T$, and notice that $\alpha$ has at least one edge, being $T_{[W]}$ and $\sigma$ disjoint. For $[V]\in\mathcal{I}$, we have that $T_{[V]}$ intersects both $T_{[W]}$ and $\sigma$, and therefore $\alpha$ is contained in $T_{[V]}$, being $T$ is a tree. Thus the set
$T_{[W]}\cap\bigcap_{[V]\in\mathcal{I}}T_{[V]}$ is not empty, because (at least) the initial vertex on the geodesic $\alpha$ belongs to this intersection.

Let the set $I$ index $\mathcal I$, that is 
$\mathcal{I}=\{[V_i]\}_{i\in I}$. Without loss of generality, we can suppose that each $V_{i}$ is the 
representative of $[V_i]$ in the vertex space $(\mathcal{X}_v,\mathfrak{S}_v)$.
Let $S_v\in\mathfrak{S}_v$ be the $\sqsubseteq$-maximal element, and notice that $[V_i]\sqsubseteq [S_v]$ for all $i\in I$. Furthermore, note that $[V]\sqsubseteq [\bigvee_{i\in I}V_{i}]$ for all $[V]\in\mathcal{I}$ and let $[V_{\vee}]$ denote $[\bigvee_{i\in I}V_i]$.  Therefore, in this first sub-case, Large Links is satisfied by the family $Y\cup \{[V_\vee]\}$ for the elements $T_{[W]}\in\mathfrak{S}$ and $x,x'\in\mathcal{X}$.

For the second sub-case, suppose that $\sigma\cap T_{[W]}$ is not empty, and let $\{v_1,\dots,v_n\}$ be the finitely many vertices of $\sigma\cap T_{[W]}$ (there can be only finitely many such 
vertices because $\sigma$ is a geodesic).
Analogously to Equation~\eqref{largelinks_won_eq1}, for all $v_i\in \sigma\cap T_{[W]}$ define
\begin{equation*}
\mathcal{I}_{v_i}=\bigl\{[V]\sqsubseteq T_{[W]}\mid v_i\in T_{[V]}\cap \sigma\bigr\},
\end{equation*}
and notice that $\mathcal{I}=\bigcup \mathcal{I}_{v_i}$.
As in the previous case, for each $\mathcal{I}_{v_i}$ consider $[S_{v_i}]$, and notice that $[V]\sqsubseteq [S_{v_i}]$ for all $[V]\in
\mathcal{I}_{v_i}$, for all $i=1,\dots,n$.
Therefore, Large 
Links for an element $T_{[W]}\in \mathfrak{S}_2$ is satisfied considering the set $Y\cup\{[V_\vee^{v_1}],\dots,[V_\vee^{v_n}]\}$. 

Notice that, in both sub-cases, we bounded the cardinality of the sets $Y\cup \{[V_\vee]\}$ and $Y\cup\{[V_\vee^{v_1}],\dots,[V_\vee^{v_n}]\}$ in terms of $\sigma$, that is in terms of $d_T(x,x')$. As $d_{T_{[W]}}(\pi_{T_{[W]}}(x),\pi_{T_{[W]}}(x'))$ is bounded from above by $d_T(x,x')$, we obtained the 
desired bound on the cardinality of these sets.

Combining these bounds with Lemma \ref{lemma_finite_set}, we conclude the proof of Large Links for the case $X\sqsubsetneq T_{[W]}$.

\smallskip
Finally, we prove Large Links for the $\sqsubseteq$-maximal element $\widehat T$. From Lemma \ref{lemma_finite_set} applied with $S=\widehat T$, there 
are only finitely many (and the number depends only on the distance in $\widehat T$ from $x$ to $x'$) 
elements $X\in\mathfrak{S}_2$ such that $d_X(\pi_X(x),\pi_X(x'))$ is big. 
On the other hand, for an equivalence class $[V]\sqsubseteq \widehat T$, the distance $d_{[V]}(\pi_{[V]}(x),\pi_{[V]}(x'))$ can be big
only if the support $T_{[V]}$ intersects the geodesic $\sigma$ connecting $v$ to $v'$ (otherwise, it would be zero). 
Let $S_1,\dots, S_n$ be the $\sqsubseteq$-maximal
elements of all the finitely many edges in $\sigma\cap T_{[V]}$. We have that $[V]\sqsubseteq [S_i]$ for all $i=1,\dots,n$.
Therefore, the set $Y\cup\{S_1,\dots,S_n\}$ is the set of significant elements for the Axiom.

Let $E'$ be the constant that satisfies the Large Links Axiom of the (uniformly) hierarchically hyperbolic vertex spaces (see Definition 
\ref{HHS_definition}), and let $E>\max\{2, E'\}$. Then Large Links is satisfied with this constant $E$. 

\medskip
\noindent
{\bf (Bounded geodesic image)}
Consider $[W]\sqsubsetneq \widehat{T}$, and let $\gamma$ be a geodesic in $\hat T$. 
If $\gamma\cap T_{[V]}=\emptyset$, let $e$ be the last edge in the geodesic connecting $\gamma$ to $T_{[V]}$, and suppose $e^+\in T_{[V]}$. 
Then $\rho^{\hat T}_{[V]}(\gamma)=\mathfrak{c}_W\circ\pi_{V_{e^+}}(\phi_{e^+}(\mathcal{X}_e))$ is a uniformly bounded set. If not, then $\gamma$ 
intersects $\rho^{[V]}_{\widehat T}$.
The cases $[V]\sqsubseteq T_{[W_1]}$, $T_{[W_1]}\sqsubseteq T_{[W_2]}$, and $T_{[W_1]}\sqsubseteq \hat T$, where $T_{[W_1]},T_{[W_2]}\in\mathfrak{S}_2$, are analogous.

\smallskip
Let $[W] \in\mathfrak{S}$, let $[V]\sqsubseteq[W]$, and let $\gamma$ be a geodesic in $\mathcal{C}[W]=\mathcal{C}W_w$ (where $w$ is the favorite 
vertex of $[W]$ and $W_w\in\mathfrak{S}_w$ is the favorite representative). 
Let $V_w$ be the representative of $[V]$ supported in the vertex $w$, so that $\rho_{[W]}^{[V]}=\rho_{W_w}^{V_w}$.
The Bounded
Geodesic Image Axiom in this case follows because it holds in the vertex space $(\mathcal{X}_w,\mathfrak{S}_w)$ (notice that the constant $E$ 
changes according to the quasi-isometry constant of the comparison maps).

\medskip
\noindent
{\bf (Partial realization)}
Notice that two elements $T_{[W_1]}$ and $T_{[W_2]}$ of $\mathfrak{S}_2$ are never orthogonal. 
Consider $k+1$ pairwise orthogonal elements $[V_1],\ldots,[V_k],T_{[W]}\in\mathfrak{S}$,
and let $p_i\in\pi_{[V_i]}(\mathcal{X})\subseteq \mathcal{C}[V_i]$, for $i=1,\dots,k$, and $v_S\in \widehat T_{[W]}$.

By definition of orthogonality, $ T_{[V_i]}\cap T_{[V_j]}\neq\emptyset$ for all $i\neq j$, $T_{[W]}\subseteq T_{[V_i]}$ for all $i=1,\dots, n$, and 
in particular $T_{[W]}\subseteq \bigcap_{i=1}^{k} T_{[V_i]}$. Consider a vertex $v\in T_{[W]}$ that is not a cone point and has distance at most one from 
$v_S$, that is $v\in T\cap T_{[W]}$ and $d_{T_{[W]}}(v,v_S)\leqslant 1$.
As $v\in T_{[V_i]}$ for all $i=1,\dots,k$, without loss of generality we can suppose that $V_i$ is an element of $\mathfrak{S}_v$, by choosing representatives. We have that $V_i\perp V_j$ for all $i\neq j$.
Comparison maps are uniform quasi-isometries, and $p_i\in \pi_{[V_i]}(\mathcal{X})$, therefore the element $\mathfrak{c}_i(p_i)$ is uniformly close to the set $\pi_{V_i}(\mathcal{X})$ for all $i=1,\dots,k$, where $\mathfrak{c}_i\colon \mathcal{C}[V_i]\to\mathcal{C}V_i$ is the comparison map. For $i=1,\dots,k$, let $p_i^v\in \pi_{V_i}(\mathcal{X})$ be a point such that $d_{V_i}\bigl(p_i^v,\mathfrak{c}_i(p_i)\bigr)$ is uniformly bounded.

By Partial realization in the vertex space $(\mathcal{X}_v,\mathfrak{S}_v)$, there 
exists $x\in\mathcal{X}_v$ such that
$d_{V_i}(\pi_{V_i}(x),p_i^v)$ is uniformly bounded for all $i$. As comparison maps are uniform quasi-isometries, we obtain that 
$d_{[V_i]}(\pi_{[V_i]}(x),p_i)$ is uniformly bounded for all $i$. Moreover, $d_{T_{[W]}}(\pi_{T_{[W]}}(x),v_S)=d_{T_{[W]}}(v,v_S)\leqslant 1$.

If $[V_i]\sqsubseteq [U]$, then $[U]$ has a representative $U_v\in\mathfrak{S}_v$ such that $V_i\sqsubseteq U_v$. Therefore 
$d_{[U]}(\pi_{[U]}(x),\rho^{[V_i]}_{[U]})$ is uniformly bounded, because $x$ is a realization point for $\{V_i\}_{i=1}^k$, and comparison maps 
are uniform quasi isometries.

If $[V_i]\sqsubseteq T_{[U]}$, then $\rho_{T_{[U]}}^{[V_i]}=T_{[V_i]}\cap T_{[U]}$ and $\pi_{T_{[U]}}(x)\in
\rho_{T_{[U]}}^{[V_i]}$.  Therefore, $d_{T_{[U]}}\bigl(\pi_{T_{[U]}}(x),\rho_{T_{[U]}}^{[V_i]}\bigr)=0$.
Analogously, for $T_{[W]} \sqsubseteq T_{[U]}$ we have that  $d_{T_{[U]}}\bigl(\pi_{T_{[U]}}(x),\rho_{T_{[U]}}^{T_{[W]}}\bigr)=~0$.
This argument also applies when considering the $\sqsubseteq$-maximal element, therefore proving that $d_{\widehat T}\bigl(\pi_{\widehat T}(x),\rho_{\widehat T}^{T_{[W]}}\bigr)=0$ and $d_{\widehat T}\bigl(\pi_{\widehat T}(x),\rho_{\widehat T}^{[V_i]}\bigr)=0$.

\smallskip
Let now $[V_i]\pitchfork [U]$. Either $T_{[U]}\cap T_{[V_i]}=\emptyset$, in which case the distance $d_{[U]}(\pi_{[U]}(x),\rho^{[V_i]}_{[U]})$ is 
uniformly bounded, or $T_{[U]}\cap T_{[V_i]}\neq\emptyset$, in which case $[U]$ has a representative $U_v\in\mathfrak{S}_v$ that is transverse to $V_i$. 
Therefore, in the latter case the distance $d_{[U]}(\pi_{[U]}(x),\rho^{[V_i]}_{[U]})$ is again uniformly bounded, because it is in the vertex space 
$\mathcal{X}_v$, and comparison maps are uniform quasi-isometries.

If $[V_i]\pitchfork T_{[U]}$ then $\pi_{T_{[U]}}(x)\in\rho_{T_{[U]}}^{[V_i]}$, and therefore $d_{T_{[U]}}\bigl(\pi_{T_{[U]}}(x),\rho_{T_{[U]}}^{[V_i]}\bigr)=~0$.
For the last case, suppose that $T_{[W]}\pitchfork [U]$ for some $[U]\in\mathfrak{S}_1$. If the support of $[U]$ does not intersect $T_{[W]}$, then 
$\pi_{[U]}(x) \in\rho_{[U]}^{T_{[W]}}$. So, suppose that $T_{[W]}$ intersects $T_{[U]}$. 
Again using Lemma \ref{consequence_winning_theorem}, we can conclude.

If $T_{[W]}\pitchfork T_{[U]}$ and $T_{[W]}\cap T_{[U]}\neq\emptyset$, then the subtree $T_{[W]}\cap T_{[U]}=T_{[W\vee U]}$ is strictly contained in $T_{[U]}$. Therefore, $T_{[W]}\cap T_{[U]}$ is coned-off in $\mathcal{C}T_{[U]}=\widehat{T}_{[U]}$. Since $\pi_{T_{[U]}}(x)\in T_{[W]}\cap T_{[U]}$, we obtain that $d_{T_{[U]}}(\pi_{T_{[U]}}(x),\rho^{T_{[W]}}_{T_{[U]}})\leq 2$. On the other hand, if $T_{[W]}\cap T_{[U]}=\emptyset$ then $\pi_{T_{[U]}}(x)=\rho^{T_{[W]}}_{T_{[U]}}=e^+\in T_{[U]}$, where $e$ is the last edge in the geodesic separating $T_{[W]}$ from $T_{[U]}$, and therefore $d_{T_{[U]}}(\pi_{T_{[U]}}(x),\rho^{T_{[W]}}_{T_{[U]}})=0$.
By definition, no element of $\mathfrak{S}_1$ can be nested into an element of $\mathfrak{S}_2$. Therefore, all the relevant cases have been considered.

%%%%%%%%%%%%%%%%%%%%%%%%%%%%%%%%%%%%
%%%%%%%%%%%%%%%%%%%%%%%%%%%%%%%%%%%%
\medskip
\noindent
{\bf (Uniqueness)}
Suppose $x,y\in\mathcal{X}$ are such that $d_{R}(\pi_{R}(x),\pi_{R}(y))\leqslant K$, for all $R\in\mathfrak{S}$. In particular, we have that
$d_{\widehat T}\bigl(\pi_{\widehat T}(x),\pi_{\widehat T}(y) \bigr)\leqslant K$, that $d_{S}\bigl(\pi_{S}(x),\pi_{S}(y) \bigr)\leqslant K$ for all 
$S\in \mathfrak{S}_2$, and that $d_{[V]}\bigl(\pi_{[V]}(x),\pi_{[V]}(y) \bigr)\leqslant K$ for all $[V]\in\mathfrak{S}_1$.

Suppose that the distance in $\widehat T$ from $\pi_{\widehat T}(x)$ to $\pi_{\widehat T}(y)$ is realized by a path only consisting of vertices of 
$T\subseteq \widehat T$, and let 
\begin{equation*}
v_0=\pi_T(x),v_1,\dots,v_{k-1},\pi_T(y)=v_k,
\end{equation*}
be these vertices, where $k\leqslant K$. In particular, no four consecutive vertices can belong to the same support tree, because this would produced a shorter path in $\widehat T$ joining $x$ to $y$.

We have that $d_{\mathcal{X}}(x,y)\leqslant\sum_{i=0}^kd_{\mathcal{X}_{v_i}}\bigl(\mathfrak{g}_{v_i}(x),\mathfrak{g}_{v_i}(y)\bigr)+k$. Moreover, for all $i=0,\dots, k$ we have that the distance $d_{\mathcal{X}_{v_i}}\bigl(\mathfrak{g}_{v_i}(x),\mathfrak{g}_{v_i}(y)\bigr)$ is uniformly bounded. Indeed, if this is not the case, by Uniqueness in the hierarchically hyperbolic space $(\mathcal{X}_{v_i},\mathfrak{S}_{v_i})$, there exists $V\in\mathfrak{S}_{v_i}$ such that $d_V\bigl(\pi_V(\mathfrak{g}_{v_i}(x)),\pi_V(\mathfrak{g}_{v_i}(y))\bigr)$ is not bounded. By \cite[Lemma 8.18]{BHS2} and Theorem \ref{thmB}, we have that $d_V\bigl(\pi_V(\mathfrak{g}_{v_i}(x)),\pi_V(\mathfrak{g}_{v_i}(y))\bigr)$ and $d_{[V]}\bigl(\pi_{[V]}(x),\pi_{[V]}(y)\bigr)$ coarsely coincide, and therefore the latter is not bounded. This contradicts the fact that 
$d_{[V]}\bigl(\pi_{[V]}(x),\pi_{[V]}(y)\bigr)\leqslant K$, and thus $d_{\mathcal{X}_{v_i}}\bigl(\mathfrak{g}_{v_i}(x),\mathfrak{g}_{v_i}(y)\bigr)\leqslant\zeta=\zeta(K)$ is uniformly bounded, as claimed.
Therefore, $d_{\mathcal{X}}(x,y)\leqslant \zeta'(K)$, for some uniform bound $\zeta'(K)$.

\smallskip
Suppose now that in the geodesic $\sigma$ in $\hat T$ connecting $\pi_{\hat T}(x)$ to $\pi_{\hat T}(y)$ there is a cone point. Therefore, 
there exists an element 
$T_{[W_1]}\in\mathfrak{S}_2$ containing two points $x_1$ and $y_1$ in this geodesic (that, therefore, have distance two in $\hat T$ since $T_{[W]}$ is 
coned-off in $\widehat T$). 
As $T_{[W_1]}\in\mathfrak{S}_2$, we have that $d_{T_{[W_1]}}\bigl(\pi_{T_{[W_1]}}(x_1),\pi_{T_{[W_1]}}(y_1)\bigr)=d_{T_{[W_1]}}(x_1,y_1)\leqslant~K$. Either the geodesic 
$\sigma_1$ in $\mathcal{C}T_{[W_1]}=\widehat{T}_{[W_1]}$ connecting these 
two points only consists of vertices of $T$, or there are cone points, and therefore an element $T_{[W_2]}\in\mathfrak{S}_2$ containing two elements
$x_2,y_2$ of the geodesic $\sigma_1$.

As complexity in $\mathfrak{S}_2$ is finite and nesting coincides with inclusion, this process must end after a finite number of steps (that 
depends only on $K$). Therefore, there
exists a geodesic in $T$ connecting $\pi_{\hat T}(x)$ to $\pi_{\hat T}(y)$, whose length is bounded from above by a function in $K$.
Repeating the argument given before, we conclude that $d_{\mathcal{X}}(x,y)$ is uniformly bounded.

\smallskip
This concludes the proof of hierarchical hyperbolicity of the space $\bigl(\mathcal{X}(\mathcal{T}),\mathfrak{S}\bigr)$.

%%%%%%%%%%%%%%%%%%%%%%%%%%%%%%%%%%%%%%%%%%%%%%%%%
%%%%%%%%%%%%%%%%%%%%%%%%%%%%%%%%%%%%%%%%%%%%%%%%%
%%%%%%%%%%%%%%%%%%%%%%%%%%%%%%%%%%%%%%%%%%%%%%%%%
%%%%%%%%%%%%%%%%%%%%%%%%%%%%%%%%%%%%%%%%%%%%%%%%%
\section{Applications of Theorem \ref{mainT}}\label{last_section}

In this concluding section, we collect two applications of Theorem \ref{mainT}, that is Corollary \ref{mainC}, and Theorem \ref{mainGraphProducts}.

\subsection{Finite graphs of hierarchically hyperbolic groups}\label{subsection_graph_hhgs}
In this subsection we apply Theorem \ref{mainT} to prove Corollary \ref{mainC}:
\begin{cor52}
Let $\mathcal{G}=\bigl(\Gamma,\{G_v\}_{v\in V},\{G_e\}_{e\in E},\{\phi_{e^\pm}\colon G_e\to G_{e^\pm}\}_{ e\in E}\bigr)$ be a finite graph of hierarchically
hyperbolic groups. Suppose that:
\begin{enumerate}
\item each edge-hieromorphism is hierarchically quasiconvex, uniformly coarsely lipschitz and full;
\item comparison maps are isometries;
\item the hierarchically hyperbolic spaces of $\mathcal{G}$ have the intersection property and clean containers.
\end{enumerate}
Then the group associated to $\mathcal{G}$ is itself a hierarchically hyperbolic group.
\end{cor52}

We begin with the following lemma, in which we use the notation of Section \ref{trees_with_decorations}. 

\begin{lemma}\label{useful_lemma_1}
Let $\mathcal{T}$ be a tree of hierarchically hyperbolic spaces and $\widetilde{\mathcal{T}}$ be the corresponding decorated tree. Then
\begin{enumerate}
\item $\pi_{\widetilde{T}_{[V]_\star}}(\mathcal{X}({\mathcal{T}}))$ is isometric to $\mathcal{C} T_{[V]}$, and quasi-isometric to $\mathcal{C} \widetilde{T}_{[V]_\star}$, for all support trees $T_{[V]}\in\mathfrak{S}_2$; 
\item $\pi_{[V]_{\star}}(\mathcal{X}(\mathcal{T}))$ is isometric to $\pi_{[V]}(\mathcal{X}(\mathcal{T}))$, and quasi-isometric to $\pi_{[V]_{\star}}(\mathcal{X}(\widetilde{\mathcal{T}}))$, for all equivalence classes $[V]\in \mathfrak{S}_1$;
\item $\mathcal{X}(\mathcal{T})$ is hierarchically quasiconvex in $\mathcal{X}(\widetilde{\mathcal{T}})$.
\end{enumerate}
\end{lemma}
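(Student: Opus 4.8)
The plan is to prove the three items of Lemma \ref{useful_lemma_1} in order, leveraging the decoration construction of Subsection \ref{trees_with_decorations} and the structural results already established, notably Theorem \ref{thmB}, Proposition \ref{XqiFS}, Lemma \ref{conclusion_fourth_hypothesis}, and the description of the index set and projections from Subsections \ref{nes_ort_tra}--\ref{proj_hyp}.

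For item (1), recall that a support tree $T_{[V]} \in \mathfrak{S}_2$ has associated hyperbolic space $\mathcal{C}T_{[V]} = \widehat{T}_{[V]}$, obtained by coning off the proper sub-supports, and that $\pi_{T_{[V]}}$ is (the composition with the inclusion $T \hookrightarrow \widehat{T}_{[V]}$ of) the closest-point projection of a vertex onto $T_{[V]}$ in the tree $T$. First I would observe that $\pi_{T_{[V]}}\bigl(\mathcal{X}(\mathcal{T})\bigr)$ hits exactly the (images of the) genuine vertices of $T_{[V]}$, since every $x \in \mathcal{X}(\mathcal{T})$ lies in some $\mathcal{X}_w$ with $w$ a vertex of $T$; hence the image, with the induced metric, is isometric to $\widehat{T}_{[V]}$ minus possibly its cone-points, and since cone points are at distance one from genuine vertices, this is quasi-isometric — in fact, after the appropriate identification, isometric — to $\mathcal{C}T_{[V]}$ as defined for $\mathcal{T}$. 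For the second half, one uses that decorating adds, at each vertex $v$, finite trees of uniformly bounded diameter $C$, so that $\widetilde{T}_{[V]_\star}$ and $T_{[V]}$ differ by appending bounded trees; the identity on genuine vertices extends to a $(1,C)$-quasi-isometry $\mathcal{C}T_{[V]} \to \mathcal{C}\widetilde{T}_{[V]_\star}$, using that $\widetilde{T}_{[V]_\star} \cap T = T_{[V]}$ (noted right before Lemma \ref{lemma_decorations}).

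For item (2), I would split into the two comparisons. The isometry $\pi_{[V]}(\mathcal{X}(\mathcal{T})) \cong \pi_{[V]_\star}(\mathcal{X}(\mathcal{T}))$ is essentially a tautology once one checks that the favorite representative of $[V]$ can be chosen inside $T \subseteq \widetilde T$: then $\mathcal{C}[V] = \mathcal{C}[V]_\star = \mathcal{C}V_v$ for the same favorite vertex $v$, and the defining formulas \eqref{proj1}, \eqref{proj2} for the projection agree on $\mathcal{X}(\mathcal{T})$ because a point $x \in \mathcal{X}_w$ with $w \in T$ has the same closest-point projection data toward $T_{[V]}$ whether computed in $T$ or in $\widetilde T$ (the added decorations are leaves hanging off $T$, so they never lie on a geodesic between two vertices of $T$ unless one endpoint is in the decoration). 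For the quasi-isometry $\pi_{[V]}(\mathcal{X}(\mathcal{T})) \to \pi_{[V]_\star}(\mathcal{X}(\widetilde{\mathcal{T}}))$, I would argue that the extra points of $\mathcal{X}(\widetilde{\mathcal{T}})$ lie in product regions $\mathbf{F}_U \times \{f\}$ that are within bounded Hausdorff distance of pieces of $\mathcal{X}(\mathcal{T})$ (the uniform bound $C$ with $\mathcal{N}_C(\mathcal{X}(\mathcal{T})) = \mathcal{X}(\widetilde{\mathcal{T}})$), and the projection $\pi_{[V]_\star}$ is uniformly coarsely lipschitz by Lemma \ref{proj_ucl} applied in $\widetilde{\mathcal{T}}$; hence $\pi_{[V]_\star}(\mathcal{X}(\widetilde{\mathcal{T}}))$ is within bounded Hausdorff distance of $\pi_{[V]_\star}(\mathcal{X}(\mathcal{T})) = \pi_{[V]}(\mathcal{X}(\mathcal{T}))$, and both are (uniformly) quasiconvex, giving the claimed quasi-isometry.

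For item (3), the cleanest route is to invoke the inclusion map $\iota\colon \mathcal{X}(\mathcal{T}) \hookrightarrow \mathcal{X}(\widetilde{\mathcal{T}})$, which is a quasi-isometry by the discussion in Subsection \ref{trees_with_decorations} ($\mathcal{N}_C(\mathcal{X}(\mathcal{T})) = \mathcal{X}(\widetilde{\mathcal{T}})$), and then verify the two conditions of Definition \ref{hierarchical_quasiconvexity} directly: for each $R \in \widetilde{\mathfrak{S}}$, items (1) and (2) identify $\pi_R(\mathcal{X}(\mathcal{T}))$ up to uniform Hausdorff distance with a (uniformly) quasiconvex subset of $\mathcal{C}R$, and for the second condition one uses the Distance Formula in $\mathcal{X}(\widetilde{\mathcal{T}})$ together with the fact that a point all of whose $\widetilde{\mathfrak{S}}$-projections are close to those of $\mathcal{X}(\mathcal{T})$ must be within $C$ of $\mathcal{X}(\mathcal{T})$ — precisely because $\mathcal{X}(\mathcal{T})$ is coarsely $\mathbf{F}_{\widehat{T}}$-like and $C$-dense. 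I expect the main obstacle to be item (2)'s quasi-isometry half: one must carefully track which elements of $\widetilde{\mathfrak{S}} \setminus \mathfrak{S}$ (the support trees and equivalence classes created purely by decoration) contribute, and confirm that on those the projection of $\mathcal{X}(\mathcal{T})$ is coarsely dense in $\mathcal{C}R$, which requires knowing that for every decoration-born class $\widetilde V$ there is $V \in \bigsqcup_{v\in V}\mathfrak{S}_v$ with $\widetilde V \sim_\star V$ (stated before Lemma \ref{lemma_decorations}) so that $\mathcal{C}\widetilde V$ is just a comparison-map copy of $\mathcal{C}V$ and the relevant projection is essentially $\pi_{[V]}$. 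Once that bookkeeping is pinned down, the three statements follow from assembling the quoted facts.
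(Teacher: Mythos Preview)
Your proposal is correct and follows essentially the same route as the paper: both arguments rest entirely on the single fact $\mathcal{X}(\widetilde{\mathcal{T}})=\mathcal{N}_C\bigl(\mathcal{X}(\mathcal{T})\bigr)$ from Subsection~\ref{trees_with_decorations}, together with the observation that favorite representatives and projection formulas are unchanged for points of $\mathcal{X}(\mathcal{T})$. The paper's proof is considerably more economical than yours, however. None of Theorem~\ref{thmB}, Proposition~\ref{XqiFS}, or Lemma~\ref{conclusion_fourth_hypothesis} is invoked; the quasi-isometry halves of (1) and (2) follow immediately from $C$-density plus the projections being coarsely lipschitz, and for (3) the second condition of Definition~\ref{hierarchical_quasiconvexity} is trivial because \emph{every} point of $\mathcal{X}(\widetilde{\mathcal{T}})$ is within $C$ of $\mathcal{X}(\mathcal{T})$, so in particular any realization point is---there is no need to appeal to the Distance Formula or to anything ``$\mathbf{F}_{\widehat T}$-like''. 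Your anticipated obstacle about decoration-born classes in $\widetilde{\mathfrak{S}}\setminus\mathfrak{S}$ is also a non-issue: as you yourself note, every such class has a representative in some $\mathfrak{S}_v$ with $v\in T$, so there are no genuinely new equivalence classes, and the bookkeeping you worry about never materializes.
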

\begin{proof}
\begin{enumerate}
\item The first assertion of this item follows from the fact that the projections to hyperbolic spaces for elements in $\mathcal{X}(\mathcal{T})$ are not modified by decorating the tree $\mathcal{T}$.
Furthermore, by the construction of Section \ref{trees_with_decorations}, there exists a constant $C>0$ such that  $\mathcal{C}\widetilde{T}_{[V]_\star}=\mathcal{N}_C
\bigl(\pi_{\widetilde{T}_{[V]_\star}}
(\mathcal{X}(\mathcal{T}))\bigr)$,
and therefore $\pi_{\widetilde{T}_{[V]_\star}}(\mathcal{X}(\mathcal{T}))$ is quasi-isometric to $\mathcal{C}\widetilde{T}_{[V]_\star}$.

\item As the favorite representative of the equivalence class $[V]_\star$ is the same as of the class $[V]$, it follows that $\pi_{[V]_{\star}}(\mathcal{X}(\mathcal{T}))$ is isometric to $\pi_{[V]}(\mathcal{X}(\mathcal{T}))$. The second assertion of this item follows from the equality $\mathcal{X}(\widetilde{\mathcal{T}})=\mathcal{N}_C
\bigl(\mathcal{X}(\mathcal{T})\bigr)$.

\item 
By what was just proved in the previous points, $\pi_U(\mathcal{X}(\mathcal{T}))$ is $k(0)$-quasiconvex in $\pi_U(\mathcal{X}(\widetilde{\mathcal{T}}))$, for all $U\in \mathfrak{S}$, for some fixed number $k(0)$.

Moreover, let $\vec{b}$ be a $\kappa$-consistent tuple such that $b_X\in\pi_X(\mathcal{X}(\mathcal{T}))$ for every $X\in\mathfrak{S}$ and let $x\in\mathcal{X}(\widetilde{\mathcal{T}})$ be a realization point of $\vec{b}$. Since $\mathcal{X}(\widetilde{\mathcal{T}})=\mathcal{N}_C(\mathcal{X}(\mathcal{T}))$ there exists $x'\in\mathcal{X}(\mathcal{T})$ such that $d_{\mathcal{X}(\widetilde{\mathcal{T}})}(x,x')\leq C$, and therefore the proof is complete.
\end{enumerate}
\end{proof}

As already mentioned in Section \ref{trees_with_decorations}, to construct the hierarchically hyperbolic structure of the graph of hierarchically hyperbolic groups $\mathcal{G}$ of Corollary \ref{mainC}, we do not consider directly a decorated tree, because there might not be a non-trivial action of the fundamental group of $\mathcal{G}$ on that hierarchically hyperbolic space.
Instead, we proceed as follows.
Let 
\begin{equation}\label{tree_of}
\mathcal{T}=\Bigl(T, \{H_{ w}\}_{ w\in V},\{ H_{ f}\}_{ f\in E},\{\phi_{ f^\pm}\}\Bigr)
\end{equation}
be the tree of hierarchically hyperbolic groups associated to $\mathcal{G}$, as described in \cite[Section 8.2]{BHS2}. In particular, $T=(V,E)$ is the Bass-Serre tree associated to the finite graph $\Gamma$, each $H_{ w}$ is conjugated in the total group $G$ to
$G_v$, where $w$ maps to $v$ via the quotient map $T\to \Gamma$, analogously $H_{f}$ is conjugated to $G_e$, and the
edge maps $\phi_{ f^\pm}$ agree with these conjugations of edge and vertex groups to give the embeddings in the tree of hierarchically hyperbolic groups. Let $\mathcal{X}(\mathcal{T})$ be the associated metric space, and let $\mathfrak{S}$ denote the index set associated to $\mathcal{X}(\mathcal{T})$, as described in Section \ref{Section4}.

Associated to this, we consider the decorated tree $\widetilde{ \mathcal{T}}$ of hierarchically hyperbolic groups, as described in Section \ref{trees_with_decorations}. By Theorem \ref{mainT}, the metric space $\mathcal{X}(\widetilde{\mathcal{T}})$ admits a hierarchically hyperbolic space structure, that we denote by $\widetilde{\mathfrak{S}}$. By Lemma \ref{useful_lemma}, the metric space $\mathcal{X}(\mathcal{T})$ is hierarchically quasiconvex in $\mathcal{X}(\widetilde{\mathcal{T}})$, and therefore $\bigl(\mathcal{X}(\mathcal{T}), \widetilde{\mathfrak{S}}\bigr)$ is a hierarchically hyperbolic space by \cite[Proposition 5.5]{BHS2}, where the hyperbolic spaces associated to an element $U\in\widetilde{\mathfrak{S}}$ is defined as $\pi_U\bigl(\mathcal{X}(\mathcal{T})\bigr)\subseteq \mathcal{C}U$. From Remark \ref{remark_coarsely_surjective}, we are assuming that every $\pi_U$ is uniformly coarsely surjective, so in fact there is no harm in considering $\mathcal{C}U$ instead of $\pi_U\bigl(\mathcal{X}(\mathcal{T})\bigr)$. As $\widetilde{\mathfrak{S}}$ and $\mathfrak{S}$ coincide as sets of indices (what changes are the hyperbolic spaces associated to each index, as detailed in Section~\ref{trees_with_decorations}), the above substitution is equivalent to equipping the metric space $\mathcal{X}(\mathcal{T})$ with the hierarchically hyperbolic structure given by $\mathfrak{S}$.
%\textcolor{red}{By \cite[Proposition 1.16]{DHS}, there exists a normalized hierarchically hyperbolic structure on $\mathcal{X}(\mathcal{T})$. That is, the space $\mathcal{X}(\mathcal{T})$ can be endowed with a hierarchically hyperbolic structure without modifying the index set and replacing the hyperbolic space $\mathcal{C}U$ with $\pi_U(\mathcal{X}(\mathcal{T}))$ for every $U\in\widetilde{\mathfrak{S}}$. Therefore, we may substitute $\mathcal{C}\widetilde{T}_{[V]_{\star}}$ with $\pi_{\widetilde{T}_{[V]_{\star}}}(\mathcal{X}(\mathcal{T}))$ and $\mathcal{C}[V]_{\star}$ with $\pi_{[V]_{\star}}(\mathcal{X}(\mathcal{T}))$ for every $[V]_{\star}\in\widetilde{\mathfrak{S}}$. Moreover, by Lemma \ref{useful_lemma_1} we have that $\pi_{\widetilde{T}_{[V]_{\star}}}(\mathcal{X}(\mathcal{T}))$ is equal to $T_{[V]}$ and $\pi_{[V]_{\star}}(\mathcal{X}(\mathcal{T}))$ is equal to $\pi_{[V]}(\mathcal{X}(\mathcal{T}))$ for every $[V]$ in $\mathfrak{S}$}. 
That is to say, $\bigl(\mathcal{X}(\mathcal{T}),\mathfrak{S}\bigr)$ is a hierarchically hyperbolic space.

\smallskip
As shown in~\cite[Section 8.1]{BHS2}, the hierarchically hyperbolic structure associated to $\mathfrak{S}$ can be made equivariant, if the
starting hierarchically hyperbolic spaces are hierarchically hyperbolic \emph{groups}. 
We rewiev the construction here, extend it to cover the bigger index set we are using, and use it to prove Corollary~\ref{mainC}.

%{\color{red} We first describe the hierarchical hyperbolic space structures involved in each vertex space associated to the tree of spaces described in Equation \eqref{tree_of}. Recall that each vertex in the Bass-Serre tree $T$ is a coset $gG_v$, where $G_v$ is a group corresponding to the graph of groups $\mathcal{G}$. We endow the metric space $gG_v$ with a copy of the index set $\mathfrak{S}_v$ denoted by $g\mathfrak{S}_v$ such that there is a hieromorphism $\phi_g:(G_v,\mathfrak{S}_v)\to(gG_v,g\mathfrak{S}_v)$ equivariant with respect to the conjugation isomorphism $G_v\to G_v^g$. If $U\in\mathfrak{S}_v$ we denote by $\phi^{(U)}_g$ the isometry induced by $\phi_g$ at hyperbolic space level. }

We recall here the notion of $\mathcal{T}$-\emph{coherent} bijections, where $\mathcal{T}$ is the tree of hierarchically hyperbolic spaces.
A bijection of the index set $\mathfrak{S}$ given in Equation \eqref{newfactorsystem} is said to be $\mathcal{T}$-coherent if:
\begin{itemize}
\item it induces bijections on the sets $\mathfrak{S}_1$ and $\mathfrak{S}_2$;
\item it preserves the relation $\sim$ on $\mathfrak{S}_1$;
\item it induces a bijection $b$ of the underlying tree $T$ that commutes with $f\colon \bigsqcup_{v\in V} \mathfrak{S}_v\to T$, 
where $f$ sends each $V\in \mathfrak{S}_v$ to the vertex $v$. That is,  $fb=bf$.
\end{itemize}
Notice that the composition of $\mathcal{T}$-coherent bijections is $\mathcal{T}$-coherent. Therefore, let $\mathcal{P}_{\mathcal{T}}\leqslant \Aut(\mathfrak{S})$ be the group of $\mathcal{T}$-coherent bijections.

To produce the index set $\mathfrak{S}$ in a $\mathcal{P}_{\mathcal{T}}$-equivariant manner, we proceed as follows. Notice that $G$ acts on $\bigsqcup_{v\in V}\mathfrak{S}_v$, so that for any $V_v\in \mathfrak{S}_v$ we have that $g.V_v\in \mathfrak{S}_{g.v}$. This extends to an action of $\mathfrak{S}_1$ defining $g.[V]=[g.V]$
For any $[W]\in\mathfrak{S}_1$, choose a left transversal $\mathcal{S}_{[W]}$ of the subgroup 
\[\Stab_G([W])=\bigl\{g\in G\mid g[W]=[W]\bigr\},\]
and impose that $e_G\in \mathcal{S}_{[W]}$.
For each $\mathcal{P}_{\mathcal{T}}$-orbit in $\mathfrak{S}_1$ choose a representative $[V]$ of the orbit, a favorite vertex $v$ for $[V]$, and a favorite
representative $V_v\in\mathfrak{S}_v$ for $[V]$. 
For any element $g\in G$, there is a unique element $l\in \mathcal{S}_{[V]}$ such that $g\in l\cdot\Stab_G([V])$. We declare $lv$ to be the favorite vertex of $g[V]$, and $gV_v\in\mathfrak{S}_{l.v}$ to be the favorite representative of the equivalence class $g.[V]$.

This definition is consistent, that is that if $g,\tilde g\in G$, then the favorite vertex of $(g\tilde g).[V]$ coincides with the favorite vertex of $g.\bigl(\tilde g.[V]\bigr)$. Indeed, suppose that $\tilde g\in \tilde l\cdot \Stab_G([V])$, that $g\tilde g\in p\cdot \Stab_G([V])$, and that $g\in l_\star \cdot\Stab_G(\tilde g[V])$, for unique elements $\tilde l,p\in\mathcal{S}_{[V]}$ and $l_\star \in\mathcal{S}_{\tilde g[V]}$. Thus, the favorite vertex of $g\tilde g[V]$ is $p.v$, and its representative is $V_{p.v}\in\mathfrak{S}_{p.v}$. On the other hand, the favorite vertex of $\tilde g[V]$ is $\tilde l. v$, with favorite representative $\tilde l[V]$, and consequently the favorite vertex of $g\bigl(\tilde g[V]\bigr)$ is $(l_\star \tilde l).v$, with favorite representative $V_{(l_\star \tilde l).v}$. It remains to be shown that $p.v=(l_\star\tilde l).v$, that is that $p^{-1}l_\star\tilde l\in \Stab_G(v)$. As $g\in l_\star \cdot\Stab_G(\tilde g[V])$ and $\Stab_G(\tilde g[V])=\tilde g\Stab_G([V])\tilde g^{-1}$, we have that $g\tilde g\in (l_\star \tilde g)\cdot\Stab_G([V])=(l_\star \tilde l)\cdot\Stab_G([V])$.
Therefore, as $g\tilde g$ belongs to a unique coset of $\Stab_G([V])$, we have that $p\cdot \Stab_G([V])=(l_\star \tilde l)\cdot \Stab_G([V])$, that is $p^{-1}l_\star\tilde l\in \Stab_G(v)$.

\smallskip
For any vertex $v\in T$ and any $V_v\in \mathfrak{S}_v$, an element $g\in G$ induces an isometry $g_{V_v}\colon \mathcal{C}V_v\to\mathcal{C}gV_v$. We use these maps to produce isometries $g_{[V]}\colon \mathcal{C}[V]\to\mathcal{C}g[V]$ in the following way. Denote $[U]:=g[V]$ and let $v$ be the favorite vertex of the equivalence class $[V]$, so that $\mathcal{C}[V]=\mathcal{C}V_v$. Let $U_{g.v}:=gV_v\in \mathfrak{S}_{g.v}$. Moreover, by how we defined the favorite representative for $[U]$, we have that $\mathcal{C}[U]=\mathcal{C}U_{l.v}$, where $g\in l\cdot\Stab_G([V])$ for a unique $l\in\mathcal{S}_{[V]}$. We define $g_{[V]}:=\mathfrak{c}\circ g_{V_v}$, where $\mathfrak{c}\colon \mathcal{C}U_{g.v}\to \mathcal{C}U_{l.v}$ is the comparison map between the two representatives $U_{g.v}\in\mathfrak{S}_{g.v}$ and $U_{l.v}\in\mathfrak{S}_{l.v}$. By hypotheses of Corollary \ref{mainC} the comparison map $\mathfrak{c}$ is an isometry. Therefore $g_{[V]}$ is an isometry, and the diagrams of Equation \eqref{coarsely.commuting.diagrams} uniformly coarsely commute.

\smallskip
From the definition of the action of $\mathcal{P}_{\mathcal{T}}$ on $\mathfrak{S}_2$, it follows that $\mathcal{C}g.T_{[U]}=\mathcal{C}T_{g.[U]}$. We are now ready to prove Corollary \ref{mainC}.

\begin{proof42}\upshape
\upshape
Let $\mathcal{T}$ be the tree of hierarchically hyperbolic spaces constructed from the finite graph of hierarchically hyperbolic groups, as done in Equation \eqref{tree_of}.
By Theorem \ref{mainT}, the metric space $\mathcal{X}(\mathcal{T})$ associated to $\mathcal{T}$ admits a hierarchical hyperbolic structure $\mathfrak{S}$. We
choose $\mathfrak{S}$ following the constraints of Subsection \ref{subsection_graph_hhgs}. 

The group $G$ acts on $\mathcal{X}(\mathcal{T})$ in the following way. At the level of the metric space
$g.x=gx\in\mathcal{X}(\mathcal{T})$ for all $x\in \mathcal{X}(\mathcal{T})$. The action at the level of the index set $\mathfrak{S}$ is defined by
$g.[V]=[gV]\in\mathfrak{S}_1$ for all $[V]\in\mathfrak{S}_1$, and $g.T_{[V]}=T_{g.[V]}\in\mathfrak{S}_2$ for all $T_{[V]}\in\mathfrak{S}_2$. At the level of
hyperbolic spaces, the action of $G$ is completely determined by the actions of the hierarchically hyperbolic groups $G_v$ on the hyperbolic spaces associated 
to elements of $\mathfrak{S}_1$, and by the action of $G$ on the Bass-Serre tree for hyperbolic spaces associated to elements of~$\mathfrak{S}_2$.

Therefore $(G,\mathfrak{S})$ is a hierarchically hyperbolic space. Moreover, $G\leqslant \mathcal{P}_{\mathcal{G}}$, because the action is given by 
$\mathcal{T}$-coherent automorphisms. As in \cite[Corollary 8.22]{BHS2}, this action is cocompact and proper. The action of $G$ on $\mathcal{G}$ is cofinite
if and only if the induced actions on $\mathfrak{S}_1$ and $\mathfrak{S}_2$ are cofinite, and this is indeed the case. The action on $\mathfrak{S}_1$
coincides with the action considered in \cite[Corollary 8.22]{BHS2} and therefore is cofinite, and the action on 
\[\mathfrak{S}_2=\{T_{[V]}\mid [V]\in\mathfrak{S}_1\}\]
is cofinite because the action on $\mathfrak{S}_1$ is.

This proves that $G$ is a hierarchically hyperbolic group. It has the intersection property and clean containers because $\bigl(\mathcal{X}(\mathcal{T}),
\mathfrak{S}\bigr)$ has these properties.

\qed
\end{proof42}

\begin{remark}\label{remark_qi}
If supports of equivalence classes are bounded (as for instance in the setting of \cite{BHS2}) then for any equivalence class $[V]$ there will exist a vertex $v\in T_{[V]}$ that is fixed by the subgroup $\Stab_G([V])$. In this case, the vertex $v$ can be chosen to be the favorite vertex of the equivalence class $[V]$. This choice is consistent with the action of the group $G$: if $v$ is the vertex fixed by all elements of $\Stab_G([V])$, then $g.v\in T_{g.[V]}$ will be the vertex fixed by all elements of $\Stab_G(g.[V])=g\Stab_G([V])g^{-1}$. 
Therefore, for each equivalence class $[V]$ the favorite vertex can be chosen so that it is fixed by $\Stab_G([V])$. With this choices, the vertices $g.v$ and $l.v$ involved in the definition of the isometries $g_{[V]}\colon \mathcal{C}[V]\to\mathcal{C}g[V]$, where $g\in l\cdot\Stab_G([V])$, will coincide. In this case, therefore, there is no need to require comparison maps to be isometries and, to be able to apply Theorem~\ref{mainT}, one just needs comparison maps to be uniformly quasi isometries.

On the other hand, if supports of equivalence classes are unbounded, then there might exists elements $g\in \Stab_G([V])$ that do not fix any vertex in $T_{[V]}$. This happens whenever $g$ acts as a translation on the tree $T_{[V]}$, globally (but not point-wise) fixing it.
\end{remark}

Therefore, we obtain
\begin{corollary}\label{corollary_qi}
Let $\mathcal{G}=\bigl(\Gamma,\{G_v\}_{v\in V},\{G_e\}_{e\in E},\{\phi_{e^\pm}\colon G_e\to G_{e^\pm}\}_{ e\in E}\bigr)$ be a finite graph of hierarchically
hyperbolic groups. Suppose that:
\begin{enumerate}
\item each edge-hieromorphism is hierarchically quasiconvex, uniformly coarsely lipschitz and full;
\item comparison maps are uniform quasi-isometries;
\item the hierarchically hyperbolic spaces of $\mathcal{G}$ have the intersection property and clean containers;
\item for all $[V]\in \mathfrak{S}_1$ there exists $v\in T_{[V]}$ such that $g.v=v$ for all elements $g\in \Stab_G([V])$.
\end{enumerate}
Then the group associated to $\mathcal{G}$ is itself a hierarchically hyperbolic group.
\end{corollary}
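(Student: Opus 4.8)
The plan is to prove Corollary~\ref{corollary_qi} by reducing it to the already-established Corollary~\ref{mainC}, observing that the only place where the isometry hypothesis on comparison maps is used in the proof of Corollary~\ref{mainC} is in the construction of the $G$-equivariant isometries $g_{[V]}\colon\mathcal{C}[V]\to\mathcal{C}g[V]$; that construction goes through verbatim under the weaker hypothesis $(4)$ together with comparison maps being uniform quasi-isometries. Concretely, the hierarchically hyperbolic \emph{space} structure $\bigl(\mathcal{X}(\mathcal{T}),\mathfrak{S}\bigr)$ provided by Theorem~\ref{mainT} requires only that comparison maps be uniform quasi-isometries, so hypotheses $(1)$--$(3)$ are exactly what is needed to invoke Theorem~\ref{mainT} and obtain that $(G,\mathfrak{S})$ is a hierarchically hyperbolic space. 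The remaining task is to promote this to a hierarchically hyperbolic \emph{group} structure, i.e.\ to check that $G$ acts on $(\mathcal{X}(\mathcal{T}),\mathfrak{S})$ by automorphisms, metrically properly, coboundedly, with cofinite induced action on $\mathfrak{S}$.

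First I would set up the action exactly as in the proof of Corollary~\ref{mainC}: $G$ acts on $\mathcal{X}(\mathcal{T})$ by left multiplication, on $\mathfrak{S}_1$ by $g.[V]=[gV]$, on $\mathfrak{S}_2$ by $g.T_{[V]}=T_{g.[V]}$, and it fixes $\widehat T$. The key point is the choice of favorite vertices and the induced maps on hyperbolic spaces. Under hypothesis $(4)$, for every $[V]\in\mathfrak{S}_1$ there is a vertex $v\in T_{[V]}$ fixed by all of $\Stab_G([V])$; as explained in Remark~\ref{remark_qi}, this choice is $G$-consistent, meaning that if $v$ is the $\Stab_G([V])$-fixed favorite vertex for $[V]$, then $g.v$ is a $\Stab_G(g[V])$-fixed vertex for $g[V]$, and we declare it the favorite vertex of $g[V]$. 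With this choice, in the definition of $g_{[V]}\colon\mathcal{C}[V]\to\mathcal{C}g[V]$ (where $g\in l\cdot\Stab_G([V])$ for the unique $l\in\mathcal{S}_{[V]}$), the two vertices $g.v$ and $l.v$ coincide: indeed $l^{-1}g\in\Stab_G([V])$ fixes $v$. Hence the comparison map $\mathfrak{c}\colon\mathcal{C}U_{g.v}\to\mathcal{C}U_{l.v}$ appearing in that definition is the identity, and $g_{[V]}=g_{V_v}$ is literally the isometry $\mathcal{C}V_v\to\mathcal{C}gV_v$ induced by the hierarchically hyperbolic group structure of the vertex groups. So $g_{[V]}$ is an isometry, and the coarse-commutativity diagrams of Equation~\eqref{coarsely.commuting.diagrams} are satisfied uniformly, exactly as before. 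Thus each $g\in G$ gives an element of $\Aut(\mathfrak{S})$, and in fact of $\mathcal{P}_{\mathcal{T}}$ since the action is by $\mathcal{T}$-coherent bijections.

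Next I would record that the action is metrically proper and cobounded on $\mathcal{X}(\mathcal{T})$, and cofinite on $\mathfrak{S}$, by citing the corresponding arguments: properness and coboundedness follow as in~\cite[Corollary 8.22]{BHS2}, and cofiniteness on $\mathfrak{S}$ reduces to cofiniteness on $\mathfrak{S}_1$ and on $\mathfrak{S}_2$. The action on $\mathfrak{S}_1$ is cofinite by~\cite[Corollary 8.22]{BHS2}, and the action on $\mathfrak{S}_2=\{T_{[V]}\mid[V]\in\mathfrak{S}_1\}$ is cofinite because it is induced by the cofinite action on $\mathfrak{S}_1$; the maximal element $\widehat T$ is fixed. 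This shows $G$ is a hierarchically hyperbolic group. Finally, $(G,\mathfrak{S})$ has the intersection property and clean containers because $\bigl(\mathcal{X}(\mathcal{T}),\mathfrak{S}\bigr)$ does, by Theorem~\ref{mainT}.

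The main obstacle — and really the only substantive point beyond bookkeeping — is verifying the $G$-consistency of the favorite-vertex assignment under hypothesis $(4)$, i.e.\ that the choices can be made so that $g.v=l.v$ in the notation above, so that comparison maps disappear from the definition of $g_{[V]}$. This is precisely the content of the first two paragraphs of Remark~\ref{remark_qi}, which I would spell out in detail: the consistency computation $p^{-1}l_\star\tilde l\in\Stab_G(v)$ from the proof of Corollary~\ref{mainC} still applies, and the additional input is only that the chosen favorite vertex for each orbit representative lies in the $\Stab_G$-fixed set, which is nonempty by $(4)$ and can be transported $G$-equivariantly across each orbit. Everything else is identical to the proof of Corollary~\ref{mainC}, with ``isometry'' replaced by ``uniform quasi-isometry'' wherever comparison maps enter the hierarchically hyperbolic \emph{space} structure, which is harmless there.
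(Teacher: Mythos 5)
Your proposal is correct and matches the paper's approach: the paper states Corollary~\ref{corollary_qi} as a direct consequence of Remark~\ref{remark_qi}, and your argument is precisely that remark spelled out — apply Theorem~\ref{mainT} (needing only uniform quasi-isometries for comparison maps) to get the HHS structure, then use hypothesis~$(4)$ to choose $\Stab_G([V])$-fixed favorite vertices so that $g.v=l.v$, making $g_{[V]}=g_{V_v}$ an isometry without invoking any isometry assumption on comparison maps, and then run the remainder of the proof of Corollary~\ref{mainC} unchanged.
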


\subsection{Finite graph of hyperbolic groups}\label{BFsubsection}
Let us briefly comment on some Bestvina-Feighn flavored applications of Corollary \ref{mainC} concerning graphs of hyperbolic groups.

First, let us stress that with Corollary \ref{mainC} we cannot hope to recover the full combination theorem of Bestvina-Feighn. Indeed, consider the graph of groups
associated to the HNN extension where vertex and edge groups are the same free group $F$, one embedding is the identity map $\id_F$, and the other is a hyperbolic
automorphism~$\phi$
\[F\ast_\phi=\langle F,t\mid tft^{-1}=\phi(f)\quad \forall f\in F\rangle.\]
This group is hyperbolic, by means of \cite{BestvinaFeighn}.

As the vertex and the edge groups are hyperbolic, they admit the hierarchically hyperbolic structure $(F,\{F\})$ with intersection property and 
clean containers, and the embeddings $\id_F$ and $\phi$ extend to full, hierarchically quasiconves coarsely lipschitz hieromorphisms. The only equivalence class in the index
set is $[F]$, and its support tree $T_{[F]}$ is equal to the whole Bass-Serre tree associated to the HNN extension.

It can be seen that the comparison maps $\mathfrak{c}\colon\mathcal{C}F_v\to\mathcal{C}F_u$ are not uniform quasi isometries, where $v,u$ are vertices 
in the support $T_{[F]}$ and $F_v$, $F_u$ are representatives of $[F]$, because the automorphism $\phi$ is hyperbolic. 
Therefore, the hypotheses of Corollary \ref{mainC} are not met, and in particular the projection 
$\pi_{[V]}$ (as defined
in Equation \eqref{proj1}) would not be a $(K,K)$-coarsely lipschitz map for any $K$, as required in the definition of hierarchically hyperbolic space.

\smallskip
On the other hand, if also the automorphism $\phi$ is the identity of $F$, that is the HNN extension is the direct product $F\times \Z$, then all the hypotheses of Corollary \ref{mainC}
are met, and the index set produced for the group by Corollary \ref{mainC} is $\{[F],T_{[F]},\widehat T\}$, where $\widehat T$ is the $\sqsubseteq$-maximal element and has a bounded  
associated hyperbolic space, $[F]\perp T_{[F]}$, $\mathcal{C}[F]$ is the free group $F$, and $\mathcal{C}T_{[F]}=\widehat{T}_{[F]}$ is the Bass-Serre tree of the HNN extension, which is isometric to a line. That is, in this case we recover the usual index set for the direct product of the two hyperbolic groups $F$ and $\Z$.

\medskip
Let us now suppose that the groups appearing in Corollary \ref{mainC} are hyperbolic, and that edge groups are (hierarchically) quasiconvex in vertex groups. 
For the sake of simplicity, let us also suppose that the finite graph
of hyperbolic groups $\mathcal{G}$ has two vertices and an edge, that is, we are considering an amalgamated free product. To construct the hierarchically hyperbolic
structures for the vertex groups $G_v$ and $G_w$, we proceed as follows. Let $G_e$ be the edge group, and let $\phi_v(G_e)$ and $\phi_w(G_e)$ be its (hierarchically
quasiconvex) images into the vertex groups.
By \cite[Theorem 1]{Sp1}, the subgroup $\phi_v(G_e)$ induces a hierarchically hyperbolic structure $\mathfrak{S}^0$ on $G_v$, given by cosets of certain 
quasiconvex subgroups (up to finite Hausdorff distance). 
To obtain a full hieromorphisms $\phi_v$, we are forced to induce
on the edge group $G_e$ the hierarchical structure $\mathfrak{S}^0_{\phi_v(G_e)}$. On the other hand, $\mathfrak{S}^0_{\phi_v(G_e)}$ induces on the other
vertex group $G_w$ a new hierarchical structure $\mathfrak{S}^1$, and to make the hieromorphism $\phi_w$ full, we need to enrich the structure of the edge group $G_e$
with all the (possibly new) cosets that appear in $\mathfrak{S}^1_{\phi_w(G_e)}$, and so on. 

If this process stabilizes after a finite number of times, then the groups can be
given hierarchically hyperbolic structures that induce a full, coarsely lipschitz hieromorphism with hierarchically quasiconvex images.
This construction always produces structures with the intersection property and clean containers, 
but it is
unclear whether there is a simpler way to articulate the necessary
hypothesis in this case, than just requiring the comparison maps to be uniformly quasi isometries.

%%%%%%%%%%%%%%%%%%%%%%%%%%%%%%%%%%%%%%%%%%%%%%%%%%%%%%%%%%%%%%%
%%%%%%%%%%%%%%%%%%%%%%%%%%%%%%%%%%%%%%%%%%%%%%%%%%%%%%%%%%%%%%%
%%%%%%%%%%%%%%%%%%%%%%%%%%%%%%%%%%%%%%%%%%%%%%%%%%%%%%%%%%%%%%%
%%%%%%%%%%%%%%%%%%%%%%%%%%%%%%%%%%%%%%%%%%%%%%%%%%%%%%%%%%%%%%%
%%%%%%%%%%%%%%%%%%%%%%%%%%%%%%%%%%%%%%%%%%%%%%%%%%%%%%%%%%%%%%%
%%%%%%%%%%%%%%%%%%%%%%%%%%%%%%%%%%%%%%%%%%%%%%%%%%%%%%%%%%%%%%%

\subsection{Graph products of hierarchically hyperbolic groups}
In this subsection, we prove Theorem \ref{mainGraphProducts} of the Introduction:
\begin{thmGP}
Let $\Gamma$ be a finite simplicial graph, $\mathcal{G}=\{G_v\}_{v\in V}$ be a family of hierarchically hyperbolic groups with the intersection property and clean containers. Then 
the graph product $G=\Gamma\mathcal{G}$ is a hierarchically hyperbolic group with the intersection property and clean containers.
\end{thmGP}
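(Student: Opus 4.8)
The plan is to realize a finite graph product $\Gamma\mathcal{G}$ as an iterated amalgamated free product over vertex subgroups, and then apply Corollary \ref{mainC} repeatedly. Recall that if $\Gamma$ has a vertex $v$ whose link $\mathrm{lk}(v)$ separates $\Gamma$ (or more simply: writing $\Gamma = \Gamma_1 \cup_{\mathrm{lk}(v)} \Gamma_2$ where $\Gamma_1 = \mathrm{star}(v)$ and $\Gamma_2 = \Gamma \setminus \{v\}$), then $\Gamma\mathcal{G} = (\Gamma_1\mathcal{G}) \ast_{\mathrm{lk}(v)\mathcal{G}} (\Gamma_2\mathcal{G})$. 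Since $\Gamma_1\mathcal{G} = G_v \times (\mathrm{lk}(v)\mathcal{G})$, this exhibits the graph product as an amalgam of strictly smaller graph products (in terms of number of vertices), with edge group a smaller graph product, glued along the obvious retraction-induced inclusions. I would set up the induction on $|V(\Gamma)|$: the base case $|V|=1$ is just a single hierarchically hyperbolic group, which has the intersection property and clean containers by hypothesis. For the inductive step, I assume all graph products over graphs with fewer vertices (in particular $\mathrm{lk}(v)\mathcal{G}$, $\mathrm{star}(v)\mathcal{G} = G_v\times \mathrm{lk}(v)\mathcal{G}$, and $\Gamma_2\mathcal{G} = (\Gamma\setminus\{v\})\mathcal{G}$) are hierarchically hyperbolic groups with the intersection property and clean containers — the first using also Lemma \ref{directproduct_intersection} for the direct product — and I want to conclude the same for $\Gamma\mathcal{G}$.

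The core of the argument is checking that the amalgam $(\Gamma_2\mathcal{G}) \ast_{\mathrm{lk}(v)\mathcal{G}} (\Gamma_1\mathcal{G})$ satisfies the three hypotheses of Corollary \ref{mainC}. The first hypothesis requires the edge-hieromorphism $\mathrm{lk}(v)\mathcal{G} \hookrightarrow \Gamma_j\mathcal{G}$ to be hierarchically quasiconvex, uniformly coarsely lipschitz, and full. Hierarchical quasiconvexity and the coarse-lipschitz property should follow because the subgroup $\mathrm{lk}(v)\mathcal{G}$ is a retract of $\Gamma_j\mathcal{G}$ (via the canonical retraction killing the generators outside $\mathrm{lk}(v)$), so the inclusion is an isometric embedding onto a "parabolic-type" subgroup; one needs to verify this is reflected at the level of the hierarchically hyperbolic structures — i.e. that the natural hierarchical structure on $\Gamma_j\mathcal{G}$ restricts to the natural one on $\mathrm{lk}(v)\mathcal{G}$. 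Fullness is the statement that every index in $\mathfrak{S}_{\Gamma_j\mathcal{G}}$ nested into the image of the maximal element of $\mathfrak{S}_{\mathrm{lk}(v)\mathcal{G}}$ comes from $\mathrm{lk}(v)\mathcal{G}$; this should be arranged by an appropriate bookkeeping of which index sets we use, exactly as in the discussion at the end of Subsection \ref{BFsubsection} — one may need to first stabilize the hierarchical structures across the amalgam so that fullness holds, which terminates by finite complexity. The second hypothesis, that comparison maps are isometries, should hold because the inclusions are isometric embeddings inducing \emph{isometries} (not merely quasi-isometries) on the associated hyperbolic spaces, by Remark \ref{we_use_this_graphproduct}. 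The third hypothesis is exactly the inductive hypothesis together with Lemma \ref{directproduct_intersection} and the fact (extending \cite{ABD}) that clean containers pass to direct products.

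Having verified the hypotheses, Corollary \ref{mainC} gives that $\Gamma\mathcal{G}$ is a hierarchically hyperbolic group, and the "moreover" clauses of Theorem \ref{mainT} (inherited through Corollary \ref{mainC}) give that the resulting structure has the intersection property and clean containers, completing the induction. I would close by noting the byproduct advertised in the Introduction: since clean containers persist at each stage, they are preserved by arbitrary finite graph products, extending the free/direct product results of \cite{ABD}.

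I expect the main obstacle to be the fullness of the edge-hieromorphisms and the associated stabilization of hierarchical structures. Naively restricting the "standard" hierarchically hyperbolic structure of $\Gamma_j\mathcal{G}$ to the subgroup $\mathrm{lk}(v)\mathcal{G}$ need not agree with the "standard" structure the inductive hypothesis provides for $\mathrm{lk}(v)\mathcal{G}$, and forcing agreement may require enlarging the index set of the edge group (as in the raag/special-cube-complex picture and the amalgam discussion of Subsection \ref{BFsubsection}), then propagating this enlargement back to the vertex groups. The delicate point is to check this enlargement process terminates — which it must, by finite complexity — and that it does not destroy the intersection property or clean containers, which is guaranteed by Proposition \ref{concreteness_making}-type arguments, Lemma \ref{wedge_container_lemma_blue}, and Lemma \ref{directproduct_intersection}. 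A secondary subtlety is ensuring that the bijection-level and hyperbolic-space-level data can be chosen $G$-equivariantly so that the output is genuinely a hierarchically hyperbolic \emph{group} and not merely a space; this is handled by the equivariant construction recalled in Subsection \ref{subsection_graph_hhgs}, using that here the supports can be taken to interact well with the $G$-action (so that Corollary \ref{corollary_qi} or Corollary \ref{mainC} applies).
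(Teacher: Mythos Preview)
Your overall architecture matches the paper: induction on $|V(\Gamma)|$, the splitting $G \cong G_{\Gamma\setminus\{v\}} \ast_{G_{\link(v)}} (G_{\link(v)}\times G_v)$, and an appeal to Corollary~\ref{mainC}. You also correctly identify fullness of the edge-hieromorphisms as the crux.

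However, your proposed fix for fullness is a genuine gap. You suggest running the stabilization procedure of Subsection~\ref{BFsubsection} and assert it ``terminates by finite complexity''. The paper does \emph{not} claim that procedure terminates in general; finite complexity bounds the length of $\sqsubseteq$-chains, not the number of new elements that can appear when one alternately enriches $\mathfrak{S}_{\Gamma_1\mathcal G}$ and $\mathfrak{S}_{\Gamma_2\mathcal G}$. Even granting termination, you would still need to show the resulting structures retain the intersection property and clean containers and that the induced maps on hyperbolic spaces are \emph{isometries} (not merely quasi-isometries), since Corollary~\ref{mainC} requires isometric comparison maps. None of this follows from the references you cite.

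The paper avoids stabilization entirely by \emph{strengthening the inductive hypothesis}: one proves, for every graph product on fewer vertices and every full subgroup $G_\Delta$, that the inclusion $G_\Delta\hookrightarrow G$ is a full, hierarchically quasiconvex hieromorphism inducing isometries on hyperbolic spaces. With this as the inductive hypothesis, the edge maps $G_{\link(v)}\hookrightarrow G_{\Gamma\setminus\{v\}}$ and $G_{\link(v)}\hookrightarrow G_{\link(v)}\times G_v$ automatically have all the required properties (the second via Example~\ref{example_directproduct}), Remark~\ref{we_use_this_graphproduct} gives isometric comparison maps, and Corollary~\ref{mainC} applies directly. The remaining work, which your proposal does not address, is to close the induction: one must show that the explicit structure $\mathfrak{S}=\mathfrak{S}_1\sqcup\mathfrak{S}_2\sqcup\{\widehat T\}$ produced by Theorem~\ref{mainT} makes $G_{\Gamma\setminus\{v\}}\hookrightarrow G$ (and hence any $G_\Delta\hookrightarrow G$) again full, hierarchically quasiconvex, and isometric on hyperbolic spaces. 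This uses the concrete description of projections in Equations~\eqref{proj1}--\eqref{proj2} and Theorem~\ref{thmB}, and is where the actual content of the proof lies.
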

\begin{proof}
Throughout the proof, if $G$ denotes the graph product $\Gamma\mathcal{G}$ and $\Delta$ is a subgraph of $\Gamma$, we denote with  $G_\Delta$ the subgroup of $G$ generated by the family of subgroups $\{G_v\mid v\in \Delta\}$. This is canonically isomorphic to the graph product $\Delta\mathcal{G}_\Delta$, where $\mathcal{G}_\Delta$ is the subfamily of $\mathcal{G}$ indexed by elements in $\Delta$.
Given vertex groups $\{G_v\}_{v\in V}$, we fix once and for all word metrics on them, and we always consider the graph product metric on $\Gamma\mathcal{G}$, so that the (infinite) generating set of the graph product $\Gamma\mathcal{G}$ consists of all vertex-groups elements. In particular, for a full subgroup $H$ of the graph product $G$, that is a subgroup conjugated to a $G_\Delta$ as above, the inclusion map $H\to G$ is an isometric embedding.

\smallskip
We show by induction on the number of vertices that every graph product $G$ of hierarchically hyperbolic groups with the intersection property and clean containers is again a hierarchically hyperbolic group with the intersection property and clean containers, and that for any full subgroup $H$ of $G$, hierarchically hyperbolic group structures (with intersection property and clean containers) can be given to $H$ and $G$ so that the canonical inclusion $H\hookrightarrow G$ is a full, hierarchically quasiconvex hieromorphism, inducing isometries at the level of hyperbolic spaces.

The case $n=1$ is trivial, so let us suppose that $V=\{v,w\}$. 
If the vertices are connected by an edge, then the graph product is the direct product of the two vertex groups, its hierarchically hyperbolic structure is described in Example \ref{example_directproduct}, and it satisfies the inductive statement we want to prove.

On the other hand, if the two vertices are not connected by an edge, then the graph product is the free product of the two vertex groups, and also in this case the inductive statement is satisfied.

Let us suppose that the graph $\Gamma$ has $n$ vertices, that is $\lvert V\rvert=n$, and that the lemma is satisfied by graph products on at most $n-1$ vertices.
If the graph product splits non-trivially as a direct or free product, then either $G= G_{\Delta}\times G_{\Theta}$ or $G=G_{\Delta}\ast G_{\Theta}$, where $\Delta$ and $\Theta$ are proper non-trivial subgraphs of $\Gamma$. In both cases the inductive statement is satisfied, by induction and by
either invoking Example \ref{example_directproduct} or the free product case (as done for graph products on two vertices).
Therefore, suppose that $G$ does not split non-trivially as a direct nor as a free product. Consider any (non-central and non-isolated) vertex $v\in V$ and the splitting 
\begin{equation}\label{splitting}
G\cong G_{\Gamma\setminus\{v\}}\ast_{G_{\link(v)}}(G_{\link(v)}\times G_v).
\end{equation}
We now check that all the hypotheses of Corollary \ref{mainC} are satisfied.

By the inductive hypotheses the groups $G_{\Gamma\setminus\{v\}}$ and $G_{\link(v)}$ admit a hierarchically hyperbolic group structures with the intersection property and clean containers, and we call $\mathfrak{S}_{\Gamma\setminus\{v\}}$ and $\mathfrak{S}_{\link(v)}$ their index sets, respectively. By Lemma~\ref{directproduct_intersection} the direct product $G_{\link(v)}\times G_v$ is a hierarchically hyperbolic group with the intersection property, and it also satisfies clean containers by \cite[Lemma~3.6]{ABD}. Moreover, also by inductive hypotheses, the inclusions $\iota_1 \colon G_{\link(v)}\hookrightarrow G_{\Gamma\setminus\{v\}}$ and $\iota_2\colon G_{\link(v)}\hookrightarrow G_{\link(v)}\times~G_v$ are full, hierarchically quasiconvex hieromorphisms, and $\iota_{i,U}^*$ are isometries for $i=1,2$ and for all $U\in\mathfrak{S}_{\link(v)}$. 

Moreover, $\iota_1$ and $\iota_2$ are isometric embeddings. By choosing inverse isometries for the maps $\iota_{i,U}^*$ for $i=1,2$ and all $U\in\mathfrak{S}_{\link(v)}$, we conclude that the comparison maps, as defined in Definition \ref{comparison_maps}, are again isometries.
Therefore, all of the hypotheses of Corollary \ref{mainC} are satisfied, and we apply it to the graph of groups appearing in Equation \eqref{splitting}. Thus, the group $G$ admits a hierarchically hyperbolic group structure with the intersection property and clean containers. To conclude the proof, it is enough to prove that the embedding $G_{\Delta}\hookrightarrow G$ is a full, hierarchically quasiconvex hieromorphism, and that induces isometries at the level of hyperbolic spaces, where $\Delta$ is any proper subgraph of $\Gamma$. 

\smallskip
Let us first consider the case $\Delta=\Gamma\setminus\{v\}$, and let us show that $G_{\Gamma\setminus\{v\}}$ is hierarchically quasiconvex in $G$.
Recall that the index set $\mathfrak{S}$ constructed in Corollary~\ref{mainC} for $G_{\Gamma}$ is $\mathfrak{S}_1\cup\mathfrak{S}_2\cup \{\widehat{T}\}$, as fully described in Equation~\eqref{S_1} and Equation~\eqref{S_2}.

Any element of $\mathfrak{S}_1$ is an equivalence class $[V]$, equipped with a favourite representative $V_w$ in the Bass-Serre tree $T$ for which $\mathcal{C}[V]=\mathcal{C}V_w$.
On the other, any element of $\mathfrak{S}_2$ is a support tree $T_{[V]}$, and the metric space $\mathcal{C}T_{[V]}$ is the tree $T_{[V]}$ in which all properly contained support trees $T_{[W]}$ are coned-off.

For each $[V]\in\mathfrak{S}_1$, the projection $\pi_{[V]}$, as defined in Equation \eqref{proj1} and Equation \eqref{proj2}, is
\begin{equation*}
\pi_{[V]}(x)=\begin{cases}
\mathfrak{c}_w\circ\pi_{V_w}(x),&\quad \forall x\in \mathcal{X}_v,\ v\in T_{[V]};\\
\mathfrak{c}_{e^+}\circ\pi_{V_{e^+}}(\phi_{e^+}(\mathcal{X}_e)),&\quad 
\forall x\in \mathcal{X}_v,\ v\notin T_{[V]},
\end{cases}
\end{equation*}
where $e=e(v)$ is the last edge in the geodesic connecting $v$ to $T_{[V]}$ such that $e^+\in T_{[V]}$, and the maps $\mathfrak{c}_w$ and $\mathfrak{c}_{e^+}$ denote the appropriate comparison maps to the favorite representative of $[V]$.

Let $x\in\mathcal{X}_v\subseteq\mathcal{X}$ and let $T_{[V]}\in\mathfrak{S}_2$. Then, $\pi_{T_{[V]}}(x)$ is defined as the composition of the closest point projection of $v$ to $T_{[V]}$ in the Bass-Serre tree $T$, with the inclusion of $T_{[V]}$ into the coned-off~$\mathcal{C}T_{[V]}=\widehat{T}_{[V]}$.

To prove that $G_{\Gamma\setminus\{v\}}$ is hierarchically quasiconvex in $G_\Gamma$, we need to check the two conditions of Definition~ \ref{hierarchical_quasiconvexity}.
For each element $T_{[V]}\in\mathfrak{S}_2$ we have that $\pi_{T_{[V]}}(G_{\Gamma\setminus\{v\}})$ is a point in $\mathcal{C}T_{[V]}=\widehat{T}_{[V]}$ and, therefore, it is quasiconvex in~$\mathcal{C}T_{[V]}$.

Suppose that $[V]\in\mathfrak{S}_1$,
and assume that $[V]$ has a representative in $g.\mathfrak{S}_{v}$, where $\mathfrak{S}_{v}$ is the index set associated to the vertex group $G_v$. In particular $[V]=\{V\}$,
and $\pi_{[V]}(G_{\Gamma\setminus\{v\}})\subseteq \pi_{V}(g.G_{\link(v)})$. Since $V\not\in g.\mathfrak{S}_{\link(v)}$, the set $\pi_{V}(g.G_{\link(v)})$ is uniformly bounded, and therefore $\pi_{[V]}(G_{\Gamma\setminus\{v\}})$ is quasiconvex in $\mathcal{C}[V]$. 

On the other hand, assume that the group orbit $G.[V]$ intersects $\mathfrak{S}_{\Gamma\setminus\{v\}}$. Without loss of generality, as the group acts isometrically on the hyperbolic spaces, we can assume that $[V]$ has a representative $\tilde V\in \mathfrak{S}_{\Gamma\setminus\{v\}}$. By definition $\pi_{[V]}(G_{\Gamma\setminus\{v\}})=\mathfrak{c}\circ \pi_{\tilde V}(G_{\Gamma\setminus\{v\}})$, where $\mathfrak{c}$ is the comparison map from $\tilde V$ to the favourite representative of $[V]$. 
By Axiom \eqref{axiom1} of Definition \ref{HHS_definition}, the set $\pi_{\tilde V}(G_{\Gamma\setminus\{v\}})$ is quasiconvex in $\mathcal{C}\tilde V$, and therefore $\pi_{[V]}(G_{\Gamma\setminus\{v\}})$ is quasiconvex in $\mathcal{C}[V]$, being $\mathfrak{c}$ an
isometry.
It follows that for every element $[V]\in\mathfrak{S}_1$, the set $\pi_{[V]}(G_{\Gamma\setminus\{v\}})$ is quasiconvex in $\mathcal{C}[V]$.

To conclude the proof of hierarchical quasiconvexity, consider a consistent tuple $\vec{b}$ in $(G,\mathfrak{S})$ such that $b_{[V]}\in\pi_{[V]}(G_{\Gamma\setminus\{v\}})$ and $b_{T_{[V]}}\in\pi_{T_{[V]}}(G_{\Gamma\setminus\{v\}})$ for every $[V]\in\mathfrak{S}_1$. 
The sets $\pi_{T_{[V]}}(G_{\Gamma\setminus\{v\}})$ are uniformly bounded, being points, for all $T_{[V]}\in\mathfrak{S}_2$. Moreover, $\pi_{[V]}(G_{\Gamma\setminus\{v\}})$ are uniformly bounded for every equivalence class $[V]\in\mathfrak{S}_1$ which has a representative in $g.\mathfrak{S}_{v}$. 

Let $\alpha$ denote the vertex of the Bass-Serre tree in which the subgroup $G_{\Gamma\setminus\{v\}}$ is supported.
Let $i:G_{\Gamma\setminus\{v\}}\to G_\Gamma$ be the hieromorphism defined as follows. At the metric-space level define it to be the natural inclusion. At the level of index sets $i^{\lozenge}(U)=[U]$ and, at the level of hyperbolic spaces, $i^*_U\colon \mathcal{C}U\to\mathcal{C}[U]$ is the comparison map $\mathfrak{c}\colon\mathcal{C}U_{\alpha}\to\mathcal{C}[U]$, which is an isometry. 

For each $[V]\in\mathfrak{S}_1$, we have that 
\begin{equation*}
\pi_{[V]}(G_{\Gamma\setminus\{v\}})=\begin{cases}
\mathfrak{c}_\alpha\circ\pi_{V_{\alpha}}(G_{\Gamma\setminus\{v\}}),&\quad\text{if }\alpha\in T_{[V]};\\
\mathfrak{c}_{e^+}\circ\pi_{V_{e^+}}(\phi_{e^+}(\mathcal{X}_e)),&\quad \text{if }\alpha\not\in T_{[V]}.
\end{cases}
\end{equation*} 
By Theorem \ref{thmB} the set $\pi_{V_{e^+}}(\phi_{e^+}(\mathcal{X}_e))$ is uniformly bounded, and thus $\mathfrak{c}_{e^+}\circ\pi_{V_{e^+}}(\phi_{e^+}(\mathcal{X}_e))$ is uniformly bounded. For each $[V]\in\mathfrak{S}_1$ such that $\alpha\in T_{[V]}$, let $c_{[V]}$ denote $\mathfrak{c}(b_{[V]})$, where the maps
$\mathfrak{c}$ denote the comparison maps (which are isometries) from the favourite representative of $[V]$ to the representative $V_\alpha$ (therefore, the maps $\mathfrak{c}$ change with respect to different equivalence classes).
Consider the consistent tuple 
\[\vec c=\prod_{\substack{[V]\in\mathfrak{S}_1,\\\alpha\in T_{[V]}}} c_{[V]}\]
By induction hypothesis, $G_{\Gamma\setminus\{v\}}$ is a hierarchically hyperbolic group. Therefore, the consistent tuple $\vec{c}$
admits a realization point $z\in G_{\Gamma\setminus\{v\}}$, and thus we obtain that $\pi_{[V]}(z)\asymp b_{[V]}$ for every $[V]\in\mathfrak{S}_1$. Furthermore, since $\pi_{T_{[V]}}(G_{\Gamma\setminus\{v\}})$ is a point, we also have that $\pi_{T_{[V]}}(z)=b_{T_{[V]}}=\pi_{T_{[V]}}(G_{\Gamma\setminus\{v\}})$ for every $T_{[V]}\in\mathfrak{S}_2$. That is, the second condition of hierarchical quasiconvexity is proved, and the inclusion $G_{\Gamma\setminus\{v\}}\hookrightarrow G_{\Gamma}$ is a hierarchically quasiconvex hieromorphism. 

Moreover, for each $V\in\mathfrak{S}_{\Gamma\setminus\{v\}}$ the map $\mathcal{C}V\to\mathcal{C}[V]$ is an isometry. Note that, if an element $[V]\sqsubseteq i^{\lozenge}(U)=[U]$, where $U\in\mathfrak{S}_{\Gamma\setminus\{v\}}$, then $T_{[U]}\subseteq T_{[V]}$. By assumption $\alpha\in T_{[U]}$, and therefore $\alpha\in T_{[V]}$ and there exists $V\in\mathfrak{S}_{\Gamma\setminus\{v\}}$ such that $i^{\lozenge}(V)=[V]$.

Thus, we proved that all induction hypotheses are satisfied by the inclusion $G_{\Gamma\setminus\{v\}}\hookrightarrow G$, that is that the embedding is a full, hierarchically quasiconvex hieromorphism, which induces isometries at the level of hyperbolic spaces. 

\smallskip
To deduce the same for an arbitrary $G_\Delta$, we proceed as follows. If $\Delta=\Gamma\setminus\{u\}$ for some (other) vertex $u\in V$, then the above argument, where in Equation \eqref{splitting} we consider the splitting over the subgroup $G_{\link(u)}$, proves that the inclusion $G_\Delta \hookrightarrow G$ satisfies the desired properties. If not, then $\Delta$ is a proper subgraph of $\Gamma\setminus\{u\}$, for some $u\in V$. Induction proves that the embedding $G_\Delta \hookrightarrow G_{\Gamma\setminus\{u\}}$ satisfies said properties, and again the above argument proves the claim for the inclusion $G_{\Gamma\setminus\{u\}}\hookrightarrow G$. As fullness, hierarchical quasiconvexity, and inducing isometries at the level of hyperbolic spaces, are all properties preserved by composition of hieromorphisms, we conclude that the inclusion $G_\Delta\hookrightarrow G$ satisfies the inductive statement, and the proof is thus complete.
\end{proof}


\begin{thebibliography}{10}

\bibitem{ABD} C. Abbott, J. Behrstock, M.G. Durham, \emph{Largest acylindrical actions and stability in hierarchically hyperbolic groups}. ArXiv preprint (2017): \url{https://arxiv.org/abs/1705.06219};

\bibitem{EminaAlibegovic} E. Alibegovi\'c, \emph{A combination theorem for relatively hyperbolic groups}. Bull. London Math. Soc. 37 no. 3, 459 -- 466, 2005;

\bibitem{BakerCooper} M.Baker, D.Cooper, \emph{A combination theorem for convex hyperbolic manifolds, with applications to surfaces in $3$-manifolds}. J. Topol. 1 (2008), no. 3, 603 -- 642;

\bibitem{BDM} J. Behrstock, C. Dru\c{t}u, L. Mosher; \emph{Thick metric spaces, relative hyperbolicity, and quasi-isometric rigidity}. Math. Ann. 344 (2009), no. 3, 543 -- 595; 

\bibitem{BHS1} J. Behrstock, M. F. Hagen, A. Sisto, \emph{Hierarchically hyperbolic spaces I: curve complexes for cubical groups}. Geom. Topol. 21 (2017), no. 3, 1731 -- 1804;

\bibitem{BHS2} J. Behrstock, M. F. Hagen, A. Sisto, \emph{Hierarchically hyperbolic spaces II: combination theorems and the distance formula}. To appear in Pacific J. Math.;

\bibitem{asdim} J. Behrstock, M. F. Hagen, A. Sisto, \emph{Asymptotic dimension and small-cancellation for hierarchically hyperbolic spaces and groups}. Proc. Lond. Math. Soc. (3) 114 (2017), no. 5, 890 -- 926;

\bibitem{quasiflats} J. Behrstock, M. F. Hagen, A. Sisto, \emph{Quasiflats in hierarchically hyperbolic spaces}. ArXiv preprint (2017): \url{https://arxiv.org/abs/1704.04271};

\bibitem{BestvinaFeighn} M. Bestvina, M. Feighn, \emph{A combination theorem for negatively curved groups}. J. Differential Geom. 35 (1992), no. 1, 85 -- 101;

\bibitem{BKS} M. Bestvina, B. Kleiner, M. Sageev, \emph{Quasiflats in $CAT(0)$ $2$-complexes}. Algebr. Geom. Topol., 16(5): 2663 -- 2676, 2016;

\bibitem{Bo} B. Bowditch, \emph{Relatively hyperbolic groups}. Internat. J. Algebra Comput. 22 (2012), no. 3, 1250016, 66 pp;

\bibitem{Dahm} F. Dahmani, \emph{Combination of convergence groups}. Geom. Topol. 7 (2003) 933 -- 963;

\bibitem{DHS} M.G. Durham, M.F. Hagen, A. Sisto, \emph{Boundaries and automorphisms of hierarchically hyperbolic spaces}. Geom. Topol. 21 (2017) 3659 -- 3758;


\bibitem{G92} S. M. Gersten, \emph{Dehn functions and $\ell_1$-norms of finite presentations}. Algorithms and classification in combinatorial group theory (Berkeley, CA, 1989), 195 -- 224, Math. Sci. Res. Inst. Publ., 23, Springer, New York, 1992;

\bibitem{Gitik} R. Gitik, \emph{Ping-pong on negatively curved groups}. J. Algebra 217 (1999), no. 1, 65 -- 72;

\bibitem{Gr} M. Gromov, \emph{Hyperbolic groups}. Essays in group theory, 75 -- 263, Math. Sci. Res. Inst. Publ., 8, Springer, New York, 1987;

\bibitem{hyprig} T. Haettel, \emph{Hyperbolic rigidity of higher rank lattices}. ArXiv preprint(2017): \url{https://arxiv.org/abs/1607.02004};

\bibitem{Ha1} M. F. Hagen, \emph{Weak hyperbolicity of cube complexes and quasi-arboreal groups}. J. Topol. 7 (2014), no. 2, 385 -- 418;

\bibitem{HaSu} M. F. Hagen, T. Susse, \emph{On hierarchical hyperbolicity of cubical groups}. To appear in Isr. J. Math.;

\bibitem{WiHs} T. Hsu, D. T. Wise, \emph{Cubulating malnormal amalgams}. Invent. Math. 199 (2015), no. 2, 293 -- 331;

\bibitem{huang} J. Huang, \emph{Top dimensional quasiflats in $CAT(0)$ cube complexes}. Geom. Topol. 21 (2017), no. 4, 2281 -- 2352;

\bibitem{VirtualAmalgamation} E. Martinez-Pedroza, A. Sisto, \emph{Virtual amalgamation of relatively quasiconvex subgroups}. Algebr. Geom. Topol. 12 (2012), 1993 -- 2002;

\bibitem{MM1} H. A. Masur, Y. N. Minsky, \emph{Geometry of the complex of curves I: hyperbolicity}. Invent. Math. 138 (1999), 103 -- 149;

\bibitem{MM2} H. A. Masur, Y. N. Minsky, \emph{Geometry of the complex of curves II: Hierarchical structure}. Geom. Funct. Anal. 10 (2000), 902~--~974;

\bibitem{Me} J. Meier, \emph{When is the graph product of hyperbolic groups hyperbolic?} Geom. Dedicata 61 (1996), no. 1, 29 -- 41;

\bibitem{MinasyanOsin} A.Minasyan, D. Osin, \emph{Acylindrical hyperbolicity of groups acting on trees}. Math. Ann. 362 (2015), no. 3 -- 4, 1055 -- 1105;

\bibitem{StrongRelGps} M. Mj, L. Reeves, \emph{A combination theorem for strong relative hyperbolicity}. Geom. Topol. 12 (2008) 1777 -- 1798;

\bibitem{Os1} D. Osin, \emph{Relatively hyperbolic groups: intrinsic geometry, algebraic properties, and algorithmic problems}. Mem. Amer. Math. Soc. 179 (2006), no. 843;

\bibitem{Os2} D. Osin, \emph{Acylindrically hyperbolic groups}. Trans. Amer. Math. Soc. 368 (2016), no. 2, 851 -- 888;

\bibitem{Sa1} M. Sageev, \emph{CAT(0) cube complexes and groups}. Geometric group theory, 7 -- 54, IAS/Park City Math. Ser., 21, Amer. Math. Soc., Providence, RI, 2014;

\bibitem{Sp1} D. Spriano, \emph{Hyperbolic HHS I: factor systems and quasi-convex subgroups}. ArXiv preprint (2017): \url{https://arxiv.org/abs/1711.10931};

\bibitem{Sp2} D. Spriano, \emph{Hyperbolic HHS II: graphs of hierarchically hyperbolic groups}. ArXiv preprint(2018): \url{https://arxiv.org/abs/1801.01850};

\bibitem{KateVokes} K. Vokes, \emph{Hierarchical hyperbolicity of graphs of multicurves}. ArXiv preprint (2017): \url{https://arxiv.org/abs/1711.03080};

\bibitem{Wi} D. T. Wise, \emph{From riches to raags: $3$-manifolds, right-angled Artin groups, and cubical geometry}. CBMS Regional Conference Series in Mathematics, vol. 117, Published for the Conference Board of the Mathematical Sciences, Washington, DC, 2012.


\end{thebibliography}
\end{document}